\newif\iffinal
\numberwithin{equation}{section}
\newtheorem{theorem}{\bf Theorem}[section]
\newtheorem{definition}[theorem]{\bf Definition}
\newtheorem{lemma}[theorem]{\bf Lemma}
\newtheorem{condition}[theorem]{\bf Condition}
\newtheorem{proposition}[theorem]{\bf Proposition}
\theoremstyle{remark}
\newtheorem{example}[theorem]{\bf Example}
\newtheorem{remark}[theorem]{\bf Remark}
\newcommand{\NJ}{\mathbb{N}_J}
\newcommand{\NI}{\mathbb{N}_I}
\newcommand{\NN}{\mathbb{N}}
\newcommand{\RR}{\mathbb{R}}
\newcommand{\one}{\mathbf{1}}
\newcommand{\cle}{\mathcal{E}}
\newcommand{\clf}{\mathcal{F}}
\newcommand{\clg}{\mathcal{G}}
\newcommand{\clp}{\mathcal{P}}
\newcommand{\clc}{\mathcal{C}}
\newcommand{\clq}{\mathcal{Q}}
\newcommand{\cls}{\mathcal{S}}
\newcommand{\clo}{\mathcal{O}}
\newcommand{\clm}{\mathcal{M}}
\newcommand{\ch}{\check}
\newcommand{\Om}{\Omega}
\newcommand{\om}{\omega}
\newcommand{\bw}{\mathbf{w}}
\newcommand{\bz}{\mathbf{z}}
\newcommand{\bm}{\mathbf{m}}
\newcommand{\eps}{\epsilon}
\begin{document}

\title[Control Policies for HGI Performance]{Control Policies Approaching HGI Performance in Heavy Traffic for Resource Sharing Networks}

\date{}
\subjclass[2010]{Primary: 60K25, 68M20, 90B36.  Secondary: 60J70.}
\keywords{Stochastic networks,  dynamic control, heavy traffic, diffusion approximations, Brownian control problems,  reflected Brownian motions,  threshold policies, resource sharing networks, Internet flows. }

\author[Budhiraja]{Amarjit Budhiraja$^1$}
\author[Johnson]{Dane Johnson$^1$}
\address{$^1$Department of Statistics and Operations Research, 304 Hanes Hall, University of North Carolina, Chapel Hill, NC 27599}
\email{budhiraj@email.unc.edu, danedane@email.unc.edu}
\maketitle
\begin{abstract}
We consider resource sharing networks of the form introduced in the work of Massouli\'{e} and Roberts(2000) as models for Internet flows. The goal is to study the open problem, formulated in Harrison et al. (2014),
of constructing simple form rate allocation policies for broad families of resource sharing networks with associated costs converging to the Hierarchical Greedy Ideal performance in the heavy traffic limit.
We consider two types of cost criteria,  an infinite horizon discounted cost,   and  a long time average cost per unit time.  We introduce a sequence of rate allocation control policies that are determined in terms of certain thresholds for the scaled queue length processes and prove 
that, under conditions, both type of  costs  associated with these policies converge in the heavy traffic limit to the corresponding HGI performance. The conditions needed for these results are satisfied  by all the examples considered in Harrison et al. (2014).
\end{abstract}

	\section{Introduction}
	
In \cite{harmandhayan}	the authors have formulated an interesting and challenging open problem for resource sharing networks that were introduced in the work of Massouli\'{e} and Roberts \cite{masrob} as models for Internet flows. A typical network of interest consists of $I$ resources (labeled $1, \ldots , I$) with associated capacities $C_i$, $i = 1, \ldots , I$.
Jobs of type $1, \ldots , J$ arrive according to independent Poisson processes with rates depending on the job-type and the job-sizes of different job-type are exponentially distributed with parameters once more depending on the type. Usual assumptions on mutual independence are made. The processing of a job is accomplished by allocating a {\em flow rate}
to it over time and a job departs from the system when the integrated flow rate equals the size of the job. A typical job-type requires simultaneous processing by several resources in the network. This relationship between job-types and resources is described through a $I\times J$ incidence matrix $K$ for which $K_{ij}= 1$ if $j$-th job-type requires processing by resource $i$ and $K_{ij}=0$  otherwise. Denoting by $x = (x_1, \ldots , x_J)'$ the vector of flow rates allocated to various job-types at any given time instant, $x$ must satisfy the capcity constraint $Kx \le C$, where $C= (C_1, \ldots , C_I)'$.

One of the basic problems for such networks is to construct ``good'' dynamic control policies that allocate resource capacities to jobs in the system. A ``good'' performance is usually quantified in terms of an appropriate cost function. One can formulate an optimal stochastic control problem using such a cost function, however in general such control problems are intractable and therefore one considers an asymptotic formulation under a suitable scaling.
The paper \cite{harmandhayan} formulates a Brownian control problem (BCP) that formally approximates the system manager's control under heavy traffic conditions. Since finding optimal solutions of such general Brownian control problems and constructing asymptotically optimal control policies for the network based on such solutions is a notoriously hard problem, the paper
\cite{harmandhayan} proposes a different approach in which the goal is not to seek an asymptotically optimal solution for the network but rather control policies that achieve the so called {\em Hierarchical Greedy Ideal} (HGI) performance in the heavy traffic limit. Formally speaking, HGI performance is the cost associated with a control in the BCP (which is in general sub-optimal), under which (I) no resource's capacity is underutilized when there is work for that resource in the system,  
and (II) the total number of jobs of each type at any given instant is the minimum consistent with the vector of workloads for the various resources. Desirability of such control policies has been argued in great detail in \cite{harmandhayan}
through simulation and numerical examples and will not be revisited here.

The main open problem formulated in \cite{harmandhayan} is to construct simple form rate allocation policies for broad families of resource sharing networks with associated costs converging to the HGI performance determined from the corresponding BCP. The goal of this work is to make progress on this open problem. We consider two types of cost criteria, the first is an infinite horizon discounted cost (see \eqref{eq: costdisc})  and the second is a long time average cost per unit time (see \eqref{eq: costerg}). In particular the second cost criterion is analogous to the cost function considered in \cite{harmandhayan}. We introduce a sequence of rate allocation control policies that are determined in terms of certain thresholds for the scaled queue length processes and prove in Theorems \ref{thm:thm6.5} and \ref{thm:thm6.5disc}
that, under conditions, the costs \eqref{eq: costerg} and \eqref{eq: costdisc} associated with these policies converge in the heavy traffic limit to the corresponding HGI performance. 

We now comment on the conditions that are used in establishing the above results. The first main condition (Condition \ref{cond:loctrafcond}) we need is the existence of {\em local traffic} on each resource, namely for each resource $i$ there is a unique job type that only uses resource $i$. This basic condition, first introduced in \cite{kankelleewil}, is also a key assumption in \cite{harmandhayan} and is needed in order to ensure that the state space of the {\em workload process} is all of the positive orthant (see Section \ref{sec:hgi} for  a discussion of this point).  Our second condition (Condition \ref{cond:HT1})
is a standard heavy traffic condition and a stability condition for diffusion scaled workload processes. The stability condition will be key in Section \ref{sec:unifmom} when establishing moment bounds that are uniform in time and scaling parameter.
We now describe the final main condition used in this work. In Section \ref{sec:polandmaires}  we will see that the collection of all job-types can be decomposed into the so called {\em primary} jobs and {\em secondary } jobs. Primary jobs are those with `high' holding cost and intuitively are the ones we want to process first. It will also be seen in Section \ref{sec:polandmaires}  that the collection $\cls^1$ of all job-types that only require processing from a single resource is contained in the collection $\cls^s$ of all secondary jobs. Our third main condition, formulated as Condition \ref{cond:viableRankExists},
says that there is a {\em ranking} of all job-types in $\cls^m \doteq \cls^s\setminus \cls^1$. A precise notion of a ranking is given in Definition \ref{def:viableRank}, but roughly speaking, the job-types with larger rank value will get higher `attention' in a certain sense under our proposed policy. We note that the ranking is given through a deterministic map that only depends on system parameters and not on the state of the system.
The condition is somewhat nontransparent and notationally  cumbersome  and  so we provide two sufficient conditions in Theorems \ref{thm:SuffSubsetJobs} and \ref{thm:SuffRemoveJobCond} for Condition \ref{cond:viableRankExists} to hold. We also discuss in Remarks \ref{rem:firsthm} and \ref{rem:secthm} some examples where one of these sufficient conditions holds. In particular, all the examples in \cite{harmandhayan} (2LLN, 3LLN, C3LN, and the negative example of Section 13 therein) satisfy Condition \ref{cond:viableRankExists}. Furthermore, there are many other networks not covered by Theorems \ref{thm:SuffSubsetJobs} and \ref{thm:SuffRemoveJobCond} where Condition \ref{cond:viableRankExists} is satisfied and in Example \ref{exam:out} we provide one such example. Finally, it is not hard to construct examples where Condition \ref{cond:viableRankExists} fails and in Example \ref{exam:outout} we give  such an example. Construction of simple form rate allocation policies that achieve HGI performance in the heavy traffic limit for general families of  models as in Example \ref{exam:outout}
remains a challenging open problem. We expect that suitable notions of state dependent ranking maps will be needed in order to use the ideas developed in the current work for treating such models, however the proofs and constructions are expected to be substantially more involved.

Our rate allocation policy is introduced in Definition \ref{def:workAllocScheme}. Implementation of this policy requires first determining the collection of secondary jobs. This step, using the definition in \eqref{eq:primjobdef}, can be completed easily by solving a finite collection of linear programming problems. The next step is to determine a {\em viable ranking} (if it exists) of all jobs in $\cls^m$. In general when $\cls^m$ is very large, determining this ranking may be a numerically hard problem, however as discussed in Section \ref{sec:simsuffcond}, for many examples this ranking can be given explicitly in a simple manner. Once a ranking is determined, the policy in Definition \ref{def:workAllocScheme} is explicit given in terms of arbitrary positive constants $c_1, c_2$ with $c_1<c_2$ and $\alpha \in (0, 1/2)$. 
	Roughly speaking, our approach is applicable to systems where job-types have a certain ordering of ``urgency'' in the sense that, regardless of the particular workload, we want as much of it as possible to come from the least urgent job types.    
	A second concern that needs to be addressed is that a resource should work at `near' full capacity when there is `non negligible' amount of work for it.
A detailed discussion of how the  proposed policy achieves these goals is given in Remark \ref{rem:poldisc} where we also comment on connection between this policy and the UFO policies proposed in \cite{harmandhayan}.
	
We now comment on the proofs of our main results, Theorems \ref{thm:thm6.5} and \ref{thm:thm6.5disc}. 
Both results rely on large deviation probability estimates and stopping time constructions of the form introduced first in the works of Bell and Williams \cite{belwil1, belwil2} (see also \cite{budgho1} and \cite{atakum}).  A key result is Theorem \ref{thm:discCostInefBnd} which relates the cost under our policy with the workload cost function $\clc$ in \eqref{eq:eq942}. This estimate is crucial in achieving property (II) of the HGI asymptotically. Asymptotic achievement of property (I) of HGI is a consequence of Theorem \ref{thm:finTimeConvToRBM}, the estimate in \eqref{eq:eqworkldfi} and continuity properties of the Skorohod map. 
Proof of Theorem \ref{thm:thm6.5} requires additional moment estimates that are uniform in time and the scaling parameter (see Section \ref{sec:unifmom}). A key such estimate is given in Theorem \ref{thm:WmomBnd}, 
the proof of which relies on the construction of a suitable Lyapunov function (see Proposition \ref{thm:timeDecayV}). Once uniform moment bounds are available, one can argue tightness of certain path occupation measures (see Theorem \ref{thm:occMeasTight}) and characterize their limit points in a suitable manner (see Theorem \ref{thm:limitMeasProp}). Desired cost convergence then follows readily by appealing to continuous mapping theorem and uniform integrability estimates.

The paper is organized as follows. In Section \ref{sec:backg} we introduce the state dynamics, cost functions of interest, and two of our main conditions. Section \ref{sec:hgi} gives the precise definition of Hierarchical Greedy Ideal Performance in terms of certain costs associated with $I$ dimensional reflected Brownian motions.  In Section \ref{sec:polandmaires} we introduce our final key condition (Condition \ref{cond:viableRankExists}), present our dynamic rate allocation policy, and give our two main convergence results: Theorems \ref{thm:thm6.5} and \ref{thm:thm6.5disc}.  Section \ref{sec:example} discusses Condition \ref{cond:viableRankExists} and presents some sufficient conditions for it to be satisfied. This section also gives an example where the condition fails to hold.
Sections \ref{sec:secworkcost} - \ref{sec:pathoccmzr} form the technical heart of this work. Section \ref{sec:secworkcost}
proves some useful properties of the workload cost function $\clc(\cdot)$ introduced in \eqref{eq:eq942} and Section \ref{sec:ratallpol} studies some important structural properties of our proposed rate allocation policy. Section \ref{sec:ldest} is  technically the most demanding part of this work. It provides some key estimates on costs under our scheme in terms of the workload cost function and establishes certain moment estimates that are  uniform in time and the scaling parameter.  In Section \ref{sec:pathoccmzr} we introduce certain path occupation measures,  prove their tightness,  and characterize the limit points. Finally Section \ref{sec:pfsmainthms} completes the proof our two main results. An appendix contains some standard large deviation estimates for Poisson processes.

	The following notation will be used.
	 For a Polish space $\mathbb{S}$, denote
	the corresponding Borel $\sigma$-field by $\mathcal{B}(\mathbb{S})$. 
	%For a signed measure $\mu$ on $\mathbb{S}$ and $\mu$-integrable function $f \colon \mathbb{S} \to \mathbb{R}$, let $\langle f,\mu \rangle \doteq \int f \, d\mu$.
	Denote by $\mathcal{P}(\mathbb{S})$ (resp.\ $\mathcal{M}(\mathbb{S})$) the
	space of probability measures (resp. finite measures) on $\mathbb{S}$,
	equipped with the topology of weak convergence. 
	% Denote by $\mathbb{C}_b(%
	% 	\mathbb{S})$ (resp.\ $\mathbb{M}_b(\mathbb{S})$) the space of real bounded
	% 	and continuous functions (resp.\ bounded and measurable functions). 
	For $f
	\colon \mathbb{S} \to \mathbb{R}$, let $\|f\|_\infty \doteq \sup_{x \in 
	\mathbb{S}} |f(x)|$. 
	%We will use the notations $\{X_t\}$ and $\{X(t)\}$ interchangeably for stochastic processes.
	For a Polish space $\mathbb{S}$ and $T>0$, denote by $C([0,T]:\mathbb{S})$
	(resp.\ $D([0,T]:\mathbb{S})$) the space of continuous functions
	(resp.\ right continuous functions with left limits) from $[0,T]$ to $%
	\mathbb{S}$, endowed with the uniform topology (resp.\ Skorokhod topology). 
	%When $\mathbb{S}$ is a normed space with norm $\| \cdot \|$, for a map $f \colon [0,T] \to \mathbb{S}$, let $\|f\|_{*,t} \doteq \sup_{0 \le s \le t} \|f(s)\|$, $t \in [0,T]$.
	%For $d \in \mathbb{N}$, let $\mathcal{C}_d \doteq \mathbb{C}([0,T]:\mathbb{R}^d)$ and $\|f\|_{*,t} \doteq \sup_{0 \le s \le t} \|f(s)\|$ for $f \in \mathcal{C}_d$, $t \in [0,T]$.
	We say a collection $\{ X^n \}$ of $\mathbb{S}$-valued random variables is
	tight if the distributions of $X^n$ are tight in $\mathcal{P}(\mathbb{S})$.
	% A sequence of $D([0,T]:\mathbb{S})$-valued random variable is said
	% 	to be $\mathcal{C}$-tight if it is tight in $D([0,T]:\mathbb{S})$
	% 	and every weak limit point takes values in $C([0,T]:\mathbb{S})$
	% 	a.s. 
	% We use the symbol `$\Rightarrow$' to denote convergence in
	% 	distribution. 
	Equalities and inequalities involving vectors are
	interpreted component-wise.

\section{General Background}

\label{sec:backg}

Assume there are $J$ types of jobs and $I$ resources for processing them. \
The network is described through the  $I\times J$ matrix $K$ that has
entries $K_{ij}=1$ if resource $i$ works on job type $j$, and $K_{i,j}=0$
otherwise. We will assume (for simplicity) that no two columns
of $K$ are identical, namely, given a subset of resources, there is at most one job-type that has this subset as the associated set of resources.
 Given $m \in \mathbb{N}$, we let $\mathbb{N}_m
\doteq \{1, 2, \ldots m\}$. In particular, $\mathbb{N}_I=\{1, \ldots I\}$
and $\mathbb{N}_J=\{1, \ldots J\}$.

Denote by $N_j$ the set of resources that work on type $j$ jobs, i.e. 
\begin{equation*}
N_j \doteq \{i \in \mathbb{N}_I: K_{i,j}=1\}.
\end{equation*}
Let $\cls^1$ be the collection of all job types that use only one resource.
I.e. 
\begin{equation*}
\cls^1 \doteq \{j \in \mathbb{N}_J: {\mathbf{1}}^T Ke_j = \sum_{i=1}^I K_{ij} =
1\},
\end{equation*}
where $e_j$ is the unit vector in $\mathbb{R}^J$ with $1$ in the $j$-th
coordinate and $\mathbf{1}$ is the $I$-dimensional vector of ones.
Throughout we assume that for every resource there is a unique job type that
only uses that resource, namely the following condition is satisfied.
\begin{condition}
	\label{cond:loctrafcond}
$\bigcup_{j \in \cls^1} N_j = \mathbb{N}_I	$
\end{condition}
We denote the unique job-type that uses only resource $i$
as $\check j(i)$. Similarly for $j \in \cls^1$, we denote by $\hat i(j)$ the
unique resource that processes this job-type.

The capacity for resource $i$ is given by $C_{i}$. Let $\{\eta
_{j}^{r}(k)\}_{k=1}^{\infty }$ be the i.i.d. inter-arrival times for the $j$%
-th job type and let $\{\Delta _{j}^{r}(k)\}_{k=1}^{\infty }$ be the
associated i.i.d. amounts of work for the $j$-th job type. If at a given
instant work of type $j$ is processed at rate $x_{j}$ then the capacity
constraint requires that  $C\geq Kx$.  We assume the $\{\eta _{j}^{r}(k)\}_{k=1}^{\infty }$ are
exponentially distributed with rates $\lambda _{j}^{r}$ and the $\{\Delta
_{j}^{r}(k)\}_{k=1}^{\infty }$ are exponentially distributed with rates $\mu
_{j}^{r}$. \ Define Poisson processes 
\begin{equation*}
A_{j}^{r}(t)=\max \left\{ k:\sum_{i=1}^{k}\eta _{j}^{r}(i)\leq t\right\},\; 
S_{j}^{r}(t)=\max \left\{ k:\sum_{i=1}^{k}\Delta _{j}^{r}(i)\leq t\right\} 
\text{.}
\end{equation*}%
\ Let $\varrho_{j}^{r}=\frac{\lambda _{j}^{r}}{\mu _{j}^{r}}$ and $\varrho^r
\doteq (\varrho_j^r)_{j=1}^J$. The following will be our main heavy traffic
condition. The requirement $v^*>0$ will ensure the stability of the reflected Brownian motion in \eqref{eq:eqrbm}
and will be a key ingredient for uniform moment estimates in Section \ref{sec:unifmom}. 

\begin{condition}
\label{cond:HT1}  $C > K\varrho^{r}$ for all $r$. For some $\lambda_j, \mu_j \in (0,\infty)$,
$\lim_{r\rightarrow \infty } \lambda_{j}^{r} = \lambda_j$, 
$\lim_{r\rightarrow \infty } \mu_{j}^{r} = \mu_j$, for all $j \in \NJ$. With $\varrho_j = \frac{\lambda_j}{\mu_j}$
and $\varrho = (\varrho_j)_{j\in \NJ}$,
 $ C= K\varrho$, 
$
\lim_{r\rightarrow \infty}r(\varrho - \varrho^{r})= \beta^*$, $v^* \doteq K\beta^*>0$.
\ 
\end{condition}

Consider a $J$-dimensional absolutely continuous, nonnegative, non-decreasing stochastic process $%
\{B^r(t)\}$ where $B^r_j(t)$ represents the amount of type $j$ work
processed by time $t$ under a given policy.
% 
% 
% A resource-allocation policy, or simply a \textbf{control policy} is a $J$%
% -dimensional absolutely continuous,nonnegative, non-decreasing stochastic process $%
% \{B^r(t)\}$ with suitable non-anticipativity properties. Here, $B^r_j(t)$ represents the amount of type $j$ work
% processed by time $t$. 
Note that such a process must satisfy the resource
constraint: 
\begin{equation}
K\dot{B}^r_j(t) \le C, \mbox{ for all } t \ge 0.\label{eq:resconst}
\end{equation}

Define the $I$ dimensional capacity-utilization process $T^r = KB^r$. Then $%
T^r_i(t)$ represents the amount of work processed by the $i$-th resource by
time $t$. Letting $I^r(t)= tC -T^r(t)$, $I^r_i(t)$ represents the unused
capacity of resource $i$ by time $t$. % 
% \begin{equation*}
% T_{i}^{r}=\int_{0}^{t}\sum_{j=1}^{J}K_{i,j}x_{j}^{r}(s)ds
% \end{equation*}%
% and 
% \begin{equation*}
% I_{i}^{r}(t)=C_{i}t-T_{i}^{r}=C_{i}t-\int_{0}^{t}%
% \sum_{j=1}^{J}K_{i,j}x_{j}^{r}(s)ds\text{.}
% \end{equation*}%
Let $\{Q^{r}(t)\}$ be the $J$-dimensional process, where $Q^{r}_j(t)$
represents the number of jobs in the queue for type $j$ jobs. Then 
\begin{equation}
Q^{r}(t)=q^r+A^{r}(t)-S^{r}\left(B^r(t)\right),\label{eq:queleneqn}
\end{equation}
where $q^r$ denotes the initial queue-length vector.
For $B^r$ to be a valid rate allocation policy, $Q^r$ defined by \eqref{eq:queleneqn} must satisfy
\begin{equation}
	\label{eq:qrnonneg}
	Q^r(t) \ge 0 \mbox{ for all } t\ge 0.
\end{equation}
Any absolutely continuous, nonnegative, non-decreasing stochastic process $%
\{B^r(t)\}$ satisfying \eqref{eq:resconst}, \eqref{eq:qrnonneg} and appropriate non-anticipativity conditions
will be referred to as a {\bf resource allocation policy} or simply a {\bf control policy}.
Non-anticipativity conditions on $\{B^r\}$ are formulated using multi-parameter filtrations as in \cite{budgho2} (see Definition 2.6 (iv) therein). We omit details here, however we will note that from Theorem 5.4 of
\cite{budgho2} it follows that the control policy constructed in Section \ref{sec:secallostra} is non-anticipative in the sense of \cite{budgho2}.

  Let $W^{r}(t)$ be the $I$%
-dimensional workload process given by $W^{r}(t)=KM^{r}Q^{r}(t)$ where $M^{r}
$ is the diagonal matrix with entries $1/\mu _{j}^{r}$.

Define the fluid-scaled quantities by%
\begin{align}\label{eq:eq935}
	\begin{split}
&\bar{T}^{r}(t)=T^{r}(r^{2}t)/r^{2},\;\; \bar{B}^{r}(t)=B^{r}(r^{2}t)/r^{2},
\;\; \bar{I}^{r}(t)=I^{r}(r^{2}t)/r^{2} \\
&\bar{A}^{r}(t)=A(r^{2}t)/r^{2},\;\; \bar{S}^{r}(t)=S^{r}(r^{2}t)/r^{2}, \\
& \bar{Q}^{r}(t)=Q^{r}(r^{2}t)/r^{2}, \;\; \bar{W}^{r}(t)=W(r^{2}t)/r^{2} 
\end{split}
\end{align}%
and the diffusion scaled quantities 
\begin{align}\label{eq:eq938}
	\begin{split}
&\hat{T}^{r}(t)=T(r^{2}t)/r,\;\;\hat{B}^{r}(t)=B^{r}(r^{2}t)/r,\;\;\hat{I}%
^{r}(t)=I^{r}(r^{2}t)/r, \\
&\hat{A}^{r}(t)=(A(r^{2}t)-\lambda ^{r}r^{2}t)/r,\;\; \hat{S}%
^{r}(t)=(S^{r}(r^{2}t)-\mu ^{r}r^{2}t)/r, \\
&\hat{Q}^{r}(t)=Q(r^{2}t)/r,\;\; \hat{W}^{r}(t)=W^{r}(r^{2}t)/r\text{.}
\end{split}
\end{align}%
Note that, with $G^r \doteq KM^r$ and $\hat w^r \doteq G^r\hat q^r$, 
\begin{align}
\hat{W}^{r}(t) = G^r\hat{Q}^{r}(t) 
= \hat w^r +G^{r}(\hat{A}^{r}(t)-\hat{S}^{r}(\bar{B}^{r}(t)))+tr(K\varrho
^{r}-C)+r\bar{I}^{r}(t)\text{.}\label{eq:eq939}
\end{align}%
Let $h$ be a given $I$-dimensional strictly positive vector. Associated with
a control policy $B^r$, We will be interested in two types of cost
structures:

\begin{itemize}
\item \textbf{Infinite horizon discounted cost:} Fix $\theta \in (0,\infty)$.
\begin{equation}
	\label{eq: costdisc}
	J_D^r(B^r, q^r) \doteq
\int_0^{\infty} e^{-\theta t} E  \left(h \cdot \hat Q^r(t)\right) dt.
\end{equation} 

\item \textbf{Long-term cost per unit time:} 
\begin{equation}
		\label{eq: costerg}
	J_E^r(B^r, q^r) \doteq
\limsup_{T\to \infty} \frac{1}{T} \int_0^T E  \left(h \cdot \hat Q^r(t)\right)
dt.
\end{equation}
\end{itemize}
The goal of this work is to construct dynamic rate allocation policies that asymptotically achieve the Hierarchical Greedy Ideal(HGI) performance as $r\to \infty$. The next section gives the precise definition of HGI performance.

\section{Hierarchical Greedy Ideal}
\label{sec:hgi}
Similar to $M^r$ and $G^r$ in Section \ref{sec:backg}, let $M$ be the $%
J\times J$ diagonal matrix with entries $\{1/\mu_j\}_{j=1}^J$ and let $G
\doteq KM$. Define for $w \in \mathbb{R}_+^I$ (regarded as a workload
vector), the set of possible associated queue lengths $\mathcal{Q}(w)$ by
the relation 
\begin{equation*}
\mathcal{Q}(w)\doteq \{ q\in \mathbb{R}_+^J: Gq=w\}.
\end{equation*}
Note that by our assumption on $K$, $\mathcal{Q}(w)$ is compact for every $w \in \mathbb{R}_+^I$. 
Also the local traffic condition (Condition \ref{cond:loctrafcond}) ensures that $\clq(w)$ is nonempty for every $w \in \RR_+^I$. 
HGI performance introduced in \cite{harmandhayan} is motivated by the  Brownian control problem (BCP), as introduced in \cite{har1},  associated with the network in Section \ref{sec:backg} and the holding cost vector $h$. This BCP has an equivalent workload formulation (EWF) from the results of \cite{harvan} (see Section 10 of \cite{harmandhayan}). 
The EWF in the current setting 
is a singular control problem with state space that is all of the positive orthant $\RR_+^I$(due to the local traffic condition). 
In the EWF the cost is given by a nonlinear function $\clc$ defined as
\begin{equation}  \label{eq:eq942}
\clc(w) \doteq \inf_{q\in \mathcal{Q}(w)}\{ h\cdot q\}, \; w \in \mathbb{R}_+^I .
\end{equation}
One particular control in the EWF is the one corresponding to no-action in the interior and  normal reflection on the boundary of the orthant. This control  yields the (coordinate-wise) minimal controlled state process in the EWF given as  the $I$-dimensional reflected Brownian motion in $\RR_+^I$ with normal reflection.
The HGI performance is the cost, in terms of the the workload cost function $\clc$, associated with this minimal state process. We now give precise definitions.

We first recall the definition of the Skorohod problem and Skorohod map with normal reflections on the $d$-dimensional positive orthant.
\begin{definition}
	\label{def:smsp}
	Let $\psi \in D([0,T]: \RR^d)$ such that $\psi(0)\in \RR^d_+$. The pair $(\varphi,\eta) \in D([0,T]: \RR^d\times \RR^d)$ is said to solve the {\em Skorohod problem} for $\psi$ (in $\RR^d_+$, with normal reflection)
	if $\varphi = \psi+\eta$; $\varphi(t)\in \RR^d_+$ for all $t \ge 0$; $\eta(0)=0$; $\eta$ is nondecreasing and $\int_{[0,T]} 1_{\{\varphi_i(t)>0\}} d\eta_i(t) =0$.
	We write $\varphi = \Gamma_d(\psi)$ and refer to $\Gamma_d$ as the $d$-dimensional {\em Skorohod map}.
\end{definition}
It is  known that there is a unique solution to the above Skorohod problem for every $\psi \in D([0,T]: \RR^d)$ and that the Skorohod map has the following Lipschitz property: There exists $K_{\Gamma_d} \in (0,\infty)$ such that for all $T>0$ and $\psi_i \in D([0,T]: \RR^d)$ such that $\psi_i(0) \in \RR^d_+$, $i=1,2$
$$\sup_{0\le t \le T}|\Gamma_d(\psi_1)(t) - \Gamma_d(\psi_2)(t)| \le K_{\Gamma_d} \sup_{0\le t \le T}|\psi_1(t)- \psi_2(t)|.$$
Also note that for $\psi \in D([0,T]: \RR^d)$, $\Gamma_d(\psi)_i = \Gamma_1(\psi_i)$ for all $i = 1, \ldots d$.
When $d=I$ we will write $\Gamma_d=\Gamma_I$ as simply $\Gamma$.

Let $(\ch \Om, \ch \clf, \{\ch \clf_t\}, \ch P)$ be a filtered probability space on which is given a $J$-dimensional standard $\{\ch \clf_t\}$- Brownian motion $\{\ch B(t)\}$.
Let $\zeta_j \doteq 2\varrho_j/\mu_j$ for $j \in J$ and let $\mbox{{ Diag}}(\mathbf{\zeta})$ be the $J\times J$ diagonal matrix with $j$-th diagonal entry $\zeta_j$. Let
$\Lambda \doteq K(\mbox{{ Diag}}(\mathbf{\zeta}))^{1/2}$. For $w_0 \in \RR_+^I$, let $\ch W^{w_0}$ be a $\RR_+^I$ valued continuous stochastic process defined as 
\begin{equation}
	\label{eq:eqrbm}
\ch W^{w_0}(t) = \Gamma (w_0 - v^*\iota + \Lambda \ch B(\cdot))(t), \; t \ge 0
\end{equation}
where $\iota: [0,\infty) \to [0,\infty)$ is the identity map. Then $\ch W^{w_0}$ is a $I$-dimensional reflected Brownian motion with initial value $w_0$, drift $-v^*$ and covariance matrix $\Lambda \Lambda'$.
It is well known\cite{harwil1} that $\{\ch W^{w_0}\}_{w_0 \in \RR_+^I}$ defines a Markov process that has a unique invariant probability distribution which we denote as $\pi$. 

 Suppose $\hat q^r \doteq q^r/r \to q_0$ as $r\to \infty$ and let $w_0\doteq Gq_0$.  Then the HGI cost associated with the costs $J_D^r(B^r, q^r) $ and $J_E^r(B^r, q^r) $
are given respectively as
\begin{align}
	\mbox{HGI}_D(w_0) &\doteq \int_0^{\infty} e^{-\theta t} E  \left(\clc(\ch W^{w_0}(t))\right) dt\nonumber\\
		\mbox{HGI}_E &\doteq \int_{\RR^+_I} \clc(w) \pi(dw). \label{eq:hgicosts}
\end{align}
\section{Control Policy and Convergence to HGI}
\label{sec:polandmaires}
This section will introduce our final key condition on the model and present our main results.
Denote by $g_1, \ldots g_J$ the columns of the matrix $G$, i.e. $G=[g_1,
\ldots, g_J]$. We will partition the set $\mathbb{N}_J$ into sets $\cls^p$ and $%
\cls^s$ corresponding to the set of \emph{primary jobs} and the set of \emph{%
secondary jobs} respectively, defined as follows 
\begin{equation}
	\label{eq:primjobdef}
\cls^p \doteq \{j \in \mathbb{N}_J: \clc(g_j) < h_j\}, \; \cls^s \doteq \mathbb{N}%
_J\setminus \cls^p.
\end{equation}
Intuitively, $\cls^p$ corresponds to the set of jobs that we want to process
first.

Within the set of secondary jobs we will distinguish the set $\cls^1$,
introduced earlier, of all job types that use only one resource. Note that $%
\cls^1$ is indeed a subset of $\cls^s$ since for $j \in \cls^1$, $\mathcal{Q}(g_j) =
\{e_j\}$ and so 
\begin{equation*}
\clc(g_j) = \inf_{q\in \mathcal{Q}(g_j)} \{h \cdot q\} = h_j.
\end{equation*}

We now introduce the notion of \emph{minimal covering} sets associated with
any $j \in \mathbb{N}_J$ and also define, for given $F \subset \mathbb{N}_J
\setminus \{j\}$, minimal covering sets of $j$ that are not covering sets
for any $j^{\prime }\in F$.

\begin{definition}
\label{def:minCovCollect} Given $E\subset\mathbb{N}_J$ and $k\in \mathbb{N}_J$ we define $\mathcal{M}%
^{E,k}$ to be the collection of all minimal sets of jobs in $E$ other than $k$ such
that $N_k$ is contained in the set of all resources associated with the jobs
in the set, namely, 
\begin{equation*}
\mathcal{M}^{E,k} \doteq \left\{M\subset E\setminus \{k\}: N_{k}\subseteq
\bigcup _{j\in M}N_{j} \mbox{ and } N_k \nsubseteq \bigcup _{j\in M\setminus
\{l\}}N_{j} \mbox{ for all } l\in M\right\}.
\end{equation*}
In addition, given $F\subset \mathbb{N}_J$ define $\mathcal{M}_{F}^{E,k}$ to
be the collection of all $M \in \mathcal{M}^{E,k}$ such that the set of
resources associated with any job in $F$ is not contained in the set of
resources associated with jobs in $M$, namely, 
\begin{equation*}
\mathcal{M}_{F}^{E,k} \doteq \left\{M \in \mathcal{M}^{E,k}: N_{l}\nsubseteq
\bigcup _{j\in M}N_{j} \mbox{ for any } l\in F\right\}.
\end{equation*}
\end{definition}

Minimal covering sets will be used to determine the collection of jobs which
do not have lower priority than any other job in a given subset of $\mathbb{N%
}_J$. For that we introduce the following definition. Let $\cls^m \doteq
\cls^s\setminus \cls^1$ be the collection of secondary jobs that use multiple
resources and let $m \doteq |\cls^m|$. Denote the $j$-th column of $K$ by $K_j$%
, i.e. $K = [K_1, \ldots , K_J]$.

\begin{definition}
\label{def:optJob} Given sets $E,F\subset \cls^m$ define the set $\mathcal{O}%
_{F}^{E}\subset E$ by $j^{\prime }\in \mathcal{O}_{F}^{E}$ if and only if
for all $M\in \mathcal{M}_{F}^{E\bigcup \cls^1,j^{\prime }}$ 
\begin{equation}  \label{eq:eq929}
\mu _{j^{\prime }}h_{j^{\prime }}+\clc\left( \sum_{j\in M}K_j-K_{j^{\prime
}}\right) \leq \clc\left(\sum_{j\in M}K_j\right).
\end{equation}%
and the set $\mathcal{O}^{E}\subset E$ by $j^{\prime }\in \mathcal{O}^{E}$
if and only if \eqref{eq:eq929} holds for all $M\in \mathcal{M}^{E\bigcup
\cls^1,j^{\prime }}$.
\end{definition}
Note that since a $M\in \mathcal{M}_{F}^{E\bigcup \cls^1,j^{\prime }}$ covers $j'$, $\sum_{j\in M} K_j - K_{j'}$ is a nonnegative vector.
We now introduce the notion of a \emph{viable ranking} of jobs in $\cls^m$.

\begin{definition}
\label{def:viableRank}A viable ranking of jobs in $\cls^m$ is a bijection $\rho:%
\mathbb{N}_m\rightarrow \cls^m$, such that for all $k\in \mathbb{N}_m$,
$\rho(k)\in \mathcal{O}_{F_k}^{E_k}$,
where for $k \in \mathbb{N}_m$, $F_k \doteq \{\rho(1),...,\rho(k-1)\}$ and $%
E_k \doteq \cls^m \setminus F_k$, with the convention that $\mathcal{O}%
_{F_k}^{E_k}=\mathcal{O}^{\cls^m}$ for $k=1$.
\end{definition}

For an interpretation of a viable ranking, see Remark \ref{rem:poldisc}. The following will be one of main assumptions that will be taken to hold
throughout this work. This assumption (and Conditions \ref{cond:loctrafcond} and \ref{cond:HT1}) will not be noted explicitly in the
statements of the results.

\begin{condition}
\label{cond:viableRankExists}
There exists a viable ranking of jobs in $\cls^m$.
\end{condition}

In Section \ref{sec:example} we illustrate through examples that this
condition holds for a broad family of models.

We can now present our dynamic rate allocation policy.

\subsection{Resource Allocation Policy}
\label{sec:secallostra}
For $k\in \mathbb{N}_m$ let 
\begin{equation}\label{eq:eqzetaik}
\zeta _{i}^{k}=\{j\in \mathbb{N}_{J}\setminus F_{k+1}:K_{i,j}=1\}.
\end{equation}%
This class can be interpreted as the collection of jobs which impact node $i$ and have a higher processing priority than
job $\rho(k)$ (see Remark \ref{eq:jobprior}). \ 
% In addition, let 
% \begin{equation}\label{eq:eqzetaiz}
% \zeta _{i}^{0}=\{j\in \cls^p:K_{i,j}=1\}
% \end{equation}%
% be the set of primary jobs which impact node $i$.

Let $0<\alpha <1/2$ and $0<c_{1}<c_{2}$. \ Define 
\begin{equation*}
\sigma ^{r}(t)\doteq\left\{ j\in \mathbb{N}_{J}:Q_{j}^{r}(t)\geq
c_{2}r^{\alpha }\right\}
\end{equation*}%
to be the set of job-types whose queue length is at least $c_{2}r^{\alpha }$
at time $t$. \ Define 
\begin{equation*}
\varpi ^{r}(t)\doteq \bigcup _{j\in \sigma ^{r}(t)}N_{j}
\end{equation*}%
to be the subset of $\mathbb{N}_I$ consisting of resources associated with
job-types in $\sigma ^{r}(t)$, namely with queue lengths at least $%
c_{2}r^{\alpha }$.   We will use the
following work allocation scheme.

% Recall the definitions of $\zeta _{i}^{k}$, $k\in \mathbb{N}_m$, and $\zeta _{i}^{0}$ from \eqref{eq:eqzetaik}
% and \eqref{eq:eqzetaiz} respectively.
\begin{definition}
\label{def:workAllocScheme}Let $\delta =\frac{\min_{j}\varrho _{j}}{2J }$.
For $t\ge 0$, define the vector $y(t) = (y_j(t))_{j \in \mathbb{N}_J}$ as
follows.\newline

\noindent \textbf{Primary jobs.} For $j\in \cls^p$ 
\begin{equation}
y_{j}(t)\doteq \left\{ 
\begin{array}{cc}
\varrho _{j}+\delta, & \text{ if } j\in \sigma ^{r}(t) \\ 
\  &  \\ 
\varrho _{j}- \frac{\delta }{ J2^{m +3} }, & \text{ if } j\notin \sigma
^{r}(t).%
\end{array}
\right. \label{eq:A1prim}
\end{equation}
\newline
\noindent \textbf{Jobs in $\cls^m$.} For $k\in \mathbb{N}_m$ 
\begin{equation}
y_{\rho (k)}(t)\doteq \left\{ 
\begin{array}{cc}
\varrho _{\rho (k)}-2^{k-m -2}\delta, & \text{ if } \zeta _{i}^{k}\cap
\sigma ^{r}(t)\neq \emptyset \mbox{ for all } i\in N_{\rho (k)} \\ 
\  &  \\ 
\varrho _{\rho (k)}+2^{k-m -2}\delta, & \text{ if } \zeta _{i}^{k}\cap
\sigma ^{r}(t)= \emptyset \mbox{ for some } i\in N_{\rho (k)} \mbox{ and }
\rho (k)\in \sigma ^{r}(t) \\ 
\  &  \\ 
\varrho _{\rho (k)}-2^{-k-m -2}\delta, & \text{ if } \zeta _{i}^{k}\cap
\sigma ^{r}(t)= \emptyset \mbox{ for some } i\in N_{\rho (k)} \mbox{ and }
\rho (k)\notin \sigma ^{r}(t).%
\end{array}
\right. \label{eq:A2mult}
\end{equation}
\newline
\noindent \textbf{Jobs in $\cls^1$.} For $j \in \cls^1$
\begin{equation}
y_{j}(t)\doteq \left\{ 
\begin{array}{cc}
C_{\hat i(j)}-\sum_{l\neq j:K_{\hat i(j),l}=1}y_{l}(t), & \text{ if } \hat
i(j)\in \varpi^{r}(t) \\ 
\  &  \\ 
\varrho _{j}-\delta, & \text{ if } \hat i(j)\notin \varpi^{r}(t).%
\end{array}
\right. \label{eq:A3sing}
\end{equation}

For all $j$, define stopping times 
\begin{equation*}
\tau _{1}^{j}=\inf \{t\geq 0:Q_{j}^{r}(t)<c_{1}r^{\alpha }\}\text{,}
\end{equation*}%
\begin{equation*}
\tau _{2l}^{j}=\inf \{t\geq \tau _{2l-1}^{j}:Q_{j}^{r}(t)\geq c_{2}r^{\alpha
}\}\text{,}
\end{equation*}%
and 
\begin{equation*}
\tau _{2l+1}^{j}=\inf \{t\geq \tau _{2l}^{j}:Q_{j}^{r}(t)<c_{1}r^{\alpha }\}%
\text{,}
\end{equation*}%
for all $l>0$. \ Define $\mathcal{E}^{r}(t)\in \{0,1\}^{J}$ by%
\begin{equation*}
\mathcal{E}_{j}^r(t)\doteq \left\{ 
\begin{array}{cc}
1, & \text{ if } t\in \left[ \tau _{2l-1}^j,\tau _{2l}^j\right) \text{ for some }%
l>0 \\ 
\  &  \\ 
0, & \text{ otherwise. }%
\end{array}
\right. 
\end{equation*}
Finally, define $x(t) \in \mathbb{R}^J$ as $x_j(t) \doteq y_j(t) 1_{\{%
\mathcal{E}_{j}^r(t) =0\}}$ for $j \in \mathbb{N}_J$.
\end{definition}
We note that $y_j(t)$ and $x_j(t)$ depend on $r$ but this dependence is suppressed in the notation.
\begin{remark}
	\label{rem:poldisc}
Roughly speaking, under the allocation policy in Definition \ref{def:workAllocScheme}, jobs are prioritized as follows:
\begin{equation}
	\label{eq:jobprior}
	\cls^p \succ \cls^1 \succ \rho(m) \succ \rho(m-1) \cdots \succ \rho(1).
\end{equation} 
However the above priority order needs to be interpreted with some care. We will call the $j$-th queue {\em stocked}
at time instant $t$ if $Q^r_j(t) \ge c_2 r^{\alpha}$ and we will call it {\em depleted} at time instant $t$
if $Q^r_j(t) < c_1 r^{\alpha}$. The last line of Definition \ref{def:workAllocScheme} says that any queue once depleted does not get any rate allocation until it gets stocked again. 
Beyond that, rate allocation by a typical resource $i$ is decided as follows.

First we consider all the primary job-types associated with resource $i$, i.e. $j \in \cls^p$ such that $K_{ij}=1$.
If the associated queue is stocked then it gets higher than nominal rate allocation according to the first line in \eqref{eq:A1prim}
and otherwise a lower than nominal allocation as in the second line of \eqref{eq:A1prim}.
Next we look at all the job-types in $\cls^m$ associated with resource $i$. Denote these as $j_1, j_2, \ldots j_k$ and assume without loss of generality that $\rho(j_1)< \rho(j_2) \cdots < \rho(j_k)$. We consider the top ranked job $\rho(j_k)$ first and look at all the resources (including resource $i$) that process this job-type. 
If every associated resource has at least one job-type rated higher according to \eqref{eq:jobprior} with a stocked queue then rate allocated to job-type $\rho(j_k)$ is lower than nominal as given in the first line of \eqref{eq:A2mult}.  On the other hand, if there is at least one associated resource such that none of its  job-types that
are rated higher that $\rho(j_k)$ (according to \eqref{eq:jobprior}) has a stocked queue , we assign $\rho(j_k)$
a flow rate higher than nominal, according to the second line in \eqref{eq:A2mult} if the queue for job-type $\rho(j_k)$ is stocked and a lower than nominal flow rate according to the third line in \eqref{eq:A2mult} if the queue is not stocked.
Note that all resources processing job-type $\rho(j_k)$ allocate the same flow rate to it.
We then successively consider $\rho(j_{k-1}), \rho(j_{k-2}), \ldots \rho(j_{1})$ and allocate rate flows to it in a similar fashion as above.

Finally, if the unique job-type $\check j(i)$ queue associated with resource $i$ is stocked, we allocate it all remaining capacity of resource $i$ (this may be larger or small than nominal allocation) and if this queue is not stocked we assign it less than nominal allocation given by the second line in \eqref{eq:A3sing}.

Lemma \ref{lem:admispol} will show that $B^r(t) \doteq \int_0^t x(s) ds$  is nonnegative, nondecreasing and satisfies the resource constraint \eqref{eq:resconst}. Also, clearly the associated $Q^r$ defined by \eqref{eq:queleneqn}
satisfies \eqref{eq:qrnonneg}.
Finally, it can be checked that the process $B^r(t)$ is  non-anticipative in the sense of Definition  2.6 (iv) of \cite{budgho2}. Thus $B^r$ is a resource allocation policy as defined in Section \ref{sec:backg}.

We remark that the formal priority ordering given in \eqref{eq:jobprior} is consistent with the UFO priority scheme
proposed in Section 12 of \cite{harmandhayan} for 2LLN and 3LLN networks. However, the UFO scheme for C3LN network in \cite{harmandhayan}
appears to be of a  different form.
\end{remark}

\subsection{Main Results}
 Recall that we assume throughout that Conditions \ref{cond:loctrafcond}, \ref{cond:HT1} and \ref{cond:viableRankExists}
are satisfied. We now present the main results of this work.
The first result considers the ergodic cost whereas the second the discounted cost. Recall $q^r$ introduced in \eqref{eq:queleneqn}.
\begin{theorem}
	\label{thm:thm6.5}
Suppose $\hat q^r \doteq q^r/r$ satisfies $\sup_{r>0} \hat q^r <\infty$.
Let $t_{r} \uparrow \infty$ as $r\to \infty$. Then as $r\to \infty$
 $\frac{1}{t_{r}}\int_{0}^{t_{r}}
h\cdot \hat{Q}^{r}(t) dt$ converges in $L^1$ to $\int \clc(y)\pi (dy)$.
In particular, as $r\to \infty$,
$$J_E^r(B^r, q^r) \to \mbox{HGI}_E.$$
\end{theorem}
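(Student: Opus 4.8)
The plan is to prove the stronger $L^1$-convergence statement; the convergence $J_E^r(B^r,q^r)\to \mbox{HGI}_E$ then follows immediately by Fatou/dominated convergence applied to the time-averaged expectations, since $\mbox{HGI}_E = \int \clc(y)\pi(dy)$ by \eqref{eq:hgicosts} and the limit $\int \clc(y)\pi(dy)$ is deterministic, so $L^1$-convergence of the random time-averages forces convergence of $\frac{1}{T}\int_0^T E(h\cdot\hat Q^r(t))\,dt$ along any sequence $T=t_r\uparrow\infty$, hence of the $\limsup$. The heart of the matter is therefore the ergodic-average convergence, and this I would assemble from the technical machinery flagged in the introduction.

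First I would reduce the cost $h\cdot\hat Q^r(t)$ to the workload cost $\clc(\hat W^r(t))$. By definition $\clc(\hat W^r(t)) = \inf_{q\in\clq(\hat W^r(t))}h\cdot q \le h\cdot\hat Q^r(t)$, so one direction is automatic; the reverse requires showing that under our threshold policy the diffusion-scaled queue length is asymptotically the minimal one consistent with the workload, i.e. $h\cdot\hat Q^r(t) - \clc(\hat W^r(t))$ is negligible in the relevant averaged sense. This is precisely property (II) of HGI, and the tool is Theorem \ref{thm:discCostInefBnd} relating the cost under the policy to $\clc$; here it must be combined with the uniform-in-time, uniform-in-$r$ moment bounds of Section \ref{sec:unifmom} (Theorem \ref{thm:WmomBnd} and the Lyapunov construction of Proposition \ref{thm:timeDecayV}) so that the inefficiency estimate can be integrated in time and divided by $t_r$ without the tail blowing up. The stability/heavy-traffic Condition \ref{cond:HT1} ($v^*>0$) is what makes these uniform moment bounds possible.

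Next I would pass to the limit in $\frac{1}{t_r}\int_0^{t_r}\clc(\hat W^r(t))\,dt$. Introduce the path occupation measures of Section \ref{sec:pathoccmzr}, $\nu^r(\cdot)\doteq\frac{1}{t_r}\int_0^{t_r}\delta_{\hat W^r(t)}(\cdot)\,dt$; by Theorem \ref{thm:occMeasTight} the family $\{\nu^r\}$ is tight, and by Theorem \ref{thm:limitMeasProp} every subsequential limit point is (a.s.) the invariant distribution $\pi$ of the reflected Brownian motion \eqref{eq:eqrbm} — this uses Theorem \ref{thm:finTimeConvToRBM}, the estimate \eqref{eq:eqworkldfi}, and continuity of the Skorohod map $\Gamma$ to identify the limiting dynamics, together with uniqueness of $\pi$ from \cite{harwil1}. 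Since $\clc$ is continuous (and has at most linear growth, being a finite infimum of linear functionals over a compact set $\clq(w)$ depending continuously on $w$), the uniform moment bounds give uniform integrability of $\clc(\hat W^r(t))$ in the averaged sense, so $\int\clc\,d\nu^r \to \int\clc\,d\pi$ in $L^1$ by the continuous mapping theorem plus uniform integrability. Combined with the reduction of the previous paragraph, this yields $\frac{1}{t_r}\int_0^{t_r} h\cdot\hat Q^r(t)\,dt \to \int\clc(y)\pi(dy)$ in $L^1$.

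The main obstacle I expect is the second paragraph: establishing that the time-averaged inefficiency $\frac{1}{t_r}\int_0^{t_r}\big(h\cdot\hat Q^r(t) - \clc(\hat W^r(t))\big)\,dt$ vanishes uniformly in $r$. Theorem \ref{thm:discCostInefBnd} is stated (per the introduction) for the discounted/finite-horizon setting, so transferring it to an ergodic average requires the uniform moment control to kill the contribution of rare excursions where the threshold policy has not yet driven the excess queue length down — this is where the large-deviation estimates and the Bell–Williams-style stopping-time constructions, together with the Lyapunov function of Proposition \ref{thm:timeDecayV}, do the real work, and where the interplay between the thresholds $c_1 r^\alpha < c_2 r^\alpha$, $\alpha\in(0,1/2)$, and the diffusive scaling must be handled carefully.
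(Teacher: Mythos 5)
Your proposal follows essentially the same route as the paper: replace $h\cdot\hat Q^r$ by the workload cost via Theorem \ref{thm:discCostInefBnd} together with the uniform moment bounds of Theorem \ref{thm:WmomBnd}, then pass to the limit through the occupation measures of Theorems \ref{thm:occMeasTight} and \ref{thm:limitMeasProp} with a truncation/uniform-integrability argument, and the comparison of $\clc(\tilde W^r)$ with $\clc(\hat W^r)$ via \eqref{eq:eqworkldfi} and the Lipschitz property of $\clc$. The ``main obstacle'' you anticipate is not actually an obstacle: the second inequality of Theorem \ref{thm:discCostInefBnd} is already a time-average bound, uniform in $T\ge 1$, of order $r^{\alpha-1/2}(1+|q|^2)$, so no separate transfer from the discounted to the ergodic setting is needed.
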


\begin{theorem}
	\label{thm:thm6.5disc}
Suppose that $\hat q^r \to q_0$ as $r\to \infty$. Let $w_0 = Gq_0$. Then
\begin{equation*}
\lim_{r\rightarrow \infty }J_D^r(B^r, q^r) = \mbox{HGI}_D(w_0).
\end{equation*}%
\end{theorem}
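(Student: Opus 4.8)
\noindent\textbf{Proof plan for Theorem \ref{thm:thm6.5disc}.}
The plan is to split the discrepancy between $J_D^r(B^r,q^r)$ and $\mathrm{HGI}_D(w_0)$ according to the two defining features of the HGI. Write
\begin{equation*}
\bigl| J_D^r(B^r,q^r) - \mathrm{HGI}_D(w_0)\bigr| \le \mathrm{I}_r + \mathrm{II}_r,
\end{equation*}
where
\begin{align*}
\mathrm{I}_r &\doteq \left| \int_0^\infty e^{-\theta t} E\bigl(h\cdot\hat Q^r(t)\bigr)\, dt - \int_0^\infty e^{-\theta t} E\bigl(\clc(\hat W^r(t))\bigr)\, dt\right|,\\
\mathrm{II}_r &\doteq \left| \int_0^\infty e^{-\theta t} E\bigl(\clc(\hat W^r(t))\bigr)\, dt - \int_0^\infty e^{-\theta t} E\bigl(\clc(\hat W^{w_0}(t))\bigr)\, dt\right|.
\end{align*}
The term $\mathrm{I}_r$ is where property (II) of the HGI enters: after discounting, it measures how far $h\cdot\hat Q^r(t)$ is from the minimal holding cost $\clc(\hat W^r(t))$ compatible with the observed workload. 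I would dispatch $\mathrm{I}_r\to 0$ directly from Theorem \ref{thm:discCostInefBnd}, which bounds $\int_0^\infty e^{-\theta t}E|h\cdot\hat Q^r(t)-\clc(\hat W^r(t))|\,dt$ by a quantity vanishing as $r\to\infty$; the mechanism is that under the policy of Definition \ref{def:workAllocScheme} a queue $j$ never grows far beyond the upper threshold $c_2 r^\alpha$ except on events of small probability (controlled by the Poisson large deviation estimates of the appendix), so that in diffusion scale the excess over the workload-minimal configuration is $O(r^{\alpha-1})\to 0$, while the discount factor together with a crude polynomially growing (in $t$), uniform in $r$, bound on $E(h\cdot\hat Q^r(t))$ handles the integration over all of $[0,\infty)$.

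For $\mathrm{II}_r$ the relevant feature is property (I): the workload should track the reflected Brownian motion $\hat W^{w_0}$ of \eqref{eq:eqrbm}. First note that the present hypothesis $\hat q^r\to q_0$ (rather than merely boundedness) together with $\mu_j^r\to\mu_j$ gives $\hat w^r = G^r\hat q^r \to Gq_0 = w_0$. Then Theorem \ref{thm:finTimeConvToRBM} — whose proof rests on the representation \eqref{eq:eq939}, the estimate \eqref{eq:eqworkldfi}, and the Lipschitz continuity of the Skorohod map $\Gamma$ — yields, for every fixed $T>0$, $\hat W^r \Rightarrow \hat W^{w_0}$ in $D([0,T]:\RR_+^I)$; since the limit has continuous paths this convergence is uniform on $[0,T]$, so $\hat W^r(t)\Rightarrow \hat W^{w_0}(t)$ for every $t\ge 0$. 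As $\clc$ is Lipschitz (Section \ref{sec:secworkcost}), the continuous mapping theorem gives $\clc(\hat W^r(t))\Rightarrow \clc(\hat W^{w_0}(t))$. Using $0\le \clc(w)\le L|w|$ (Lipschitzness of $\clc$ with $\clc(0)=0$) and the uniform in $r$ bound $\sup_r E|\hat Q^r(t)|^2<\infty$ (so that, since $\hat W^r=G^r\hat Q^r$, also $\sup_r E|\hat W^r(t)|^2<\infty$), the family $\{\clc(\hat W^r(t))\}_r$ is uniformly integrable, whence $E\clc(\hat W^r(t)) \to E\clc(\hat W^{w_0}(t))$ for each $t$. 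Finally $e^{-\theta t}E\clc(\hat W^r(t)) \le L\,e^{-\theta t}\bigl(E|\hat W^r(t)|^2\bigr)^{1/2}$ is bounded, uniformly in $r$, by an integrable (over $[0,\infty)$) function of $t$, so dominated convergence gives $\mathrm{II}_r\to 0$ and therefore $J_D^r(B^r,q^r)\to\int_0^\infty e^{-\theta t}E\clc(\hat W^{w_0}(t))\,dt = \mathrm{HGI}_D(w_0)$.

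The two substantive inputs are Theorems \ref{thm:discCostInefBnd} and \ref{thm:finTimeConvToRBM}, which I take as given; among the steps carried out here the only real work is the two moment bounds — a finite-time, uniform in $r$, second moment bound on $\hat Q^r(t)$ (for uniform integrability) and an at most polynomial in $t$ (uniform in $r$) bound on $E(h\cdot\hat Q^r(t))$ (for the domination and tail estimates). Both follow from the structure of the policy: a stocked queue is served faster than it fills (a stocked primary job gets rate $\varrho_j+\delta$, etc.), so the controlled queue length process admits the usual negative-drift comparison estimates, which combined with the Poisson large deviation bounds of the appendix give the required control. The point worth emphasizing is that, in contrast with the ergodic Theorem \ref{thm:thm6.5}, the delicate uniform in time Lyapunov estimates of Section \ref{sec:unifmom} (Proposition \ref{thm:timeDecayV}, Theorem \ref{thm:WmomBnd}) are not needed here: the exponential discount $e^{-\theta t}$ renders the large-$t$ behavior irrelevant once a crude polynomial bound is in hand, so only finite-horizon estimates are required.
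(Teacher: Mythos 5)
Your decomposition and the two main inputs coincide with the paper's own proof: it too disposes of property (II) via Theorem \ref{thm:discCostInefBnd}, obtains finite-horizon convergence of $\hat W^r$ to $\check W^{w_0}$ from Theorem \ref{thm:finTimeConvToRBM} (combined with the functional CLT for $\hat Z^r$ extracted from the proof of Theorem \ref{thm:occMeasTight} -- a step you gloss over but which is needed, since Theorem \ref{thm:finTimeConvToRBM} only compares $\hat W^r$ with the Skorohod image of $\hat Z^r$), and then passes to the limit in the discounted integral by truncating $\clc$ at level $L$ and horizon $T$, which is the same uniform-integrability argument you describe. One small slip: Theorem \ref{thm:discCostInefBnd} compares $h\cdot\hat Q^r$ with $\clc(\tilde W^r)$, $\tilde W^r=KM\hat Q^r$, not with $\clc(\hat W^r)=\clc(KM^r\hat Q^r)$; bridging the two requires \eqref{eq:eqworkldfi} (Lipschitzness of $\clc$, $M^r\to M$) plus a moment bound on $\hat Q^r$, exactly as in the proof of Theorem \ref{thm:thm6.5}. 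This is easily repaired.

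The genuine divergence is your claim that the uniform-in-time estimates of Section \ref{sec:unifmom} are dispensable and that ``crude'' polynomial-in-$t$ bounds follow from per-queue negative drift. The paper does invoke Theorem \ref{thm:WmomBnd} for both the $L$- and $T$-truncation errors, and your proposed substitute rests on a false premise: it is not true that under the policy a stocked queue is served faster than it fills. For a stocked job $\rho(k)\in\cls^m$ all of whose resources have some higher-ranked stocked queue, the first line of \eqref{eq:A2mult} assigns the below-nominal rate $\varrho_{\rho(k)}-2^{k-m-2}\delta$, so that queue has \emph{positive} drift while stocked; a stocked job in $\cls^1$ only gets leftover capacity, which may also be below nominal. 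So there is no per-queue negative-drift comparison. The needed moment bounds must instead be routed through the workload: $\hat Q^r_j\le \mu^r_j\hat W^r_i$ for any $i\in N_j$, and $\hat W^r$ is controlled via the Skorohod sandwich \eqref{eq:skorokIneq} together with the idleness estimate \eqref{eq:squareIdleTimeResultImplication} -- which is precisely the content the paper packages, in time-uniform form, as Propositions \ref{thm:stopTimeExpMomBnd}--\ref{thm:timeIndVbnd} and Theorem \ref{thm:WmomBnd}. It is plausible that a weaker, polynomially growing workload bound obtained this way would suffice under the discount, but until you supply that workload-based estimate (or simply cite Theorem \ref{thm:WmomBnd}, as the paper does), the uniform-integrability and domination steps in both $\mathrm{I}_r$ and $\mathrm{II}_r$ are unsupported.
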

Proofs of the above theorems are given in Section \ref{sec:pfsmainthms}.

\section{Verification of Condition \ref{cond:viableRankExists}.}
\label{sec:example}
In this section we will give two  more transparent sets of criteria
which imply Condition \ref{cond:viableRankExists} and provide some examples
of networks which satisfy them. \ Note that these alternative conditions are
more restrictive and by no means necessary for Condition \ref%
{cond:viableRankExists} to hold. \ We present them because for certain types of
networks they provide an easy way to verify  Condition \ref%
{cond:viableRankExists}. \ We will then provide an example of a simple
network which does not satisfy Condition \ref{cond:viableRankExists} and
consequently does not fall in the family of systems analyzed here.

 Verifying Condition \ref{cond:viableRankExists} and
finding the optimal cost/queue length for a particular workload only
involves jobs in $\mathcal{S}^{s}$ (see Theorem \ref{thm:restOptJobs}). 
For this reason, sufficient conditions below impose conditions only on jobs in $\mathcal{S}^{s}$.
 Finally, for notational convenience, in this section we will denote the job type $j$
 that requires  service from nodes $i_{1},...,i_{n}$ by $\chi _{i_{1},...,i_{n}}$.  Similarly, we will use notation $h_{\chi _{i_{1},...,i_{n}}}$, $\mu _{\chi _{i_{1},...,i_{n}}}$, and $N_{\chi _{i_{1},...,i_{n}}}$ for the corresponding $h_j, \mu_j, N_j$.

\subsection{Some Simple Sufficient Conditions for Condition \protect\ref%
{cond:viableRankExists}}
\label{sec:simsuffcond}
We present below two basic sufficient (but not necessary) conditions for
Condition \ref{cond:viableRankExists} to be satisfied in order to illustrate
 networks that are covered by our approach.

\begin{theorem}
\label{thm:SuffSubsetJobs} If for all $j,k\in \cls^m$ either $%
N_{j}\subset N_{k}$, $N_{k}\subset N_{j}$, or $N_{j}\cap N_{k}=\emptyset $
then Condition \ref{cond:viableRankExists} is satisfied.
\end{theorem}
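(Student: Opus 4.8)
The plan is to read the hypothesis as saying that $\{N_j:j\in\cls^m\}$ is a \emph{laminar} family (any two of these resource sets are nested or disjoint), and then to build a viable ranking greedily, peeling off at each stage a job whose resource set is inclusion-\emph{maximal} among those not yet ranked. Laminarity will force every minimal covering set appearing in the defining inequality \eqref{eq:eq929} to collapse onto that resource set exactly, and the defining identity of secondary jobs will then make \eqref{eq:eq929} hold with equality.

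Concretely, I would construct $\rho:\mathbb{N}_m\to\cls^m$ by recursion: having fixed $F_k=\{\rho(1),\dots,\rho(k-1)\}$ and set $E_k\doteq\cls^m\setminus F_k$, let $\rho(k)$ be any $j'\in E_k$ with $N_{j'}$ inclusion-maximal among $\{N_j:j\in E_k\}$ (possible since $E_k$ is finite and nonempty). I claim that in fact $\rho(k)\in\mathcal{O}^{E_k}$. Since $\mathcal{M}_{F_k}^{E_k\cup\cls^1,j'}\subseteq\mathcal{M}^{E_k\cup\cls^1,j'}$ we have $\mathcal{O}^{E_k}\subseteq\mathcal{O}_{F_k}^{E_k}$, and for $k=1$ one has $E_1=\cls^m$, so $\mathcal{O}^{E_1}=\mathcal{O}^{\cls^m}$, in agreement with the convention in Definition \ref{def:viableRank}. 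Hence the claim shows $\rho$ is a viable ranking and proves Condition \ref{cond:viableRankExists} (if $\cls^m=\emptyset$ there is nothing to do).

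The crux is the structural fact that every $M\in\mathcal{M}^{E_k\cup\cls^1,j'}$ (with $j'=\rho(k)$) satisfies $\sum_{j\in M}K_j=K_{j'}$, i.e. the sets $\{N_j:j\in M\}$ partition $N_{j'}$. First, each such $N_j$ is a proper subset of $N_{j'}$: if $j\in\cls^m$ then $j\in E_k$ (as $F_k\subseteq\cls^m$ is disjoint from $E_k\cup\cls^1$), so by laminarity $N_j$ and $N_{j'}$ are disjoint or nested --- disjointness would let us delete $j$ from $M$ and still cover $N_{j'}$, against minimality; $N_{j'}\subsetneq N_j$ would contradict maximality of $N_{j'}$ in $E_k$ (equality being impossible since the columns of $K$ are distinct); so $N_j\subsetneq N_{j'}$. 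If $j\in\cls^1$, then $N_j=\{\hat i(j)\}$ with $\hat i(j)\in N_{j'}$ forced by minimality, a proper subset as $|N_{j'}|\ge 2$. Second, the sets $\{N_j:j\in M\}$ are pairwise disjoint, since an inclusion between two of them or a resource common to two of them would again make one member of $M$ deletable (or would force two equal columns of $K$). Since moreover $\bigcup_{j\in M}N_j\supseteq N_{j'}$ and each $N_j\subseteq N_{j'}$, the disjoint sets $\{N_j:j\in M\}$ exhaust $N_{j'}$, giving $\sum_{j\in M}K_j=\sum_{j\in M}\mathbf 1_{N_j}=\mathbf 1_{N_{j'}}=K_{j'}$.

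Granting this, fix $M\in\mathcal{M}^{E_k\cup\cls^1,j'}$; then $\sum_{j\in M}K_j-K_{j'}=0$, so \eqref{eq:eq929} for this $M$ reduces to $\mu_{j'}h_{j'}+\clc(0)\le\clc(K_{j'})$. From \eqref{eq:eq942} one has $\clc(0)=0$ and $\clc$ positively homogeneous; and because $j'=\rho(k)\in\cls^m\subseteq\cls^s$, the definition \eqref{eq:primjobdef} together with $e_{j'}\in\mathcal{Q}(g_{j'})$ yields $\clc(g_{j'})=h_{j'}$, hence $\clc(K_{j'})=\clc(\mu_{j'}g_{j'})=\mu_{j'}\clc(g_{j'})=\mu_{j'}h_{j'}$. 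So \eqref{eq:eq929} holds (with equality) for all such $M$, i.e. $\rho(k)\in\mathcal{O}^{E_k}$, and the recursion closes. I expect the only genuine obstacle to be finding this route: it is the choice of $\rho(k)$ \emph{maximal} (rather than minimal) in $E_k$ that makes each minimal cover a partition of $N_{\rho(k)}$, after which the secondary-job identity $\clc(g_j)=h_j$ does all the remaining work; the elementary covering/minimality arguments and the properties of $\clc$ are routine.
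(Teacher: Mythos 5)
Your proof is correct and follows essentially the same route as the paper: you order $\cls^m$ so that inclusion-maximal sets come first (a particular realization of the paper's nested-or-disjoint ordering), show via laminarity and minimality that every relevant minimal cover $M$ partitions $N_{\rho(k)}$ so that $\sum_{j\in M}K_j=K_{\rho(k)}$, and then use $\clc(g_{j'})=h_{j'}$ for secondary jobs to get \eqref{eq:eq929} with equality. The only (harmless) differences are that you verify the slightly stronger statement $\rho(k)\in\mathcal{O}^{E_k}$ rather than just $\rho(k)\in\mathcal{O}^{E_k}_{F_k}$, and you spell out the partition argument in more detail than the paper does.
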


\begin{proof}
We will use the notation from  Definition \ref{def:viableRank}, namely
 $%
F_{k}\doteq \{\rho (1),...,\rho (k-1)\}$ and $E_{k}\doteq \cls^m\setminus
F_{k}$.  Take $\rho$ to be an arbitrary map from $\NN_m$ to $\cls^m$
with the property that for all $j,k \in \NN_m$ with $j<k$, either $N_{\rho
(k)}\subset N_{\rho (j)}$ or $N_{\rho (j)}\cap N_{\rho (k)}=\emptyset $.
Note that our assumption  in the statement of the theorem ensures that such a map always exists.
We now argue that this $\rho$ defines a viable ranking, namely Condition \ref{cond:viableRankExists}
is satisfied.  For this we need to show that for every $k \in \NN_m$, $\rho(k) \in \clo^{E_k}_{F_k}$, namely
for all $M \in \clm_{F_k}^{E_k\bigcup \cls^1, \rho(k)}$
\begin{align}
	\label{eq:eqmurhok}
	\mu_{\rho(k)} h_{\rho(k)} + \clc\left(\sum_{j\in M} K_j - K_{\rho(k)}\right) \le \clc\left(\sum_{j\in M} K_j \right).
\end{align}
Now consider such a $k$ and $M$. Note that $M \subset \{\rho(k+1), \ldots \rho(m)\} \bigcup \cls^1$.
Since $M$ defines a minimal covering, if for $l\neq l'$, $\rho(l), \rho(l') \in M$,  we must have that
$N_{\rho(l)} \cap N_{\rho(l')} = \emptyset$. From minimality of $M$ we also have that,
$(\bigcup_{j\in \cls^1\cap M} N_j) \cap N_{\rho(l)} = \emptyset$ for every $l\ge k+1$ such that $\rho(l)\in M$.
We thus have $\sum_{j\in M}|N_j| = |N_{\rho(k)}|$ which implies that 
\begin{equation}
	\label{eq:eqkjkk}
	\sum_{j\in M} K_j = K_{\rho(k)}.
\end{equation}
Therefore,
\begin{align*}
		\mu_{\rho(k)} h_{\rho(k)} + \clc\left(\sum_{j\in M} K_j - K_{\rho(k)}\right) = \mu_{\rho(k)} h_{\rho(k)} = \mu_{\rho(k)} \clc(g_{\rho(k)}) = \clc(\mu_{\rho(k)}g_{\rho(k)})
		= \clc(K_{\rho(k)}) = \clc(\sum_{j\in M} K_j),
\end{align*}
where the first and last equality use \eqref{eq:eqkjkk} and the second equality uses the fact that $\rho(k)$ is a secondary job.
This proves \eqref{eq:eqmurhok} (in fact with equality) and completes the proof of the theorem.

\end{proof}

\begin{remark}
	\label{rem:firsthm}
	One simple consequence of Theorem \ref{thm:SuffSubsetJobs} is that any network
where $\cls^m=\emptyset $ (meaning $\cls^{s}=\cls^1$) satisfies Condition \ref{cond:viableRankExists}. We note that condition $\cls^m=\emptyset $ does not rule out existence of 
jobs that require service from multiple nodes. Here is one elementary example to illustrate this point. Suppose $I=3$
and $J=6$ with $\mu_j=1$ for all $j$. Also let $h_{\chi_{1}} = h_{\chi_{2}}= h_{\chi_{2}}=1$,
$h_{\chi_{1,2,3}} = h_{\chi_{1,2}} = h_{\chi_{2,3}} =4$. It is easy to check that for this example $\cls^m=\emptyset$.

Another  consequence of Theorem \ref{thm:SuffSubsetJobs} is that any network where $%
\cls^m$ only contains one job (for instance a job which impacts all nodes)
satisfies Condition \ref{cond:viableRankExists}. \ In particular any $2$
node network  satisfies Condition \ref{cond:viableRankExists}.
\ Another basic network covered by Theorem \ref{thm:SuffSubsetJobs} is one
with $2n$ jobs where $\cls^m=\{\chi _{1,2,...,2n},\chi _{1,2},\chi
_{3,4},\chi _{2n-1,2n}\}$. Many other examples can be given. In particular 2LLN and 3LLN networks of
\cite{harmandhayan} satisfy the sufficient condition in Theorem \ref{thm:SuffSubsetJobs} .
\end{remark}
The following theorem provides another
sufficient condition for a network to satisfy Condition \ref%
{cond:viableRankExists}. Recall that $\mathcal{O}^{\cls^m}$ is the collection of all $j' \in \cls^m$ that satisfy \eqref{eq:eq929} for all  $M\subset \cls^s\setminus \{j'\}$ that are minimal covering sets for $j'$. 
\begin{theorem}
\label{thm:SuffRemoveJobCond}If for all $j\in \left. \cls^m\right\backslash 
\mathcal{O}^{\cls^m}$ and $M\in \mathcal{M}_{\mathcal{O}^{\cls^m}}^{\{ \left.
\cls^m\right\backslash \mathcal{O}^{\cls^m} \} \bigcup \cls^1,j}$ we have $\sum_{l\in
M}|N_{l}|=|N_{j}|$ then Condition \ref{cond:viableRankExists} is satisfied.
\end{theorem}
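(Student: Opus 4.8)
The plan is to construct a viable ranking $\rho:\NN_m\to\cls^m$ by first listing the elements of $\clo^{\cls^m}$ (in an arbitrary order), and then listing the remaining elements of $\cls^m\setminus\clo^{\cls^m}$ (again in an arbitrary order). That is, for some $p\le m$ we set $\{\rho(1),\dots,\rho(p)\}=\clo^{\cls^m}$ and $\{\rho(p+1),\dots,\rho(m)\}=\cls^m\setminus\clo^{\cls^m}$. I must verify the defining property of Definition~\ref{def:viableRank}: for every $k\in\NN_m$, $\rho(k)\in\clo^{E_k}_{F_k}$ where $F_k=\{\rho(1),\dots,\rho(k-1)\}$ and $E_k=\cls^m\setminus F_k$.

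First I would handle the case $k\le p$, i.e.\ $\rho(k)\in\clo^{\cls^m}$. Here $F_k\subseteq\clo^{\cls^m}$ and $E_k\supseteq\cls^m\setminus\clo^{\cls^m}$, but in fact $E_k\bigcup\cls^1\subseteq\cls^s$, so every $M\in\clm^{E_k\bigcup\cls^1,\rho(k)}_{F_k}$ is in particular a minimal covering set for $\rho(k)$ contained in $\cls^s\setminus\{\rho(k)\}$; by definition of $\clo^{\cls^m}$, inequality \eqref{eq:eq929} holds for all such $M$, hence certainly for those additionally in $\clm^{\cdots}_{F_k}\subseteq\clm^{E_k\bigcup\cls^1,\rho(k)}$. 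So $\rho(k)\in\clo^{E_k}_{F_k}$ for every $k\le p$, regardless of the order chosen within $\clo^{\cls^m}$.

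Next I would treat $k>p$, i.e.\ $j\doteq\rho(k)\in\cls^m\setminus\clo^{\cls^m}$. Now $F_k\supseteq\clo^{\cls^m}$, so $\clm^{E_k\bigcup\cls^1,j}_{F_k}\subseteq\clm^{\{\cls^m\setminus\clo^{\cls^m}\}\bigcup\cls^1,j}_{\clo^{\cls^m}}$: indeed $E_k\bigcup\cls^1\subseteq(\cls^m\setminus\clo^{\cls^m})\bigcup\cls^1$, and the $F_k$-avoidance constraint is only more restrictive than the $\clo^{\cls^m}$-avoidance constraint since $\clo^{\cls^m}\subseteq F_k$. So it suffices to check \eqref{eq:eq929} for $M\in\clm^{\{\cls^m\setminus\clo^{\cls^m}\}\bigcup\cls^1,j}_{\clo^{\cls^m}}$. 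For any such $M$, the hypothesis of the theorem gives $\sum_{l\in M}|N_l|=|N_j|$; combined with the fact that $M$ covers $N_j$ (and entries of $K$ are $0$ or $1$), this forces $\sum_{l\in M}K_l=K_j$, exactly as in \eqref{eq:eqkjkk} of the proof of Theorem~\ref{thm:SuffSubsetJobs}. Then the identical chain of equalities as in that proof — using that $j$ is a secondary job so $\clc(g_j)=h_j$, that $\mu_j g_j = K_j$, and that $\clc$ is positively homogeneous — shows $\mu_j h_j + \clc\bigl(\sum_{l\in M}K_l - K_j\bigr) = \clc(K_j) = \clc\bigl(\sum_{l\in M}K_l\bigr)$, so \eqref{eq:eq929} holds (with equality). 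Hence $\rho(k)\in\clo^{E_k}_{F_k}$ for $k>p$ as well, and $\rho$ is a viable ranking.

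The only subtle points are the two set-inclusion arguments relating the relevant collections of minimal covering sets (the monotonicity of $\clm^{E,j}_F$ in $E$ and $F$), and the observation that positive homogeneity of $\clc$ — which follows immediately from its definition \eqref{eq:eq942} — lets the computation from Theorem~\ref{thm:SuffSubsetJobs} be reused verbatim. I expect the main (though still routine) obstacle to be bookkeeping: carefully confirming that $M\subseteq\{\rho(k+1),\dots,\rho(m)\}\bigcup\cls^1$ and that no element of $F_k$ besides possibly those not yet covered interferes, so that the theorem's hypothesis applies to exactly the sets $M$ that arise. No large-deviation or probabilistic input is needed; this is a purely combinatorial/linear-algebraic verification.
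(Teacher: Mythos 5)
Your proposal is correct and follows essentially the same route as the paper: the same ranking (all of $\mathcal{O}^{\cls^m}$ first in arbitrary order, then $\cls^m\setminus\mathcal{O}^{\cls^m}$), the same monotonicity inclusions $\mathcal{M}_{F_{k}}^{E_{k}\bigcup \cls^1,\rho (k)}\subset \mathcal{M}_{\mathcal{O}^{\cls^m}}^{ \{\cls^m\backslash \mathcal{O}^{\cls^m}\}\bigcup \cls^1,\rho (k)}$, and the same reduction of the hypothesis $\sum_{l\in M}|N_l|=|N_j|$ to \eqref{eq:eqkjkk} followed by the equality chain from the proof of Theorem \ref{thm:SuffSubsetJobs}.
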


\begin{proof}
Consider the following ranking of jobs in $\cls^m$.
Assign the first $\bar m \doteq \left\vert \mathcal{O}^{\cls^m}\right\vert $
ranks arbitrarily to jobs in $\mathcal{O}^{\cls^m}$ and the remaining $m-\bar m$
ranks arbitrarily to jobs in $\cls^m\left\backslash \mathcal{O}%
^{\cls^m}\right. $. In particular  $\rho (k)\in \mathcal{O}^{\cls^m}$ for all $k\in
\{1,...,\bar m \}$ and $\rho (k)\in
\cls^m\left\backslash \mathcal{O}^{\cls^m}\right. $ for all $k\in \{\bar m +1,...,m \}$. \
Note that, for $k\in \{1,...,\bar m \}$
we have $\mathcal{O}^{\cls^m} \subset \mathcal{O}_{F_{k}}^{E_{k}}$ which says that $\rho
(k)\in \mathcal{O}_{F_{k}}^{E_{k}}$ for all $k\in \{1,...,\bar m \}$. \ Let now
$k\in \{\bar m +1,...,m \}$ be arbitrary and
note that $\mathcal{M}_{F_{k}}^{E_{k}\bigcup \cls^1,\rho (k)}\subset \mathcal{M}_{\mathcal{O}^{\cls^m}}^{ \{\cls^m\backslash \mathcal{O}^{\cls^m}\}\bigcup \cls^1,\rho (k)}$ so
for all $M\in \mathcal{M}_{F_{k}}^{E_{k}\bigcup \cls^1,\rho (k)}$ we have $\sum_{l\in
M}|N_{l}|=|N_{\rho (k)}|$. \ This implies that \eqref{eq:eqkjkk} is satisfied which as in the proof of Theorem \ref{thm:SuffSubsetJobs} shows that 
 (\ref{eq:eq929}) is satisfied for all $M\in \mathcal{M}%
_{F_{k}}^{E_{k}\bigcup \cls^1,\rho (k)}$ and therefore $\rho (k)\in \mathcal{O}_{F_{k}}^{E_{k}}$ for all $k\in \{\bar m +1,...,m \}$. Thus $\rho$ defines a viable ranking and so Condition \ref{cond:viableRankExists}
is satisfied.
\end{proof}
\begin{remark}
\label{rem:secthm}
 The above theorem provides an easy way to check that Condition \ref{cond:viableRankExists} is satisfied. \ For instance, for $3$ node networks
if $\chi _{1,2,3}\in \cls^m$, from Theorem \ref{thm:SuffRemoveJobCond}, verification of Condition \ref{cond:viableRankExists}
reduces to proving that $\chi _{1,2,3}\in \mathcal{O}^{\cls^m}$. \ This is due to the fact that
for a $3$ node network $\cls^m\subset \{\chi _{1,2,3},\chi _{1,2},\chi
_{1,3},\chi _{2,3}\}$, and consequently for any job $j\in \left.
\cls^m\right\backslash \{\chi _{1,2,3}\}$ and $M\in \mathcal{M}_{\mathcal{\{}%
\chi _{1,2,3}\}}^{\left. \cls^m\right\backslash \mathcal{\{}\chi
_{1,2,3}\}\bigcup \cls^1,j} $ we must have $M\cap \cls^m=\emptyset $ which says that $\sum_{l\in M}|N_{l}|=|N_{j}|$. \ 
In particular the C3LN in \cite{harmandhayan} satisfies the sufficient condition in Theorem \ref{thm:SuffRemoveJobCond}
with one viable ranking given as $\rho(1) = \chi _{1,2,3}$, $\rho(2) = \chi _{1,2}$, $\rho(3) = \chi _{2,3}$.

Similarly, for a $4$ node
network with $\cls^m\subset \{\chi _{1,2,3,4},\chi _{1,2,3},\chi
_{1,2,4},\chi _{1,3,4},\chi _{2,3,4}\}$, from  Theorem \ref{thm:SuffRemoveJobCond}, verification of Condition \ref{cond:viableRankExists}  reduces to proving that $\chi_{1,2,3,4}\in \mathcal{O}^{\cls^m}$. 
Many other examples can be given. In general  Theorem \ref{thm:SuffRemoveJobCond} can  be useful for verifying Condition \ref{cond:viableRankExists} for networks with high
number of nodes when  $\cls^m$ has few elements.
In particular the negative example in Section 13 of \cite{harmandhayan}  satisfies the sufficient condition in the above theorem. In that example $J=9$, $I=6$ and $\cls^m = \{\chi_{1,2,3}, \chi _{4,5,6}, \chi_{3,6}\}$. It is easy to see
that with the values of holding costs and job sizes in the above paper 
$\mathcal{O}^{\cls^m} = \{\chi_{1,2,3}, \chi _{4,5,6}\}$
and 
$\cls^m\left\backslash \mathcal{O}^{\cls^m}\right.
= \{\chi_{3,6}\}$  and so the only $M \in \mathcal{M}_{\mathcal{O}^{\cls^m}}^{\{ \left.
\cls^m\right\backslash \mathcal{O}^{\cls^m} \} \bigcup \cls^1,j}$ for $j = \chi_{3,6}$
is the set $\{\chi_{3}, \chi_{6}\}$ which clearly satisfies the property $\sum_{l\in
M}|N_{l}|=|N_{j}|$.
\end{remark}
It should be noted that Theorems \ref{thm:SuffSubsetJobs} and \ref{thm:SuffRemoveJobCond} are much more restrictive than necessary, meaning that the class of networks which satisfy Condition \ref{cond:viableRankExists} is much wider than those covered by Theorem \ref{thm:SuffSubsetJobs} or Theorem \ref{thm:SuffRemoveJobCond}.  To illustrate this we provide a simple example of one such network. 
\begin{example}
	\label{exam:out}
 Let $I=4$, $J=7$, and 
\begin{align*}\mu _{\chi _{1}}&=\mu
_{\chi _{2}}=\mu _{\chi _{3}}=\mu _{\chi _{4}}=\mu _{\chi _{1,2}}=\mu _{\chi _{2,3}}=\mu _{\chi _{1,2,3,4}}=1\\
h_{\chi _{1}}&=h_{\chi _{2}}=h_{\chi _{3}}=h_{\chi _{4}}=4, h_{\chi _{1,2}}=6, h_{\chi _{2,3}}=7, h_{\chi _{1,2,3,4}}=13.
\end{align*}
It is easy to verify that $\cls^m = \{\chi _{1,2},\chi _{2,3}, \chi _{1,2,3,4}\}$ and there is exactly one viable ranking as in Definition \ref{def:viableRank} which is $\rho(1)=\chi_{1,2,3,4}, \rho(2)=\chi _{1,2}, \rho(3)=\chi _{2,3}$ (so Condition \ref{cond:viableRankExists} is satisfied).  In particular this implies $\mathcal{O}^{\cls^m}=\{\chi _{1,2,3,4}\}$.  However, note that $N_{\chi _{1,2}}\not\subset N_{\chi _{2,3}}$, $N_{\chi _{2,3}}\not\subset N_{\chi _{1,2}}$, and $N_{\chi _{1,2}}\cap N_{\chi _{1,2}}\neq \emptyset$ so this network does not satisfy the conditions of Theorem \ref{thm:SuffSubsetJobs}.  In addition, $\chi _{2,3} \in \left. \cls^m\right\backslash 
\mathcal{O}^{\cls^m}$ and $\{ \chi _{1,2}, \chi _{3} \}\in \mathcal{M}_{\mathcal{O}^{\cls^m}}^{\{ \left.
\cls^m\right\backslash \mathcal{O}^{\cls^m} \} \bigcup \cls^1, \chi _{2,3}}$ but $|N_{\chi _{1,2}}|+|N_{\chi _{3}}|>|N_{\chi _{2,3}}|$ so the conditions of Theorem \ref{thm:SuffRemoveJobCond} are not satisfied either.  Consequently this simple network  satisfies Condition \ref{cond:viableRankExists} although it is outside the scope of Theorems  \ref{thm:SuffSubsetJobs} and \ref{thm:SuffRemoveJobCond}.
\end{example}
As seen in the last two theorems, Condition 
\ref{cond:viableRankExists} holds for a broad range of networks. However there are many interesting cases that are not covered by this condition.  We now illustrate this point through an example. In this example $I=3$ and $J=6$
and  $\clc$ is a non decreasing function, however a viable ranking does not exist and therefore techniques of this paper do not apply.
\begin{example}{\protect (Example That Doesn't Satisfy Condition \protect
\ref{cond:viableRankExists})}
\label{exam:outout}
Suppose that
\begin{align*}\mu _{\chi _{1}}&=\mu
_{\chi _{2}}=\mu _{\chi _{3}}=\mu _{\chi _{1,2}}=\mu _{\chi _{2,3}}=\mu
_{\chi _{1,2,3}}=1\\
h_{\chi _{1}}&=h_{\chi _{2}}=h_{\chi _{3}}=5, h_{\chi _{1,2}}=7, h_{\chi _{2,3}}=8, h_{\chi _{1,2,3}}=11.
\end{align*}
It is easy to check that in this case $\cls^m = \{\chi _{1,2},\chi _{2,3}, \chi _{1,2,3}\}$.
  This
network does not satisfy Condition \ref{cond:viableRankExists}\ because $%
\mathcal{O}^{\left\{ \chi _{1,2},\chi _{2,3},\chi _{1,2,3}\right\}
}=\varnothing $, since (\ref{eq:eq929}) does not hold for $\chi _{1,2}$%
, $\chi _{2,3}$, or $\chi _{1,2,3}$. We leave the verification of this fact to the reader.
Consequently a viable ranking cannot exist.

 {\em Workload cost and its minimizer.}
The workload $\clc$ for this example can be given explicitly as follows. Let for $w \in \mathbb{R}_+^3$, $w_{12} \doteq w_1 \wedge w_2$, $w_{23} \doteq w_2 \wedge w_3$,
$w_{123} \doteq w_1 \wedge w_2 \wedge w_3$.

For $w \in \mathbb{R}_+^3$
\begin{equation*}
\clc(w)\doteq \left\{ 
\begin{array}{cc}
5w_{2}+2w_{1}+3w_{3}, & \text{ if } w_{2}\geq w_{1}+w_{3} \\ 
 3w_{1}+4w_{2}+4w_{3}, & \text{ if } w_{1}+w_{3}>w_{2}\geq w_{1} \vee w_{3}\\
5(w_{1}+w_{2}+w_{3})+w_{123}-3w_{12}-2w_{23}, & \text{ if } w_{1} \vee w_{3}>w_{2}\\.%
\end{array}
\right. 
\end{equation*}
The optimal $q^*(w)$ in $\mathcal{Q}(w)$ is given as follows.
Let $q^* = (q_{\chi _{1}}^{\ast}, q_{\chi _{2}}^{\ast}, q_{\chi _{3}}^{\ast}, q_{\chi _{1,2}}^{\ast}, q_{\chi _{2,3}}^{\ast}, q_{\chi _{1,2,3}}^{\ast})$. Then
{\small \begin{align*}
&q^*(w)= \\
&\left\{ 
\begin{array}{cc}
(0,\,w_{2}-w_{1}-w_{3},\,0,\,w_{1},\,w_{3},\,0), & \text{ if } w_{2}\geq w_{1}+w_{3} \\ 
 ( 0,\,0,\,0,\,w_{2}-w_{3},\,w_{2}-w_{1},\,w_{1}+w_{3}-w_{2}), & \text{ if } w_{1}+w_{3}>w_{2}\geq w_{1} \vee w_{3}\\
(w_{1}-w_{12},\,w_{2}+w_{123}- w_{12}-w_{23},\, w_{3}-
w_{23},\, w_{12}- w_{123},\, 
w_{23}-w_{123},\, w_{123}), & \text{ if } w_{1} \vee w_{3}>w_{2}\\.%
\end{array}
\right. 
\end{align*}}
Note that $\clc$ and $q^*$ are continuous functions and $\clc$ is nondecreasing. In particular the HGI performance in this case is also the optimal cost in the associated BCP. However, as noted above, there does not exist a viable ranking for this example.  Thus the techniques developed in the current paper do not apply to this example.
\end{example}

\section{Some Properties of the Workload Cost Function}
\label{sec:secworkcost}

The following result on a continuous selection of a minimizer is well known
(cf. Theorem 2 in \cite{boh1} or Proposition 8.1 in \cite{harmandhayan}).

\begin{theorem}
\label{thm:costAchieved} There is a continuous map $\bar q: \mathbb{R}_+^I
\to \mathbb{R}_+^J$ such that for every $w \in \mathbb{R}_+^I$, $\bar q(w)
\in \mathcal{Q}(w)$ and 
\begin{equation*}
h \cdot \bar q(w) = \clc(w).
\end{equation*}
\end{theorem}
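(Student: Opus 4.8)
The statement to prove is Theorem \ref{thm:costAchieved}: the existence of a continuous selection $\bar q:\RR_+^I\to\RR_+^J$ with $\bar q(w)\in\clq(w)$ and $h\cdot\bar q(w)=\clc(w)$. I would set this up as a parametric linear program. For each fixed $w\in\RR_+^I$ we are minimizing the linear functional $q\mapsto h\cdot q$ over the polytope $\clq(w)=\{q\in\RR_+^J:Gq=w\}$, which (as noted right after the definition of $\clq$) is compact and, by the local traffic condition, nonempty. So the minimizer set $\clq^*(w)\doteq\{q\in\clq(w):h\cdot q=\clc(w)\}$ is a nonempty compact set for every $w$, and the infimum in \eqref{eq:eq942} is attained. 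The whole content of the theorem is thus the \emph{continuous} dependence of a choice of minimizer on $w$.

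**Key steps, in order.** First, I would record the continuity of $\clc$ itself: $w\mapsto\clc(w)$ is the value function of the LP and is continuous (indeed Lipschitz on $\RR_+^I$) — this follows from standard LP perturbation theory (the polyhedron $\clq(w)$ varies Hausdorff-continuously because of the local traffic condition guaranteeing nonemptiness, and $h\cdot q$ is fixed), or one can cite it from \cite{harmandhayan} as the excerpt already does for related facts. Second, I would observe that the set-valued map $w\mapsto\clq^*(w)$ has closed graph (if $w_n\to w$, $q_n\in\clq^*(w_n)$, $q_n\to q$, then $Gq=w$ by continuity of $G$ and $h\cdot q=\lim h\cdot q_n=\lim\clc(w_n)=\clc(w)$, so $q\in\clq^*(w)$) and is locally bounded (since $\clq(w)$ stays in a bounded set for $w$ in a bounded set — again using compactness and the explicit form of $G$). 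Third — the real work — I would invoke a continuous selection theorem. The cleanest route is Theorem 2 of \cite{boh1} (cited in the excerpt precisely for this purpose), which gives a continuous selection from the solution set of a parametric convex/linear program; alternatively one can argue via Michael's selection theorem after checking lower hemicontinuity of a suitable lower-semicontinuous convex-valued map, or one can use the classical device of perturbing the objective to make the minimizer unique: replace $h\cdot q$ by $h\cdot q+\eps\|q\|^2$, note the $\eps$-minimizer $\bar q_\eps(w)$ is unique hence continuous by Berge's maximum theorem (the argmax/argmin of a continuous function over a continuously varying compact set, when the minimizer is unique, is continuous), and then pass to the limit $\eps\downarrow0$ — but the limit may not be continuous, so this needs the stronger selection result anyway. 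I would therefore just cite \cite{boh1} or Proposition 8.1 of \cite{harmandhayan} directly, which is exactly what the sentence preceding the theorem does.

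**Main obstacle.** The only nontrivial point is the continuous selection: an argmin set-valued map with closed graph and convex compact values need not admit a continuous selection in general (lower hemicontinuity can fail), so one genuinely needs the special structure of linear/convex parametric programming. The right statement to lean on is the one in \cite{boh1}: for a parametric linear program with the feasible region depending affinely on the parameter and remaining nonempty and bounded, there is a continuous selection of optimal solutions. All the hypotheses of that theorem are met here — affine dependence $\clq(w)=\{q\ge0:Gq=w\}$, nonemptiness from Condition \ref{cond:loctrafcond}, boundedness from the assumption on $K$ — so the proof reduces to verifying these hypotheses and quoting the result. Since the excerpt explicitly flags this as ``well known (cf. Theorem 2 in \cite{boh1} or Proposition 8.1 in \cite{harmandhayan})'', I would keep the proof short: state that $\clq(w)$ is nonempty (local traffic), compact (assumption on $K$), depends affinely on $w$, and that $h\cdot q$ is linear, hence the cited results apply verbatim to yield the desired $\bar q$.
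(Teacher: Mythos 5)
Your proposal is correct and follows essentially the same route as the paper, which gives no independent argument but simply verifies the setup (the parametric LP over $\mathcal{Q}(w)$, nonempty by Condition \ref{cond:loctrafcond} and compact by the assumption on $K$) and cites Theorem 2 of \cite{boh1} or Proposition 8.1 of \cite{harmandhayan} for the continuous selection. Your added remarks on why a closed-graph argmin map alone is insufficient and why the quadratic-perturbation/Berge route does not immediately yield continuity in the limit are accurate, but the paper treats the result as known and does not reprove it.
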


Define for a given workload vector $w \in \mathbb{R}_+^I$ the set $\mathcal{Q%
}^s(w)$ consisting of all queue-length vectors that produce the workload $w$
and have zero coordinates for queue-lengths corresponding to primary jobs,
namely, 
\begin{equation*}
\mathcal{Q}^{s}(w)=\left\{ q\in \mathcal{Q}(w): q_{j}=0\text{ for all }j\in 
\cls^p\right\} \text{.}
\end{equation*}

The following theorem shows that in computing the infimum in \eqref{eq:eq942}
we can replace $\mathcal{Q}(w)$ with $\mathcal{Q}^{s}(w)$.

\begin{theorem}
\label{thm:restOptJobs} For all $w\in \mathbb{R}_+^I$, $\bar q(w)
\in \mathcal{Q}^{s}(w)$. In particular, 
\begin{equation*}
\clc(w)=\inf_{q\in \mathcal{Q}^{s}(w)}\left\{ h\cdot q\right\} \text{.}
\end{equation*}
\end{theorem}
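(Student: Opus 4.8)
The plan is to show that any minimizer of $h\cdot q$ over $\mathcal{Q}(w)$ can be modified, without increasing the cost, so that all primary-job coordinates vanish; since the continuous selection $\bar q(w)$ of Theorem \ref{thm:costAchieved} is a minimizer, it will then suffice to show that \emph{every} minimizer already lies in $\mathcal{Q}^s(w)$, or alternatively to produce a competing element of $\mathcal{Q}^s(w)$ with cost $\le \clc(w)$. I would pursue the latter, constructive route, since it directly gives the displayed identity $\clc(w)=\inf_{q\in\mathcal{Q}^s(w)}\{h\cdot q\}$ and then one invokes compactness of $\mathcal{Q}^s(w)$ and continuity to upgrade to $\bar q(w)\in \mathcal{Q}^s(w)$.

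First I would fix $w$ and let $q\in\mathcal{Q}(w)$ be arbitrary with some primary coordinate $q_{j_0}>0$, $j_0\in\cls^p$. Recall that $j_0\in\cls^p$ means $\clc(g_{j_0})<h_{j_0}$, i.e.\ there is a queue-length vector $p\in\mathcal{Q}(g_{j_0})$ (so $Gp=g_{j_0}=Ge_{j_0}$) with $h\cdot p<h_{j_0}=h\cdot e_{j_0}$. The key observation is that for any $\epsilon\in(0,q_{j_0}]$ the vector $q' \doteq q-\epsilon e_{j_0}+\epsilon p$ satisfies $Gq'=Gq-\epsilon g_{j_0}+\epsilon g_{j_0}=w$, and $q'\ge 0$ provided $\epsilon$ is small (the only coordinate pushed down is $q_{j_0}$, by at most $\epsilon\le q_{j_0}$, and $p\ge0$). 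Its cost is $h\cdot q' = h\cdot q - \epsilon(h_{j_0}-h\cdot p) < h\cdot q$. Taking $\epsilon=q_{j_0}$ zeroes out the $j_0$ coordinate of the swapped-in part. One then repeats this over all primary job types. The only subtlety is that swapping in $p$ may itself have positive primary coordinates, so a naive one-job-at-a-time argument needs a termination/monotonicity argument.

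To handle termination cleanly I would instead phrase it as a single optimization argument: let $q^\star$ be \emph{any} minimizer of $h\cdot q$ over the compact set $\mathcal{Q}(w)$ (exists by compactness, continuity). Suppose for contradiction $q^\star_{j_0}>0$ for some $j_0\in\cls^p$. Apply the swap above with $\epsilon=q^\star_{j_0}$: the resulting $q'\in\mathcal{Q}(w)$ has $h\cdot q' = h\cdot q^\star - q^\star_{j_0}(h_{j_0}-h\cdot p) < h\cdot q^\star$, contradicting minimality. Hence every minimizer lies in $\mathcal{Q}^s(w)$; in particular $\bar q(w)\in\mathcal{Q}^s(w)$, and since $\mathcal{Q}^s(w)\subseteq\mathcal{Q}(w)$ the infima over the two sets coincide and equal $\clc(w)$. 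I would also note at the start that $\mathcal{Q}^s(w)$ is nonempty: this follows because it contains $\bar q(w)$ once the above is established, or directly because $\mathcal{Q}(w)$ nonempty (local traffic condition) plus the swap procedure produces a member with all primary coordinates zero.

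The main obstacle, such as it is, is purely bookkeeping: verifying that the swapped vector $q'$ stays in the nonnegative orthant and still maps to $w$ under $G$, and making sure the argument is robust to $p$ having its own primary-job mass (resolved above by arguing via an arbitrary minimizer rather than iterating). There is no analytic difficulty — it is a one-line convexity/exchange argument driven entirely by the definition \eqref{eq:primjobdef} of $\cls^p$ together with $Gp = g_{j_0}$. I would present it in that order: recall compactness and existence of a minimizer, state the exchange construction, derive the strict cost decrease, conclude by contradiction, and finally read off the identity for $\clc$ and the claim $\bar q(w)\in\mathcal{Q}^s(w)$.
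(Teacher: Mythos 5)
Your proposal is correct and is essentially the paper's own argument: the paper also applies the exchange $\tilde q = \bar q(w) - \bar q_k(w)e_k + \bar q_k(w)\,\bar q(g_k)$ to the minimizer $\bar q(w)$, checks $G\tilde q = w$, and derives a strict cost decrease contradicting minimality, using $\clc(g_k) < h_k$ from the definition of $\cls^p$. The only cosmetic difference is that you allow an arbitrary cheap $p \in \mathcal{Q}(g_{j_0})$ and phrase the conclusion for every minimizer, whereas the paper works directly with the continuous selection $\bar q$.
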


\begin{proof}
Fix $w\in \mathbb{R}_+^I$. With $\bar q$ as in Theorem \ref{thm:costAchieved}%
, we have $\clc(w)=h \cdot\bar{q}(w)$. Assume $\bar{q}_{k}(w)>0$ for some $k\in 
\cls^p$. Then with $q^* \doteq \bar q(g_k)$, we have from the
definition of $\cls^p$ that 
\begin{equation}
	\label{eq:hcqstar}
h\cdot q^* = \clc(g_k) < h_k.
\end{equation}
Define $\tilde q \in \mathbb{R}_+^J$ by $\tilde{q}_{k}=\bar{q}_{k}(w){q}%
^*_{k}$ and $\tilde{q}_{j}=\bar{q}_{j}(w)+\bar{q}_{k}(w){q}^*_{j}$ for $%
j\neq k$. \ Then for $i \in \mathbb{N}_I$, noting that 
\begin{equation*}
\sum_{j=1}^J G_{ij} q^*_j = \sum_{j=1}^J G_{ij} \bar q_j(g_k) = (g_k)_i =
G_{ik},
\end{equation*}
we have 
\begin{align*}
w = \sum_{j\neq k}G_{ij}\bar{q}_{j}(w) +G_{ik}\bar{q}_{k}(w) 
=\sum_{j\neq k}G_{ij}\bar{q}_{j}(w)+\left(\sum_{j=1}^{J}G_{ij}{q}%
^*_{j}\right)\bar{q}_{k}(w) = G\tilde q 
\end{align*}%
and consequently 
\begin{align*}
\clc(w) = \sum_{j\neq k}h_{j}\bar{q}_{j}(w)+h_{k}\bar{q}_{k}(w) 
 >\sum_{j\neq k}h_{j}\bar{q}_{j}(w)+\bar{q}_{k}(w)\sum_{j=1}^{J}h_{j}{q}%
^*_{j} 
= h\cdot\tilde{q} \ge \clc(w)
\end{align*}%
where the inequality in the above display is from \eqref{eq:hcqstar} and from the fact that, by assumption, $\bar{q}_{k}(w)>0$.
Thus we have a contradiction and therefore $\bar{q}_{k}(w)=0$ for all $k\in 
\cls^p$ which completes the proof.
\end{proof}

 Hereafter we fix a viable
ranking $\rho$. As was noted in Theorem \ref{thm:costAchieved}, there exists
a continuous selection of the minimizer in \eqref{eq:eq942}. We now show
that using the ranking $\rho$, one can give a rather explicit representation
for such a selection function.

Given $w \in \mathbb{R}_+^I$, define $q^*(w) \in \mathbb{R}_+^J$ as follows.
Set $q^*_j(w) = 0$ for $j \in \cls^p$. Define,  
\begin{equation}
q_{\rho(1)}^{*}(w)=\min_{i\in N_{\rho(1)}}\{w_{i}\}\mu _{\rho(1)}\text{.}
\label{eq:eq834}
\end{equation}%
For $k\in \{2,\ldots, m \}$ define, recursively, 
\begin{equation}  \label{eq:eq834b}
{q}_{\rho(k)}^{*}(w)=\min_{i\in
N_{\rho(k)}}\left\{w_{i}-\sum_{l=1}^{k-1}G_{i,\rho(l)} {q}%
_{\rho(l)}^{*}(w)\right\} \mu_{\rho(k)}\text{.}
\end{equation}%
Finally, for $j\in \cls^1$ define 
\begin{equation}  \label{eq:eq753}
{q}_{j}^{*}(w)=\left\{ w_{\hat i(j)}-\sum_{k=1}^{m }G_{\hat i(j),\rho(k)} {q}%
_{\rho(k)}^{*}(w)\right\} \mu _{j}\text{,}
\end{equation}
where recall that $\hat i(j)$ is the unique resource processing the job $j$.
By a recursive argument it is easy to check that $q^*(w)$ defined above is a
non-negative vector in $\mathbb{R}^J$. The following theorem shows that $q^*$
defined above is a continuous selection of the minimizer in \eqref{eq:eq942}.

\begin{theorem}
\label{thm:restCostJobOrd}For any $w\in \mathbb{R}_+^I$, $q^*(w) \in 
\mathcal{Q}^{s}(w)$ and 
\begin{equation}  \label{eq:eq755}
\clc(w)= h\cdot {q}^{*}(w) =\sum_{k=1}^{m }h_{\rho(k)}{q}_{\rho(k)}^{*}(w)+\sum_{j%
\in \cls^1}h_{j}{q}_{j}^{*}(w).
\end{equation}
\end{theorem}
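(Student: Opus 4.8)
The plan is to verify the two claims — membership $q^*(w) \in \mathcal{Q}^s(w)$ and the cost identity \eqref{eq:eq755} — separately, with the membership part being essentially a bookkeeping exercise and the cost identity being the place where the viable ranking hypothesis (Condition \ref{cond:viableRankExists}) does the real work. First I would check that $q^*(w)$ is a well-defined nonnegative vector (this is asserted in the excerpt just before the theorem, via a recursive argument) and that $Gq^*(w) = w$. The latter is a telescoping computation: by construction of \eqref{eq:eq834}–\eqref{eq:eq753}, for each resource $i$ the quantity $w_i - \sum_{k} G_{i,\rho(k)} q^*_{\rho(k)}(w)$ is precisely what gets absorbed by the local-traffic job $\check j(i) \in \cls^1$ through \eqref{eq:eq753}, so that $\sum_j G_{ij} q^*_j(w) = w_i$ for every $i$; the primary coordinates contribute nothing since $q^*_j(w)=0$ for $j \in \cls^p$. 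Combined with $q^*_j(w) = 0$ on $\cls^p$, this gives $q^*(w) \in \mathcal{Q}^s(w)$, and hence $\clc(w) \le h \cdot q^*(w)$ automatically.

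For the reverse inequality $h \cdot q^*(w) \le \clc(w)$, the idea is to exploit Theorem \ref{thm:restOptJobs}: the infimum defining $\clc(w)$ can be taken over $\mathcal{Q}^s(w)$, so it suffices to show $h \cdot q^*(w) \le h \cdot q$ for every $q \in \mathcal{Q}^s(w)$. I would prove this by the classical exchange/greedy argument: starting from an arbitrary $q \in \mathcal{Q}^s(w)$, process the multi-resource secondary jobs in the order $\rho(m), \rho(m-1), \ldots, \rho(1)$ dictated by the ranking, and at each stage transfer mass so as to bring the $\rho(k)$-coordinate up to its maximal feasible value $q^*_{\rho(k)}(w)$ given the higher-ranked allocations already fixed. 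The key point is that increasing $q_{\rho(k)}$ by a unit forces (via the workload constraint $Gq = w$) a compensating rearrangement among the lower-ranked coordinates and the $\cls^1$-coordinates, and the defining inequality \eqref{eq:eq929} of a viable ranking — namely $\mu_{\rho(k)} h_{\rho(k)} + \clc(\sum_{j\in M} K_j - K_{\rho(k)}) \le \clc(\sum_{j\in M} K_j)$ over the relevant minimal covering sets $M \in \mathcal{M}^{E_k \cup \cls^1, \rho(k)}_{F_k}$ — is exactly the statement that this rearrangement does not increase $h \cdot q$. Iterating down the ranking yields $h \cdot q \ge h \cdot q^*(w)$, and since $q$ was arbitrary in $\mathcal{Q}^s(w)$, we conclude $\clc(w) \ge h \cdot q^*(w)$.

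The main obstacle I anticipate is making the exchange step precise: one must identify, for a given $q \in \mathcal{Q}^s(w)$ and a fixed $k$, exactly which lower-ranked and $\cls^1$ coordinates absorb the compensating mass when $q_{\rho(k)}$ is pushed up, and verify that the resulting redistribution corresponds to a genuine element of $\mathcal{M}^{E_k \cup \cls^1, \rho(k)}_{F_k}$ so that \eqref{eq:eq929} is applicable — in particular, that the "covering" structure is the minimal one and that no higher-ranked job $l \in F_k$ has $N_l \subseteq \bigcup_{j \in M} N_j$, which is what the subscript $F_k$ in $\mathcal{M}^{E_k \cup \cls^1, \rho(k)}_{F_k}$ encodes. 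A clean way to organize this is by induction on $k$ (or on $m - k$): assume the higher-ranked coordinates $\rho(1), \ldots, \rho(k-1)$ have already been matched to $q^*$, reduce the problem to a residual workload $\tilde w = w - \sum_{l<k} G_{\cdot,\rho(l)} q^*_{\rho(l)}(w)$ on the network with those jobs removed, and apply the one-step exchange argument there; the continuity of $q^*(\cdot)$ claimed in the theorem then follows immediately from the explicit formulas \eqref{eq:eq834}–\eqref{eq:eq753}, which express each coordinate as a finite combination of minima of affine functions of $w$.
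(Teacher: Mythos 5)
Your overall skeleton is the same as the paper's: membership of $q^*(w)$ in $\mathcal{Q}^s(w)$ by the telescoping identity (so $\clc(w)\le h\cdot q^*(w)$ is free), reduction to $\mathcal{Q}^s(w)$ via Theorem \ref{thm:restOptJobs}, and then a greedy exchange argument down the ranking, organized by induction with the residual workloads $w^{k-1}=w-\sum_{l<k}G_{\cdot,\rho(l)}q^*_{\rho(l)}(w)$, invoking \eqref{eq:eq929} at each stage. One slip first: in your middle paragraph you process the jobs in the order $\rho(m),\rho(m-1),\dots,\rho(1)$, which is inconsistent both with the definition \eqref{eq:eq834}--\eqref{eq:eq834b} (the maximality of $q^*_{\rho(k)}$ is relative to $\rho(1),\dots,\rho(k-1)$ being fixed, not $\rho(k+1),\dots,\rho(m)$) and with Condition \ref{cond:viableRankExists}, whose restriction $\clm_{F_k}^{E_k\cup\cls^1,\rho(k)}$ only protects the already-processed lower-indexed jobs $F_k=\{\rho(1),\dots,\rho(k-1)\}$; in the reversed order nothing controls the covering sets that arise. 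Your final paragraph (induction on $k$, residual workload after matching $\rho(1),\dots,\rho(k-1)$) is the correct order and is what the paper does.

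The genuine gap is precisely the step you flag as an ``anticipated obstacle'' but do not resolve, and it is where the proof actually needs an idea beyond bookkeeping. To apply \eqref{eq:eq929} at stage $k$ one must exhibit a minimal covering set of $N_{\rho(k)}$, built from jobs carrying enough mass in the competing configuration (the paper takes the maximizers $j^*(i)$, $i\in N_{\rho(k)}$, as in \eqref{eq:eq517}), and verify it belongs to $\clm_{F_k}^{E_k\cup\cls^1,\rho(k)}$, i.e.\ that it covers no $N_{\rho(l)}$ with $l<k$. The paper gets this from \eqref{eq:eq608}: once the coordinates $\rho(1),\dots,\rho(k-1)$ are matched to $q^*$, the residual workload vanishes at some resource of each $N_{\rho(l)}$, $l<k$, so any job with strictly positive residual mass cannot use that resource, and hence the covering set cannot cover $N_{\rho(l)}$. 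Without this mechanism the inequality \eqref{eq:eq929} is simply not available for the covering set your exchange produces. A second, smaller gap: a single exchange of the form \eqref{eq:eq653} raises the $\rho(k)$-coordinate only by $u_k\mu_{\rho(k)}$, which is a positive fraction (at least $1/J$) of the remaining gap, not all the way to $q^*_{\rho(k)}(w)$; so ``bring the coordinate up to its maximal value'' requires either an iteration-and-limit argument or the paper's device of defining $s_k$ as the supremum of the $\rho(k)$-coordinate over cost minimizers of the residual problem (attained by compactness of $\mathcal{Q}(w)$) and deriving a contradiction from one exchange step. With the order corrected and these two points supplied, your argument becomes the paper's proof.
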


\begin{proof}
Fix $w\in \mathbb{R}_+^I$. % From  Theorem \ref{thm:restOptJobs} we have 
% \begin{equation*}
% C(w)=\inf_{q\in \mathcal{Q}^{s}(w)}\left\{ h\cdot q\right\} \text{.}
% \end{equation*}%
% Recall that Definition \ref{def:optQueueConst}\ gives 
% \begin{equation*}
% \hat{q}_{\rho(1)}^{r}(w)=\min_{i\in N_{\rho(1)}}\{w_{i}\}\mu _{\rho(1)}\text{.}
% \end{equation*}%
Let $\bar q(w)$ be as in Theorem \ref{thm:costAchieved}. Then $\clc(w) = h
\cdot \bar q(w)$ and the proof of Theorem \ref{thm:restOptJobs} shows that $%
\bar q(w) \in \mathcal{Q}^{s}(w)$. Define 
\begin{equation*}
s_{1}=\sup \left\{ q_{\rho(1)}:q\in \mathcal{Q}^{s}(w) \mbox{ and } h\cdot q
=\clc(w)\right\}.
\end{equation*}%
Clearly the supremum is achieved, namely there is a $\check{q} \in \mathcal{Q%
}^{s}(w)$ s.t. $h\cdot \check{q} =\clc(w)$ and $\check{q}_{\rho(1)} = s_1$. We
now show that $s_1 = q^*_{\rho(1)}(w)$. First note that $s_1 \le q^*_{\rho(1)}$
since from \eqref{eq:eq834} there is an $i^* \in N_{\rho(1)}$ such that 
\begin{equation*}
q^*_{\rho(1)}(w) = w_{i^*}\mu_{\rho(1)} = (G \check q)_{i^*} \mu_{\rho(1)}
\ge \check q_{\rho(1)} = s_1,
\end{equation*}
where the second equality holds since $\check q \in \mathcal{Q}^{s}(w)$ and
the next inequality is a consequence of the fact that $i^* \in N_{\rho(1)}$.
We now show that in fact the inequality can be replaced by equality. We
argue by contradiction and suppose that $s_{1}<{q}_{\rho(1)}^{*}(w)$. For all $%
i\in N_{\rho(1)}$ define 
\begin{equation}  \label{eq:eq517}
 j^*(i)=\arg \max_{j \neq \rho(1):i\in N_{j}}\left\{ \frac{\check{q}_{j}%
}{\mu _{j}}\right\}
\end{equation}%
and note that for any $i \in N_{\rho(1)}$ 
\begin{align*}
\frac{\check q_{j^*(i)}}{\mu_{j^*(i)}} \ge \frac{1}{J-1} \left( \sum_{j: i
\in N_j} \frac{\check q_j}{\mu_j} - \frac{\check q_{\rho(1)}}{\mu_{\rho(1)}}%
\right) > \frac{1}{J} \left( w_i - \frac{\check q_{\rho(1)}}{\mu_{\rho(1)}}%
\right) \ge \frac{1}{J} \left(\frac{q^*_{\rho(1)}(w)-\check q_{\rho(1)}}{%
\mu_{\rho(1)}}\right),
\end{align*}
where the second inequality uses the fact that $\check q \in \mathcal{Q}%
^{s}(w)$ while the third uses \eqref{eq:eq834} once more. Thus, 
\begin{equation}
\min_{i\in N_{\rho(1)}}\left\{ \frac{\check{q}_{j^*(i)}}{\mu _{j^{\ast
}(i)}}\right\} >\frac{q_{\rho(1)}^{*}(w)-s_{1}}{J\mu _{\rho(1)}}\text{.}
\label{eq:miniinnp}
\end{equation}%
We can choose a subset  $M\in \mathcal{M}^{\cls^s,\rho(1)}$ such that $M\subset
\left\{  j^*(i):i\in N_{\rho(1)}\right\} $. From the definition of $M$, 
$\sum_{j\in M} K_j - K_{\rho(1)}$ is a nonnegative vector. Since $\rho(1)
\in \mathcal{O}^{\cls^m}$, due to Definition \ref{def:optJob} 
\begin{equation}
\mu_{\rho(1)} h_{\rho(1)} + \clc(\sum_{j\in M} K_j - K_{\rho(1)}) \le
\clc(\sum_{j\in M} K_j).
\label{eq:eqmurho1}
\end{equation}
Thus there exists $v^{1}\in \mathcal{Q}^{s}\left( \sum_{j\in M}K_j-K_{\rho(1)}\right) $
i.e., 
\begin{equation}
\sum_{j\in M}K_j-K_{\rho(1)} = G v^{1} = \sum_{j=1}^JK_j b^{1}_{j}  \mbox{ where } v^{1}_{j}
= b^{1}_{j}\mu_j \mbox{ for } j \in \mathbb{N}_J,\label{eq:eqkjkrho}
\end{equation}
such that
\begin{equation}
\sum_{j=1}^J h_{j}b^{1}_{j}\mu_j  = h \cdot v^{1} = \clc(\sum_{j\in M}K_j-K_{\rho(1)}).
\label{eq:hjb1j}
\end{equation}
Furthermore, $b^{1}_{\rho(1)} = 0$, since if $b^{1}_{\rho (1)}>0$ then 
$
\sum_{j\in M}K_{i,j}-K_{i,\rho (1)}\geq 1
$
for all $i\in N_{\rho (1)}$, so that for any $l\in M$ we have
$
\sum_{j\in M\setminus \{l\}}K_{j}-K_{\rho (1)}\geq 0
$
which means $M$ is not minimal and contradicts $M\in \mathcal{M}^{\cls^{s},\rho (1)}$.

From \eqref{eq:eqmurho1} and \eqref{eq:hjb1j} we have
\begin{equation}
h_{\rho(1)}\mu _{\rho(1)}+\sum_{j=1}^{J}h_{j}b^{1}_{j}\mu _{j}\leq \sum_{j\in
M}h_{j}\mu _{j}\text{.}\label{eq:15.1}
\end{equation}%
Let 
\begin{equation}  \label{eq:eq651}
u_{1}\doteq \min_{j\in M}\left\{ \frac{\check{q}_{j}}{\mu _{j}}\right\}
\end{equation}%
Since $M \subset \left\{j^*(i):i\in N_{\rho(1)}\right\}$ , from \eqref{eq:miniinnp}
$
u_1 \ge \frac{q_{\rho(1)}^{*}(w)-s_{1}}{J\mu _{\rho(1)}}.
$
Define $\tilde{q}\in \mathbb{R}_+^J$ by 
\begin{equation}  \label{eq:eq653}
\tilde{q}_{\rho(1)}=\check{q}_{\rho(1)}+u_{1}\mu _{\rho(1)}, \mbox{ and } 
\tilde{q}_{j}=\check{q}_{j}-\mathbf{1}_{\{j\in M\}}u_{1}\mu
_{j}+u_{1}b^1_{j}\mu _{j} \mbox{ for } j\neq \rho(1).
\end{equation}
By definition of $u_1$, $\tilde q \in \mathbb{R}_+^J$. Also, 
\begin{eqnarray*}
w &=&\sum_{j=1}^{J}K_j\left( \frac{\check{q}_{j}}{\mu _{j}}-\mathbf{1}%
_{\{j\in M\}}u_{1}\right) +u_{1}\sum_{j\in M}K_j \\
&=&\sum_{j=1}^{J}K_j\left( \frac{\check{q}_{j}}{\mu _{j}}-\mathbf{1}_{\{j\in
M\}}u_{1}\right) +u_{1}\sum_{j=1}^{J}K_jb^{1}_{j}+u_{1}K_{\rho(1)} \\
&=&\sum_{j=1}^{J}K_{j}\frac{\tilde{q}_{j}}{\mu _{j}},
\end{eqnarray*}%
where the second equality uses \eqref{eq:eqkjkrho} and last equality uses the observation that $b^{1}_{\rho(1)}=0$. Thus $%
\tilde{q}\in \mathcal{Q}^{s}(w)$. Furthermore, 
\begin{eqnarray*}
\clc(w) &=&\sum_{j=1}^{J}h_{j}\left( \check{q}_{j}-\mathbf{1}_{\{j\in
M\}}u_{1}\mu _{j}\right) +u_{1}\sum_{j\in M}h_{j}\mu _{j} \\
&\geq &\sum_{j=1}^{J}h_{j}\left( \check{q}_{j}-\mathbf{1}_{\{j\in
M\}}u_{1}\mu _{j}\right) +u_{1}h_{\rho(1)}\mu
_{\rho(1)}+u_{1}\sum_{j=1}^{J}h_{j}b^{1}_{j}\mu _{j} \\
&=&\sum_{j=1}^{J}h_{j}\tilde{q}_{j} \ge \clc(w),
\end{eqnarray*}%
where the second line is from \eqref{eq:15.1} and the last inequality holds since $\tilde{q}\in \mathcal{Q}^{s}(w)$. So $h\cdot
\tilde q = \clc(w)$ and by definition of $s_1$, $\tilde q_{\rho(1)} \le s_1$.
However, since by assumption $s_1 < {q}_{\rho(1)}^{*}(w)$,
\begin{equation}
\tilde{q}_{\rho(1)}=s_{1}+ u_{1}\mu _{\rho(1)}\geq s_{1}+\frac{{q}%
_{\rho(1)}^{*}(w)-s_{1}}{J}>s_{1}  \label{eq:eq655}
\end{equation}
which is a contradiction. \ Thus we have shown $s_{1}={q}_{\rho(1)}^{*}(w)$.

Denote $\check q$ as $q^1$. Then $q^1_{\rho(1)} = q^*_{\rho(1)}(w)$. Note
that 
\begin{equation*}
\clc(w) = h \cdot q^1 = h_{\rho(1)} q^*_{\rho(1)} + \sum_{i\neq \rho(1)} h_i
q^1_i.
\end{equation*}
Let $w^1 = w - \frac{q^*_{\rho(1)}(w)}{\mu_{\rho(1)}}K_{\rho(1)}$. Then 
$
w^1 = G \left[q^1 - q^*_{\rho(1)}(w) e_{\rho(1)}\right]
$
and if for any $\tilde q \in \mathbb{R}_+^J$, $G\tilde q = w^1$, we have 
$
G\left[\tilde q + q^*_{\rho(1)}(w)e_{\rho(1)}\right] = Gq^1 = w
$
and so 
\begin{equation*}
h\cdot (\tilde q + q^*_{\rho(1)}(w)e_{\rho(1)}) \ge \clc(w) = h_{\rho(1)}
q^*_{\rho(1)}(w) + \sum_{i\neq \rho(1)} h_i q^1_i.
\end{equation*}
Thus $h\cdot \tilde q \ge \sum_{i\neq \rho(1)} h_i q^1_i$ and since $\tilde q
$ is arbitrary vector in $\mathbb{R}_+^J$ satisfying $G\tilde q = w^1$ 
\begin{equation*}
\clc(w^1) = h\cdot q^1 - h_{\rho(1)}q^*_{\rho(1)}(w) = \clc(w) -
h_{\rho(1)}q^*_{\rho(1)}(w).
\end{equation*}
We now proceed via induction. Suppose that for some $k \in \{2, \ldots, m\}$
and all $w \in \mathbb{R}_+^I$ 
\begin{equation}  \label{eq:eq651b}
\clc(w)=\sum_{l=1}^{k-1}h_{\rho(l)}{q}_{\rho(l)}^{*}(w)+\clc\left( w^{k-1}\right)
\end{equation}%
where 
\begin{equation*}
w^{k-1}=w-\sum_{l=1}^{k-1} \frac{{q}_{\rho(l)}^{*}(w)}{\mu_{\rho(l)}}%
K_{\rho(l)}\text{.}
\end{equation*}%
Note that we have shown \eqref{eq:eq651b} for $k=2$. With $\bar q$ as in
Theorem \ref{thm:costAchieved} $\bar{q}\left( w^{k-1}\right) \in \mathcal{Q}^{s}\left(
w^{k-1}\right) $ and 
\begin{equation*}
\clc\left( w^{k-1}\right) = \bar{q}\left( w^{k-1}\right)\cdot h \text{.}
\end{equation*}%
Define 
\begin{equation*}
s_{k}=\sup \left\{ q_{\rho(k)}:q\in \mathcal{Q}^{s}\left( w^{k-1}\right) ,
q\cdot h =\clc(w^{k-1})\right\}.
\end{equation*}%
Then there is $\check q \in \mathcal{Q}^{s}(w^{k-1})$ such that  $\check{q}%
_{\rho(k)}=s_{k}$,  and $\check{q}\cdot h =\clc(w^{k-1})$.
Also, using \eqref{eq:eq834b} we have for every $l<k$ an $i^* \in N_{\rho(l)}
$ such that 
$
\frac{q^*_{\rho(l)}(w)}{\mu_{\rho(l)}} = w_{i^*}^{l-1}.
$
Thus, 
\begin{equation*}
0\le w^{k-1}_{i^*} \le w_{i^*} - \sum_{u=1}^l G_{i^*,\rho(u)} q^*_{\rho(u)}(w) =
w_{i^*}^{l-1} - \frac{q^*_{\rho(l)}(w)}{\mu_{\rho(l)}} = 0.
\end{equation*}
Consequently, 
\begin{equation}  \label{eq:eq608}
\mbox{ for every } l \in {1, \ldots k-1} \mbox{ there is an } i \in
N_{\rho(l)} \mbox{ such that } w_i^{k-1}=0.
\end{equation}
Since $G\check q = w^{k-1}$, this in turn says that $\check q_{\rho(l)} = 0$
for $l \in {0, 1, \ldots k-1}$. Next, as for the case $k=1$, we can show that 
$s_k = {q}_{\rho(k)}^{*}(w)$. Indeed, the inequality $s_k \le q^*_{\rho(k)}$
follows on noting from \eqref{eq:eq834b} that for some $i^* \in N_{\rho(k)}$ 
\begin{equation*}
q^*_{\rho(k)}(w) = w_{i^*}^{k-1}\mu_{\rho(k)} = (G \check q)_{i^*}
\mu_{\rho(k)} \ge \check q_{\rho(k)} = s_k.
\end{equation*}
Next suppose $s_{k}<{q}_{\rho(k)}^{*}(w)$. Define $j^*(i)$ as in %
\eqref{eq:eq517} replacing $\rho(1)$ with $\rho(k)$, then as before (using %
\eqref{eq:eq834b} instead of \eqref{eq:eq834}) 
\begin{equation}
\min_{i\in N_{\rho(k)}}\left\{ \frac{\check{q}_{j^*(i)}}{\mu _{j^*(i)}}
\right\} >\frac{q_{\rho(k)}^{*}(w)-s_{k}}{J\mu _{\rho(k)}}\text{.}
\label{eq:eq607}
\end{equation}

Thus from \eqref{eq:eq607} we have that $j^*(i) \notin \{\rho(1), \ldots
\rho(k)\}$. We next claim that the set of resources associated with $\rho(l)$
for any $l<k$ is not a subset of the set of resources associated with $%
\{j^*(i): i \in N_{\rho(k)}\}$. Indeed, if that were the case for some $l<k$%
, then we will have 
\begin{equation}  \label{eq:eq612}
\sum_{i \in N_{\rho(k)}} K_{j^*(i)} - K_{\rho(l)} \ge 0.
\end{equation}
From \eqref{eq:eq608} there is an $i^*$ such that $K_{i^*,\rho(l)}=1$ and $%
w_{i^*}^{k-1}=0$. Then from \eqref{eq:eq612} $K_{i^*, j^*(i)} =1$ for some $%
i \in N_{\rho(k)}$. Since from \eqref{eq:eq607} $\check q_{j^*(i)} >0$, we
have $w_{i^*}^{k-1}>0$ which is a contradiction. This proves the claim, namely 
$N_{\rho(l)} \not \subset \bigcup_{i \in N_{\rho(k)}} N_{j^*(i)}$ for $l = 1, \ldots, k-1$.

We  can now choose a subset $M^{k}\in \mathcal{M}_{F_k }^{\cls^s\setminus F_k
,\rho(k)}$ such that $M^{k}\subset \left\{ j^{\ast}(i):i\in
N_{\rho(k)}\right\} $.

Since by definition $\rho(k) \in \mathcal{O}^{E_k}_{F_k}$ and by our choice $%
M^k \in \mathcal{M}_{F_k }^{\cls^s\setminus F_k ,\rho(k)}$, we have from
Definition \ref{def:optJob} that there exists $b^{k}\in \mathbb{R}_+^J$ such that
$b^k_{\rho(k)}=0$ and 
% that there exists $b^{k}\in \mathcal{Q}^{s}(\sum_{j\in M^k}
% K_j - K_{\rho(k)})$ \footnote{%
% why is this needed} such that $b^{k}_{j}=0$ for $j\in F_{k+1}\bigcup M$ and 
\begin{equation*}
K_{\rho(k)}+\sum_{j=1}^{J}K_{j}b^{k}_{j}=\sum_{j\in M^{k}}K_{j}, \mbox{ and }
h_{\rho(k)}\mu _{\rho(k)}+\sum_{j=1}^{J}h_{j}b^{k}_{j}\mu _{j}\leq \sum_{j\in
M^{k}}h_{j}\mu _{j}.
\end{equation*}%
With $u_k$ as defined in \eqref{eq:eq651} with $M$ replaced by $M^k$ (and
with $\check q$ as above) 
\begin{equation*}
u_{k} \ge \frac{q_{\rho(k)}^{*}(w)-s_{k}}{J\mu _{\rho(k)}}.
\end{equation*}%
Define $\tilde q$ as in \eqref{eq:eq653} replacing $\rho(1)$ with $\rho(k)$, $u_{1}$ with $u_{k}$,
and $M$ with $M^k$. Then as before $h\cdot \tilde q = \clc(w^{k-1})$ and $%
G\tilde q = w^{k-1}$; and as in the proof of \eqref{eq:eq655} we see using %
\eqref{eq:eq607} that $\tilde q_{\rho(k)} > s_k$ which contradicts the
definition of $s_k$. This completes the proof that $s_k = {q}%
_{\rho(k)}^{*}(w)$.

Setting $q^k = \check q$ we have that $q^k_{\rho(k)} = q^*_{\rho(k)}(w)$. Also,
recalling that 
\begin{equation*}
w^k = w^{k-1} - \frac{q^*_{\rho(k)}(w)}{\mu_{\rho(k)}}K_{\rho(k)}
\end{equation*}
and since $G q^k = w^{k-1}$, we have $G[q^k - q^*_{\rho(k)}(w) e_{\rho(k)}] =
w^k$ and $h\cdot (q^k - q^*_{\rho(k)}(w) e_{\rho(k)}) = \clc(w^{k-1}) -
q^*_{\rho(k)}(w) h_{\rho(k)}$. Furthermore, using the fact that $h\cdot q^k =
\clc(w^{k-1})$, we have that if for $\tilde q \in \mathbb{R}^J_+$, $G\tilde q =
w^k$, then $h\cdot \tilde q \ge \clc(w^{k-1}) - q^*_{\rho(k)}(w) h_{\rho(k)}$.
Thus we have that $\clc(w^k) = \clc(w^{k-1}) - q^*_{\rho(k)}(w) h_{\rho(k)}$.
Combining this with the induction hypothesis \eqref{eq:eq651b}, we have that %
\eqref{eq:eq651b} holds with $k-1$ replaced with $k$. This completes the
induction step and proves \eqref{eq:eq651b} for all $k=2, \ldots m+1$, in
particular 
\begin{equation}  \label{eq:eq651c}
\clc(w)=\sum_{l=1}^{m}h_{\rho(l)}{q}_{\rho(l)}^{*}(w)+\clc\left( w^{m}\right)
\end{equation}%
where  
\begin{equation}  \label{eq:eq750}
w^{m}=w-\sum_{l=1}^{m}K_{\rho(l)}{q}_{\rho(l)}^{*}(w)\text{.}
\end{equation}%
Next, using \eqref{eq:eq608} with $k-1$ replaced with $m$ we see that for
any $q \in \mathcal{Q}^s(w^m)$, $q_{\rho(l)} =0$ for all $l = 1, \ldots m$.
Namely, 
\begin{equation*}
\clc(w^m) = \sum_{j \in \cls^1} h_j \mu_j w^m_{\hat i(j)}.
\end{equation*}
From the definition of $w^m$ in \eqref{eq:eq750} and the definition of $%
q^*_j(w)$ for $j \in \cls^1$ in \eqref{eq:eq753} we then have that 
\begin{equation*}
\clc(w^m) = \sum_{j\in \cls^1}h_{j}{q}_{j}^{*}(w),\; w = \sum_{l=1}^{m}K_{\rho(l)}{%
q}_{\rho(l)}^{*}(w) + \sum_{j \in \cls^1} K_{j}{q}_{j}^{*}(w).
\end{equation*}
This proves \eqref{eq:eq755} and the statement that $q^*(w) \in \mathcal{Q}%
^s(w)$, and completes the proof of the theorem.
\end{proof}

Analogous to  $\zeta _{i}^{k}$ introduced in Section \ref{sec:secallostra}, let 
\begin{equation}\label{eq:eqzetaiz}
\zeta _{i}^{0}=\{j\in \cls^p:K_{i,j}=1\}
\end{equation}%
be the set of primary jobs which impact node $i$.
\begin{theorem}
 \label{thm:costInefIneq} There exists  $B\in (0,\infty)$ such
that for any $q\in \mathbb{R}_+^{J}$ and the corresponding
workload, $w=Gq$,
we have 
\begin{equation*}
\left\vert  h\cdot q -\clc(w)\right\vert \leq B\left(
\sum_{k=1}^{m }\min_{i\in N_{\rho (k)}}\left\{ \sum_{j\in \zeta
_{i}^{k}}q_{j}\right\} +\sum_{i=1}^{I}\sum_{j\in \zeta _{i}^{0}}q_{j}\right) 
\text{.}
\end{equation*}
\end{theorem}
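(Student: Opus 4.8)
The plan is to compare $q$ against the explicit optimal queue-length vector $q^*(w)$ furnished by Theorem \ref{thm:restCostJobOrd}. Since $q\in\clq(w)$ we have $h\cdot q\ge\clc(w)=h\cdot q^*(w)$, so the absolute value is automatic and it suffices to bound $h\cdot q-h\cdot q^*(w)=\sum_{j\in\NJ}h_j\bigl(q_j-q^*_j(w)\bigr)$ from above by $B$ times the right-hand side, which I abbreviate $R$. Write $R_k:=\min_{i\in N_{\rho(k)}}\sum_{j\in\zeta_i^k}q_j$ (so $R_k\le R$ for every $k$), $\Delta_k:=q_{\rho(k)}-q^*_{\rho(k)}(w)$ for $k\in\NN_m$, and $\delta_j:=q_j-q^*_j(w)$ for $j\in\cls^1$. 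Using $h\cdot q-\clc(w)\le\|h\|_\infty\sum_j|q_j-q^*_j(w)|$, the theorem will follow once I show that each of the quantities $q_j$ (for $j\in\cls^p$, where $q^*_j(w)=0$), $\Delta_k$ (for $k\in\NN_m$), and $\delta_j$ (for $j\in\cls^1$) is bounded in absolute value by a fixed multiple of $R$; then $B$ depends only on $\|h\|_\infty$, $J$, and those multiples. The primary terms are immediate: for $j\in\cls^p$ and any $i\in N_j$ one has $j\in\zeta_i^0$, so $q_j\le\sum_{j'\in\zeta_i^0}q_{j'}\le\sum_{i'}\sum_{j'\in\zeta_{i'}^0}q_{j'}\le R$.

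The core of the argument is the estimate on the $\Delta_k$, which I would obtain by a recursion on $k$. Using \eqref{eq:eq834b} in the form $q^*_{\rho(k)}(w)=\mu_{\rho(k)}\min_{i\in N_{\rho(k)}}w^{k-1}_i$, with $w^{k-1}:=w-\sum_{l<k}\mu_{\rho(l)}^{-1}q^*_{\rho(l)}(w)K_{\rho(l)}$ as in the proof of Theorem \ref{thm:restCostJobOrd}, and expanding $w_i=\sum_{j:K_{ij}=1}q_j/\mu_j$ while splitting the jobs at node $i$ into $\{\rho(1),\dots,\rho(k-1)\}$ and $\zeta_i^{k-1}$, one arrives — for $i\in N_{\rho(k)}$, where $\rho(k)\in\zeta_i^{k-1}$ and $\zeta_i^{k-1}=\zeta_i^k\cup\{\rho(k)\}$ — at the identity
\[
w^{k-1}_i=\sum_{l<k}K_{i,\rho(l)}\frac{\Delta_l}{\mu_{\rho(l)}}+\frac{q_{\rho(k)}}{\mu_{\rho(k)}}+\sum_{j\in\zeta_i^k}\frac{q_j}{\mu_j}.
\]
Evaluating this at the node $i^*$ attaining $\min_{i\in N_{\rho(k)}}w^{k-1}_i$ and discarding the nonnegative last sum gives the cheap upper bound $\Delta_k\le c_\mu\sum_{l<k}|\Delta_l|$, where $c_\mu:=\max_j\mu_j/\min_j\mu_j$; evaluating it instead at the node $i'$ attaining $R_k$ and using $w^{k-1}_{i^*}\le w^{k-1}_{i'}$ gives the matching lower bound $-\Delta_k\le c_\mu\bigl(\sum_{l<k}|\Delta_l|+R_k\bigr)$. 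Hence $|\Delta_k|\le c_\mu\sum_{l<k}|\Delta_l|+c_\mu R_k$, and since $R_k\le R$ and $m$ is a fixed structural constant of the network, iterating the recursion yields $|\Delta_k|\le\bigl((1+c_\mu)^m-1\bigr)R$ for all $k\in\NN_m$.

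Finally, for $j\in\cls^1$ with unique resource $i_0=\hat i(j)$, the same expansion applied to $w^m_{i_0}$ — now $\zeta_{i_0}^m=\zeta_{i_0}^0\cup\{j\}$ since $\{\rho(1),\dots,\rho(m)\}=\cls^m$ — together with the formula \eqref{eq:eq753} for $q^*_j(w)$ gives $\delta_j/\mu_j=-\sum_{l\le m}K_{i_0,\rho(l)}\Delta_l/\mu_{\rho(l)}-\sum_{j'\in\zeta_{i_0}^0}q_{j'}/\mu_{j'}$, so $|\delta_j|$ is again bounded by a fixed multiple of $R$ once the bound on the $\Delta_l$ is in hand. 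Collecting the three estimates and summing against $h$ completes the proof. The main obstacle is the estimate on the $\Delta_k$: because the residuals are coupled through $w^{k-1}$, no single $\Delta_k$ can be controlled in isolation, and the key move is to extract from the displayed identity both an essentially free upper bound (from $\rho(k)\in\zeta_i^{k-1}$) and a genuine lower bound (obtained by testing the minimum at the node realizing $R_k$), after which the recursion closes only because the index $m$ is a fixed constant. Careful sign bookkeeping in that step is where the real work lies; everything else is routine.
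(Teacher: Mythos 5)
Your proposal is correct and takes essentially the same route as the paper's proof: you compare $q$ with the explicit minimizer $q^*(w)$ of Theorem \ref{thm:restCostJobOrd}, control $|q_{\rho(k)}-q^*_{\rho(k)}(w)|$ through the same identity for $w^{k-1}_i$ at nodes of $N_{\rho(k)}$ and close the resulting recursion in $k$, and then handle the $\cls^1$ and $\cls^p$ coordinates exactly as the paper does. The only cosmetic differences are that the paper bounds the absolute value of the minimum in one stroke rather than via your two test nodes $i^*$ and $i'$, and the iteration constants differ ($(1+c_\mu)^m-1$ versus the paper's powers of $2$), which is immaterial.
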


\begin{proof}
Recall from Theorem \ref{thm:restCostJobOrd}\ that with $q^{\ast} = q^{\ast}(w)$
\begin{equation*}
\clc(w)=q^{\ast }\cdot h =\sum_{k=1}^{m }h_{\rho
(k)}q_{\rho (k)}^{\ast }+\sum_{j\in \mathit{\cls}^{1}}h_{j}q_{j}^{\ast }%
\text{.}
\end{equation*}%
Since
\begin{equation*}
\frac{q_{\rho (1)}^{\ast }}{\mu _{\rho (1)}} =\min_{i\in N_{\rho (1)}}\left\{
w_{i}\right\} =\min_{i\in N_{\rho (1)}}\left\{ \sum_{j\in \zeta
_{i}^{1}} \frac{q_{j}}{\mu _{j}}\right\} + \frac{q_{\rho (1)}}{\mu _{\rho (1)}}
\end{equation*}%
we have%
\begin{equation*}
q_{\rho (1)}=q_{\rho (1)}^{\ast }-\min_{i\in N_{\rho (1)}}\left\{ \sum_{j\in
\zeta _{i}^{1}}\frac{q_{j}}{\mu _{j}}\right\} \mu _{\rho (1)}
\end{equation*}%
from which we have
\begin{equation*}
\frac{1}{\mu _{\rho (1)}}\left\vert q_{\rho (1)}^{\ast }-q_{\rho (1)}\right\vert \leq
\min_{i\in N_{\rho (1)}}\left\{ \sum_{j\in \zeta _{i}^{1}} \frac{q_{j}}{\mu_{j}}\right\} \text{.}
\end{equation*}%
In % addition, 
% \begin{eqnarray*}
% q_{\rho (2)}^{\ast }/\mu _{\rho (2)} &=&\min_{i\in N_{\rho (2)}}\left\{
% w_{i}-K_{i,\rho (1)}q_{\rho (1)}^{\ast }/\mu _{\rho (1)}\right\}  \\
% &=&\min_{i\in N_{\rho (2)}}\left\{ \sum_{j\in \zeta _{i}^{2}}q_{j}/\mu
% _{j}-K_{i,\rho (1)}(q_{\rho (1)}^{\ast }-q_{\rho (1)})/\mu _{\rho
% (1)}\right\} +q_{\rho (2)}/\mu _{\rho (2)}
% \end{eqnarray*}%
% which gives%
% \begin{equation*}
% (q_{\rho (2)}^{\ast }-q_{\rho (2)})/\mu _{\rho (2)}\leq \min_{i\in N_{\rho
% (2)}}\left\{ \sum_{j\in \zeta _{i}^{2}}q_{j}/\mu _{j}\right\} +\left\vert
% q_{\rho (1)}^{\ast }-q_{\rho (1)}\right\vert /\mu _{\rho (1)}\text{,}
% \end{equation*}%
% \begin{equation*}
% (q_{\rho (2)}-q_{\rho (2)}^{\ast })/\mu _{\rho (2)}\leq \left\vert q_{\rho
% (1)}^{\ast }-q_{\rho (1)}\right\vert /\mu _{\rho (1)}\text{,}
% \end{equation*}%
% and implies the weaker bound%
% \begin{equation*}
% \left\vert q_{\rho (2)}^{\ast }-q_{\rho (2)}\right\vert /\mu _{\rho (2)}\leq
% \min_{i\in N_{\rho (2)}}\left\{ \sum_{j\in \zeta _{i}^{2}}q_{j}/\mu
% _{j}\right\} +\left\vert q_{\rho (1)}^{\ast }-q_{\rho (1)}\right\vert /\mu
% _{\rho (1)}\text{.}
% \end{equation*}%
% Generally 
general, for $2\leq k\leq m $ we have%
\begin{eqnarray*}
\frac{q_{\rho (k)}^{\ast }}{\mu _{\rho (k)}} &=&\min_{i\in N_{\rho (k)}}\left\{
w_{i}-\sum_{l=1}^{k-1}K_{i,\rho (l)} \frac{q^{*}_{\rho (l)}}{\mu _{\rho
(l)}}\right\}  \\
&=&\min_{i\in N_{\rho (k)}}\left\{ \sum_{j\in \zeta _{i}^{k}} \frac{q_{j}}{\mu_{j}}
-\sum_{l=1}^{k-1}K_{i,\rho (l)}\frac{(q_{\rho (l)}^{\ast }-q_{\rho (l)})}{\mu
_{\rho (l)}}\right\} + \frac{q_{\rho (k)}}{\mu _{\rho (k)}}
\end{eqnarray*}%
which gives%
\begin{equation*}
\frac{1}{\mu _{\rho (k)}}\left|q_{\rho (k)}^{\ast }-q_{\rho (k)}\right|\leq \min_{i\in N_{\rho
(k)}}\left\{ \sum_{j\in \zeta _{i}^{k}} \frac{q_{j}}{\mu_j} \right\}
+\sum_{l=1}^{k-1}\frac{\left\vert q_{\rho (l)}^{\ast }-q_{\rho (l)}\right\vert}{
\mu _{\rho (l)}}.
\end{equation*}%
% \begin{equation*}
% (q_{\rho (k)}-q_{\rho (k)}^{\ast })/\mu _{\rho (k)}\leq
% \sum_{l=1}^{k-1}\left\vert q_{\rho (l)}^{\ast }-q_{\rho (l)}\right\vert /\mu
% _{\rho (l)}\text{,}
% \end{equation*}%
% and implies the weaker bound%
% \begin{equation*}
% \left\vert q_{\rho (k)}^{\ast }-q_{\rho (k)}\right\vert /\mu _{\rho (k)}\leq
% \min_{i\in N_{\rho (k)}}\left\{ \sum_{j\in \zeta _{i}^{k}}q_{j}/\mu \right\}
% +\sum_{l=1}^{k-1}\left\vert q_{\rho (l)}^{\ast }-q_{\rho (l)}\right\vert
% /\mu _{\rho (l)}\text{.}
% \end{equation*}%
Consequently
 for $k\in \{2,...,m \}$ we have 
\begin{align*}
\frac{1}{\mu _{\rho (k)}}\left\vert q_{\rho (k)}^{\ast }-q_{\rho (k)}\right\vert 
&\leq \min_{i\in N_{\rho (k)}}\left\{ \sum_{j\in \zeta _{i}^{k}}\frac{q_{j}}{\mu_j}
\right\} +\sum_{l=0}^{k-2}2^{l}\min_{i\in N_{\rho (k-1-l)}}\left\{ \sum_{j\in \zeta
_{i}^{k-1-l}}\frac{q_{j}}{\mu _{j}}\right\} \text{.}
\end{align*}%
For $j\in \mathit{S}^{1}$ we have with $i = \hat i(j)$
\begin{align*}
\frac{q_{j}^{\ast }}{\mu _{j}} =
w_{i}-\sum_{k=1}^{m }K_{i,\rho (k)}\frac{q_{\rho (k)}^{\ast }}{\mu _{\rho (k)}}
=  \sum_{j'\in \zeta _{i}^{0}} \frac{q_{j'}}{\mu_{j'}}-\sum_{k=1}^{m}K_{i,\rho (k)}
\frac{(q_{\rho (k)}^{\ast }-q_{\rho (k)})}{\mu_{\rho (k)}} + \frac{q_{j}}{\mu _{j}}
\end{align*}%
which gives%
\begin{equation*}
\frac{1}{\mu _{j}}\left|q_{j}^{\ast }-q_{j}\right|\leq  \sum_{j'\in \zeta
_{i}^{0}} \frac{q_{j'}}{\mu_j'}  +\sum_{l=1}^{m }\frac{\left\vert q_{\rho
(l)}^{\ast }-q_{\rho (l)}\right\vert}{\mu _{\rho (l)}}
\end{equation*}%
This, combined with our bounds on $\left\vert q_{\rho (k)}^{\ast }-q_{\rho
(k)}\right\vert$ for $k\in \{1,...,m \}$, gives the
 following bound for  $j \in \cls^1$
\begin{align*}
\frac{\left\vert q_{j}^{\ast }-q_{j}\right\vert}{\mu _{j}} \leq  \sum_{j'\in \zeta _{\hat i(j)}^{0}}
\frac{q_{j'}}{\mu _{j'}} 
+\sum_{l=0}^{m-1}2^{l}\min_{i\in N_{\rho (m -l)}}\left\{ \sum_{j'\in \zeta
_{i}^{m -l}}\frac{q_{j'}}{\mu _{j'}}\right\} \text{.}
\end{align*}%
Finally, for $j\in \cls^p$ we have 
\begin{equation*}
\frac{\left\vert q_{j}^{\ast }-q_{j}\right\vert}{\mu _{j}} = \frac{q_{j}}{\mu _{j}}\leq 
\min_{i\in N_{j}}\left\{ \sum_{j'\in \zeta
_{i}^{0}} \frac{q_{j'}}{\mu _{j'}}\right\} \text{.}
\end{equation*}%
Combining the above bounds 
\begin{align*}
h\cdot q  &= h\cdot q^{\ast}
+h\cdot (q-q^{\ast})  \leq  \clc(w)+ \sum_{j \in \NJ} h_j |q_j-q^{\ast}|
\\
&\leq \clc(w)+\max_{j}\{h_{j}\}\sum_{j}\left\vert q_{j}-q^{\ast}_{j}\right\vert  \\
&\leq \clc(w)+\max_{j}\{h_{j}\}\max_{j}\{\mu_{j}\}J^22^{J}\left( \sum_{k=1}^{m }\min_{i\in N_{\rho
(k)}}\left\{ \sum_{j\in \zeta _{i}^{k}}\frac{q_{j}}{\mu _{j}}\right\}
+\sum_{i=1}^{I}\sum_{j\in \zeta _{i}^{0}}\frac{q_{j}}{\mu _{j}}\right)  \\
&\leq \clc(w)+\frac{\max_{j}\{h_{j}\}\max_{j}\{\mu_{j}\}}{\min_{j}\{\mu _{j}\}}J^22^{J}\left(
\sum_{k=1}^{m }\min_{i\in N_{\rho (k)}}\left\{ \sum_{j\in \zeta
_{i}^{k}}q_{j}\right\} +\sum_{i=1}^{I}\sum_{j\in \zeta _{i}^{0}}q_{j}\right) 
\text{.}
\end{align*}%
Because $h\cdot q \geq \clc(w)$ we have 
\begin{equation*}
\left\vert h\cdot q -\clc(w)\right\vert \leq B\left(
\sum_{k=1}^{m }\min_{i\in N_{\rho (k)}}\left\{ \sum_{j\in \zeta
_{i}^{k}}q_{j}\right\} +\sum_{i=1}^{I}\sum_{j\in \zeta _{i}^{0}}q_{j}\right) 
\end{equation*}%
where 
$
B=\frac{\max_{j}\{h_{j}\}\max_{j}\{\mu_{j}\}}{\min_{j}\{\mu _{j}\}}J^22^{J}\text{.}
$
\end{proof}

\section{Some Properties of the Rate Allocation Policy}
In this section we record some important properties of the rate allocation policy $x(\cdot)$ introduced in
Definition \ref{def:workAllocScheme}. 
Throughout this section $y(t), x(t)$ and $\mathcal{E}_{j}^r(t)$  will be as in
Definition \ref{def:workAllocScheme}.
Our first result shows that $x$ satisfies basic conditions for admissibility, namely, it is nonnegative and satisfies the capacity constraint. 
\label{sec:ratallpol}
\begin{lemma}
	\label{lem:admispol}
For all $t\ge 0$, $x(t) \ge 0$ and $Kx(t) \le C$.
\end{lemma}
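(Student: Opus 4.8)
The plan is to verify nonnegativity and the capacity constraint directly from the definition of $y(t)$ in Definition \ref{def:workAllocScheme}, using the fact that $x_j(t) = y_j(t) 1_{\{\mathcal{E}_j^r(t)=0\}}$ so that $0 \le x_j(t) \le y_j(t)^+$ componentwise and $Kx(t) \le K y(t)$ provided $y(t) \ge 0$. First I would establish $y(t) \ge 0$. For primary jobs this is immediate since $\varrho_j - \delta/(J2^{m+3}) > 0$ by the choice $\delta = \min_j \varrho_j/(2J)$. For jobs in $\cls^m$ the three cases in \eqref{eq:A2mult} each subtract at most $2^{k-m-2}\delta \le \delta/2$ (as $k \le m$, so $2^{k-m-2} \le 2^{-2}$) or $2^{-k-m-2}\delta$ from $\varrho_{\rho(k)}$, and again $\delta$ is small enough relative to $\min_j\varrho_j$ to keep this positive. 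The only subtle case is the singular jobs $j \in \cls^1$ with $\hat i(j) \in \varpi^r(t)$, where $y_j(t) = C_{\hat i(j)} - \sum_{l\neq j: K_{\hat i(j),l}=1} y_l(t)$; here I must show the right-hand side is nonnegative, which I defer to the capacity argument below since it is essentially equivalent to the constraint $Kx \le C$ being tight at such resources.

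Next I would verify $Kx(t) \le C$, i.e. for each resource $i$, $\sum_{j: K_{ij}=1} x_j(t) \le C_i$. Split into two cases according to whether $i \in \varpi^r(t)$. If $i \notin \varpi^r(t)$, then in particular $\check j(i) \notin \varpi^r(t)$, so $y_{\check j(i)}(t) = \varrho_{\check j(i)} - \delta$; bounding every other $y_j(t)$ with $K_{ij}=1$ by $\varrho_j$ plus its perturbation, the total is at most $\sum_{j: K_{ij}=1}\varrho_j + (\text{sum of perturbations}) - \delta = C_i + (\text{sum of perturbations}) - \delta$ using $K\varrho = C$. The key bookkeeping step is to show the sum of all positive perturbations from jobs impacting resource $i$ other than $\check j(i)$ is strictly less than $\delta$: the primary jobs contribute at most $\delta$ when stocked, but then I need the geometric structure — this is exactly where the factors $2^{k-m-2}$, $2^{-k-m-2}$ and the $J2^{m+3}$ denominator are engineered so the worst-case sum of upward perturbations is dominated by $\delta$. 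If $i \in \varpi^r(t)$, then by definition $\check j(i) \in \varpi^r(t) \subset$ region where line one of \eqref{eq:A3sing} applies (or some other stocked job forces $\hat i(j) = i \in \varpi^r$), and $y_{\check j(i)}(t)$ is defined precisely as $C_i - \sum_{l \neq \check j(i): K_{il}=1} y_l(t)$, so $\sum_{j:K_{ij}=1} y_j(t) = C_i$ exactly, giving equality; I then also get $y_{\check j(i)}(t) \ge 0$ from the fact that the other $y_l(t)$ for $l$ impacting $i$ sum to at most $C_i$ (again by the perturbation-control estimate, since none of them is itself of the "fill-remaining-capacity" form — only $\check j(i)$ is).

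The main obstacle is the combinatorial/arithmetic estimate that the sum of upward deviations $y_j(t) - \varrho_j$ over all job-types $j$ impacting a fixed resource $i$, excluding $\check j(i)$, is at most $\delta$ (and strictly less when $i \notin \varpi^r(t)$). One has to carefully enumerate: at most one contribution of $+\delta$ from the primary block (since... actually possibly several primary jobs, so the bound is $\le$ number of primaries times $\delta$ — here is where I need to recheck whether the definition intends $\delta$ or $\delta$ divided by something for the stocked primary case; reading \eqref{eq:A1prim} it is $\varrho_j + \delta$, so I must instead argue that if some primary impacting $i$ is stocked then $i \in \varpi^r(t)$ and we are in the equality case, so the only scenario needing the strict bound is $i \notin \varpi^r(t)$, in which case \emph{no} job impacting $i$ is stocked, hence all primary contributions are the \emph{negative} $-\delta/(J2^{m+3})$ terms, and among the $\cls^m$ jobs impacting $i$ only the third line of \eqref{eq:A2mult} can apply with its small negative perturbation $-2^{-k-m-2}\delta$, while the second line requires $\rho(k)$ stocked — impossible — and the first line is also negative). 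So when $i \notin \varpi^r(t)$ \emph{every} perturbation of a job impacting $i$ is nonpositive, giving $\sum_{j:K_{ij}=1} y_j(t) \le C_i - \delta < C_i$ trivially, and also each individual $y_j(t) \ge \varrho_j - \delta > 0$. When $i \in \varpi^r(t)$ the equality case handles everything. I would write the proof in exactly this dichotomy, which sidesteps the need for any delicate geometric-series estimate and reduces the whole lemma to the observation that $\delta < \min_j \varrho_j$ together with the capacity balance $K\varrho = C$.
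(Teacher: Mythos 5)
Your proposal is correct and follows essentially the same route as the paper: nonnegativity is immediate for every job except the singular job $\check j(i)$ at a resource $i\in\varpi^r(t)$, where one bounds the other allocations by $\varrho_l+\delta$ and invokes $K\varrho=C$, and the capacity constraint is handled by the dichotomy $i\in\varpi^r(t)$ (exact equality by construction of $y_{\check j(i)}$) versus $i\notin\varpi^r(t)$ (every job impacting $i$ gets strictly less than nominal rate, so the sum is below $C_i$). One small caveat: your closing claim that only $\delta<\min_j\varrho_j$ is needed is an overstatement — since up to $J-1$ jobs at a busy resource can each sit $\delta$ above nominal, the argument really uses $J\delta\le\tfrac{1}{2}\min_j\varrho_j$, which is exactly what the definition $\delta=\min_j\varrho_j/(2J)$ supplies and what the paper uses.
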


\begin{proof}
For the first statement in the lemma it suffices to show that $y_{j}(t)\geq 0
$ for all $j \in \mathbb{N}_J$ and $t\ge 0$. From definition of $\delta $ it
is clear that $y_{j}(t)\geq 0$ for all $j\in \mathbb{N}_{J}\setminus \cls^1$
and for $j\in \cls^1$ with $\hat i(j)\notin \varpi ^{r}(t)$. Consider now a $%
j\in \cls^1$ for which $\hat i(j)\in \varpi ^{r}(t)$. Then 
\begin{equation*}
y_{j}(t)=C_{\hat i(j)}-\sum_{l\neq j:K_{\hat i(j),l}=1}y_{l}(t)\text{.}
\end{equation*}%
Also note that 
\begin{eqnarray*}
\sum_{l\neq j:K_{\hat i(j),l}=1}y_{l}(t) \leq \sum_{l\neq j:K_{\hat
i(j),l}=1}\left( \varrho _{l}+\delta \right) \leq \sum_{l\neq j:K_{\hat
i(j),l}=1}\varrho _{l}+\frac{\min_{j'}\{\varrho _{j'}\}}{2}
\end{eqnarray*}
and thus since $K\varrho =C$
\begin{eqnarray*}
y_{j}(t) = C_{\hat i(j)}-\sum_{l\neq j:K_{\hat i(j),l}=1}y_{l}(t) \geq
C_{\hat i(j)}-\sum_{l\neq j:K_{\hat i(j),l}=1}\varrho _{l}-\frac{%
\min_{j'}\{\varrho _{j'}\}}{2} \geq \varrho _{j}-\frac{\min_{j'}\{\varrho _{j'}\}%
}{2} \geq 0.
\end{eqnarray*}%
This completes the proof of the first statement in the lemma. We now show
that $Kx(t) \le C$ for all $t\ge 0$. Let $i\in \mathbb{N}_{I}$ be arbitrary.
It suffices to show that for all $t\ge 0$,  
$
C_{i}\geq \sum_{j=1}^{J}K_{i,j}y_{j}(t)\text{.}
$
From definition of $y_j(t)$ for $j \in \cls^1$ in Definition \ref{def:workAllocScheme}, it is clear that when $i\in \varpi ^{r}(t)$, 
$
C_{i}=\sum_{j=1}^{J}K_{i,j}y_{j}(t)\text{.}
$
Finally, if $i\notin \varpi ^{r}(t)$, then Definition \ref%
{def:workAllocScheme} gives $y_{j}(t)<\varrho _{j}$ for all $j$ with $%
K_{i,j}=1$ and so 
\begin{eqnarray*}
\sum_{j=1}^{J}K_{i,j}y_{j}(t) <\sum_{j=1}^{J}K_{i,j}\varrho _{j} <C_{i}.
\end{eqnarray*}%
This completes the proof.
\end{proof}

The following two results are used in the proof of Theorem \ref{thm:IdleTimeExp}.
\begin{lemma}
For all $t\ge 0$ and $i\in \varpi
^{r}(t)$ such that $\sum_{j=1}^{J}K_{i,j}\mathcal{E}_{j}^r(t)=0, $ we have 
$C_{i}=\sum_{j=1}^{J}K_{i,j}x_{j}(t)\text{.} $
\label{lem:fullWorkloadCond}
\end{lemma}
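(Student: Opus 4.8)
The plan is to reduce this to the capacity computation already carried out in the proof of Lemma~\ref{lem:admispol}. First I would observe that the hypothesis $\sum_{j=1}^{J}K_{i,j}\mathcal{E}_{j}^r(t)=0$, together with $\mathcal{E}_{j}^r(t)\in\{0,1\}$ and $K_{i,j}\in\{0,1\}$, forces $\mathcal{E}_{j}^r(t)=0$ for every $j$ with $K_{i,j}=1$. Hence, by the defining relation $x_{j}(t)=y_{j}(t)1_{\{\mathcal{E}_{j}^r(t)=0\}}$, we get $K_{i,j}x_{j}(t)=K_{i,j}y_{j}(t)$ for all $j\in\mathbb{N}_J$, and therefore
\begin{equation*}
\sum_{j=1}^{J}K_{i,j}x_{j}(t)=\sum_{j=1}^{J}K_{i,j}y_{j}(t).
\end{equation*}

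Next I would invoke the fact, established within the proof of Lemma~\ref{lem:admispol}, that for $i\in\varpi^{r}(t)$ one has $C_{i}=\sum_{j=1}^{J}K_{i,j}y_{j}(t)$. Concretely, the unique local-traffic job $j_{0}\doteq\check j(i)$ lies in $\cls^1$ with $\hat i(j_{0})=i$, and since $i\in\varpi^{r}(t)$ the first branch of \eqref{eq:A3sing} is the one in force, giving $y_{j_{0}}(t)=C_{i}-\sum_{l\neq j_{0}:K_{i,l}=1}y_{l}(t)$, i.e.
\begin{equation*}
\sum_{j=1}^{J}K_{i,j}y_{j}(t)=y_{j_{0}}(t)+\sum_{l\neq j_{0}:K_{i,l}=1}y_{l}(t)=C_{i}.
\end{equation*}
Combining the two displays yields $C_{i}=\sum_{j=1}^{J}K_{i,j}x_{j}(t)$, which is the assertion of the lemma.

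There is essentially no serious obstacle here: the content is pure bookkeeping about which branch of Definition~\ref{def:workAllocScheme} is active. The only points requiring a little care are (i) using the hypothesis $\sum_{j}K_{i,j}\mathcal{E}_{j}^r(t)=0$ to pass from $x(t)$ to $y(t)$ on exactly the coordinates relevant to resource $i$, and (ii) noting that $i\in\varpi^{r}(t)$ (and \emph{not} the a priori stronger $\check j(i)\in\sigma^{r}(t)$) is precisely the condition that triggers the capacity-absorbing branch of \eqref{eq:A3sing} for $y_{\check j(i)}(t)$, so that the $i$-th row of $Ky(t)$ equals $C_{i}$ with equality rather than strict inequality.
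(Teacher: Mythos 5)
Your proof is correct and follows essentially the same route as the paper: reduce from $x$ to $y$ on the coordinates with $K_{i,j}=1$ using the hypothesis $\sum_j K_{i,j}\mathcal{E}_j^r(t)=0$, then observe that the capacity-absorbing branch of \eqref{eq:A3sing} for $y_{\check j(i)}(t)$ (active because $i\in\varpi^r(t)$) makes the $i$-th row sum equal to $C_i$. The only cosmetic difference is that you cite the identity $C_i=\sum_j K_{i,j}y_j(t)$ as established inside the proof of Lemma \ref{lem:admispol}, whereas the paper reads it off directly from Definition \ref{def:workAllocScheme}; the content is identical.
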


\begin{proof}
Let $t\ge 0$ and $i\in \varpi ^{r}(t)$ satisfy $\sum_{j=1}^{J}K_{i,j}%
\mathcal{E}_{j}^r(t)=0$. Then for all $j$ with $K_{i,j}=1$ we have $%
x_{j}(t)=y_{j}(t)$ and so it suffices to prove that $C_{i}=%
\sum_{j=1}^{J}K_{i,j}y_{j}(t)\text{.} $ However, this is an immediate
consequence of the definition of $y_j(t)$ for $j \in \cls^1$ and $\hat i(j) \in
\varpi ^{r}(t)$ in Definition \ref{def:workAllocScheme}.
\end{proof}

From Condition \ref{cond:HT1} we can find  $\hat{R} \in (0,\infty)$ such that
for all $r\geq \hat{R}$ and $j\in \mathbb{N}_{J}$ we have%
\begin{equation}
\left\vert \varrho _{j}-\varrho _{j}^{r}\right\vert \leq 2^{-2m -6}\frac{%
\delta }{J }\text{,}\;\;
2\lambda _{j}\geq \lambda _{j}^{r}\geq \lambda _{j}/2\text{, and }\;
2\mu _{j}\geq \mu _{j}^{r}\geq \mu _{j}/2\text{.}\label{def:Rhat} 
\end{equation}
For the rest of this work we will assume without loss of generality that $%
r\ge \hat R$.

\begin{lemma}
\label{lem:posDriftCond}For all $t\ge 0$ and $j\in \mathbb{N}_{J}$ if $%
c_{1}r^{\alpha }\leq Q_{j}^{r}(t)<c_{2}r^{\alpha }$ then 
\begin{equation*}
\lambda _{j}^{r}-\mu _{j}^{r}x_{j}(t)\geq \mu _{j}2^{-2m -5}\frac{\delta }{J 
}\text{.}
\end{equation*}
\end{lemma}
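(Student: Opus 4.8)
The plan is to show that whenever a queue $j$ is in the intermediate band $[c_1 r^\alpha, c_2 r^\alpha)$, the rate $x_j(t)$ assigned to it is strictly below the nominal level $\varrho_j^r$ by a quantity of order $\delta/J$, and then convert ``strictly below by a fixed amount'' into the stated drift bound. The key observation is that $c_1 r^\alpha \le Q_j^r(t)$ forces $Q_j^r(t) \ge c_1 r^\alpha > 0$, so (for $r$ large) $j$ has been stocked at least once, and the last-depletion/last-stocking mechanism in Definition \ref{def:workAllocScheme} gives $\mathcal{E}_j^r(t) = 0$ is \emph{not} guaranteed; however, what we actually need is only a bound on $x_j(t) = y_j(t) 1_{\{\mathcal{E}_j^r(t)=0\}} \le y_j(t)$, so it suffices to bound $y_j(t)$ from above by $\varrho_j^r$ minus a fixed positive amount. (If $\mathcal{E}_j^r(t) = 1$ then $x_j(t) = 0$ and the claimed inequality is even easier since $\lambda_j^r - 0 = \lambda_j^r > 0$ is bounded below using \eqref{def:Rhat}.)

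First I would handle the case $\mathcal{E}_j^r(t) = 0$, so $x_j(t) = y_j(t)$, and split according to which of the three groups $j$ belongs to. The crucial point is that since $Q_j^r(t) < c_2 r^\alpha$, we have $j \notin \sigma^r(t)$, i.e. $j$ is not stocked. For $j \in \cls^p$: the second line of \eqref{eq:A1prim} gives $y_j(t) = \varrho_j - \tfrac{\delta}{J 2^{m+3}} \le \varrho_j - 2^{-2m-5}\tfrac{\delta}{J} \cdot (\text{const})$; more precisely $y_j(t) \le \varrho_j - \tfrac{\delta}{J 2^{m+3}}$. For $j = \rho(k) \in \cls^m$: since $\rho(k) \notin \sigma^r(t)$, only the first or third line of \eqref{eq:A2mult} can apply, and both give $y_{\rho(k)}(t) = \varrho_{\rho(k)} - 2^{k-m-2}\delta$ or $\varrho_{\rho(k)} - 2^{-k-m-2}\delta$, in either case $y_{\rho(k)}(t) \le \varrho_{\rho(k)} - 2^{-2m-2}\delta \le \varrho_{\rho(k)} - 2^{-2m-2}\delta$ (using $k \le m$, so $2^{-k-m-2} \ge 2^{-2m-2}$). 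For $j \in \cls^1$: since $Q_j^r(t) < c_2 r^\alpha$ we have $j \notin \sigma^r(t)$, hence $\hat i(j) \notin \varpi^r(t)$ is \emph{not} automatic --- but the second line of \eqref{eq:A3sing} applies precisely when $\hat i(j) \notin \varpi^r(t)$, giving $y_j(t) = \varrho_j - \delta$; and when $\hat i(j) \in \varpi^r(t)$ we need a separate argument, namely that the ``remaining capacity'' $C_{\hat i(j)} - \sum_{l \ne j} y_l(t)$ is also at most $\varrho_j^r$ minus a fixed amount. This last subcase is the main obstacle.

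To resolve the $\cls^1$ subcase with $\hat i(j) \in \varpi^r(t)$, I would use the capacity identity: by Lemma \ref{lem:admispol} (or directly from the construction of $y_j$ for $j \in \cls^1$), when $\hat i(j) \in \varpi^r(t)$ we have $\sum_{l : K_{\hat i(j), l}=1} y_l(t) = C_{\hat i(j)} = K\varrho$ evaluated at row $\hat i(j)$, i.e. $y_j(t) = C_{\hat i(j)} - \sum_{l \ne j} y_l(t) = \sum_{l: K_{\hat i(j),l}=1} \varrho_l - \sum_{l \ne j} y_l(t) = \varrho_j + \sum_{l \ne j: K_{\hat i(j),l}=1}(\varrho_l - y_l(t))$. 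Now $\hat i(j) \in \varpi^r(t)$ means there is some job $l^*$ with $K_{\hat i(j), l^*}=1$, $l^* \in \sigma^r(t)$, so $l^*$ is stocked, and $l^* \ne j$ since $j \notin \sigma^r(t)$. For that $l^*$, depending on its group, $y_{l^*}(t) \ge \varrho_{l^*} + (\text{fixed positive multiple of }\delta)$ --- for a stocked primary job this is $\varrho_{l^*} + \delta$ from the first line of \eqref{eq:A1prim}; for a stocked job in $\cls^m$ it is $\varrho_{l^*} + 2^{k-m-2}\delta$ from the second line of \eqref{eq:A2mult} (but the first line, when all the $\zeta_i^k$ are hit, gives a \emph{decrease} --- so I need to be careful here and may need to instead argue there must be \emph{some} resource/job witnessing the excess). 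For the other $l \ne j, l^*$, $y_l(t) \ge \varrho_l - \delta$ trivially by definition of $\delta$. Combining, $y_j(t) \le \varrho_j + (\varrho_{l^*} - y_{l^*}(t)) + \sum_{l \ne j, l^*}(\varrho_l - y_l(t)) \le \varrho_j - c\delta + (J-2)\delta$ --- and this does \emph{not} obviously give a decrease. The correct fix: the ``stocked'' job $l^*$ that witnesses $\hat i(j) \in \varpi^r(t)$ should be chosen to have a \emph{higher} allocation; for primary jobs this works, and the construction is designed so that at a resource with a stocked primary, the local-traffic job $j = \check j(\hat i(j))$ absorbs the deficit. I would then invoke that the $y_l$ for $l \in \cls^m$ with $K_{\hat i(j),l}=1$ satisfy $y_l(t) \le \varrho_l + 2^{-m-1}\delta$ (bounding $2^{k-m-2}$), so the total positive excess contributed by non-primary jobs is at most $J \cdot 2^{-m-1}\delta$, while the stocked primary contributes $-\delta$ to $\varrho_j - y_j(t)$; choosing the constants $\delta = \min_j \varrho_j/(2J)$ and the exponents $2^{-2m-5}$ exactly as in \eqref{def:Rhat} makes $\delta - J 2^{-m-1}\delta - (\text{error from }\varrho_j - \varrho_j^r \text{ via } \eqref{def:Rhat}) \ge \mu_j 2^{-2m-5}\delta/J / \mu_j^r$ work out. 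Finally, in all cases I convert $y_j(t) \le \varrho_j^r - 2^{-2m-5}\delta/J \cdot (\text{something}) $ into $\lambda_j^r - \mu_j^r x_j(t) = \mu_j^r(\varrho_j^r - x_j(t)) \ge \mu_j^r \cdot (\text{gap}) \ge (\mu_j/2) \cdot 2^{-2m-4}\delta/J \ge \mu_j 2^{-2m-5}\delta/J$ using \eqref{def:Rhat} to pass between $\varrho_j$ and $\varrho_j^r$ and between $\mu_j^r$ and $\mu_j$, absorbing the $|\varrho_j - \varrho_j^r| \le 2^{-2m-6}\delta/J$ slack. The main obstacle, as flagged, is pinning down the exact witness job and bookkeeping the dyadic constants in the $\hat i(j) \in \varpi^r(t)$ subcase so that the net gap is at least $2^{-2m-5}\delta/J$.
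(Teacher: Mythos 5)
Your treatment of the easy cases matches the paper: $\mathcal{E}_j^r(t)=1$ gives $x_j(t)=0$ and the bound from $\lambda_j^r\ge\lambda_j/2$; and for $\mathcal{E}_j^r(t)=0$ the observation $Q_j^r(t)<c_2r^\alpha\Rightarrow j\notin\sigma^r(t)$ settles primary jobs, jobs in $\cls^m$, and jobs in $\cls^1$ with $\hat i(j)\notin\varpi^r(t)$, after which the passage from $y_j(t)\le\varrho_j-(\text{gap})$ to the stated drift bound via \eqref{def:Rhat} is exactly as in the paper. But the subcase you yourself flag as the obstacle --- $j\in\cls^1$, $\mathcal{E}_j^r(t)=0$, $\hat i(j)\in\varpi^r(t)$ --- is not closed. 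Your proposed fix only works when the witness of $\hat i(j)\in\varpi^r(t)$ can be taken to be a \emph{stocked primary} job (the paper's Case 1). You leave open precisely the situation you identified: the only stocked jobs at $\hat i(j)$ are in $\cls^m$, and such a job may fall under the \emph{first} line of \eqref{eq:A2mult} and receive a below-nominal rate, so it provides no excess at all; "argue there must be some resource/job witnessing the excess" is the missing idea, not a proof. The paper's resolution (its Case 2) is to take $k^\ast$ the \emph{maximal} rank among stocked $\cls^m$ jobs at $\hat i(j)$: since no primary at $\hat i(j)$ is stocked, $j$ itself is not stocked, and by maximality no $\rho(k)$ with $k>k^\ast$ at this resource is stocked, one gets $\zeta_{\hat i(j)}^{k^\ast}\cap\sigma^r(t)=\emptyset$, so the \emph{second} line of \eqref{eq:A2mult} applies to $\rho(k^\ast)$ and it receives $\varrho_{\rho(k^\ast)}+2^{k^\ast-m-2}\delta$; the dyadic weights are designed so that the total possible excess of the lower-ranked jobs, $\sum_{k<k^\ast}2^{k-m-2}\delta=2^{k^\ast-m-2}\delta-2^{-m-1}\delta$, falls short of this by the fixed margin $2^{-m-1}\delta$, while higher-ranked and primary jobs (all unstocked) contribute only the small $2^{-k-m-2}\delta$ and $2^{-m-3}\delta/J$ terms, yielding $y_j(t)\le\varrho_j-2^{-m-3}\delta$. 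Without this argument the lemma is unproved in that configuration.

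A secondary, smaller issue: even in the stocked-primary subcase your bookkeeping is off. A stocked-or-unstocked job $\rho(k)\in\cls^m$ can receive up to $\varrho_{\rho(k)}+2^{k-m-2}\delta$, which is bounded by $\varrho_{\rho(k)}+2^{-2}\delta$, not by $\varrho_{\rho(k)}+2^{-m-1}\delta$ as you assert (that bound fails for every $k\ge 2$), and your total $J\cdot 2^{-m-1}\delta$ is not necessarily below the $\delta$ provided by the stocked primary (e.g.\ $m=1$, $J$ large). The correct accounting sums the geometric series over distinct ranks, $\sum_{k=1}^{m}2^{k-m-2}\delta<\tfrac{\delta}{2}$, which together with the at most $2^{-m-3}\delta$ excess from the remaining primaries is dominated by the stocked primary's $\delta$; this is how the paper obtains the fixed deficit in its Case 1.
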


\begin{proof}
Note that if $\mathcal{E}_{j}^r(t)=1$ then $x_{j}(t)=0$ which, since $r
\ge \hat R$, implies on recalling the definition of $\delta$ from Definition \ref{def:workAllocScheme} that 
\begin{equation*}
\lambda _{j}^{r}-\mu _{j}^{r}x_{j}(t)=\lambda _{j}^{r}\geq \lambda
_{j}/2=\mu _{j}\varrho _{j}/2\geq \mu _{j}\delta.
\end{equation*}%
Thus the result holds in this case.

We now consider the case $\mathcal{E}_{j}^r(t)=0$ so that $%
x_{j}(t)=y_{j}(t)$.  If $j\in \mathbb{N}_{J}\setminus \cls^1$ or $j\in \cls^1$
and $\hat i(j)\notin \varpi ^{r}(t)$, Definition \ref{def:workAllocScheme}
gives%
\begin{equation*}
y_{j}(t)\leq \varrho _{j}-2^{-2m-3}\frac{\delta }{J }
\end{equation*}%
which combined with  \eqref{def:Rhat} implies 
\begin{eqnarray*}
\lambda _{j}^{r}-\mu _{j}^{r}x_{j}(t) &\geq& \lambda _{j}^{r}-\mu
_{j}^{r}\left( \varrho _{j}-2^{-2m -3}\frac{\delta }{J }\right) = \mu
_{j}^{r}\left( \varrho _{j}^{r}-\varrho _{j}\right) +\mu _{j}^{r}2^{-2m -3}%
\frac{\delta }{J } \\
&\geq& -\mu _{j}^{r}2^{-2m -6}\frac{\delta }{J }+\mu _{j}2^{-2m -4}\frac{%
\delta }{J } \ge \mu _{j}2^{-2m -5}\frac{\delta }{J }
\end{eqnarray*}%
and the result again holds. \ Finally we consider the remaining case, namely 
$j\in \cls^1$, $\mathcal{E}_{j}^r(t)=0$ and $\hat i(j)\in \varpi ^{r}(t)$.
We will consider two sub-cases.\newline

\noindent \textbf{Case 1:} $\zeta _{\hat i(j)}^{0}\cap \sigma ^{r}(t) \neq
\emptyset$. Let $l^{\ast }\in \zeta _{\hat i(j)}^{0}\cap \sigma ^{r}(t)$.
Then $y_{l^{\ast }}(t)=\varrho _{l^{\ast }}+\delta $ and 
\begin{align*}
y_{j}(t) = C_{\hat i(j)}-\sum_{l\neq j:K_{\hat i(j),l}=1}y_{l}(t) 
= C_{\hat i(j)}-y_{l^{\ast }}(t)-\sum_{k=1}^m K_{\hat i(j),\rho (k)}y_{\rho
(k)}(t)-\sum_{l\in \zeta _{\hat i(j)}^{0}:l\neq l^{\ast }}y_{l}(t)\text{.}
\end{align*}%
Furthermore, 
\begin{align*}
-\sum_{k=1}^{m }K_{\hat i(j),\rho (k)}y_{\rho (k)}(t) \leq -\sum_{k=1}^{m
}K_{\hat i(j),\rho (k)}\left( \varrho _{\rho (k)}-2^{k-m -2}\delta \right) 
\leq -\sum_{k=1}^{m }K_{\hat i(j),\rho (k)}\varrho _{\rho (k)}+\delta
\left( 1-2^{-m -2}\right)
\end{align*}%
and%
\begin{align*}
-\sum_{l\in \zeta _{\hat i(j)}^{0}:l\neq l^{\ast }}y_{l}(t) \leq
-\sum_{l\in \zeta _{\hat i(j)}^{0}:l\neq l^{\ast }}\left( \varrho_{l}-2^{-m
-3}\frac{\delta }{J }\right) 
\leq -\sum_{l\in \zeta _{\hat i(j)}^{0}:l\neq l^{\ast }}\varrho _{l}+2^{-m
-3}\delta.
\end{align*}%
Consequently 
\begin{eqnarray*}
y_{j}(t) \leq C_{\hat i(j)}-\sum_{l\neq j:K_{\hat i(j),l}=1}\varrho
_{l}-\delta +\delta \left( 1-2^{-m -2}\right) +2^{-m -3}\delta \leq \varrho
_{j}-\delta 2^{-m -2}
\end{eqnarray*}%
which combined with  \eqref{def:Rhat}\ gives 
\begin{eqnarray*}
\lambda _{j}^{r}-\mu _{j}^{r}y_{j}(t) &\geq &\lambda _{j}^{r}-\mu
_{j}^{r}\left( \varrho _{j}-2^{-m -2}\delta \right) \geq \mu _{j}^{r}\left(
\varrho _{j}^{r}-\varrho _{j}\right) +\mu _{j}^{r}2^{-m -2}\delta \geq -\mu
_{j}^{r}2^{-2m -6}\frac{\delta }{J }+\mu _{j}2^{-m -3}\delta \\
&\geq &-\mu _{j}2^{-2m -5}\frac{\delta }{J }+\mu _{j}2^{-m -3}\delta \geq
\mu _{j}2^{-m -4}\delta
\end{eqnarray*}%
and the result holds. \newline

\noindent \textbf{Case 2:} $\zeta _{\hat i(j)}^{0}\cap \sigma
^{r}(t)=\emptyset $. In this case the assumption $\hat i(j)\in \varpi ^{r}(t)
$ implies that there exists some $k\in \mathbb{N}_m$ such that $K_{\hat
i(j),\rho (k)}=1$ and $\rho (k)\in \sigma ^{r}(t)$. \ Let 
\begin{equation*}
k^{\ast }=\max \{k\in \mathbb{N}_m:K_{\hat i(j),\rho (k)}=1\text{ and }\rho
(k)\in \sigma ^{r}(t)\}\text{.}
\end{equation*}%
Consequently $\zeta _{\hat i(j)}^{k^{\ast }}\cap \sigma ^{r}(t)=\emptyset $
and $\rho (k^{\ast })\in \sigma ^{r}(t)$ so 
$
y_{\rho (k^{\ast })}=\varrho _{\rho (k^{\ast })}+2^{k^{\ast }-m -2}\delta 
$.
Recall that%
\begin{eqnarray*}
y_{j}(t) &=&C_{\hat i(j)}-\sum_{l\neq j:K_{\hat i(j),l}=1}y_{l}(t) \\
&=&C_{\hat i(j)}-y_{\rho (k^{\ast })}-\sum_{k=1}^{k^{\ast }-1}K_{\hat
i(j),\rho (k)}y_{\rho (k)}(t)-\sum_{k=k^{\ast }+1}^{m }K_{\hat i(j),\rho
(k)}y_{\rho (k)}(t)-\sum_{l\in \zeta _{\hat i(j)}^{0}}K_{\hat i(j),l}y_{l}(t)%
\text{.}
\end{eqnarray*}%
For the third term on the right side, we have 
\begin{eqnarray*}
-\sum_{k=1}^{k^{\ast }-1}K_{\hat i(j),\rho (k)}y_{\rho (k)}(t) &\leq
&-\sum_{k=1}^{k^{\ast }-1}K_{\hat i(j),\rho (k)}\left( \varrho _{\rho
(k)}-2^{k-m -2}\delta \right) \\
&\leq &-\sum_{k=1}^{k^{\ast }-1}K_{\hat i(j),\rho (k)}\varrho _{\rho
(k)}+\left( 1-2^{-k^{\ast }+1}\right) 2^{k^{\ast }-m -2}\delta \\
&\leq &-\sum_{k=1}^{k^{\ast }-1}K_{\hat i(j),\rho (k)}\varrho _{\rho
(k)}+2^{k^{\ast }-m -2}\delta -2^{-m -1}\delta \text{.}
\end{eqnarray*}%
By the definition of $k^{\ast }$ for all $k\in \{k^{\ast }+1,...,m \}$ if $%
K_{\hat i(j),\rho (k)}=1$ we have $\rho (k)\notin \sigma ^{r}(t)$ and
$\zeta^k_{\hat i(j)}\cap \sigma^r(t) = \emptyset$,
consequently 
$
y_{\rho (k)}(t)=\varrho _{\rho (k)}-2^{-k-m -2}\delta \text{.}
$
This gives%
\begin{align*}
-\sum_{k=k^{\ast }+1}^{m }K_{\hat i(j),\rho (k)}y_{\rho (k)}(t)
&= -\sum_{k=k^{\ast }+1}^{m }K_{\hat i(j),\rho (k)}\left( \varrho _{\rho
(k)}-2^{-k-m -2}\delta \right) \\
&\le -\sum_{k=k^{\ast }+1}^{m }K_{\hat i(j),\rho (k)}\varrho _{\rho
(k)}+2^{-k^{\ast }-m -2}\delta (1-2^{-m+k^*})\text{.}
\end{align*}%
Finally, by assumption, $\zeta _{\hat i(j)}^{0}\cap \sigma ^{r}(t)=\emptyset $
and therefore 
\begin{align*}
-\sum_{l\in \zeta _{\hat i(j)}^{0}}K_{\hat i(j),l}y_{l}(t) = -\sum_{l\in
\zeta _{\hat i(j)}^{0}}K_{\hat i(j),l}\left( \varrho _{l}-2^{-m -3}\frac{%
\delta }{J }\right) 
\leq  -\sum_{l\in \zeta _{\hat i(j)}^{0}}K_{\hat i(j),l}\varrho _{l}+2^{-m
-3}\delta \text{.}
\end{align*}%
This gives 
\begin{eqnarray*}
y_{j}(t) &\le C_{\hat i(j)}-\left(\sum_{l\neq j:K_{\hat i(j),l}=1}\varrho
_{l}\right)-2^{k^{\ast }-m -2}\delta +2^{k^{\ast }-m -2}\delta 
-2^{-m -1}\delta +2^{-k^{\ast }-m -2}\delta +2^{-m -3}\delta \\
&\le \varrho _{j}-2^{-m -3}\delta
\end{eqnarray*}
which combined with  \eqref{def:Rhat} implies 
\begin{eqnarray*}
\lambda _{j}^{r}-\mu _{j}^{r}y_{j}(t) &\geq &\lambda _{j}^{r}-\mu
_{j}^{r}\left( \varrho _{j}-2^{-m -3}\delta \right) = \mu _{j}^{r}\left(
\varrho _{j}^{r}-\varrho _{j}\right) +\mu _{j}^{r}2^{-m -3}\delta \geq -\mu
_{j}^{r}2^{-2m -6}\frac{\delta }{J }+\mu _{j}2^{-m -4}\delta \\
&\geq &-\mu _{j}2^{-2m -5}\frac{\delta }{J }+\mu _{j}2^{-m -4}\delta \geq
\mu _{j}2^{-m -5}\delta
\end{eqnarray*}%
and completes the proof.
\end{proof}

The following lemma will be used in the proofs of Propositions \ref{thm:initInef} 
and \ref{thm:runningInef}.
\begin{lemma}
	\label{lem:lem3_4}
 (a) Let $t\ge 0$ and $k\in \mathbb{N}_m$ be such that $\zeta
_{i'}^{k}\cap \sigma ^{r}(t)\neq \emptyset $ for all $i'\in N_{\rho (k)}$.
Then for any $i\in N_{\rho (k)}$ satisfying $\sum_{j\in \zeta _{i}^{k}}%
\mathcal{E}_{j}^r(t)=0$, we have 
\begin{equation}
\sum_{j\in \zeta _{i}^{k}}\left( \varrho _{j}^{r}-x_{j}(t)\right) \leq
-2^{-m -2}\delta \text{.}\label{eq:eq636}
\end{equation}%
(b) Let $i\in \mathbb{N}_{I}$ and $t\ge 0$ be such that $\zeta _{i}^{0}\cap
\sigma ^{r}(t)\neq \emptyset $ and $\sum_{j\in \zeta _{i}^{0}}\mathcal{E}%
_{j}^{r}(t)=0$. Then, we have 
\begin{equation*}
\sum_{j\in \zeta _{i}^{0}}\left( \varrho _{j}^{r}-x_{j}(t)\right) \leq
-2^{-2}\delta \text{.}
\end{equation*}
\end{lemma}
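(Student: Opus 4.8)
The plan is to treat the two parts in parallel. In each, the hypothesis $\sum_j\mathcal{E}_j^r(t)=0$ over the relevant index set forces (since $\mathcal{E}_j^r$ is $\{0,1\}$-valued) $\mathcal{E}_j^r(t)=0$, hence $x_j(t)=y_j(t)$, for every $j$ in that set; so it will suffice to bound $\sum_j(\varrho_j^r-y_j(t))$ from above. The first step is to peel off the discrepancy $\sum_j(\varrho_j^r-\varrho_j)$, which by \eqref{def:Rhat} is at most $|\zeta|\cdot 2^{-2m-6}\delta/J\le 2^{-2m-6}\delta$ in absolute value, since the index set has at most $J$ elements. Everything then reduces to a lower bound on $\sum_j y_j(t)$.

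For part (a) I would fix $i\in N_{\rho(k)}$ and apply the hypothesis with $i'=i$ to obtain some $j_0\in\zeta_i^k$ with $j_0\in\sigma^r(t)$; since $i\in N_{j_0}$ this gives $i\in\varpi^r(t)$. The definition of $y_{\check j(i)}(t)$ for the local-traffic job $\check j(i)$ (here one uses Condition \ref{cond:loctrafcond} and the no-repeated-columns hypothesis on $K$ to know $\check j(i)$ is the unique $\cls^1$-job impacting node $i$) then yields the capacity identity $\sum_{j:\,K_{i,j}=1}y_j(t)=C_i$. Observing that $\zeta_i^k$ is precisely the set of jobs impacting $i$ other than $\rho(1),\dots,\rho(k)$, and using $K\varrho=C$, a rearrangement gives
\begin{equation*}
\sum_{j\in\zeta_i^k}\bigl(\varrho_j-y_j(t)\bigr)=\sum_{l=1}^{k}K_{i,\rho(l)}\bigl(y_{\rho(l)}(t)-\varrho_{\rho(l)}\bigr).
\end{equation*}
In the $l=k$ term $K_{i,\rho(k)}=1$, and the full hypothesis ($\zeta_{i'}^k\cap\sigma^r(t)\neq\emptyset$ for all $i'\in N_{\rho(k)}$) places us in the first branch of \eqref{eq:A2mult}, so $y_{\rho(k)}(t)-\varrho_{\rho(k)}=-2^{k-m-2}\delta$; for $l<k$, inspecting all three branches of \eqref{eq:A2mult} gives $y_{\rho(l)}(t)-\varrho_{\rho(l)}\le 2^{l-m-2}\delta$. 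Summing the geometric series $\sum_{l=1}^{k-1}2^{l-m-2}=2^{k-m-2}-2^{-m-1}$ then gives $\sum_{j\in\zeta_i^k}(\varrho_j-y_j(t))\le-2^{-m-1}\delta$, and combining with the discrepancy bound yields $\sum_{j\in\zeta_i^k}(\varrho_j^r-x_j(t))\le 2^{-2m-6}\delta-2^{-m-1}\delta\le-2^{-m-2}\delta$.

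For part (b), which is more elementary, I would use that $\zeta_i^0\subset\cls^p$ and pick $j^*\in\zeta_i^0\cap\sigma^r(t)$: the first branch of \eqref{eq:A1prim} gives $y_{j^*}(t)=\varrho_{j^*}+\delta$, while both branches of \eqref{eq:A1prim} give $y_j(t)\ge\varrho_j-\delta/(J2^{m+3})$ for the remaining $j\in\zeta_i^0$. Hence $\sum_{j\in\zeta_i^0}(\varrho_j-y_j(t))\le-\delta+(|\zeta_i^0|-1)\delta/(J2^{m+3})\le-\delta+2^{-m-3}\delta$, and adding the discrepancy bound gives $\sum_{j\in\zeta_i^0}(\varrho_j^r-x_j(t))\le 2^{-2m-6}\delta-\delta+2^{-m-3}\delta\le-2^{-2}\delta$.

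The main obstacle will be the bookkeeping in part (a): correctly recognizing $\zeta_i^k$ as the complement inside $\{j:K_{i,j}=1\}$ of $\{\rho(1),\dots,\rho(k)\}$, converting (via $i\in\varpi^r(t)$) the local-traffic allocation into the identity $\sum_{j:K_{i,j}=1}y_j(t)=C_i$, and checking that the tail $\sum_{l<k}2^{l-m-2}\delta$ of the geometric series is exactly small enough relative to the gain $-2^{k-m-2}\delta$ from the $l=k$ term while still dominating the $O(2^{-2m-6}\delta)$ perturbation of $\varrho^r$ against $\varrho$. All the final numerical inequalities come down to $2^{-2m-6}\le 2^{-m-2}$ and $2^{-2m-6}+2^{-m-3}\le 3/4$, which hold for every $m\ge 0$.
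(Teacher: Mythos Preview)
Your proof is correct and follows essentially the same approach as the paper's. Both proofs reduce to bounding $\sum_j(\varrho_j-y_j(t))$ via the capacity identity $\sum_{j:K_{i,j}=1}y_j(t)=C_i$ (obtained from $i\in\varpi^r(t)$ and the definition of $y_{\check j(i)}$), then use the same case analysis of \eqref{eq:A2mult} and \eqref{eq:A1prim} together with the geometric-series cancellation $\sum_{l=1}^{k-1}2^{l-m-2}=2^{k-m-2}-2^{-m-1}$ and the perturbation bound \eqref{def:Rhat}.
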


\begin{proof}
(a) Recall that we assume $r\geq \hat{R}$ and consequently \eqref{def:Rhat} holds. \ Let $k\in \mathbb{N}_m$ and $t\ge 0$
be such that $\zeta _{i'}^{k}\cap \sigma^{r}(t)\neq \emptyset $ for
all $i'\in N_{\rho (k)}$. Let $i\in N_{\rho (k)}$ be such that $\sum_{j\in
\zeta _{i}^{k}}\mathcal{E}_{j}^r(t)=0$. We need to show that \eqref{eq:eq636} holds for such an $i$.
Since $\zeta
_{i'}^{k}\cap \sigma ^{r}(t)\neq \emptyset $ for all $i'\in N_{\rho (k)}$,
Definition \ref{def:workAllocScheme} gives 
\begin{equation}
y_{\rho (k)}(t)=\varrho _{\rho (k)}-2^{k-m -2}\delta \text{.}\label{eq:eq751}
\end{equation}%
Since $\sum_{j\in \zeta _{i}^{k}}\mathcal{E}_{j}^r(t)=0$, for all $j\in
\zeta _{i}^{k}$,
$x_{j}(t)=y_{j}(t)$ so to prove \eqref{eq:eq636} it suffices to show
\begin{equation}
\sum_{j\in \zeta _{i}^{k}}\left( \varrho _{j}^{r}-y_{j}(t)\right) \leq
-2^{-m -2}\delta \text{.}\label{eq:eq752}
\end{equation}%
Due to the assumption that $\zeta _{i}^{k}\cap \sigma ^{r}(t)\neq \emptyset $
we have $i\in \varpi ^{r}(t)$ and consequently Definition \ref{def:workAllocScheme} gives 
\begin{equation*}
y_{\check j(i)}(t)=C_{i}-\sum_{j\neq \check j(i):K_{i,j}=1}y_{j}(t)\text{.}
\end{equation*}%
Therefore 
\begin{eqnarray*}
\sum_{j\in \zeta _{i}^{k}}y_{j}(t) &=&y_{\check j(i)}(t)+\sum_{j\in \zeta
_{i}^{k}:j\neq \check j(i)}y_{j}(t) \\
&=&C_{i}-\sum_{j\neq \check j(i):K_{i,j}=1}y_{j}(t)+\sum_{j\in \zeta
_{i}^{k}:j\neq \check j(i)}y_{j}(t) \\
&=&C_{i}-y_{\rho (k)}(t)-\sum_{v=1}^{k-1}K_{i,v}y_{\rho (v)}(t)\text{.}
\end{eqnarray*}%
However, from \eqref{eq:eq751} and Definition \ref{def:workAllocScheme}
\begin{align*}
C_{i}-y_{\rho (k)}(t)-\sum_{v=1}^{k-1}K_{i,v}y_{\rho (v)}(t) &\geq
 C_{i}-\left( \varrho _{\rho (k)}-2^{k-m -2}\delta \right) 
-\sum_{v=1}^{k-1}K_{i,v}\left( \varrho _{\rho (v)}+2^{v-m -2}\delta \right)
\\
&\geq C_{i}-\sum_{v=1}^{k}K_{i,v}\varrho _{\rho (v)}+2^{k-m -2}\delta
-2^{k-m -2}\delta +2^{-m -1}\delta \\
&\geq \sum_{j\in \zeta _{i}^{k}}\varrho _{j}+2^{-m -1}\delta
\end{align*}%
which gives%
\begin{equation*}
\sum_{j\in \zeta _{i}^{k}}y_{j}(t)\geq \sum_{j\in \zeta _{i}^{k}}\varrho
_{j}+2^{-m -1}\delta .
\end{equation*}%
Combining this with  \eqref{def:Rhat} gives%
\begin{align*}
\sum_{j\in \zeta _{i}^{k}}\left( \varrho _{j}^{r}-y_{j}(t)\right)
= \sum_{j\in \zeta _{i}^{k}}\varrho _{j}^{r}-\sum_{j\in \zeta
_{i}^{k}}y_{j}(t) 
\leq \sum_{j\in \zeta _{i}^{k}}\left( \varrho _{j}^{r}-\varrho _{j}\right)
-2^{-m -1}\delta 
\leq J 2^{-2m -6}\frac{\delta }{J }-2^{-m -1}\delta 
\leq -2^{-m -2}\delta \text{.}
\end{align*}%
This proves \eqref{eq:eq752} and completes the proof of part (a).\\

\noindent (b) Suppose now that $i\in \mathbb{N}_{I}$  and $t\ge 0$ are such that 
$\zeta _{i}^{0}\cap \sigma ^{r}(t)\neq \emptyset $ and $\sum_{j\in \zeta
_{i}^{0}}\mathcal{E}_{j}^r(t)=0$. 
From the latter property we have $x_{j}(t)=y_{j}(t)$ for all $j\in \zeta _{i}^{0}$,
and because $\zeta _{i}^{0}\cap \sigma ^{r}(t)\neq \emptyset $ there exists $%
l^{\ast }\in \zeta _{i}^{0}$ such that $l^{\ast }\in \sigma ^{r}(t)$. \
From Definition \ref{def:workAllocScheme} 
$y_{l^{\ast }}(t)=\varrho _{l^{\ast }}+\delta$
and 
\begin{align*}
\sum_{j\in \zeta _{i}^{0}}y_{j}(t) &= y_{l^{\ast }}(t)+\sum_{j\in \zeta
_{i}^{0}:j\neq l^{\ast }}y_{j}(t) 
\geq \varrho _{l^{\ast }}+\delta +\sum_{j\in \zeta _{i}^{0}:j\neq l^{\ast
}}\left( \varrho _{j}-2^{-m -3}\frac{\delta }{J }\right) \\
&\geq \sum_{j\in \zeta _{i}^{0}}\varrho _{j}+\delta -2^{-m -3}\delta 
\geq \sum_{j\in \zeta _{i}^{0}}\varrho _{j}+\frac{\delta}{2}\text{.}
\end{align*}%
This combined with  \eqref{def:Rhat} gives%
\begin{align*}
\sum_{j\in \zeta _{i}^{0}}\left( \varrho _{j}^{r}-x_{j}(t)\right)
&= \sum_{j\in \zeta _{i}^{0}}\varrho _{j}^{r}-\sum_{j\in \zeta
_{i}^{0}}y_{j}(t) 
\leq \sum_{j\in \zeta _{i}^{0}}\varrho _{j}^{r}-\sum_{j\in \zeta
_{i}^{0}}\varrho _{j}-\frac{\delta}{2} 
\leq \sum_{j\in \zeta _{i}^{0}}\left( \varrho _{j}^{r}-\varrho _{j}\right)
-\frac{\delta}{2} \\
&\leq J 2^{-2m -6}\frac{\delta }{J }-\frac{\delta}{2} 
\leq -2^{-2}\delta \text{.}
\end{align*}%
This completes the proof of (b).
\end{proof}

\section{\protect Large Deviation Estimates}
\label{sec:ldest}
Recall the allocation scheme $x(\cdot)$ given by Definition \ref{def:workAllocScheme} and define processes $Q^r, B^r, T^r$ associated with this allocation scheme with $\dot B^r(t)=x(t)$, $t\ge 0$, as in Section \ref{sec:backg}. Also recall the other associated processes as defined in \eqref{eq:eq935} --\eqref{eq:eq939}. Note that the allocation scheme depends on a parameter $\alpha \in (0, 1/2)$ and $c_1, c_2\in (0,\infty)$.
Let 
$
X^{r}(t)=\left( Q^{r}(t),\mathcal{E}^{r}(t)\right)
$
and let 
\begin{equation}
	\label{eq:eqhatxrt}
\hat{X}^{r}(t) =\left( \hat{Q}^{r}(t),\mathcal{E}^{r}(r^{2}t)\right) 
=\left( Q^{r}(r^{2}t)/r,\mathcal{E}^{r}(r^{2}t)\right),\; t\ge 0.
\end{equation}%
Note that although $\hat Q^r$ is not Markovian, the pair $\hat{X}^{r}$ defines a strong Markov process with state space
$\cls^r \doteq (\RR_+ \cap \frac{1}{r}\NN_0)^J \times \{0,1\}^J$. Expectations of various functionals of the Markov process $\hat{X}^{r}$
when $\hat{X}^{r}(0)=x$ will be denoted as $E_x$ and the associated probabilities by $P_x$. 
The  following theorem is a key step in estimating the idleness terms in state dynamics.
\begin{theorem}
\label{thm:IdleTimeExp}For any $\epsilon \in(0,\infty)$ and $j\in \mathbb{N}_{J}$ there
exist  $\hat{B}_{1},\hat{B}_{2},\hat{B}_{3},\hat{B}_{4},R \in (0,\infty) $
such that for all $r\geq R$,  $t\geq 1$ and $x\in \cls^r$ we have
\begin{equation} 
P_x\left( \int_{0}^{tr^{1/2}}\mathit{I}_{\{\mathcal{E}_{j}^r(s)=1\}}ds\geq
\epsilon r^{1/4+\alpha /2}t\right) \leq \hat{B}_{1}e^{-r^{1/4+\alpha /2}t%
\hat{B}_{2}}+ \left( 1+\frac{\hat{B}_{3}}{r^{1/4+\alpha /2}}\right)
^{-\hat{B}_{4}r^{1/2}t}
\label{eq:rootIdleTimeResult}
\end{equation}%
and 
\begin{equation}
P_x\left( \int_{0}^{tr^{2}}\mathit{I}_{\{\mathcal{E}_{j}^r(s)=1\}}ds\geq
\epsilon rt\right) \leq \hat{B}_{1}e^{-rt\hat{B}_{2}}+ \left( 1+\frac{%
\hat{B}_{3}}{r^{1+\alpha }}\right) ^{-\hat{B}_{4}r^{2}t}\text{.}
\label{eq:squareIdleTimeResult}
\end{equation}%
\end{theorem}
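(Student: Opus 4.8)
The plan is to decompose the occupation time of the set $\{\mathcal{E}^r_j=1\}$ into contributions from individual ``exclusion intervals'' and to bound separately (i) how long each such interval can last and (ii) how many of them can occur in the given horizon. Write $0\le \tau^j_1<\tau^j_2<\cdots$ for the switching times in Definition \ref{def:workAllocScheme} (drop the superscript $j$), so $\{\mathcal{E}^r_j=1\}$ is the union of the intervals $[\tau_{2l-1},\tau_{2l})$, together with a possible ``zeroth'' interval $[0,\tau_0')$ when $\mathcal{E}^r_j(0)=1$. For a horizon $\mathcal{T}$ (which will be $tr^{1/2}$ and then $tr^2$) let $N=N(\mathcal{T})$ be the number of these intervals meeting $[0,\mathcal{T}]$. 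The reduction is: it suffices to bound, for a suitable $k\asymp \vartheta/r^{\alpha}$ with $\vartheta$ the relevant threshold, the probability that $N\ge k$, and, on $\{N<k\}$, the probability that the total length of the at most $k$ intervals exceeds $\vartheta$.

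For (i), note that on each exclusion interval $x_j\equiv 0$ by the last line of Definition \ref{def:workAllocScheme}, so $Q^r_j$ is nondecreasing there and is driven purely by the arrival process $A^r_j$; the interval ends as soon as $Q^r_j$ reaches $c_2 r^{\alpha}$, hence after at most $\lceil(c_2-c_1)r^\alpha\rceil+1$ arrivals (resp.\ $\lceil c_2 r^\alpha\rceil+1$ for the zeroth one, whose starting value is only known to be $\ge 0$). Consequently, on $\{N<k\}$, the quantity $\int_0^{\mathcal{T}}\mathbf 1_{\{\mathcal{E}^r_j(s)=1\}}ds$ is dominated by the first time a rate-$\lambda^r_j$ Poisson process accumulates at most $C_1(k+1)r^\alpha$ points, and since $\lambda^r_j\in[\lambda_j/2,2\lambda_j]$ by \eqref{def:Rhat}, the Appendix large-deviation estimates for Poisson processes give that this time exceeds $\vartheta$ with probability at most $\hat B_1 e^{-\hat B_2\vartheta}$, provided $k$ is chosen to be a sufficiently small (absolute) multiple of $\vartheta/r^\alpha$; this produces the first term on the right side of \eqref{eq:rootIdleTimeResult}–\eqref{eq:squareIdleTimeResult}.

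Step (ii) is the heart. Here I use Lemma \ref{lem:posDriftCond}: whenever $c_1 r^\alpha\le Q^r_j(t)<c_2 r^\alpha$ one has $\lambda^r_j-\mu^r_j x_j(t)\ge \theta_0\doteq \mu_j 2^{-2m-5}\delta/J>0$. Because the jump rates of $Q^r_j$ are bounded (arrival rate $\le 2\lambda_j$ by \eqref{def:Rhat}, departure rate $\le\lambda^r_j-\theta_0$ while in the band), there is a constant $\kappa>0$, independent of $r$, such that $e^{-\kappa Q^r_j(t)}$ is a supermartingale as long as $Q^r_j$ stays in the band $[c_1 r^\alpha,c_2 r^\alpha)$. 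Applying optional stopping between a down-crossing of level $c_2 r^\alpha$ (a genuine stopping time) and the first subsequent time $Q^r_j$ either returns to $c_2 r^\alpha$ or drops below $c_1 r^\alpha$, we get that each such ``sub-excursion below $c_2 r^\alpha$'' reaches $c_1 r^\alpha$ with conditional probability at most $e^{\kappa}e^{-\kappa(c_2-c_1)r^\alpha}$. Since the number of down-crossings of a fixed level in a time window of length $s$ is dominated by the total number of jumps there, which is Poisson–concentrated at $O(s)$, we obtain for every $l$ and every $x\in\cls^r$
\begin{equation*}
P_x\!\left(\tau_{2l+1}-\tau_{2l}\le s\mid \mathcal{F}_{\tau_{2l}}\right)\le \hat B_5\, s\, e^{-\hat B_6 r^\alpha}+\hat B_7 e^{-\hat B_8 s}.
\end{equation*}
By the strong Markov property the gaps $\tau_{2l+1}-\tau_{2l}$ stochastically dominate i.i.d.\ copies of a variable $G$ with this tail, and $\{N(\mathcal{T})\ge k\}\subseteq\{\sum_{l=1}^{k-1}(\tau_{2l+1}-\tau_{2l})\le \mathcal{T}\}$; since the above tail forces $G$ to exceed a quantity exponentially large in $r^\alpha$ with probability bounded away from $0$, a Chernoff bound $P(\sum_{l=1}^{k-1}G_l\le\mathcal{T})\le e^{\lambda_0\mathcal{T}}(Ee^{-\lambda_0 G})^{k-1}$ with a suitably small $\lambda_0$ (so that $Ee^{-\lambda_0 G}$ is bounded away from $1$) yields, for the relevant $k\asymp\vartheta/r^\alpha$, a bound of the form $(1+\hat B_3 \Lambda^{-1})^{-\hat B_4\mathcal{T}}$ where $\Lambda=r^{1/4+\alpha/2}$ in the first case and $r^{1+\alpha}$ in the second.

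Adding the estimates from (i) and (ii) and then instantiating $(\mathcal{T},\vartheta)=(tr^{1/2},\epsilon r^{1/4+\alpha/2}t)$ for \eqref{eq:rootIdleTimeResult} and $(tr^2,\epsilon rt)$ for \eqref{eq:squareIdleTimeResult} gives the two claimed inequalities; in each case $\alpha<1/2$ guarantees that $\vartheta$ dominates a single excursion length for large $r$, which is precisely what makes the choice $k\asymp\vartheta/r^\alpha\to\infty$ legitimate. I expect the main obstacle to be making Step (ii) rigorous and uniform: one must be careful that the drift control of Lemma \ref{lem:posDriftCond} only holds inside the band (so the reference times in the supermartingale argument must be honest stopping times such as down-crossings of $c_2 r^\alpha$, not the last visit before depletion), one must obtain the gap estimate uniformly over the adversarial initial state $x\in\cls^r$ and over $l$ via the strong Markov property, and one must then do the somewhat delicate bookkeeping needed to convert the resulting renewal/geometric bound into exactly the stated functional form, including absorbing the constants $\hat B_1,\dots,\hat B_4$ and the Poisson fluctuation terms.
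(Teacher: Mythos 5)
Your overall decomposition coincides with the paper's: idle time is (number of depletion episodes) times (duration of each episode); the duration is controlled by noting that $x_j\equiv 0$ on an episode, so its length is the time for the arrival process to supply roughly $(c_2-c_1)r^{\alpha}$ jobs (this is exactly the paper's representations \eqref{eq:idleTimeDefGen}--\eqref{eq:idleTimeDefInit}, and with $k\asymp\vartheta/r^{\alpha}$ it yields the first term $\hat B_1e^{-\hat B_2\vartheta}$ via Theorem \ref{thm:LDP}); and the number of episodes is controlled through the uniform positive drift of Lemma \ref{lem:posDriftCond} on the band $[c_1r^{\alpha},c_2r^{\alpha})$. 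Where you genuinely differ is in the implementation of the counting step. The paper introduces the midpoint level $\tfrac{c_1+c_2}{2}r^{\alpha}$, bounds the total number of excursions below it by the number of arrivals (its event $\mathcal{B}_1^{r,j}$), shows via the drift and the functional Poisson estimates of Theorem \ref{thm:LDP} over a window of length $d_jr^{\alpha}$ that each such excursion reaches $c_1r^{\alpha}$ with probability $p(r)\leq\kappa_3e^{-\kappa_4 r^{\alpha}}$, and then applies a binomial tail bound to produce the term $(1+\hat B_3r^{-(1/4+\alpha/2)})^{-\hat B_4 r^{1/2}t}$. You instead estimate the per-attempt probability by the exponential supermartingale $e^{-\kappa Q^r_j}$ (which does work uniformly in $r$: in the band the down-jump rate is at most $\lambda^r_j-\theta_0$ with $\theta_0$ as in Lemma \ref{lem:posDriftCond}, so any $\kappa\leq-\log(1-\theta_0/(2\lambda_j))$ serves), and then bound the number of exclusion intervals by a renewal-type Chernoff bound on the inter-exclusion gaps. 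Both routes rest on the same two ingredients (Lemma \ref{lem:posDriftCond} plus elementary Poisson/martingale bounds); yours dispenses with the auxiliary midpoint level and the binomial bookkeeping, at the price of having to control gap lengths and do a renewal argument uniformly over the post-episode states, which you correctly flag.

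One point in your step (ii) needs repair as stated: choosing $\lambda_0$ "suitably small so that $Ee^{-\lambda_0 G}$ is bounded away from $1$" is not the right criterion. With a fixed $\lambda_0$ the prefactor $e^{\lambda_0\mathcal{T}}$, where $\mathcal{T}=tr^{1/2}$ or $tr^{2}$ is much larger than $k\asymp\vartheta/r^{\alpha}$, overwhelms $(Ee^{-\lambda_0G})^{k-1}$ and the bound is vacuous. You must take $\lambda_0$ of order $k/\mathcal{T}$, i.e.\ $\lambda_0\asymp r^{-(1/4+\alpha/2)}$ in the first case and $\lambda_0\asymp r^{-(1+\alpha)}$ in the second; then your gap tail gives $Ee^{-\lambda_0G}\leq \hat B_5\lambda_0^{-1}e^{-\hat B_6r^{\alpha}}+\hat B_7\lambda_0/(\lambda_0+\hat B_8)\to 0$, so $(k-1)\log\bigl(1/Ee^{-\lambda_0G}\bigr)$ dominates $\lambda_0\mathcal{T}\asymp k$ and you obtain a bound $e^{-ck}$, which can then be rewritten in the stated forms $(1+\hat B_3r^{-(1/4+\alpha/2)})^{-\hat B_4r^{1/2}t}$ and $(1+\hat B_3r^{-(1+\alpha)})^{-\hat B_4r^{2}t}$. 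With that correction, and with the routine concatenation/strong-Markov justification in your step (i) (mirroring the paper's auxiliary Poisson process $\check A^r_j$), your argument goes through and delivers the theorem.
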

\begin{proof}
	Let $j\in \mathbb{N}_{J}$, $x \in \cls^r$  and $\epsilon >0$ be arbitrary. Recall $c_1, c_2$ from Section \ref{sec:secallostra}. \ Define 
	\begin{equation*}
	\tau _{0}^{r,j}\doteq\inf \left\{ s\geq 0:Q_{j}^{r}(s)\geq c_{2}r^{\alpha
	}\right\} ,
	\end{equation*}%
	\begin{equation*}
	\tau _{2l-1}^{r,j}\doteq\inf \left\{ s\geq \tau
	_{2l-2}^{r,j}:Q_{j}^{r}(s)<r^{\alpha }\frac{c_{2}+c_{1}}{2}\right\} ,
	\end{equation*}%
	and%
	\begin{equation*}
	\tau _{2l}^{r,j}\doteq \inf \left\{ s\geq \tau _{2l-1}^{r}:Q_{j}^{r}(s)\geq
	c_{2}r^{\alpha }\right\} 
	\end{equation*}%
	for all $l\geq 1$. Recall the functions $\cle_j$ introduced in Definition \ref{def:workAllocScheme}. \ Define the indicator functions 
	\[
	\theta _{l}^{r,j} \doteq  \left\{ 
	\begin{array}{cc}
		1, & \text{ if }\mathcal{E}_{j}^r(s)=1\text{ for some }s\in
	\left( \tau _{2l-1}^{r,j},\tau _{2l}^{r,j}\right]  \\
	\  &  \\ 
	0, & \text{ otherwise.}
	\end{array}
	\right.
	\]%
	For $t>0$ let%
	\begin{equation}
	\eta _{t}^{r,j}=\max \left\{ l:\tau _{2l-1}^{r,j}\leq tr^{1/2}\right\},\; \hat{\eta}_{t}^{r,j}=\max \left\{ l:\tau _{2l-1}^{r,j}\leq tr^{2}\right\}   \label{eq:etaDef}
	\end{equation}
	% \begin{equation}
	% \hat{\eta}_{t}^{r,j}=\max \left\{ l:\tau _{2l-1}^{r,j}\leq tr^{2}\right\} 
	% \text{,}  \label{eq:etaHatDef}
	% \end{equation}
	and %
	$N_{k}^{r,j}=\sum_{l=1}^{k}\theta _{l}^{r,j}$.
	Consider the events,
	\begin{equation*}
	\mathcal{B}_{1}^{r,j}=\left\{ \eta _{t}^{r,j}\leq 2\lambda
	_{j}^{r}r^{1/2}t\right\} \text{,}\;\; \mathcal{\hat{B}}_{1}^{r,j}=\left\{ \hat{\eta} _{t}^{r,j}\leq 2\lambda
	_{j}^{r}r^{2}t\right\} \text{,}
	\end{equation*}%

	\begin{equation*}
	\mathcal{B}_{2}^{r,j}=\left\{ N_{\left\lceil 2\lambda
	_{j}^{r}tr^{1/2}\right\rceil }^{r,j}\leq \frac{\lambda _{j}^{r}\epsilon }{%
	2(c_{2}-c_{1})}r^{1/4-\alpha /2}t\right\}, \; \mathcal{\hat{B}}_{2}^{r,j}=\left\{ N_{\left\lceil 2\lambda
	_{j}^{r}tr^{2}\right\rceil }^{r,j}\leq \frac{\lambda _{j}^{r}\epsilon }{%
	2(c_{2}-c_{1})}r^{1-\alpha }t\right\} \text{.}
	\end{equation*}%
	Let 
	$$\clc^r \doteq \left\{\int_{0}^{r^{1/2}t}\mathit{I}_{\left\{ \mathcal{E}_{j}^r(s)=1\right\}
	}ds\geq \epsilon r^{1/4+\alpha /2}t\right\},\; \hat{\clc}^r \doteq \left\{\int_{0}^{r^{2}t}\mathit{I}_{\left\{ \mathcal{E}_{j}^r(s)=1\right\}
	}ds\geq \epsilon rt\right\}.$$
	Then
	\begin{equation}
	P\left( \clc^r\right)  \leq P\left( (\mathcal{B}_{1}^{r,j})^c\right) +P\left( (\mathcal{B}_{2}^{r,j})^c\right) + P\left( \mathcal{B}_{1}^{r,j}\cap \mathcal{B}_{2}^{r,j}\cap \clc^r\right)
	\label{eq:mainIneqRoot} 
	\end{equation}%
	and 
	\begin{equation}
	P\left( \hat{\clc}^r\right)  \leq P\left( (\hat{\mathcal{B}}_{1}^{r,j})^c\right) +P\left( (\hat{\mathcal{B}}_{2}^{r,j})^c\right) + P\left( \hat{\mathcal{B}}_{1}^{r,j}\cap \hat{\mathcal{B}}_{2}^{r,j}\cap \hat{\clc}^r\right).
	\label{eq:mainIneqSquare}
	\end{equation}%
	Noting that each
	occurrence of $\tau _{2l-1}^{r,j}$ requires an arrival of a job of type $j$, we have
	$$
	P\left( ({\mathcal{B}}_{1}^{r,j})^c\right) = P\left(\eta _{t}^{r,j} > 2\lambda
	_{j}^{r}r^{1/2}t\right) \le P\left(A_j^r(tr^{1/2}) \ge 2\lambda
	_{j}^{r}r^{1/2}t\right).$$
	Similarly,
	$$
	P\left( (\hat{\mathcal{B}}_{1}^{r,j})^c\right)  \le P\left(A_j^r(tr^{2}) \ge 2\lambda
	_{j}^{r}r^{2}t\right).$$
	Thus from the first inequality in Theorem \ref{thm:LDP} in Appendix we can find $R_1 \in (0,\infty)$ and $\kappa_1, \kappa_2 \in (0,\infty)$
	such that for all $r \ge R_1$, $t\ge 1$ and $j \in \mathbb{N}_J$
	\begin{equation}
		P\left( ({\mathcal{B}}_{1}^{r,j})^c\right) \le \kappa_1 e^{-tr^{1/2}\kappa_2}, \; P\left( (\hat{\mathcal{B}}_{1}^{r,j})^c\right) \le \kappa_1 e^{-tr^{2}\kappa_2}. \label{eq:firstTermRoot}
	\end{equation}
	We now estimate $P\left( ({\mathcal{B}}_{2}^{r,j})^c\right)$, $P\left( (\hat{\mathcal{B}}_{2}^{r,j})^c\right)$.
	Note that the $\left\{ \theta _{l}^{r,j}\right\} _{l=1}^{\infty }$ are
	i.i.d. Bernoulli with parameter $p(r)$ where 
	\begin{equation*}
	p(r) = P(\theta _{l}^{r,j}=1)=P\left( Q_{j}^{r}(\varsigma _{l}^{r,j})<
	c_{1}r^{\alpha }\right) 
	\end{equation*}%
	and
	\begin{equation}
	\varsigma _{l}^{r,j}\doteq \inf \left\{ s\geq \tau _{2l-1}^{r,j}:Q_{j}^{r}(t)<
	c_{1}r^{\alpha }\text{ or }Q_{j}^{r}(t)\geq c_{2}r^{\alpha }\right\} \text{.}
	\label{eq:bndryHitTime}
	\end{equation}%
	The probability $p(r)$ can be estimated as follows. Note that from
	Lemma \ref{lem:posDriftCond}, for $\tau _{2l-1}^{r,j}\leq s<\varsigma
	_{l}^{r,j}$ 
	\begin{equation}\label{eq:eq1023}
	\lambda _{j}^{r}-\mu _{j}^{r}x_{j}(s)\geq \mu _{j}\kappa 
	\end{equation}%
	where 
	$
	\kappa \doteq 2^{-2m -5}\frac{\delta }{%
	 J }$.
	Letting $\bar C = \max_i\{C_i\}$ and $d_j \doteq (c_2-c_1)/(\mu_j\kappa)$, define
	\begin{align*}
	\mathcal{A}_{l}^{r,j} &=\Bigg\{ \sup_{0\leq s\leq 
	d_jr^{\alpha } }\left\vert A_{j}^{r}(\tau
	_{2l-1}^{r,j}+s)-A_{j}^{r}(\tau _{2l-1}^{r,j})-\lambda _{j}^{r}s\right\vert
	 \\
	&+\sup_{0\leq s\leq \bar C d_jr^{\alpha} }\left\vert S_{j}^{r}(B_{j}^{r}(\tau
	_{2l-1}^{r,j})+s)-S_{j}^{r}(B_{j}^{r}(\tau _{2l-1}^{r,j}))-\mu
	_{j}^{r}s\right\vert   \geq \frac{\left( c_{2}-c_{1}\right) r^{\alpha }}{4}\Bigg\}. 
	\end{align*}%
	From Theorem \ref{thm:LDP} and strong Markov property there exist $\kappa_3, \kappa_4 \in (0, \infty)$ and $R_{2}\in \left[ R_{1},\infty \right) $ such that for all $%
	r\geq R_{2}$, $j \in \mathbb{N}_J$, and $l\geq 1$%
	\begin{equation*}
	P\left( \mathcal{A}_{l}^{r,j}\right) \leq \kappa_3 e^{-r^{\alpha }\kappa_4}.
	\end{equation*}%
	We can also assume without loss of generality that for $r \ge R_2$,
	$
	r^{\alpha }\frac{c_{2}-c_{1}}{4}>2\text{.}
	$
	From \eqref{eq:eq1023},
	on the event $\left( \mathcal{A}_{l}^{r,j}\right) ^{c}$, we have for $s\in %
	\left[ \tau _{2l-1}^{r,j},\varsigma _{l}^{r,j}\wedge \left( \tau
	_{2l-1}^{r,j}+d_j r^{\alpha } \right)
	\right) $%
	\begin{eqnarray*}
	Q_{j}^{r}(s) &\geq &r^{\alpha }\frac{c_{2}+c_{1}}{2}-1+\left(
	A_{j}^{r}(s)-A_{j}^{r}(\tau _{2l-1}^{r,j})\right) -\left(
	S_{j}^{r}(B_{j}^{r}(s))-S_{j}^{r}(B_{j}^{r}(\tau _{2l-1}^{r,j}))\right)  \\
	&\geq &r^{\alpha }\frac{c_{2}+c_{1}}{2}-1-r^{\alpha }\frac{c_{2}-c_{1}}{4}%
	+(s- \tau _{2l-1}^{r,j})\mu _{j}\Delta.
	\end{eqnarray*}%
	Since the expression on the right side with  $s=\tau_{2l-1}^{r,j}+d_j r^{\alpha }$ is larger than $c_2 r^{\alpha}$
	we have  that on $\left( \mathcal{A}_{l}^{r,j}\right) ^{c}$, $\varsigma _{l}^{r,j}<\tau
	_{2l-1}^{r,j}+d_j r^{\alpha }$ and so
	$Q_{j}^{r}(\varsigma _{l}^{r,j}) > c_1r^{\alpha}$. Thus
	 $\left( \mathcal{A}_{l}^{r,j}\right) ^{c}\cap \{\theta_{l}^{r,j}=1\}=\emptyset$ and 
	\begin{equation*}
	p(r) \le  P\left( \mathcal{A}_{l}^{r,j}\right)
	\leq \kappa_3e^{-r^{\alpha }\kappa_4}\text{.}
	\end{equation*}%
	Choose $R_{3}\in \left[
	R_{2},\infty \right) $ such that for all $r\geq R_{3}$ we have%
	\begin{equation}
	\epsilon /[10(c_{2}-c_{1})r^{1+\alpha }]\geq 2p(r)\text{,}\;\; \epsilon /[5(c_{2}-c_{1})r^{1/4+\alpha /2}]\leq 1/2\text{,}\;\;
	\left( 2\lambda _{j}^{r}r^{1/2}+1\right) /5\leq \lambda _{j}^{r}r^{1/2}/2.
	\label{eq:greaterThanPr}
	\end{equation}%

	so in particular from the third inequality, for all $t\geq 1$, 
	\begin{equation}
	\left\lceil 2\lambda _{j}^{r}tr^{1/2}\right\rceil \left( \epsilon
	/[5(c_{2}-c_{1})r^{1/4+\alpha /2}]\right) \leq \lambda _{j}^{r}tr^{1/4-\alpha
	/2}\epsilon /[2(c_{2}-c_{1}])  \label{eq:rootIdleTimeProofIneq}
	\end{equation}%
	and 
	\begin{equation}
	\left\lceil 2\lambda _{j}^{r}tr^{2}\right\rceil \left( \epsilon
	/[5(c_{2}-c_{1})r^{1+\alpha }]\right) \leq \lambda _{j}^{r}tr^{1-\alpha }\epsilon/[2(c_{2}-c_{1})].  \label{eq:squareIdleTimeProofIneq}
	\end{equation}%
	Note that if $Z\sim \mbox{Bin}(L,p)$ then, for all $u>0$
	$$P(Z \ge u) \le (1+ p(e-1))^L e^{-u}.$$
	Thus we have
	\begin{eqnarray*}
	P\left( N_{\left\lceil 2\lambda _{j}^{r}tr^{1/2}\right\rceil }^{r}\geq \frac{%
	\lambda _{j}^{r}\epsilon }{2(c_{2}-c_{1})}r^{1/4-\alpha /2}t\right)  &\leq
	&e^{-\frac{\lambda _{j}^{r}\epsilon }{2(c_{2}-c_{1})}r^{1/4-\alpha
	/2}t}\left( 1+p(r)(e^{1}-1)\right) ^{\left\lceil 2\lambda
	_{j}^{r}tr^{1/2}\right\rceil } \\
	&\leq &\left( \frac{1+2p(r)}{e^{\epsilon
	/5(c_{2}-c_{1})r^{1/4+\alpha /2}}}\right) ^{\left\lceil 2\lambda
	_{j}^{r}tr^{1/2}\right\rceil }\text{.}
	\end{eqnarray*}%
	where the second line uses (\ref{eq:rootIdleTimeProofIneq}) and the fact that if for positive $a,b,c,d$, $ab\le c$, then
	$$e^{-c}(1+ d(e-1))^b \le \left(\frac{1+2d}{e^a}\right)^b.$$
	For all $%
	r\geq R_{3}$ we have 
	\begin{align}
	\left( \frac{1+2p(r)}{e^{\epsilon /[5(c_{2}-c_{1})r^{1/4+\alpha /2}]}}%
	\right) ^{\left\lceil 2\lambda _{j}^{r}tr^{1/2}\right\rceil } 
	&\leq \left( \frac{1+\epsilon /[10(c_{2}-c_{1})r^{1/4+\alpha /2}]}{1+\epsilon
	/[5(c_{2}-c_{1})r^{1/4+\alpha /2}]}%
	\right) ^{\left\lceil 2\lambda _{j}^{r}tr^{1/2}\right\rceil }  \notag \\
	&\leq \left( \frac{1}{1+4\epsilon /[50(c_{2}-c_{1})r^{1/4+\alpha /2}]}\right)
	^{\left\lceil 2\lambda _{j}^{r}tr^{1/2}\right\rceil }  \notag \\
	&\leq  \left( 1+\frac{4\epsilon /[50(c_{2}-c_{1})]}{r^{1/4+\alpha /2}}%
	\right)^{-\lambda _{j}r^{1/2}t}  \label{eq:secondTermRoot}
	\end{align}
	where the first line uses the inequality $e^x \ge 1+x$ and the   first bound in (\ref{eq:greaterThanPr}), the second uses 
	the second bound in (\ref{eq:greaterThanPr}) along with the inequality $(1+x)/(1+2x) \le 5/(5+4x)$ for $x \in [0,1/4]$, and the third uses  \eqref{def:Rhat} to
	bound $\lambda _{j}^{r}$ by $\lambda _{j}$. 
	Thus we have shown
	\begin{equation}
		\label{eq:eq1121}
		P\left( ({\mathcal{B}}_{2}^{r,j})^c\right) \le \left( 1+\frac{\hat{B}_{3}}{r^{1/4+\alpha /2}}\right)
		^{-\hat{B}_{4}r^{1/2}t}
	\end{equation}
	where $\hat B_3 = 4\epsilon /[50(c_{2}-c_{1})]$ and $\hat B_4=1$.
	A similar calculation shows
	that
	\begin{equation}
		\label{eq:eq1125}
		P\left( (\hat{\mathcal{B}}_{2}^{r,j})^c\right) \le \left( 1+\frac{\hat{B}_{3}}{r^{1+\alpha}}\right)
		^{-\hat{B}_{4}r^{2}t}
	\end{equation}
	Finally we estimate the third probability on the right sides of \eqref{eq:mainIneqRoot} and \eqref{eq:mainIneqSquare}. 
	Note that 
	\begin{equation*}
	\int_{0}^{tr^{1/2}}\mathit{I}_{\left\{ \mathcal{E}_{j}^r(s)=1\right\} }ds\leq
	\int_{0}^{\tau _{0}^{r,j}}\mathit{I}_{\left\{ \mathcal{E}_{j}^r(s)=1\right\}
	}ds+\sum_{l=1}^{\eta _{t}^{r,j}}\int_{\tau _{2l-1}^{r,j}}^{\tau _{2l}^{r,j}}%
	\mathit{I}_{\left\{ \mathcal{E}_{j}^r(s)=1\right\} }ds
	\end{equation*}%
	% and%
	% \begin{equation*}
	% \int_{0}^{tr^{2}}\mathit{I}_{\left\{ \mathcal{E}_{j}^r(s)=1\right\} }ds\leq
	% \int_{0}^{\tau _{0}^{r,j}}\mathit{I}_{\left\{ \mathcal{E}_{j}^r(s)=1\right\}
	% }ds+\sum_{l=1}^{\hat{\eta}_{t}^{r,j}}\int_{\tau _{2l-1}^{r,j}}^{\tau
	% _{2l}^{r,j}}\mathit{I}_{\left\{ \mathcal{E}_{j}^r(s)=1\right\} }ds\text{.}
	% \end{equation*}%
	From (\ref{eq:bndryHitTime}) we see that%
	\begin{equation}\label{eq:eq1136}
	\int_{\tau _{2l-1}^{r,j}}^{\tau _{2l}^{r,j}}\mathit{I}_{\left\{ \mathcal{E}%
	_{j}(s)=1\right\} }ds=\tau _{2l}^{r,j}-\varsigma _{l}^{r,j}.
	\end{equation}%
	Indeed, if $\theta _{l}^{r,j}=0$ then $\varsigma _{l}^{r,j}=\tau _{2l}^{r,j}$
	and the integral on the left side is $0$. Also, if 
	 $\theta _{l}^{r,j}=1$ then $Q_{j}^{r}(\varsigma
	_{l}^{r,j})=\left\lceil c_{1}r^{\alpha }\right\rceil -1$,  $\varsigma _{l}^{r,j}<\tau _{2l}^{r,j}$ and $\cle_j(s) =1$ for all $s \in [\varsigma _{l}^{r,j}, \tau _{2l}^{r,j}]$, giving once more the 
	identity in \eqref{eq:eq1136}. In the latter case we also have the representation
	\begin{equation}
	\tau _{2l}^{r,j}-\varsigma _{l}^{r,j}
	=\inf \left\{ s\geq 0:A_{j}^{r}(\varsigma
	_{l}^{r,j}+s)-A_{j}^{r}(\varsigma _{l}^{r,j})\geq \left\lceil c_{2}r^{\alpha
	}\right\rceil -\left\lceil c_{1}r^{\alpha }\right\rceil +1\right\} \text{.}
	\label{eq:idleTimeDefGen}
	\end{equation}%
	Similarly if we define 
	\begin{equation*}
	\varsigma _{0}^{r,j}=\inf \left\{ s\geq 0:Q_{j}^{r}(t)\leq c_{1}r^{\alpha }%
	\text{ or }Q_{j}^{r}(t)\geq c_{2}r^{\alpha }\right\} 
	\end{equation*}%
	then 
	\begin{equation*}
	\int_{0}^{\tau _{0}^{r,j}}\mathit{I}_{\left\{ \mathcal{E}_{j}^r(s)=1\right\}
	}ds=\tau _{0}^{r,j}-\varsigma _{0}^{r,j}
	\end{equation*}%
	where if $\varsigma _{0}^{r,j}<\tau _{0}^{r,j}$ we
	have 
	\begin{equation}
	\tau _{0}^{r,j}-\varsigma _{0}^{r,j}=\inf \left\{ s\geq 0:A_{j}^{r}(\varsigma
	_{0}^{r,j}+s)-A_{j}^{r}(\varsigma _{0}^{r,j})\geq \left\lceil c_{2}r^{\alpha
	}\right\rceil -\left\lceil c_{1}r^{\alpha }\right\rceil +1\right\} \text{.}
	\label{eq:idleTimeDefInit}
	\end{equation}%
	Consequently , since  on $\mathcal{B}_{1}^{r,j}$, $\eta _{t}^{r,j} \le 2\lambda
	_{j}^{r}r^{1/2}t$, by taking $r$ suitably large
	\begin{align*}
	& P\left( \mathcal{B}_{1}^{r,j}\cap \mathcal{B}_{2}^{r,j}\cap \clc^r \right)  \\
	& \quad \leq P\left( \mathcal{B}_{2}^{r,j}\cap \left\{ \tau
	_{0}^{r,j}-\varsigma _{0}^{r,j}+\sum_{l=1}^{\left\lceil 2\lambda
	_{j}^{r}tr^{1/2}\right\rceil }\left( \tau _{2l}^{r,j}-\varsigma
	_{l}^{r,j}\right) \geq \epsilon r^{1/4+\alpha /2}t\right\} \right)  \\
	& \quad \leq P\left( \inf \left\{ s\geq 0:\check{A}_{j}^{r}(s)\geq \left( 
	\frac{3\lambda _{j}^{r}\epsilon }{4}r^{1/4+\alpha /2}t\right) \right\} \geq
	\epsilon r^{1/4+\alpha /2}t\right) \\
	& \quad \leq P \left (\check{A}_{j}^{r}(\epsilon r^{1/4+\alpha /2}t) \le \frac{3\lambda _{j}^{r}\epsilon }{4}r^{1/4+\alpha /2}t\right)
	\end{align*}%
	where $\check{A}_{j}^{r}$ is a Poisson process with rate $\lambda _{j}^{r}$
	and the second inequality comes from the representations in (\ref{eq:idleTimeDefInit}) and (\ref%
	{eq:idleTimeDefGen}). 
	% \ Similarly we have 
	% \begin{align*}
	% & P\left( \mathcal{\hat{B}}_{1}^{r,j}\cap \mathcal{\hat{B}}_{2}^{r,j}\cap
	% \left\{ \int_{0}^{r^{2}t}\mathit{I}_{\left\{ \mathcal{E}_{j}^r(s)=1\right\}
	% }ds\geq \epsilon rt\right\} \right)  \\
	% & \quad \leq P\left( \mathcal{\hat{B}}_{2}^{r,j}\cap \left\{ \tau
	% _{0}^{r,j}-\varsigma _{0}^{r,j}+\sum_{l=1}^{\left\lceil 2\lambda
	% _{j}^{r}tr^{2}\right\rceil }\left( \tau _{2l}^{r,j}-\varsigma
	% _{l}^{r,j}\right) \geq \epsilon rt\right\} \right)  \\
	% & \quad \leq P\left( \inf \left\{ s\geq 0:\check{A}_{j}^{r}(s)\geq \left( 
	% \frac{\lambda _{j}^{r}\epsilon }{2}rt\right) \right\} \geq \epsilon
	% rt\right) \text{.}
	% \end{align*}%
	From Theorem \ref{thm:LDP}  there exist 
	$\kappa_5, \kappa_6 \in (0,\infty)$
	 and $R_{4}\in \left[ R_{3},\infty \right) $ such
	that for all $r\geq R_{4}$ 
	\begin{align}
	 P\left( \mathcal{B}_{1}^{r,j}\cap \mathcal{B}_{2}^{r,j}\cap \clc^r \right)   
	\leq P\left( \sup_{0\leq s\leq \epsilon r^{1/4+\alpha
	/2}t}\left\vert \check{A}_{j}^{r}(s)-\lambda ^{r}s\right\vert >\frac{%
	\epsilon }{2}r^{1/4+\alpha /2}t\right)   
	\leq \kappa_5e^{-r^{1/4+\alpha /2}t\kappa_6}.
	\label{eq:thirdTermRoot} 
	\end{align}%
	A similar calculation shows that
	\begin{align}
		 P\left( \mathcal{\hat{B}}_{1}^{r,j}\cap \mathcal{\hat{B}}_{2}^{r,j}\cap
	\hat{\clc}^r \right)  \leq \kappa_5e^{-rt\kappa_6}\text{.}  
	\label{eq:thirdTermSquare} 
	\end{align}%

	Finally (\ref{eq:firstTermRoot}), (\ref{eq:eq1121}), (\ref%
	{eq:thirdTermRoot}), and (\ref{eq:mainIneqRoot}) prove (\ref%
	{eq:rootIdleTimeResult}) while (\ref{eq:firstTermRoot}), \eqref{eq:eq1125}, \eqref{eq:thirdTermSquare}
	and \eqref{eq:mainIneqSquare} prove  (\ref{eq:squareIdleTimeResult}).
	This completes the proof.
\end{proof}

Let $c_3 \doteq \frac{2Jc_{2}}{\min_{j}\mu_{j}}$ and recall that $\bar{C} \doteq \max_{i \in \mathbb{N}_I}\{C_i\}$.
Note that if for given $s \ge 0$, $W^{r}_{i}(s)>c_{3}r^{\alpha}$ for some $i \in \mathbb{N}_{I}$ then we must have that
$Q^r_j(s)\ge c_2 r^{\alpha}$ for some $j \in \mathbb{N}_{J}$ with $K_{ij}=1$, namely $i \in \hat \omega^r(s)$. From Lemma \ref{lem:fullWorkloadCond} it then follows that
for such a $s$ if $C_i > \sum_{j=1}^J K_{ij} x_j(s)$, the $\cle_j^r(t)\neq 0$ for some $j$ with $K_{ij}=1$. From this it follows that for any $t\ge 0$
$$\int_0^t \mathit{I}_{\{W^{r}_{i}(s)>c_{3}r^{\alpha}\}}(s)dI^{r}_{i}(s) \le \bar{C} \sum_{j: K_{ij}=1}\int_0^t \mathit{I}_{ \{\cle_j^r(s)=1\}} ds.$$
This along with Theorem \ref{thm:IdleTimeExp} implies that for any $\epsilon >0$ and $i\in \mathbb{N}_{I}$ there
exist  $\hat{B}_{1},\hat{B}_{2},\hat{B}_{3},\hat{B}_{4},R \in (0,\infty)$
such that for all $r\geq R$,  $t\geq 1$ and $x\in \cls^r$ we have
\begin{equation} 
P_x\left( \int_{0}^{tr^{1/2}}\mathit{I}_{\{W^{r}_{i}(s)\ge c_{3}r^{\alpha}\}}(s)dI^{r}_{i}(s)\geq
\epsilon r^{1/4+\alpha /2}t\right) \leq \hat{B}_{1}e^{-r^{1/4+\alpha /2}t%
\hat{B}_{2}}+\left( 1+\frac{\hat{B}_{3}}{r^{1/4+\alpha /2}}\right)
^{-\hat{B}_{4} r^{1/2} t}\label{eq:eqpxtr2}
\end{equation}%
and 
\begin{equation}
P_x\left( \int_{0}^{tr^{2}}\mathit{I}_{\{W^{r}_{i}(s)\ge c_{3}r^{\alpha}\}}(s)dI^{r}_{i}(s)\geq
\epsilon rt\right) \leq \hat{B}_{1}e^{-rt\hat{B}_{2}}+\left(  1+\frac{%
\hat{B}_{3}}{r^{1+\alpha }}\right) ^{-\hat{B}_{4}r^{2}t}\text{.}
\label{eq:squareIdleTimeResultImplication}
\end{equation}%

\subsection{Estimating holding cost through workload cost.}
Recall the matrix
$M$ introduced in Section \ref{sec:hgi}. Along with the process $\hat W^r = KM^r \hat Q^r$, it will be convenient to also consider
the process $\tilde W^r \doteq KM \hat Q^r$.
The following is the main result of the section which says that under the scheme introduced in Definition \ref{def:workAllocScheme}, the queue lengths for the associated workload are 
`asymptotically optimal' in a certain sense. This result will be key in showing that under our policy, property (II) of 
HGI holds asymptotically.
\begin{theorem}
\label{thm:discCostInefBnd}
There exist  $B,R \in (0,\infty) $ such that for
all $r\geq R$, $x=(q,z) \in \cls^r$, $\theta >0$ and $T\ge 1$, we have%
\begin{equation*}
\left\vert E_x\left[ \int_{0}^{\infty }e^{-\theta t}  h\cdot \hat{Q}%
^{r}(t) dt\right] -E_x\left[ \int_{0}^{\infty }e^{-\theta
t}\clc\left( \tilde{W}^{r}(t)\right) dt\right] \right\vert \leq Br^{\alpha
-1/2} \frac{1+|q|^2}{1-e^{-\theta}}
\end{equation*}
and
\begin{equation*}
\left\vert E_x\left[ \frac{1}{T}\int_{0}^{T} h \cdot \hat{Q}%
^{r}(t) dt\right] -E_x\left[ \frac{1}{T}\int_{0}^{T}
\clc\left( \tilde{W}^{r}(t)\right) dt\right] \right\vert \leq Br^{\alpha
-1/2} (1+|q|^2)\text{.}
\end{equation*}
\end{theorem}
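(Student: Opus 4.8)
The plan is to pass, via Theorem~\ref{thm:costInefIneq}, from the difference of the two functionals to the expectation of a single nonnegative process, split time at the random end $\tau$ of an initial transient, and use the threshold structure of Definition~\ref{def:workAllocScheme} to control the contribution after $\tau$. Applying Theorem~\ref{thm:costInefIneq} pointwise in $t$ with $q=\hat Q^r(t)$ and $w=G\hat Q^r(t)=\tilde W^r(t)$ yields a constant $B_0$ with
\begin{equation*}
\bigl|h\cdot\hat Q^r(t)-\clc(\tilde W^r(t))\bigr|\le B_0\,\Psi^r(t),\qquad
\Psi^r(t)\doteq\sum_{k=1}^m\min_{i\in N_{\rho(k)}}\Bigl\{\sum_{j\in\zeta_i^k}\hat Q^r_j(t)\Bigr\}+\sum_{i=1}^I\sum_{j\in\zeta_i^0}\hat Q^r_j(t),
\end{equation*}
and $\Psi^r(t)\le(m+I)\sum_{j}\hat Q^r_j(t)$. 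Thus both displayed assertions reduce to bounding $E_x$ of the discounted, respectively time-averaged, integral of $\Psi^r$.

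Let $\cls^r_*\doteq\{(q',z')\in\cls^r:q'\le c_2r^{\alpha-1}\}$ and $\tau\doteq\inf\{t\ge0:\hat Q^r(t)\le c_2r^{\alpha-1}\}$, so $\hat X^r(\tau)\in\cls^r_*$. It suffices to prove
\begin{equation*}
\text{(A)}\ \ E_x\Bigl[\int_0^\tau\Psi^r(t)\,dt\Bigr]\le Br^{\alpha-1/2}(1+|q|^2),\qquad
\text{(B)}\ \ \sup_{s\ge0,\,y\in\cls^r_*}E_y[\Psi^r(s)]\le Br^{\alpha-1}.
\end{equation*}
Indeed, writing $\int_0^\infty e^{-\theta t}\Psi^r(t)\,dt\le\int_0^\tau\Psi^r(t)\,dt+e^{-\theta\tau}\int_0^\infty e^{-\theta s}\Psi^r(\tau+s)\,ds$, applying the strong Markov property at $\tau$, using $e^{-\theta\tau}\le1$ and (A), (B), gives $\int_0^\infty e^{-\theta t}E_x[\Psi^r(t)]\,dt\le Br^{\alpha-1/2}(1+|q|^2)+Br^{\alpha-1}/\theta$; since $\theta\ge1-e^{-\theta}$, $(1-e^{-\theta})^{-1}\ge1$ and $r^{\alpha-1}\le r^{\alpha-1/2}$, this is at most $2Br^{\alpha-1/2}(1+|q|^2)/(1-e^{-\theta})$. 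The time-averaged bound follows the same way, splitting at $\tau\wedge T$.

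Estimate (B) is where the policy does its work and is the core of the argument; for $y\in\cls^r_*$ it reduces to the uniform exponential tail $P_y(\hat Q^r_j(s)>c_2r^{\alpha-1}+u)\le Ce^{-cru}$ ($u>0$), with $C,c$ independent of $y$ and $s$, which integrates to $E_y[\hat Q^r_j(s)]\le c_2r^{\alpha-1}+C/(cr)\le B'r^{\alpha-1}$. For $j\in\cls^p$: on $\{Q^r_j\ge c_2r^\alpha\}$ one has $\cle_j^r=0$ and $y_j=\varrho_j+\delta$, so any excursion of $Q^r_j$ above $c_2r^\alpha$ is entered at a level $\le c_2r^\alpha+1$ and has drift $\le-\mu_j\delta/4$; hence at the fixed time $r^2s$ the overshoot is dominated by $1$ plus the all-time maximum of a negative-drift walk started at $0$, which has a geometric tail, while on $\{\cle_j^r=1\}$ one has $Q^r_j<c_2r^\alpha+1$. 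The same argument works for $j\in\cls^1$ using Cases~1--2 in the proof of Lemma~\ref{lem:posDriftCond}. For $j=\rho(k)\in\cls^m$ one argues by induction on $k$ along the priority order \eqref{eq:jobprior}: by the inductive hypothesis every queue in $\bigcup_{i\in N_{\rho(k)}}\zeta_i^k$ has the uniform tail, so the intervals on which it is stocked are short and infrequent (their lengths and frequencies controlled by large deviation estimates, as in the proof of Theorem~\ref{thm:IdleTimeExp}); hence each resource in $N_{\rho(k)}$ is ``busy'' only on such intervals, and since whenever some resource in $N_{\rho(k)}$ is not busy the rate $y_{\rho(k)}$ of Definition~\ref{def:workAllocScheme} makes the drift of $Q^r_{\rho(k)}$ strictly negative, $Q^r_{\rho(k)}$ inherits strictly negative drift above $c_2r^\alpha$ and the same overshoot estimate applies.

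Estimate (A) is a drift estimate: for $t<\tau$ some queue exceeds the threshold, and Lemma~\ref{lem:posDriftCond} together with the allocation rates of Definition~\ref{def:workAllocScheme} and Lemma~\ref{lem:fullWorkloadCond} provide a uniform negative drift for a suitable priority-weighted functional of $Q^r$; a Lyapunov argument of the kind used for the uniform moment estimates of Section~\ref{sec:unifmom} then bounds $E_x[\int_0^\tau\Psi^r(t)\,dt]$ by a constant times $(1+|q|^2)/r$, which is at most $r^{\alpha-1/2}(1+|q|^2)$ since $\alpha>0$ and $r\ge1$. I expect the main obstacle to be the inductive step for $\cls^m$ in (B): one must show, at each level of \eqref{eq:jobprior} and with all constants uniform in $r$, that a resource is busy only for intervals short and rare enough that the resulting growth of the next queue is dominated by its drainage during the free periods, and then propagate this estimate down the ranking while keeping the accumulated constants finite against the geometric $2^m$-grading of the $\delta$-spacing built into Definition~\ref{def:workAllocScheme}.
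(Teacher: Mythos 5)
Your opening reduction is the same as the paper's: apply Theorem \ref{thm:costInefIneq} pointwise to bound $|h\cdot\hat Q^r(t)-\clc(\tilde W^r(t))|$ by the aggregate $\Psi^r(t)$. The argument breaks, however, at your estimate (B), which rests on a claim that is false. You assert a uniform-in-time tail bound $P_y(\hat Q^r_j(s)>c_2r^{\alpha-1}+u)\le Ce^{-cru}$, hence $E_y[\hat Q^r_j(s)]\le B'r^{\alpha-1}$, for \emph{every} job type $j$. This cannot hold for the secondary job types that carry the workload: the diffusion-scaled workload $\hat W^r$ converges to a nondegenerate reflected Brownian motion (Theorem \ref{thm:finTimeConvToRBM}), primary queues are the only ones kept at threshold order, so the queues in $\cls^1$ and the low-ranked queues in $\cls^m$ must be of order one in diffusion scale (unscaled order $r$, not $r^{\alpha}$); indeed the HGI configuration $q^*(w)$ of Theorem \ref{thm:restCostJobOrd} is supported precisely on these jobs. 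Mechanically, your drift claim is also wrong in these cases: for $j\in\cls^1$ when some higher-priority queue at $\hat i(j)$ is stocked, and for $\rho(k)\in\cls^m$ when every resource in $N_{\rho(k)}$ has a stocked higher-priority queue, Definition \ref{def:workAllocScheme} assigns a rate strictly \emph{below} $\varrho_j$, so these queues have positive drift exactly when they are large and blocked --- they are designed to absorb workload, not to drain it. Consequently your induction down \eqref{eq:jobprior} fails at its second step ($\cls^1$), and (B) is not established even for the aggregate $\Psi^r$: your overshoot heuristic ignores the $\cle=1$ phases during which drift can be positive, and the paper's own excursion bound (cf. \eqref{eq:minUpperBnd}) only controls excursion heights up to order $r^{1/4+\alpha/2}\gg r^{\alpha}$, so a pointwise-in-time bound of order $r^{\alpha-1}$ is not available from these estimates.

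What is actually true, and what the paper proves, is weaker and different in structure: only the aggregated quantities appearing in Theorem \ref{thm:costInefIneq} are small --- the minima $Z^r_k(t)=\min_{i\in N_{\rho(k)}}\sum_{j\in\zeta^k_i}Q^r_j(t)/\mu^r_j$ and the primary sums $\sum_{j\in\zeta^0_i}Q^r_j(t)$ --- and only in a time-averaged sense. The paper shows each such aggregate has order-one negative drift whenever it exceeds $2c_3r^{\alpha}$ and no constituent queue is in an $\cle=1$ phase (Lemma \ref{lem:lem3_4}, which crucially uses that the local-traffic job absorbs all leftover capacity, Lemma \ref{lem:fullWorkloadCond}), combines this with the idleness estimates of Theorem \ref{thm:IdleTimeExp} in an excursion decomposition with stopping times defined \emph{per functional} (first entrance of that functional below $2c_3r^{\alpha}$, not of the whole queue vector below threshold as in your $\tau$), and obtains the initial-transient bound of Proposition \ref{thm:initInef} (of order $(1+|q|^2)/r$) and the per-unit-time running bound of Proposition \ref{thm:runningInef} (of order $r^{\alpha-1/2}$), which are then summed over unit time blocks against $e^{-\theta t}$ or $1/T$. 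Your discounting/averaging bookkeeping at the end is fine, but the core estimate (B), as you derive it, is a gap that cannot be closed along the per-queue route.
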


In order to prove the result we begin with the following two propositions.

%Recall the definition of $c_3$ from Theorem \ref{thm:IdleTimeExp}. 
Recall the sets
$\zeta^0_i$, $\zeta_i^k$ from \eqref{eq:eqzetaiz} and \eqref{eq:eqzetaik} and that $c_3 = \frac{2Jc_{2}}{\min_{j}\mu_{j}}$.
For  $\xi\geq 0$, $i\in \mathbb{N}_{I}$ and $k \in \NN_m$ let 
\begin{equation}
\hat{\tau}_{i}^{1}(\xi)\doteq \inf \left\{ t\geq \xi:\sum_{j\in \zeta
_{i}^{0}} \frac{Q_{j}^{r}(s)}{\mu _{j}^{r}} <2c_{3}r^{\alpha }\right\},\;  \hat{\tau}_{k}^{s}(\xi)\doteq \inf \left\{ t\geq \xi:\min_{i\in N_{\rho (k)}}\left\{
\sum_{j\in \zeta _{i}^{k}}\frac{Q_{j}^{r}(s)}{\mu _{j}^{r}}\right\}
<2c_{3} r^{\alpha }\right\}. \label{eq:defTauS}
\end{equation}%

\begin{proposition}
\label{thm:initInef} There exist  $R,B\in (0,\infty)$ 
 such that for all $r\geq
R$, $i\in \mathbb{N}_{I}$, $x=(q,z) \in \cls^r$, and $k\in \NN_m$ we have 
\begin{equation*}
\frac{1}{r^{3}}E_x  \int_{0}^{\hat{\tau}_{i }^{1}(0)}\sum_{j\in \zeta
_{i}^{0}} \frac{Q_{j}^{r}(s)}{\mu _{j}}ds \leq B (1+|q|^2) r^{-1}
\end{equation*}%
and%
\begin{equation*}
\frac{1}{r^{3}}E_x \int_{0}^{\hat{\tau}_{k }^{s}(0)}\min_{i\in N_{\rho
(k)}}\left\{ \sum_{j\in \zeta _{i}^{k}} \frac{Q_{j}^{r}(s)}{\mu _{j}}\right\} ds%
 \leq B(1+|q|^2)r^{-1}.
\end{equation*}
\end{proposition}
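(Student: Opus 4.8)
The plan is to reduce both estimates to a single first‑passage bound for a scalar process. For the first estimate, set $Y(s)\doteq\sum_{j\in\zeta_i^0}Q_j^r(s)/\mu_j^r$ and $\tau\doteq\hat\tau_i^1(0)$, so that $\tau=\inf\{s\ge0:Y(s)<2c_3r^\alpha\}$ and, by \eqref{def:Rhat}, $\sum_{j\in\zeta_i^0}Q_j^r(s)/\mu_j\le 2Y(s)$; for the second, fix any $i\in N_{\rho(k)}$, set $Y(s)\doteq\sum_{j\in\zeta_i^k}Q_j^r(s)/\mu_j^r$ and $\tau\doteq\hat\tau_k^s(0)$, and note $\min_{i'\in N_{\rho(k)}}\{\sum_{j\in\zeta_{i'}^k}Q_j^r(s)/\mu_j\}\le\sum_{j\in\zeta_i^k}Q_j^r(s)/\mu_j\le 2Y(s)$ while on $[0,\tau)$ \emph{every} sum $\sum_{j\in\zeta_{i'}^k}Q_j^r/\mu_j^r$, $i'\in N_{\rho(k)}$, stays $\ge 2c_3r^\alpha$. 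Since $Y(0)\le Cr|q|$ by \eqref{def:Rhat}, one has $r^2+Y(0)^2\le C'r^2(1+|q|^2)$, so it is enough to produce a constant $B'$, independent of $r\ge R$ and $x$, with
\[
E_x\!\int_0^{\tau}Y(s)\,ds\ \le\ B'\bigl(r^2+Y(0)^2\bigr);
\]
dividing by $r^3$ then gives the assertion. Throughout I would use the decomposition $Y(t)=Y(0)+\mathcal{M}(t)+\int_0^t\beta(s)\,ds$, where $\beta(s)$ is the sum over the relevant index set of $\varrho_j^r-x_j(s)$ and $\mathcal{M}$ is the martingale collecting the compensated parts of the $A_j^r$ and $S_j^r(B_j^r(\cdot))$; since $0\le x_j(s)\le\max_iC_i$ and by \eqref{def:Rhat}, $\langle\mathcal{M}\rangle$ has rate bounded by a constant $C_2$ and $\beta(s)\le C_0\doteq\sum_{j=1}^J(\varrho_j+1)$ for all $s$.

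Next I would establish the conditional drift inequality. On $[0,\tau)$, $Y(s)\ge 2c_3r^\alpha$, so since the relevant index set has at most $J$ elements, $\mu_j^r\ge\mu_j/2$, and $c_3=2Jc_2/\min_l\mu_l$, some $j^*$ in that set satisfies $Q_{j^*}^r(s)\ge 2c_2r^\alpha\ge c_2r^\alpha$; hence $\zeta_i^0\cap\sigma^r(s)\ne\emptyset$ in the first case, and each $\zeta_{i'}^k\cap\sigma^r(s)\ne\emptyset$ ($i'\in N_{\rho(k)}$) in the second. The hypotheses of Lemma \ref{lem:lem3_4}(b) (resp. \ref{lem:lem3_4}(a)) are thus in force, so whenever also $\mathcal{E}_j^r(s)=0$ for every $j$ in that set one gets $\beta(s)\le-c$, with $c\doteq 2^{-2}\delta$ (resp. $c\doteq 2^{-m-2}\delta$). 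Together with $\beta\le C_0$ this yields, for $s\in[0,\tau)$,
\[
\beta(s)\le-c+C_1\Psi(s),\qquad \Psi(s)\doteq\sum_{j=1}^J\mathbf{1}_{\{\mathcal{E}_j^r(s)=1\}},\quad C_1\doteq C_0+c.
\]

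From $Y\ge0$ and this inequality, for $s\le\tau$ one has $0\le Y(s)\le Y(0)+\mathcal{M}(s)-cs+C_1\Phi(s)$ with $\Phi(s)\doteq\int_0^s\Psi$, so $Y(s)\le R\doteq Y(0)+\sup_{u\le\tau}|\mathcal{M}(u)|+C_1\Phi(\tau)$ for all $s\le\tau$ and $c\tau\le R$; hence $\int_0^{\tau}Y\,ds\le\tau R\le c^{-1}R^2$. Using $\Phi(\tau)\le J\tau$, Doob's inequality and optional stopping ($E_x[\sup_{u\le\tau}|\mathcal{M}(u)|^2]\le 4E_x[\langle\mathcal{M}\rangle_\tau]\le 4C_2E_x[\tau]$), the problem is reduced to the bound $E_x[\tau^2]\le B''(r^2+Y(0)^2)$. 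This I would obtain from the inclusion, valid for every $S>0$ because on $\{\tau>S\}$ one has $0\le Y(S)\le Y(0)+\mathcal{M}(S)-cS+C_1\Phi(S)$,
\[
\{\tau>S\}\subseteq\Bigl\{Y(0)\ge\tfrac{cS}{3}\Bigr\}\cup\Bigl\{\mathcal{M}(S)\ge\tfrac{cS}{3}\Bigr\}\cup\Bigl\{\Phi(S)\ge\tfrac{cS}{3C_1}\Bigr\},
\]
combined with $E_x[\tau^2]=\int_0^\infty 2S\,P_x(\tau>S)\,dS$: the first event contributes $\le 9c^{-2}Y(0)^2$; for the second, the Poisson large deviation estimates in the appendix (Theorem \ref{thm:LDP}), applied to the finitely many compensated arrival/departure processes composing $\mathcal{M}$ (the Lipschitz time changes $B_j^r$ being harmless), give $P_x(\mathcal{M}(S)\ge cS/3)\le K_1e^{-K_2S}$, contributing $O(1)$; for the third, Theorem \ref{thm:IdleTimeExp} on windows $S\ge r^{1/2}$ (taking $t=Sr^{-1/2}\ge1$; since $\alpha<1/2$, the guaranteed $\Psi$‑mass $\epsilon r^{-1/4+\alpha/2}S$ is eventually below $cS/(3C_1)$) gives $P_x(\Phi(S)\ge cS/(3C_1))\le K_3e^{-S\rho_r}$ with $\rho_r\asymp r^{-1/4-\alpha/2}$, contributing $O(\rho_r^{-2})=O(r^{1/2+\alpha})=O(r)$, while $S<r^{1/2}$ contributes $\le\int_0^{r^{1/2}}2S\,dS=r$. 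Altogether $E_x[\tau^2]\le B''(Y(0)^2+r)\le B''(Y(0)^2+r^2)$, as needed. (One runs these moment estimates with $\tau\wedge T$ and lets $T\to\infty$; the argument also shows $E_x[\tau]<\infty$, hence $\tau<\infty$ a.s.)

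The crux is this second‑moment bound on $\tau$: the drift of $Y$ is negative only while no relevant queue is recovering after depletion — i.e. while the corresponding $\mathcal{E}_j^r$ vanish — so one must show quantitatively that such recovery periods occupy only a negligible fraction of the relevant horizon, which is exactly what Theorem \ref{thm:IdleTimeExp} provides. The delicate part is combining that estimate with the Poisson large deviation bounds so that the net drift of $Y$ over an arbitrary window is negative with overwhelming probability, and matching exponents so that the resulting tail of $\tau$ is integrable against $S\,dS$; it is here that $\alpha<1/2$ is essential ($r^{-1/4\pm\alpha/2}\to0$ and $r^{1/2+\alpha}=o(r)$). The remaining ingredients — the bookkeeping with $c_3=2Jc_2/\min_l\mu_l$ that makes Lemma \ref{lem:lem3_4} applicable, and the martingale estimates — are routine.
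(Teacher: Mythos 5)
Your proposal is correct, and it rests on the same three inputs as the paper's proof — the negative-drift bound of Lemma \ref{lem:lem3_4} (made applicable on $[0,\tau)$ by exactly the same $c_3$ bookkeeping), the idle-time estimate of Theorem \ref{thm:IdleTimeExp}, and the Poisson bounds of Theorem \ref{thm:LDP} — but the organization is genuinely different. The paper introduces no martingale decomposition: for each scale $y\geq\max\{d,1,\Delta/2\}$ it defines two bad events (too much recovery time on the window $[0,2ry/\Delta]$, or atypical fluctuations of the primitive processes there), shows each has probability at most $B_1e^{-B_2y}$, and on the complement bounds the minimum process pathwise by the linearly decaying envelope $2ry-\Delta t$, which forces $\hat{\tau}_{k}^{s}(0)<2ry/\Delta$ and hence $\int_0^{\hat{\tau}_{k}^{s}(0)}\leq\frac{4}{\Delta}r^{2}y^{2}$; integrating this tail in $y$ gives the $r^{2}(1+|q|^{2})$ bound directly. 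You instead write $Y=Y(0)+\mathcal{M}+\int\beta$, bound $\int_0^{\tau}Y\leq c^{-1}R^{2}$ pathwise, and funnel everything through $E_x[\tau^{2}]$, which you control by the three-event tail decomposition at each time $S$. Your route buys modularity — a clean reduction to a second moment of the hitting time, with the stochastic part handled by Doob's inequality and optional stopping rather than sup-over-window large deviation bounds — at the price of the localization $\tau\wedge T$ and the exponent matching $\rho_r\asymp r^{-1/4-\alpha/2}$ needed to make $\int S\,P_x(\tau>S)\,dS=O(r)$; the paper's route avoids martingale moment inequalities altogether and keeps every estimate at the level of exponential tails in the single parameter $y$. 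Both uses of Theorem \ref{thm:IdleTimeExp} exploit $\alpha<1/2$ in the same way. One small simplification available to you: choosing $\epsilon=c/(3C_1J)$ in \eqref{eq:rootIdleTimeResult} makes the guaranteed threshold $\epsilon r^{\alpha/2-1/4}S$ lie below $cS/(3C_1J)$ for every $r\geq 1$ by monotonicity, so no "eventually" is needed there, and the resulting constants are uniform in $x$, $i$, $k$ and $r\geq R$, as the statement requires.
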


\begin{proof}
 Let $k\in \NN_m$ be arbitrary.
 Note that under $P_x$, $Q^{r}(0)=r\hat{Q}^{r}(0)=rq$%
. \ Choose $\check{i}(0)\in N_{\rho (k)}$ such that 
\begin{equation*}
\sum_{j\in \zeta _{\check{i}(0)}^{k}}r \frac{q_{j}}{\mu _{j}}=\min_{i\in N_{\rho
(k)}}\left\{ \sum_{j\in \zeta _{i}^{k}}r\frac{q_{j}}{\mu _{j}}\right\} 
\end{equation*}%
and define 
\begin{equation}
d=\sum_{j\in \zeta _{\check{i}(0)}^{k}}\frac{q_{j}}{\mu _{j}} \mbox{ and } \Delta =2^{-m -2}\delta \text{,} \label{eq:defTriangle}
\end{equation}%
where $\delta$ is as in Definition \ref{def:workAllocScheme}.
If $rd<2c_{3}r^{\alpha }$ then $\hat{\tau}_{k }^{s}(0)=0$ and the result
holds trivially. \ Consider now $rd\ge 2c_{3}r^{\alpha }$ so that $\hat{\tau}_{k }^{s}(0)>0$. We claim that for $t\in \left[ 0,\hat{\tau}_{k }^{s}(0)\right) $ and $i' \in N_{\rho(k)}$ 
% such that\ $%
% \sum_{j\in \zeta _{i'}^{k}}\mathcal{E}_{j}^r(t)=0$ 
we have 
$\zeta _{i'}^{k} \cap \sigma^r(t)\neq \emptyset$.
To see the claim note that for such $t$, for all $i' \in N_{\rho(k)}$, from the definition of $\hat{\tau}_{k }^{s}(0)$
$$\sum_{j \in \zeta _{i'}^{k}} \frac{Q_j^r(t)}{\mu_j^r} \ge \min_{i \in N_{\rho(k)}} \sum_{j \in \zeta _{i}^{k}} \frac{Q_j^r(t)}{\mu_j^r} \ge 2c_3 r^{\alpha}.$$
Thus, from the definition of $c_3$ there is a $j \in \zeta_{i'}^k$ such that
$$Q_j^r(t) \ge \frac{2c_3}{J} r^{\alpha}\mu_j^r \ge \frac{c_3}{J} r^{\alpha}\mu_j \ge c_2r^{\alpha},$$
namely $j\in \sigma^r(t)$. Thus we have $\zeta _{i'}^{k} \cap \sigma^r(t)\neq \emptyset$ proving the claim.  From Lemma \ref{lem:lem3_4}(a) we now have that
for 
$i\in N_{\rho (k)}$ and $t\in \left[ 0,\hat{\tau}_{k }^{s}(0)\right) $ such that\ $\sum_{j\in \zeta _{i}^{k}}\mathcal{E}_{j}^r(t)=0$
\begin{equation}
\sum_{j\in \zeta _{i}^{k}}\left( \varrho _{j}^{r}-x_{j}(t)\right) \leq
-2^{-m -2}\delta =-\Delta \text{.}\label{eq:eq129}
\end{equation}%
Recall that $\bar{C} = \max_{i} \{C_i\}$.
Define for $y\ge 0$, the events
\begin{equation*}
\mathcal{A}_{y}^{r}=\left\{ \sum_{i\in \mathbb{N}_{I}}\int_{0}^{(2ry/\Delta ) \wedge \hat{\tau}_{k }^{s}(0) }\one_{\left\{ \sum_{j\in \zeta _{i}^{k}}\mathcal{E}%
_{j}^r(s)>0\right\} }ds\geq \frac{yr}{4(\bar C\vee \Delta)}\right\} 
\end{equation*}%
and 
\begin{eqnarray*}
\mathcal{B}_{y}^{r} &=& \bigcup_{j \in \NN_J}\left\{ \sup_{0\leq t\leq 2ry/\Delta }\left\vert
A_{j}^{r}(t)-t\lambda _{j}^{r}\right\vert 
 +\sup_{0\leq t\leq 2\bar{C}ry/\Delta }\left\vert
S_{j}^{r}(t)-t\mu _{j}^{r}\right\vert \geq \frac{y\bar{\mu}_{\min}r}{%
4J}\right\} .
\end{eqnarray*}%
From Theorem \ref{thm:IdleTimeExp}  (cf. \eqref{eq:rootIdleTimeResult} with $\frac{2r^{1/2}y}{\Delta}$ substituted in for $t$) and Theorem \ref{thm:LDP} there
exist  $B_{1},B_{2} \in (0,\infty) $ and $R_{1}\in \left[ \hat{R},\infty
\right) $ (recall (\eqref{def:Rhat})) such that for all $r\geq R_{1}$ and $y\geq \max \{\frac{\Delta}{2},d,1\}$, 
\begin{equation*}
P\left( \mathcal{A}_{y}^{r}\bigcup \mathcal{B}_{y}^{r}\right) \leq
B_{1}e^{-B_{2}y}\text{.}
\end{equation*}%
Also on the event $\left( \mathcal{A}_{y}^{r}\bigcup \mathcal{B}%
_{y}^{r}\right) ^{c}$ for all $t\in \left[ 0,\hat{\tau}_{k }^{s}(0)\wedge
2ry/\Delta \right) $ we have 
\begin{eqnarray*}
\min_{i\in N_{\rho (k)}}\left\{ \sum_{j\in \zeta _{i}^{k}}
\frac{Q_{j}^{r}(t)}{\mu_{j}^{r}}\right\}  &\leq &\sum_{j\in \zeta _{\check{i}(0)}^{k}}\frac{Q_{j}^{r}(t)}{\mu _{j}^{r}} \\
&\leq &rd+\sum_{j\in \zeta _{\check{i}(0)}^{k}} \frac{A_{j}^{r}(t)}{\mu _{j}^{r}}-\sum_{j\in \zeta _{\check{i}(0)}^{k}}
\frac{S_{j}^{r}(B_{j}(t))}{\mu _{j}^{r}} \\
&\leq &rd+\sum_{j\in \zeta _{\check{i}(0)}^{k}}\frac{y\bar{\mu}_{\min}r}{%
4 J \mu _{j}^{r}}+\sum_{j\in \zeta _{\check{i}(0)}^{k}}\left( t\varrho _{j}^{r}-B_{j}(t)\right)  
\end{eqnarray*}
where the last line follows from the definition of the event $\mathcal{B}_{y}^{r}$.
Next note that
$$B_j(t) = \int_0^t x_j(s) ds = \int_0^t x_j(s) \one_{\left\{ \sum_{j\in \zeta _{\check{i}(0)}^{k}}\mathcal{E}_{j}^r(s)=0\right\} } ds +
\int_0^t x_j(s) \one_{\left\{ \sum_{j\in \zeta _{\check{i}(0)}^{k}}\mathcal{E}_{j}^r(s)>0\right\} } ds.$$
From \eqref{eq:eq129} , on the above event, for $t\in \left[ 0,\hat{\tau}_{k }^{s}(0)\wedge
2ry/\Delta \right)$
$$\sum_{j\in \zeta _{\check{i}(0)}^{k}}\int_0^t x_j(s) \one_{\left\{ \sum_{j\in \zeta _{\check{i}(0)}^{k}}\mathcal{E}_{j}^r(s)=0\right\} } ds \ge
\int_0^t (\sum_{j\in \zeta _{\check{i}(0)}^{k}} \varrho _{j}^{r})\one_{\left\{ \sum_{j\in \zeta _{\check{i}(0)}^{k}}\mathcal{E}_{j}^r(s)=0\right\} } ds
+ \Delta \int_0^t \one_{\left\{ \sum_{j\in \zeta _{\check{i}(0)}^{k}}\mathcal{E}_{j}^r(s)=0\right\} } ds$$
Thus, recalling the definition of $\mathcal{A}_{y}^{r}$
\begin{align*}
\sum_{j\in \zeta _{\check{i}(0)}^{k}}\left( t\varrho _{j}^{r}-B_{j}(t)\right) 
&\le \int_0^t \sum_{j\in \zeta _{\check{i}(0)}^{k}} (\varrho _{j}^{r}-x_j(s)) \one_{\left\{ \sum_{j\in \zeta _{\check{i}(0)}^{k}}\mathcal{E}_{j}^r(s)\neq0\right\} } ds
- \Delta t +\Delta \int_0^t \one_{\left\{ \sum_{j\in \zeta _{\check{i}(0)}^{k}}\mathcal{E}_{j}^r(s)\neq0\right\} } ds\\
&\le \frac{C_{\check{i}(0)}yr}{4\bar C}-\Delta t  + \Delta \frac{yr}{4\Delta}\\
&\le r\frac{y}{2}-\Delta t\text{}
\end{align*}
and consequently on the event $\left( \mathcal{A}_{y}^{r}\bigcup \mathcal{B}%
_{y}^{r}\right) ^{c}$ for all $t\in \left[ 0,\hat{\tau}_{k }^{s}(0)\wedge
2ry/\Delta \right) $ we have (since $y\ge d$)
\begin{eqnarray*}
\min_{i\in N_{\rho (k)}}\left\{ \sum_{j\in \zeta _{i}^{k}}
\frac{Q_{j}^{r}(t)}{\mu_{j}^{r}}\right\}  &\leq r(d+y)-\Delta t \le 2ry - \Delta t .
\end{eqnarray*}
Since at $t = 2ry/\Delta$, $2ry - \Delta t=0$, we must have 
$
\hat{\tau}_{k }^{s}(0)< 2ry/\Delta 
$
so that on the above event
\begin{equation*}
\int_{0}^{\hat{\tau}_{k }^{s}(0)}\min_{i\in N_{\rho (k)}}\left\{ \sum_{j\in
\zeta _{i}^{k}}\frac{Q_{j}^{r}(t)}{\mu _{j}^{r}}\right\} dt\leq \frac{4}{\Delta }%
r^{2}y^{2}\text{.}
\end{equation*}%
This gives for $r\geq R_{1}$ and $y\geq \max \{d,1\}$%
\begin{equation*}
P_x\left( \int_{0}^{\hat{\tau}_{k }^{s}(0)}\min_{i\in N_{\rho (k)}}\left\{
\sum_{j\in \zeta _{i}^{k}}\frac{Q_{j}^{r}(t)}{\mu _{j}^{r}}\right\} dt> \frac{4}{%
\Delta }r^{2}y^{2}\right) \leq B_{1}e^{-B_{2}y}\text{.}
\end{equation*}%
A straightforward calculation now shows that
\begin{eqnarray*}
E_x\left[ \int_{0}^{\hat{\tau}_{k }^{s}(0)}\min_{i\in N_{\rho (k)}}\left\{
\sum_{j\in \zeta _{i}^{k}}\frac{Q_{j}^{r}(t)}{\mu _{j}^{r}}\right\} dt\right]  
&\leq &
r^{2}B_{3} (1+ |q|^2)
\end{eqnarray*}%
where $B_3$ depends only on $B_1, B_2$ and $\delta$.
This proves the second statement in the lemma. \ The
proof of the first statement
follows in a very similar manner and is omitted.
\end{proof}

The following proposition will be the second ingredient in the proof of Theorem \ref{thm:discCostInefBnd}.
\begin{proposition}
\label{thm:runningInef}There exist  $H,R\in(0,\infty) $ such that for
all $r\geq R$, $i\in \mathbb{N}_{I}$, $k\in\mathbb{N}_{m}$, and $0\leq T_{1}<T_{2}<\infty $
satisfying $T_{2}-T_{1}\geq 1$ we have 
\begin{equation*}
\frac{1}{r^{3}}E\left[ \int_{\hat{\tau}_{i}^{1}(r^{2}T_{1})}^{\hat{\tau}%
_{i}^{1}(r^{2}T_{2})}\sum_{j\in \zeta _{i}^{0}}\frac{Q_{j}^{r}(s)}{\mu _{j}^{r}}ds%
\right] \leq (T_{2}-T_{1})H r^{\alpha -1/2}
\end{equation*}%
and%
\begin{equation*}
\frac{1}{r^{3}}E\left[ \int_{\hat{\tau}_{k}^{s}(r^{2}T_{1})}^{\hat{\tau}%
_{k}^{s}(r^{2}T_{2})}\min_{i\in
N_{\rho (k)}}\left\{ \sum_{j\in \zeta _{i}^{k}}\frac{Q_{j}^{r}(s)}{\mu
_{j}^{r}}\right\} ds\right] \leq
(T_{2}-T_{1})Hr^{\alpha -1/2}
\end{equation*}
\end{proposition}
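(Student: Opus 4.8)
The plan is to prove Proposition~\ref{thm:runningInef} by the same drift-plus-large-deviations mechanism used for Proposition~\ref{thm:initInef}, the new point being a strong Markov reduction that removes any dependence on the initial configuration (this is why the bound here carries no $(1+|q|^2)$ factor). I would first note that the two displays are entirely analogous: writing $V(t)\doteq\sum_{j\in\zeta^0_i}Q^r_j(t)/\mu^r_j$ in the first case and $V(t)\doteq\min_{i\in N_{\rho(k)}}\sum_{j\in\zeta^k_i}Q^r_j(t)/\mu^r_j$ in the second, both $\hat\tau^1_i$ and $\hat\tau^s_k$ in \eqref{eq:defTauS} are return times of $V$ to $\{V<2c_3r^\alpha\}$, and Lemma~\ref{lem:lem3_4}(b), resp.\ (a), supplies the needed negative drift; so I would run the argument for $V$ generically. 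Next, apply the strong Markov property at $\hat\tau^1_i(r^2T_1)$: at that time $V<2c_3r^\alpha$, and setting $S\doteq(r^2T_2-\hat\tau^1_i(r^2T_1))^+\in[0,r^2(T_2-T_1)]$ and $\hat\tau^S\doteq\inf\{t\ge S:V(t)<2c_3r^\alpha\}$, the problem reduces to showing
\[
\frac1{r^3}\,E_{x}\!\!\int_0^{\hat\tau^S}\!\!\! V(t)\,dt\;\le\; C\,(T_2-T_1)\,r^{\alpha-1/2}
\]
uniformly over $S\in[0,r^2(T_2-T_1)]$ and over $x=(q,z)\in\cls^r$ with $V(0)<2c_3r^\alpha$ (when $\hat\tau^1_i(r^2T_1)>r^2T_2$ the original integral vanishes).

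The ingredients are: (i) whenever $V(t)\ge2c_3r^\alpha$ one has, as in the proof of Proposition~\ref{thm:initInef}, $\zeta^0_i\cap\sigma^r(t)\neq\emptyset$, so on $\{\sum_j\mathcal E^r_j(t)=0\}$ Lemma~\ref{lem:lem3_4} gives $V$ a drift $\le-\delta/4$; while some relevant $j$ is idle ($\mathcal E^r_j=1$), $x_j\equiv0$ and $V$ has drift at most $\sum_j\varrho^r_j$, but Lemma~\ref{lem:posDriftCond} together with the definition of the $\tau^j$'s in Definition~\ref{def:workAllocScheme} forces $Q^r_j$ to be restocked from $c_1r^\alpha$ to $c_2r^\alpha$ by arrivals alone, so on the Poisson large-deviation good events (Theorem~\ref{thm:LDP}) each idle phase has length $O(r^\alpha)$, consecutive idle phases of a fixed job are $\Omega(r^\alpha)$ apart, and an idle coordinate contributes only $O(r^\alpha)$ to $V$; (ii) Theorem~\ref{thm:IdleTimeExp}, in the form \eqref{eq:squareIdleTimeResult} over the horizon $[0,r^2(T_2-T_1)]$, shows that off an event of probability $\le\hat B_1e^{-\hat B_2 r(T_2-T_1)}+(1+\hat B_3r^{-1-\alpha})^{-\hat B_4r^2(T_2-T_1)}$ the total idle time $\sum_{j\in\zeta^0_i}\int_0^{r^2(T_2-T_1)}\mathbf1_{\{\mathcal E^r_j=1\}}ds$ is at most $\epsilon r(T_2-T_1)$ for an $\epsilon>0$ we fix small; (iii) a priori Poisson bounds giving $V(S)=O(r^2(T_2-T_1))$ and hence, via the drift in (i), $\hat\tau^S\le Cr^2(T_2-T_1)$, on the good events.

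On the intersection $G$ of these good events I would decompose $[0,\hat\tau^S]$ at the successive returns of $V$ to $\{V<2c_3r^\alpha\}$ and regenerate there. The time spent with $V<2c_3r^\alpha$ contributes $\le 2c_3r^\alpha\,\hat\tau^S=O(r^{2+\alpha}(T_2-T_1))$. On an excursion $[a_l,b_l]$ of $V$ above $2c_3r^\alpha$ one has $V(a_l)=2c_3r^\alpha+O(1)$; by the negative drift and the restocking estimates an idle-free excursion has length and height $O(r^\alpha)$, hence integral $O(r^{2\alpha})$, while an idle-containing excursion has its length and height controlled by the drift inequality corrected by that excursion's idle time, and since each idle phase is $\Omega(r^\alpha)$ and the total idle time is $\le\epsilon r(T_2-T_1)$ only $O(\epsilon r^{1-\alpha}(T_2-T_1))$ excursions are idle-containing. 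Because re-entering $\{V\ge2c_3r^\alpha\}$ after leaving requires a net gain $\Omega(r^\alpha)$ in $V$, hence $\Omega(r^\alpha)$ arrivals and time $\Omega(r^\alpha)$ on $G$, the number of excursions before $\hat\tau^S$ is $O(r^{2-\alpha}(T_2-T_1))$; summing the per-excursion estimates via a renewal/Wald bound — conditioning at each regeneration and using that the \emph{expected} per-cycle contribution is $O(r^{2\alpha})$ — gives $E[\int_0^{\hat\tau^S}V;\,G]\le Cr^{2+\alpha}(T_2-T_1)$. Dividing by $r^3$ yields $O(r^{\alpha-1}(T_2-T_1))\le C(T_2-T_1)r^{\alpha-1/2}$, so the stated exponent is not sharp. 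On $G^c$ one bounds $\int_0^{\hat\tau^S}V\le(\sup_{[0,\hat\tau^S]}V)\,\hat\tau^S$ using arrivals-only bounds and exponential tail bounds for $\hat\tau^S$ and $\sup V$, and Cauchy--Schwarz with the exponentially small $P(G^c)$ absorbs the polynomial growth.

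The main obstacle is this last step: one must arrange the excursion-count, excursion-length, excursion-height, and idle-time estimates on a single event of overwhelming probability — forcing a union bound over the $O(r^{2-\alpha}(T_2-T_1))$ excursions and over $j\in\mathbb N_J$ — and, crucially, obtain the sum over excursions with the correct \emph{linear} dependence on $T_2-T_1$. Naively bounding an excursion's idle-driven length and height inflation and then squaring and summing produces a spurious $(T_2-T_1)^2$; avoiding it requires the regeneration structure, so that only the expected per-cycle idle contribution — which is $o(r^\alpha)$ by the global idle-time bound of Theorem~\ref{thm:IdleTimeExp} — enters. Orchestrating the restocking dynamics (Lemma~\ref{lem:posDriftCond}), the negative-drift bounds (Lemma~\ref{lem:lem3_4}), and the idle-time large-deviation estimate to make this precise is the technically demanding part, after which Theorem~\ref{thm:discCostInefBnd} follows by combining Propositions~\ref{thm:initInef} and~\ref{thm:runningInef}.
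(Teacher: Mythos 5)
Your overall architecture (decompose at crossings of the threshold $2c_3r^{\alpha}$, use the negative drift from Lemma \ref{lem:lem3_4} off the idle sets, and control idleness via Theorem \ref{thm:IdleTimeExp}) is the right one and matches the paper in spirit, but two quantitative steps in your accounting do not hold up, and they are exactly the steps that produce your claimed improvement $r^{\alpha-1}$ over the stated $r^{\alpha-1/2}$. First, the excursion count: you assert that re-entering $\{V\ge 2c_3r^{\alpha}\}$ after leaving it requires a net gain of $\Omega(r^{\alpha})$, hence $\Omega(r^{\alpha})$ time, giving $O(r^{2-\alpha}(T_2-T_1))$ excursions. This is false. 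The process $Z^r_k$ in \eqref{eq:eqzrkt} exits the set by a single service completion, a jump of size $1/\mu_j^r=O(1)$, so it sits just below the threshold and can re-cross after a single arrival; there is no hysteresis gap at $2c_3r^{\alpha}$ (the $c_1r^{\alpha}$-to-$c_2r^{\alpha}$ gap lives in the $\mathcal{E}_j^r$ indicators, not in the excursion structure of $V$). The correct count, and the one the paper uses, is that each excursion start requires at least one arrival, so the expected number of excursions in $[r^2T_1,r^2T_2]$ is $O(r^2(T_2-T_1))$.

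Second, the per-excursion bound: you want expected contribution $O(r^{2\alpha})$ per cycle, arguing that only the \emph{expected} per-cycle idle contribution enters and that this is $o(r^{\alpha})$ "by the global idle-time bound." This is the unresolved core of the argument, not a detail: the per-cycle contribution to $\int V$ scales like (excursion length)$\times$(excursion height), i.e.\ quadratically in that cycle's idle time, so Wald-type regeneration requires the expected \emph{square} of the per-cycle idle time (or a per-cycle tail bound), which cannot be extracted from the aggregate budget $\le\epsilon r(T_2-T_1)$ of \eqref{eq:squareIdleTimeResult} — the budget could concentrate on a few excursions, and nothing you cite bounds the probability that a given excursion contains (or begins inside) an idle phase. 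The paper resolves precisely this by applying the strong Markov property at each excursion start $\tau_{2l+1}$ and invoking the $r^{1/2}$-time-scale estimate \eqref{eq:rootIdleTimeResult} on a window of length $\sim r^{1/4+\alpha/2}y$, obtaining an exponential-in-$y$ tail for the event that the excursion's length and height exceed $O(r^{1/4+\alpha/2}y)$; this yields expected per-excursion integral $O(r^{1/2+\alpha})$ — not $O(r^{2\alpha})$ — uniformly in the starting state. Multiplying by the correct excursion count $O(r^2(T_2-T_1))$ and adding the below-threshold contribution $O(r^{2+\alpha}(T_2-T_1))$ gives, after dividing by $r^3$, exactly the stated $(T_2-T_1)\,r^{\alpha-1/2}$; your sharper exponent $r^{\alpha-1}$ is an artifact of the two unsupported claims above.
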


\begin{proof}
	Once again we only prove the second statement since the proof of the first statement is similar. 
	Many steps in the proof are similar to those in Proposition \ref{thm:initInef} but we give details to keep the proof self contained.
Let $k\in \mathbb{N}_{m}$ be arbitrary. \ Recall 
$
\bar \mu_{\min}=\min_{j\in \mathbb{N}_{J}}\{\mu _{j}\}\text{.}
$
and $
\bar C=\max_{i\in \mathbb{N}_{I}}\{C_{i}\}$. Also let  for $k \in \mathbb{N}_{m}$
\begin{equation}
	Z^r_k(t) \doteq \min_{i\in
N_{\rho (k)}}\left\{ \sum_{j\in \zeta _{i}^{k}}\frac{Q_{j}^{r}(t)}{\mu
_{j}^{r}}\right\}.\label{eq:eqzrkt}
\end{equation}
Define the stopping times, $\tau_0 \doteq r^2T_1$ and for $l\in \mathbb{N}$
% \begin{equation*}
% \tau _{1}=\inf \left\{ t\geq \hat{\tau}_{k}^{s}(r^{2}T_{1}):\min_{i\in
% N_{\rho (k)}}\left\{ \sum_{j\in \zeta _{i}^{k}}\frac{Q_{j}^{r}(t)}{\mu
% _{j}^{r}}\right\} \geq 2c_{3}r^{\alpha }\right\} \text{,}
% \end{equation*}%
\begin{equation*}
	\tau _{2l-1}\doteq \inf \left\{ t\geq \tau _{2l-2}: Z^r_k(t) \geq
	2c_{3}r^{\alpha }\right\},\; 
\tau _{2l}=\inf \left\{ t\geq \tau _{2l-1}:Z^r_k(t) <2c_{3}r^{\alpha
}\right\} \text{.}
\end{equation*}%
% and 
% \begin{equation*}
% \tau _{2l+1}=\inf \left\{ t\geq \tau _{2l}:\min_{i\in N_{\rho (k)}}\left\{
% \sum_{j\in \zeta _{i}^{k}}\frac{Q_{j}^{r}(t)}{\mu _{j}^{r}}\right\} \geq
% 2c_{3}r^{\alpha }\right\} 
% \end{equation*}%
% for $l\in \mathbb{N}^{+}$.  
Let $\hat{l}\doteq \min \{l\geq 0:\tau
_{2l+1}>r^{2}T_{2}\}$. Then recalling the definition of $\hat \tau_k^s(\xi)$ from 
 (\ref{eq:defTauS}),
$\hat{\tau}_{k}^{s}(r^{2}T_{2})=r^{2}T_{2}\vee \tau _{2\hat{l}}$. \ Consequently we can write 
\begin{align}
E\left[ \int_{\hat{\tau}_{k}^{s}(r^{2}T_{1})}^{\hat{\tau}%
_{k}^{s}(r^{2}T_{2})}Z^r_k(s) ds\right] 
&\leq E\left[ \int_{\hat{\tau}_{k}^{s}(r^{2}T_{1})}^{\tau _{1}\wedge
r^{2}T_{2}}Z^r_k(s) ds\right]   
+ E\left[ \sum_{l=1}^{\infty }\mathit{I}_{\{\tau _{2l}\leq
r^{2}T_{2}\}}\int_{\tau _{2l}}^{\tau _{2l+1}\wedge r^{2}T_{2}}Z^r_k(s) ds\right]  \notag\\ 
&\quad+E\left[ \sum_{l=0}^{\infty }\mathit{I}_{\{\tau _{2l+1}\leq
r^{2}T_{2}\}}\int_{\tau _{2l+1}}^{\tau _{2l+2}}Z^r_k(s) ds%
\right] \text{.}  \label{eq:runCostDecomp} 
\end{align}%
By definition, for all $s\in \left[ \hat{\tau}_{k}^{s}(r^{2}T_{1}),\tau
_{1}\wedge r^{2}T_{2}\right) $ and $s\in \left[ \tau _{2l},\tau
_{2l+1}\wedge r^{2}T_{2}\right) $ we have
$
Z^r_k(s) \leq 2c_{3}r^{\alpha }
$
which gives 
\begin{equation}
E\left[ \int_{\hat{\tau}_{k}^{s}(r^{2}T_{1})}^{\tau _{1}\wedge
r^{2}T_{2}}Z^r_k(s) ds\right] 
+E\left[ \sum_{l=1}^{\infty }\mathit{I}_{\{\tau _{2l}\leq
r^{2}T_{2}\}}\int_{\tau _{2l}}^{\tau _{2l+1}\wedge r^{2}T_{2}}Z^r_k(s) ds\right]  
\leq 2c_{3}r^{\alpha +2}(T_{2}-T_{1})\text{.}  
	\label{eq:runCostBndedComp} 
\end{equation}%
For all $l\in \mathbb{N}$ let $\check{i}(l)\in N_{\rho (k)}$ satisfy 
\begin{equation*}
\sum_{j\in \zeta _{\check{i}(l)}^{k}}\frac{Q_{j}^{r}(\tau _{2l+1})}{\mu
_{j}^{r}}=\min_{i\in N_{\rho (k)}}\left\{ \sum_{j\in \zeta
_{i}^{k}}\frac{Q_{j}^{r}(\tau _{2l+1})}{\mu _{j}^{r}}\right\} = Z^r_k(\tau _{2l+1})
\end{equation*}%
and note that 
\begin{equation*}
\sum_{j\in \zeta _{\check{i}(l)}^{k}}\frac{Q_{j}^{r}(\tau _{2l+1})}{\mu
_{j}^{r}}\leq 2c_{3}r^{\alpha }+\frac{2}{\bar \mu_{\min}}\text{.}
\end{equation*}%
Recall the definition of $\Delta$ in \eqref{eq:defTriangle}
and define for $y \in \mathbb{R}_+$ and $l \in \mathbb{N}$, the events 
\begin{equation*}
\mathcal{A}_{l,y}^{r}=\left\{ \sum_{i\in \mathbb{N}_{I}}\int_{\tau
_{2l+1}}^{\left( \tau _{2l+1}+2r^{1/4+\alpha /2}y/\Delta \right) \wedge \tau
_{2l+2}}\mathit{I}_{\left\{ \sum_{j\in \zeta _{i}^{k}}\mathcal{E}_{j}^r(s)>0\right\} }ds\geq \frac{r^{1/4+\alpha /2}}{4\left( \bar C\vee
\Delta \right) }y\right\} 
\end{equation*}%
and 
\begin{eqnarray*}
\mathcal{B}_{l,y}^{r} &=&\left\{ \sum_{j\in \mathbb{N}_{J}}\sup_{\tau
_{2l+1}\leq t\leq \tau _{2l+1}+2r^{1/4+\alpha /2}y/\Delta }\left\vert
A_{j}^{r}(t)-A_{j}^{r}(\tau _{2l+1})-(t-\tau _{2l+1})\lambda
_{j}^{r}\right\vert \right.  \\
&&\left. +\sum_{j\in \mathbb{N}_{J}}\sup_{\tau _{2l+1}\leq t\leq \tau
_{2l+1}+\bar C2r^{1/4+\alpha /2}y/\Delta }\left\vert
S_{j}^{r}(t)-S_{j}^{r}(\tau _{2l+1})-(t-\tau _{2l+1})\mu _{j}^{r}\right\vert
\geq \frac{\bar \mu_{\min}r^{1/4+\alpha /2}}{8}y\right\} 
\end{eqnarray*}%
From the strong Markov property, Theorems \ref{thm:IdleTimeExp} (cf. \eqref{thm:IdleTimeExp}) and \ref{thm:LDP} there
exist  $B_{1},B_{2} \in (0,\infty) $ and $R_{1}\in \left[ \hat{R},\infty
\right) $ such that for all $r\geq R_{1}$, $y\geq \Delta /2$, and $l\in 
\mathbb{N}$ we have%
\begin{equation}
r^{1/4+\alpha /2}\Delta /2>\frac{2}{\bar \mu_{\min}} \mbox{ and } 	P\left( \mathcal{A}_{l,y}^{r}\bigcup \mathcal{B}_{l,y}^{r}\right) \leq
	B_{1}e^{-B_{2}y}\text{.} \label{eq:triangleRcond}
\end{equation}%
We claim that for  $t\in \left[ \tau_{2l+1},\tau_{2l+2} \right) $ we have 
$\zeta _{i'}^{k} \cap \sigma^r(t)\neq \emptyset$ for all $i' \in N_{\rho(k)}$.
To see the claim note that for all $i' \in N_{\rho(k)}$, 
$\sum_{j \in \zeta _{i'}^{k}} \frac{Q_j^r(t)}{\mu_j^r} \ge \min_{i \in N_{\rho(k)}} \sum_{j \in \zeta _{i}^{k}} \frac{Q_j^r(t)}{\mu_j^r} \ge 2c_3 r^{\alpha}.$
Thus, from the definition of $c_3$ there is a $j \in \zeta_{i'}^{k}$ such that
$$Q_j^r(t) \ge \frac{2c_3}{J} r^{\alpha}\mu_j^r \ge \frac{c_3}{J} r^{\alpha}\mu_j \ge c_2r^{\alpha},$$
namely $j\in \sigma^r(t)$. Thus we have $\zeta _{i'}^{k} \cap \sigma^r(t)\neq \emptyset$ proving the claim.  From Lemma \ref{lem:lem3_4}(a)
for $i\in N_{\rho (k)}$ and $t\in \left[
\tau _{2l+1},\tau _{2l+2}\right) $ such that\ $\sum_{j\in \zeta _{i}^{k}}%
\mathcal{E}_{j}^r(t)=0$ we now have 
\begin{equation}
\sum_{j\in \zeta _{i}^{k}}\left( \varrho _{j}^{r}-x_{j}(t)\right) \leq
-2^{-m -2}\delta =-\Delta \text{%
.}  \label{eq:arrProcRateDif}
\end{equation}%
Consequently on the event $\left( \mathcal{A}_{l,y}^{r}\bigcup \mathcal{B}%
_{l,y}^{r}\right) ^{c}$ for all $t\in \left[ \tau _{2l+1},\tau _{2l+2}\wedge
\left( \tau _{2l+1}+2r^{1/4+\alpha /2}y/\Delta \right) \right) $ we have 
\begin{eqnarray*}
\min_{i\in N_{\rho (k)}}\left\{ \sum_{j\in \zeta _{i}^{k}}\frac{Q_{j}^{r}(t)}{\mu
_{j}^{r}}\right\}  &\leq &\sum_{j\in \zeta _{\check{i}(l)}^{k}}\frac{Q_{j}^{r}(\tau
_{2l+1})}{\mu _{j}^{r}}+\sum_{j\in \zeta _{\check{i}(l)}^{k}}\frac{Q_{j}^{r}(t)}{\mu
_{j}^{r}}-\sum_{j\in \zeta _{\check{i}(l)}^{k}}\frac{Q_{j}^{r}(\tau _{2l+1})}{\mu
_{j}^{r}} \\
&\leq &2c_{3}r^{\alpha }+\frac{2}{\bar \mu_{\min}}+\sum_{j\in \zeta _{\check{i}(l)}^{k}}
\frac{1}{\mu
_{j}^{r}}\left[\left( A_{j}^{r}(t)-A_{j}^{r}(\tau _{2l+1})\right) + \left(
S_{j}^{r}(B_{j}(t))-S_{j}^{r}(B_{j}(\tau _{2l+1}))\right)\right]  \\
&\leq &2c_{3}r^{\alpha }+\frac{2}{\bar \mu_{\min}}+\frac{r^{1/4+\alpha /2}}{4}y+\sum_{j\in \zeta _{%
\check{i}(l)}^{k}}\left( (t-\tau _{2l+1})\varrho
_{j}^{r}-(B_{j}(t)-B_{j}(\tau _{2l+1}))\right) 
\end{eqnarray*}%
where the last line comes from the definition of the event $\mathcal{B}%
_{l,y}^{r}$. \ Note that for all $j\in \mathbb{N}_{J}$ and $t\geq \tau _{2l+1}$ we have 
\begin{eqnarray*}
B_{j}(t)-B_{j}(\tau _{2l+1})&=&\int_{\tau _{2l+1}}^{t}x_{j}(s)ds \\
&=&\int_{\tau _{2l+1}}^{t}x_{j}(s)\mathit{I}_{\left\{ \sum_{j\in \zeta _{i}^{k}}\mathcal{E}_{j}^r(s)>0\right\} }ds+
\int_{\tau _{2l+1}}^{t}x_{j}(s)\mathit{I}_{\left\{ \sum_{j\in \zeta _{i}^{k}}\mathcal{E}_{j}^r(s)=0\right\} }ds\text{.}
\end{eqnarray*}
From \eqref{eq:arrProcRateDif}, on the above event and for $t\in \left[ \tau
_{2l+1},\tau _{2l+2}\wedge \left( \tau _{2l+1}+2r^{1/4+\alpha /2}y/\Delta
\right) \right]$
\begin{equation}
\int_{\tau _{2l+1}}^{t}\sum_{j\in \zeta _{i}^{k}}x_{j}(s)\mathit{I}_{\left\{ \sum_{j\in \zeta _{i}^{k}}\mathcal{E}_{j}^r(s)=0\right\} }ds
\geq \int_{\tau _{2l+1}}^{t} \left( \sum_{j\in \zeta_{i}^{k}}\varrho^{r}_{j} \right) \mathit{I}_{\left\{ \sum_{j\in \zeta _{i}^{k}}\mathcal{E}_{j}^r(s)=0\right\} }ds
+\Delta\int_{\tau _{2l+1}}^{t}\mathit{I}_{\left\{ \sum_{j\in \zeta _{i}^{k}}\mathcal{E}_{j}^r(s)=0\right\} }ds
\end{equation}%
so that
\begin{eqnarray*}
\sum_{j\in \zeta _{\check{i}(l)}^{k}}\left( (t-\tau _{2l+1})\varrho
_{j}^{r}-(B_{j}(t)-B_{j}(\tau _{2l+1}))\right)  &\leq &\int_{\tau
_{2l+1}}^{t}\sum_{j\in \zeta _{\check{i}(l)}^{k}}\left( \varrho
_{j}^{r}-x_{j}(s)\right) \mathit{I}_{\left\{ \sum_{j\in \zeta _{i}^{k}}%
\mathcal{E}_{j}^r(s)>0\right\} }ds \\
&&-\Delta (t-\tau _{2l+1})+\Delta \int_{\tau _{2l+1}}^{t}\mathit{I}%
_{\left\{ \sum_{j\in \zeta _{i}^{k}}\mathcal{E}_{j}^r(s)>0\right\} }ds \\
&\leq &\frac{C_{\check{i}(l)}r^{1/4+\alpha /2}}{4\bar C}y-\Delta (t-\tau
_{2l+1})+\frac{r^{1/4+\alpha /2}}{4}y \\
&\leq &\frac{r^{1/4+\alpha /2}}{2}y-\Delta (t-\tau _{2l+1})
\end{eqnarray*}
where the second line follows because we are on the set  $\left( \mathcal{A}_{l,y}^{r}\right) ^{c}$.  Consequently on the event $\left( \mathcal{A}_{l,y}^{r}\bigcup \mathcal{B}%
_{l,y}^{r}\right) ^{c}$ for $t\in \left[ \tau _{2l+1},\tau _{2l+2}\wedge
\left( \tau _{2l+1}+2r^{1/4 +\alpha /2}y/\Delta \right) \right) $ 
\begin{equation}
Z^r_k(t) \leq 2c_{3}r^{\alpha }+\frac{2}{\bar \mu_{\min}}+r^{1/4+\alpha /2}y-\Delta
(t-\tau _{2l+1})\text{.}
\label{eq:minUpperBnd}
\end{equation}%
The right side of \eqref{eq:minUpperBnd} with $t=\tau _{2l+1}+2r^{1/4+\alpha /2}y/\Delta $ equals
\begin{eqnarray*}
2c_{3}r^{\alpha }+\frac{2}{\bar \mu_{\min}}+r^{1/4+\alpha /2}y-\Delta
(2r^{1/4+\alpha /2}y/\Delta) < 2c_{3}r^{\alpha }
\end{eqnarray*}%
where the inequality is from \eqref{eq:triangleRcond},
and so we must have $\tau _{2l+2}<\tau
_{2l+1}+2r^{1/4+\alpha /2}y/\Delta $.  This combined with \eqref{eq:minUpperBnd} gives on the event $\left( \mathcal{A}_{l,y}^{r}\bigcup \mathcal{B}%
_{l,y}^{r}\right) ^{c}$
\begin{equation*}
\int_{\tau _{2l+1}}^{\tau _{2l+2}}Z^r_k(s) ds\leq Ky^{2}r^{1/2+\alpha }
\end{equation*}%
for a $K<\infty $ depending only on $c_3, \bar \mu_{\min}$ and $\Delta$. \ Then for $y\geq B_{3}=\max
\{2c_{3}+\frac{2}{\bar \mu_{\min}},\frac{\Delta}{2}\}$ we have from \eqref{eq:triangleRcond}
\begin{equation*}
P_{X^{r}(\tau _{2l+1})}\left(  \int_{\tau _{2l+1}}^{\tau _{2l+2}}Z^r_k(t)
dt >  Ky^{2}r^{1/2+\alpha }\right) \leq
B_{1}e^{-B_{2}y}
\end{equation*}%
and a standard argument now gives 
\begin{eqnarray}
E_{X^{r}(\tau _{2l+1})}\left[  \int_{\tau _{2l+1}}^{\tau _{2l+2}}Z^r_k(t)
dt\right] \leq B_{4}r^{1/2+\alpha }  \label{eq:runCostInefBndStopTime}
\end{eqnarray}%
where the constant $B_{4}$  depends only on  $B_1, B_2, B_3$ and $K$.   Let 
\begin{equation*}
L^{r}=\max \{l\geq 1:\tau _{2l+1}\leq T_{2}r^{2}\}\text{.}
\end{equation*}%
Note that for all $l\geq 1$ each occurence of $\tau _{2l+1}$ implies an
arrival of a job of type $j\in \bigcup _{i\in N_{\rho (k)}}\zeta _{i}^{k}$ in
the interval $\left( \tau _{2l},\tau _{2l+1}\right] $, so that for some $K_1 \in (0,\infty)$
\begin{equation*}
E_x L^{r}\leq K_1r^2(T_2-T_1) \mbox{ for all } x  \in \cls^r
\end{equation*}%
Consequently 
\begin{eqnarray*}
E\left[ \sum_{l=0}^{\infty }\mathit{I}_{\{\tau _{2l+1}  \le r^2T_2\}} \int_{\tau_{2l+1} }^{\tau_{2l+2}} Z^r_k(s) ds\right] 
\leq
B_{4}r^{1/2+\alpha }E_x\left[ {L}^{r}\right]  \leq B_{5}r^{2+1/2+\alpha }(T_{2}-T_{1}),
\end{eqnarray*}
where $B_5 \doteq K_1B_4$.
 This, combined with (\ref{eq:runCostDecomp}) and (\ref{eq:runCostBndedComp}) gives 
\begin{equation*}
\frac{1}{r^{3}}E\left[ \int_{\hat{\tau}_{k}^{s}(r^{2}T_{1})}^{\hat{\tau}%
_{k}^{s}(r^{2}T_{2})}Z^r_k(s)ds\right] \leq \left(2c_{3}r^{\alpha
-1}+B_{5}r^{\alpha -1/2}\right)(T_{2}-T_{1})\text{.}
\end{equation*}%
The result follows.
\end{proof}

We can now complete the proof of Theorem \ref{thm:discCostInefBnd}.

{\bf Proof of Theorem \ref{thm:discCostInefBnd}.}
Let $R<\infty $ be given by the maximum of the two $R$ values from Propositions %
\ref{thm:initInef} and \ref{thm:runningInef}. \ Note that by (\ref{eq:eq942}%
) , for all $t\ge 0$
\begin{equation}\label{eq:eq851}
	h \cdot \hat{Q}^{r}(t) \ge \clc\left( \tilde{W}^{r}(t)\right)
\end{equation}
and by Theorem \ref{thm:costInefIneq}, there is a $B_1 \in (0,\infty)$ such that for all $t,r$,
\begin{equation}\label{eq:eq853}
	h \cdot \hat{Q}^{r}(t) - \clc\left( \tilde{W}^{r}(t)\right) \le B_1 \left(\sum_{k\in \mathbb{N}_{m}}\min_{i\in N_{\rho (k)}}\left\{ \sum_{j\in \zeta _{i}^{k}}\frac{\hat{%
	Q}_{j}^{r}(t)}{\mu^r_j}\right\} + \sum_{i=1}^{I}\sum_{j\in \zeta _{i}^{0}}\frac{\hat{Q}_{j}^{r}(t)}{\mu^r_j}\right).
\end{equation}
Let $Z_k^r$ be as in \eqref{eq:eqzrkt}. From monotone convergence we have for $\theta \ge 0$
\begin{equation}
\lim_{n\rightarrow \infty }\frac{1}{r^{3}}E\left[ \int_{0}^{\hat{\tau}%
_{k}^{s}(r^{2}n)}e^{-\theta t/r^{2}}Z^r_k(t) dt\right] =E\left[
\int_{0}^{\infty }e^{-\theta t}\min_{i\in N_{\rho (k)}}\left\{ \sum_{j\in \zeta _{i}^{k}}\frac{\hat{%
Q}_{j}^{r}(t)}{\mu^r_j}\right\} dt\right] \text{.}\label{eq:eqmct931}
\end{equation}%
Note that 
\begin{eqnarray*}
\frac{1}{r^{3}}E\left[ \int_{0}^{\hat{\tau}_{k}^{s}(r^{2}n)}e^{-\theta
t/r^{2}}Z^r_k(t) dt\right]  &=&\frac{1}{r^{3}}E\left[ \int_{0}^{%
\hat{\tau}_{k}^{s}(0)}e^{-\theta t/r^{2}}Z^r_k(t)dt\right]  \\
&&+\sum_{l=1}^{n}\frac{1}{r^{3}}E\left[ \int_{\hat{\tau}%
_{k}^{s}(r^{2}(l-1))}^{\hat{\tau}_{k}^{s}(r^{2}l)}e^{-\theta
t/r^{2}}Z^r_k(t) dt\right] \text{.}
\end{eqnarray*}%
From Proposition \ref{thm:initInef}, we have for some $B_2 \in (0,\infty)$,
for $r\geq R$, $\theta\ge 0$ and $x \in \cls^r$,
\begin{equation}
	\label{eq:eq903}
\frac{1}{r^{3}}E_x\left[ \int_{0}^{\hat{\tau}_{k}^{s}(0)}e^{-\theta
t/r^{2}}Z^r_k(t) dt\right]  \leq  B_{2}r^{-1}(1+|q|^2).
\end{equation}%
Also, from  Theorem \ref{thm:runningInef}, there is $B_3 \in (0,\infty)$ such that for $k \in \mathbb{N}_m$,
$r\geq R$ and any $l\in \mathbb{N}$ 
\begin{eqnarray*}
\frac{1}{r^{3}}E\left[ \int_{\hat{\tau}_{k}^{s}(r^{2}(l-1))}^{\hat{\tau}%
_{k}^{s}(r^{2}l)}e^{-\theta t/r^{2}}Z^r_k(t) dt\right]  &\leq &\frac{1}{%
r^{3}}e^{-\theta (l-1)}E\left[ \int_{\hat{\tau}_{k}^{s}(r^{2}(l-1))}^{\hat{%
\tau}_{k}^{s}(r^{2}l)}Z^r_k(t) dt\right]  \\
&\leq &B_{3}e^{-\theta (l-1)}r^{\alpha -1/2}
\end{eqnarray*}%

Consequently for $r\geq R$%
\newline
\begin{equation*}
\frac{1}{r^{3}}E\left[ \int_{0}^{\hat{\tau}_{k}^{s}(r^{2}n)}e^{-\theta
t/r^{2}}Z^r_k(t) dt\right] \leq \left(
B_{2} (1+|q|^2)+B_{3}\sum_{l=0}^{n-1}e^{-l\theta }\right) r^{\alpha
-1/2}.
\end{equation*}%
Sending $n\to \infty$, using \eqref{eq:eqmct931}, we have for $\theta >0$ and all $k \in \mathbb{N}_m$
\begin{eqnarray*}
E\left[ \int_{0}^{\infty }e^{-\theta t}	\min_{i\in N_{\rho (k)}}\left\{ \sum_{j\in \zeta _{i}^{k}}\frac{\hat{%
	Q}_{j}^{r}(t)}{\mu^r_j}\right\} dt \right]  &\leq
&\left( B_{2} (1+|q|^2)+\frac{B_{3}}{1-e^{-\theta }}\right)
r^{\alpha -1/2}.
% &\leq &\left( B_{2}^{s,k}(q)+\frac{B_{3}^{s,k}}{1-e^{-\theta}}\right)
% r^{\alpha -1/2}\text{.}
\end{eqnarray*}%
A similar argument shows that  there are $B_4,B_5 \in (0,\infty)$ such that for all $i\in \mathbb{N}_{I}$ and $r\geq R$%
\begin{equation*}
E\left[ \int_{0}^{\infty }e^{-\theta t}\sum_{j\in \zeta _{i}^{0}}	\frac{\hat{%
		Q}_{j}^{r}(t)}{\mu^r_j}dt\right] \leq \left( B_{4} (1+|q|^2)+\frac{B_{5}}{%
1-e^{-\theta }}\right) r^{\alpha -1/2}.
\end{equation*}%
Combining the above two estimates with \eqref{eq:eq851} and \eqref{eq:eq853} we have the first inequality in the theorem.

For the second inequality, we write
\begin{align*}
E_x\left[ \frac{1}{T}\int_{0}^{T}\min_{i\in N_{\rho (k)}}\left\{ \sum_{j\in
\zeta _{i}^{k}}\frac{\hat{Q}_{j}^{r}(t)}{\mu_j^r}\right\} dt\right]  &= E_x\left[ \frac{1}{%
Tr^{3}}\int_{0}^{Tr^{2}}Z^r_k(t) dt\right]  \\
&\leq \frac{1}{r^{3}}E_x\left[ \frac{1}{T}\int_{0}^{\hat{\tau}%
_{k}^{s}(0)}Z^r_k(t) dt\right]  
+\frac{1}{r^{3}}E_x\left[ \frac{1}{T}\int_{\hat{\tau}_{k}^{s}(0)}^{\hat{\tau}%
_{k}^{s}(r^{2}T)}Z^r_k(t) dt\text{.}\right] 
\end{align*}%
 Applying \eqref{eq:eq903} with $\theta=0$ we have for $T\ge 1$
\begin{equation}
	\label{eq:eq903b}
\frac{1}{r^{3}}E_x\left[ \frac{1}{T}\int_{0}^{\hat{\tau}_{k}^{s}(0)}
  Z^r_k(t) dt\right]  \leq  B_{2}r^{-1}(1+|q|^2).
\end{equation}%
Also, from Theorem \ref{thm:runningInef} for $%
r\geq R$ we have, for some $\tilde B_3 \in (0,\infty)$ and all $T\ge 1$, $k \in \mathbb{N}_m$,
\begin{eqnarray*}
\frac{1}{r^{3}}E_x\left[ \frac{1}{T}\int_{\hat{\tau}_{k}^{s}(0)}^{\hat{\tau}%
_{k}^{s}(r^{2}T)}Z^r_k(t) dt\right]  &\leq &\frac{1}{T}%
\tilde B_{3} r^{\alpha -1/2}T 
\leq \tilde B_{3} r^{\alpha -1/2}.
\end{eqnarray*}%
 Consequently, for all $T\ge 1$ and $k \in \mathbb{N}_m$
\begin{equation*}
E_x\left[ \frac{1}{T}\int_{0}^{T}\min_{i\in N_{\rho (k)}}\left\{ \sum_{j\in
\zeta _{i}^{k}}\frac{\hat{Q}_{j}^{r}(t)}{\mu_j^r}\right\} dt\right] \leq
B_{2}r^{-1}(1+ |q|^2)+\tilde B_{3}r^{\alpha -1/2}
\end{equation*}%
A similar argument shows that for some $\tilde B_4, \tilde B_5 \in (0,\infty)$, and all $i\in \mathbb{N}_{I}$, $T\ge 1$%
\begin{equation*}
E_x\left[ \frac{1}{T}\int_{0}^{T}\sum_{j\in \zeta^{0}_{i}}\frac{\hat{Q}_{j}^{r}(t)}{\mu^r_j}dt%
\right] \leq \tilde B_{4}r^{-1} (1+|q|^2)+\tilde B_{5} r^{\alpha -1/2}\text{.}
\end{equation*}%
Combining the above two estimates with \eqref{eq:eq851} and \eqref{eq:eq853} once more, we have the second inequality in the theorem. \hfill \qed

\subsection{Lyapunov function and uniform moment estimates.}
\label{sec:unifmom}
In this section we establish uniform in $t$ and $r$ moment bounds on $\hat W^r(t)$. 
The following is the main result of this section. 
\begin{theorem}
 \label{thm:WmomBnd}There exist  $\beta, \gamma, R,H \in (0,\infty)$ such that
for all $i \in \mathbb{N}_I$, $t\ge 0$ and $r\ge R$
$$E_x\left[e^{\gamma \hat W^r_i(t)}\right] \le H\left( 1 + e^{-\beta t} V_i(x)\right).$$
\end{theorem}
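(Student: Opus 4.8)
The plan is to construct an explicit Lyapunov function $V(x) = \sum_{i=1}^I V_i(x)$ where $V_i(x) = e^{\gamma w_i}$ with $w = Gq$ for $x = (q,z)$ (this is what Proposition \ref{thm:timeDecayV}, alluded to in the introduction, provides), and show it satisfies a geometric drift inequality of the form
$$
E_x\left[V_i(\hat X^r(t))\right] \le H\left(1 + e^{-\beta t} V_i(x)\right)
$$
uniformly in $r\ge R$ and $t\ge 0$. First I would recall from \eqref{eq:eq939} the diffusion-scaled workload dynamics
$$
\hat W^r(t) = \hat w^r + G^r(\hat A^r(t) - \hat S^r(\bar B^r(t))) + t r(K\varrho^r - C) + r\bar I^r(t),
$$
and use the Skorohod-map structure: $\hat W^r$ is a reflected process whose free-process increments have, by Lemma \ref{lem:posDriftCond} and the idleness control \eqref{eq:squareIdleTimeResultImplication}, a strictly negative drift of order $-v^*$ whenever the workload coordinate $\hat W^r_i$ is above the threshold level $c_3 r^{\alpha-1}$ (which tends to $0$). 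The key point is that the heavy-traffic stability condition $v^* = K\beta^* > 0$ in Condition \ref{cond:HT1} guarantees this negative drift at the diffusion scale.

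The core estimate would proceed via a generator/Dynkin computation on $e^{\gamma w_i}$ for the Markov process $\hat X^r$. I would write, for the pre-limit jump process, the discrete generator acting on $e^{\gamma w_i}$ and bound it: when $w_i$ is large, the arrival terms (which increase $w_i$) are offset by the service terms (which decrease $w_i$) because the net rate satisfies $\lambda^r_j - \mu^r_j x_j \ge c>0$ on the relevant event by Lemma \ref{lem:posDriftCond}; the idleness/reflection term only pushes $w_i$ up when $w_i$ is small (below $c_3 r^\alpha$ at the unscaled level), contributing a bounded correction; and the quadratic variation term from the Poisson jumps, after the $1/r$ scaling, contributes an $O(1)$ term in the exponent that can be absorbed by choosing $\gamma$ small. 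This yields a drift inequality of the form $\mathcal{L}^r e^{\gamma w_i} \le -\beta e^{\gamma w_i} + H$ for suitable $\beta, \gamma, H$ uniform in $r \ge R$. Integrating this (via Dynkin's formula and Gronwall, being careful to first localize with stopping times and then pass to the limit by Fatou / monotone convergence since $e^{\gamma w_i}\ge 0$) gives $E_x[e^{\gamma \hat W^r_i(t)}] \le e^{-\beta t}V_i(x) + (H/\beta)(1-e^{-\beta t}) \le H'(1 + e^{-\beta t}V_i(x))$, which is the claimed bound.

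The main obstacle is handling the reflection (idleness) term $r\,d\bar I^r_i$ rigorously in the generator computation: it is a singular term and its contribution to $\mathcal{L}^r e^{\gamma w_i}$ must be shown to be negative (or at worst bounded) precisely on the region where $w_i$ is large. This is where the estimate \eqref{eq:squareIdleTimeResultImplication} (derived from Theorem \ref{thm:IdleTimeExp}) is essential: it says idleness can only accumulate when $W^r_i \le c_3 r^\alpha$, so the reflection term does not fight the exponential decay at large $w_i$; quantitatively one splits $d\bar I^r_i = \mathbf{1}_{\{W^r_i < c_3 r^\alpha\}}d\bar I^r_i$ and bounds the resulting contribution to the generator by $H\,\mathbf{1}_{\{w_i \text{ small}\}}$, absorbing it into the constant $H$. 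A secondary technical point is uniformity of all constants in $r$, which follows from the convergences in Condition \ref{cond:HT1} and the bounds \eqref{def:Rhat} valid for $r \ge \hat R$, together with choosing $\gamma$ small enough that $e^{\gamma/\mu_j^r} - 1 - \gamma/\mu_j^r$ and similar exponential-moment corrections of the Poisson increments stay below the margin provided by $v^*$. Once the drift inequality is established with $r$-uniform constants, the Dynkin/Gronwall argument is routine.
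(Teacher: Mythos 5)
There is a genuine gap at the heart of your plan: the pointwise drift inequality $\mathcal{L}^r e^{\gamma w_i} \le -\beta e^{\gamma w_i} + H$ is false, uniformly in $r$, on a nonnegligible part of the state space. Under the policy of Definition \ref{def:workAllocScheme}, whenever $\mathcal{E}_j^r=1$ for some $j$ with $K_{ij}=1$ the allocation is $x_j=0$ and the freed capacity is \emph{not} reassigned (since $x_l = y_l 1_{\{\mathcal{E}_l^r=0\}}$), so resource $i$ idles even though $\hat W^r_i$ may be arbitrarily large (the depleted queue $j$ is below $c_1 r^\alpha$, but the other queues feeding $W^r_i$ can be huge). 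On such states the drift of $W^r_i$ is roughly $\varrho_j^r - v_i^*/r > 0$, i.e.\ of order $+\gamma r\, e^{\gamma \hat w_i}$ after diffusion scaling in the generator, not $-\beta e^{\gamma \hat w_i}+H$. Your proposed fix — splitting $d\bar I^r_i$ with the indicator $\mathbf{1}_{\{W^r_i < c_3 r^\alpha\}}$ and discarding the rest — does not work pointwise: the complementary piece $\mathbf{1}_{\{W^r_i \ge c_3 r^\alpha\}} d\bar I^r_i$ is not zero, and the estimate \eqref{eq:squareIdleTimeResultImplication} controls it only as a time-integral over long horizons with high probability; it cannot be inserted into a state-by-state generator bound. (A side misreading: Lemma \ref{lem:posDriftCond} gives a \emph{positive} drift for individual queue lengths in the band $[c_1 r^\alpha, c_2 r^\alpha)$, used to bound depletion probabilities; it is not the source of the negative workload drift, which comes from full capacity utilization plus Condition \ref{cond:HT1}.)

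The paper's route is genuinely different and is designed precisely to absorb these excursions: it does \emph{not} take $V_i(x)=e^{\gamma w_i}$, but defines $V_i(x) = E_x\bigl[e^{\beta \check{\tau}_{i,0}^{r}}\bigr]$, the exponential moment of the hitting time of $\{\hat W^r_i \le 2c_3\}$ from \eqref{eq:eq1012}. Proposition \ref{thm:stopTimeExpMomBnd} bounds this above (using Theorem \ref{thm:IdleTimeExp} in its natural time-integrated form together with Theorem \ref{thm:LDP}), Proposition \ref{thm:timeDecayV} derives the drift inequality $E_x[V_i(\hat X^r(t))] \le e^{-\beta t}V_i(x) + H$ purely from the strong Markov property (no generator computation), Proposition \ref{thm:timeIndVbnd} iterates it to all $t\ge 0$, and Proposition \ref{thm:VlowerBnd} supplies the lower bound $V_i(x) \ge H_2 e^{H_3\beta w_i}$ for $w_i$ large, which converts the $V_i$-bound into the exponential moment bound on $\hat W^r_i(t)$ with $\gamma = H_3\beta$. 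If you want to salvage a generator-based argument you would need a Lyapunov function that also depends on the $\mathcal{E}$-component of $\hat X^r$ (and on the depleted queue lengths), engineered so that the order-$r$ positive drift during an $\mathcal{E}$-excursion is compensated; constructing such a function is substantially harder than the hitting-time construction the paper uses, and as written your proposal does not provide it.
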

The proof is given at the end of the section. 
Let
\begin{equation}
\check{\tau}_{i,\xi }^{r}\doteq \inf \left\{  t\geq \xi :\left\vert \hat{W%
}_{i}^{r}(t)\right\vert \leq 2c_{3}\right\},\label{eq:eq1012} 
\end{equation}
where recall that $c_3 = \frac{2Jc_{2}}{\min_{j}\mu_{j}}$.
We begin by establishing
 a bound on certain exponential moments of $\check{\tau}_{i,\xi }^{r}$.
\begin{proposition}
 \label{thm:stopTimeExpMomBnd} There
exist  $\delta^{*},R\in (0,\infty)$ and $H_1: \mathbb{R}_+ \to \mathbb{R}_+$ such that for all  $i\in \mathbb{N}_{I}$, $r\geq R$ 
and $0<\beta <\delta^{*} $ 
\begin{equation*}
E_x\left[ e^{\beta \check{\tau}_{i,\xi }^{r}}\right] <H_{1}(\beta)e^{H_{1}(\beta)(w_{i}+%
\xi )}
\end{equation*}%
for all  $x=(q,z)\in \cls^r$ and $\xi \geq 0$, where $w=G^{r}q$.
\end{proposition}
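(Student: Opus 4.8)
The plan is to build a Lyapunov-type argument showing that, starting from a state with $\hat W_i^r$ large, the workload coordinate $\hat W_i^r$ has a strictly negative drift until it returns to the band $\{|\hat W_i^r|\le 2c_3\}$, and then to convert this drift estimate into an exponential moment bound on the hitting time $\check\tau_{i,\xi}^r$ via a standard geometric-trials / large deviations decomposition of the excursion above level $2c_3$. First I would reduce to the case $\xi=0$ by the strong Markov property (the process restarts at time $\xi$ from a state with $\hat W_i^r(\xi)$ possibly large, and the factor $e^{H_1\xi}$ is just the penalty for waiting until $\xi$). Next I would recall from \eqref{eq:eq939} the representation of $\hat W^r_i$ as a sum of a martingale-like centered term $G^r(\hat A^r - \hat S^r(\bar B^r))_i$, a deterministic drift $tr(K\varrho^r - C)_i$, and the pushing term $r\bar I^r_i$. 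Because $i\in\varpi^r(s)$ whenever $\hat W^r_i(s) > c_3 r^{\alpha}/r$ (this is exactly the observation made right before \eqref{eq:eqpxtr2}), and because Lemma \ref{lem:fullWorkloadCond} and the idleness estimate \eqref{eq:squareIdleTimeResultImplication} control how much $\bar I^r_i$ can increase while $\hat W^r_i$ stays above $2c_3$, the only net upward contribution while in the excursion is negligible in probability, and the deterministic term contributes $-v^*_i t/r < 0$ in the diffusion scaling via Condition \ref{cond:HT1} together with the correction bound \eqref{def:Rhat}.

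The core estimate I would then establish is: there exist $c>0$, $R<\infty$ such that for all $r\ge R$, all $x$ with $\hat W^r_i(0) = w_i > 2c_3$, and all $T\ge 1$,
\begin{equation*}
P_x\left(\check\tau_{i,0}^r > T\right) \le \kappa_1 e^{-\kappa_2 (T - w_i)_+}
\end{equation*}
for suitable constants $\kappa_1,\kappa_2$. This follows by splitting $[0,T]$ into unit-length sub-intervals, noting that on each sub-interval during which $\hat W^r_i$ stays above $2c_3$ the increment of $\hat W^r_i$ is, on the complement of a large-deviation event for the Poisson arrival/service processes (controlled by Theorem \ref{thm:LDP}) and on the complement of an excess-idleness event (controlled by \eqref{eq:squareIdleTimeResultImplication} of the previous subsection), bounded above by $-c + (\text{small})$; hence the workload must drop by at least a fixed amount per unit time until it reaches the band, so surviving above the band for $T$ units of time forces either $w_i \gtrsim cT$ or the occurrence of at least $\Omega(T)$ of these exponentially rare bad events. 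Summing the geometric series over the bad events on the sub-intervals gives the displayed tail bound. Once this tail bound is in hand, $E_x[e^{\beta\check\tau_{i,0}^r}] = \int_0^\infty \beta e^{\beta T} P_x(\check\tau_{i,0}^r > T)\,dT + 1$ is finite and bounded by $H_1(\beta) e^{H_1(\beta) w_i}$ for all $\beta < \delta^* \doteq \kappa_2$, and reinserting the $\xi$-shift via the strong Markov property yields the claimed form.

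The main obstacle I expect is the careful bookkeeping of the pushing/idleness term $r\bar I^r_i$ inside the excursion: one needs to ensure that while $\hat W^r_i$ remains above $2c_3$, resource $i$ genuinely processes at near-full capacity except during rare intervals where some $\mathcal E^r_j = 1$ for $j$ with $K_{ij}=1$, and that these rare intervals have total length that is exponentially unlikely to be macroscopic — precisely the content of \eqref{eq:squareIdleTimeResultImplication}, but one must verify that the threshold $c_3$ was chosen large enough (relative to $c_2$ and $\min_j\mu_j$) so that $\hat W^r_i(s) > 2c_3$ really does imply $i\in\hat\omega^r(r^2 s)$ with the appropriate queue in the "stocked" range, and handle the boundary interaction when $\hat W^r_i$ oscillates near $2c_3$. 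A secondary technical point is that the martingale increments here are over a random (stopping) time horizon, so the large-deviation bounds must be applied on deterministic horizons and then patched together over the unit sub-intervals rather than applied directly to the stopped process; this is routine but needs the uniform-in-$x$ form of Theorem \ref{thm:LDP} and a union bound over the $O(T)$ sub-intervals absorbed into the exponential rate.
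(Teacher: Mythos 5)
Your core drift argument is essentially the paper's: both rest on the strictly negative drift $-v_i^*$ from Condition \ref{cond:HT1}, on the fact that idleness of resource $i$ while $W_i^r\ge c_3 r^{\alpha}$ is controlled by \eqref{eq:squareIdleTimeResultImplication}, and on Theorem \ref{thm:LDP}, followed by the standard conversion of an exponential tail bound for $\check\tau$ into an exponential moment. The paper simply applies these estimates once over the whole horizon $[0,r^2t]$ (events $\mathcal{A}_{i,t}^r$, $\mathcal{B}_{i,t}^r$) and derives a contradiction with $\{\check\tau_{i,\xi}^r>t\}$ for $t\ge\max\{2\xi,8w_i/v_i^*,1\}$, rather than splitting into unit blocks; your block decomposition can be made to work, but note that with blocks of length $1$ the per-block probability bound from Theorem \ref{thm:LDP} is only $B_1e^{-B_2}$, which need not be small, so you would have to take blocks of a large constant length and handle the dependence across blocks via the Markov property before the geometric-series step goes through. (Also, the deterministic drift term in \eqref{eq:eq939} is $tr(K\varrho^r-C)\to -v^*t$, not $-v_i^*t/r$.)

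The genuine gap is your reduction to $\xi=0$. Writing $\check\tau_{i,\xi}^r=\xi+\check\tau_{i,0}^r\circ\theta_\xi$ and applying the strong Markov property gives $E_x[e^{\beta\check\tau_{i,\xi}^r}]=e^{\beta\xi}E_x\bigl[E_{\hat X^r(\xi)}[e^{\beta\check\tau_{i,0}^r}]\bigr]$, and your core estimate bounds the inner expectation by a quantity that grows exponentially in the \emph{random} workload $\hat W_i^r(\xi)$, not in $w_i$. So the ``penalty for waiting until $\xi$'' is not just $e^{\beta\xi}$: you need something like $E_x[e^{c\hat W_i^r(\xi)}]\le Ce^{c'(w_i+\xi)}$, and no such bound is available at this point in the paper — it is essentially the content of Theorem \ref{thm:WmomBnd}, whose proof is built on this very proposition (via Propositions \ref{thm:timeDecayV}--\ref{thm:timeIndVbnd}), so invoking it here would be circular, and a crude bound by the arrival process alone gives $\hat W_i^r(\xi)\lesssim w_i+Cr\xi$, which is useless after diffusion scaling. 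The paper closes exactly this hole inside the same good-event construction: using the upcrossing times $\sigma_k$ of the level $c_3r^{\alpha}$ it shows that on $(\mathcal{A}_{i,t}^r\cup\mathcal{B}_{i,t}^r)^c$ one has $\hat W_i^r(\xi)\le w_i+\tfrac{v_i^*}{16}t+c_3r^{\alpha-1}+\tfrac{2}{\bar\mu_{\min}}r^{-1}$, and then the drift over $[\xi,t]$ (with $t\ge 2\xi$ and $t\ge 8w_i/v_i^*$) forces $\hat W_i^r(t)\le 2c_3$, contradicting $\check\tau_{i,\xi}^r>t$; this yields $P_x(\check\tau_{i,\xi}^r>t)\le\hat H_1e^{-\hat H_2 t}$ for $t\ge M_\xi$ and hence the factor $e^{H_1(\beta)(w_i+\xi)}$. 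Your proposal can be repaired by bounding $\hat W_i^r(\xi)$ on your own good events in the same way (i.e., merging the $[0,\xi]$ and $[\xi,\check\tau]$ analyses), but as stated the $\xi$-shift step does not stand on its own.
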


\begin{proof}
Fix $i \in \NI$. Given $x=\left( q,z\right) \in  \cls^r$ let $w_{i}=(G^{r}q)_{i}$. \ Recall
the definition of $v^{\ast }$ given in Condition \ref{cond:HT1}. \ Fix $\xi\ge 0$ and let  
\begin{equation}\label{eq:Mdef}
t\geq \max \{2\xi
,8w_{i} /v_{i}^{\ast },1\}\doteq M_{\xi}.
\end{equation}
Consider the events 
\begin{equation*}
\mathcal{A}_{i,t}^{r}=\left\{ \int_{0}^{r^{2}t}\mathit{I}_{\left\{
W_{i}^{r}(t) \ge c_{3}r^{\alpha }\right\} }(s)dI_{i}^{r}(s)\geq \frac{v_{i}^{\ast}}{32C_{i}}rt\right\} 
\end{equation*}%
and 
\begin{equation*}
\mathcal{B}_{i,t}^{r}=\bigcup _{j\in \mathbb{N}_{J}}\left\{ \sup_{0\leq s\leq
r^{2}t}\left\vert A_{j}^{r}(s)-s\lambda _{j}^{r}\right\vert +\sup_{0\leq
s\leq C_{i}r^{2}t}\left\vert S_{j}^{r}(s)-s\mu _{j}^{r}\right\vert \geq 
\frac{\min \{1,\bar{\mu}_{\min}\}v_{i}^{\ast }}{256J}%
rt\right\} \text{.}
\end{equation*}%
Using \eqref{eq:squareIdleTimeResultImplication} and Theorem \ref{thm:LDP} we can
choose  $\hat{H}_{1},\hat{H}_{2} \in (0,\infty)$ and $R \in (\hat{R}, \infty)$ such that for all $r\geq R$ and $t\geq 1$ 
\begin{equation*}
P_x\left( \mathcal{A}_{i,t}^{r}\bigcup \mathcal{B}_{i,t}^{r}\right) \leq \hat{H}%
_{1}e^{-t\hat{H}_{2}}\text{,}
\end{equation*}%
where $\hat R$ was introduced above \eqref{def:Rhat}.
Furthermore, we can assume that $R$ is large enough so that for all $r\ge R$, 
\begin{equation}\label{eq:larger1155}
\frac{2v_{i}^{\ast }}{r}\ge C_{i}-\sum_{j=1}^{J}K_{i,j}\rho _{j}^{r}\geq \frac{v_{i}^{\ast }}{2r}\text{,}\;\;\; c_{3}r^{\alpha -1}+\frac{2}{\bar{\mu}_{\min}}r^{-1}\leq 2c_{3}.
\end{equation}%
Then, for all $r\geq R$  and $s_{1},s_{2}\in \lbrack 0,r^{2}t]$ satisfying $s_{2}>s_{1}$, on the event 
$
\left( \mathcal{A}_{i,t}^{r}\bigcup \mathcal{B}_{i,t}^{r}\right) ^{c}
$,
we have 
\begin{eqnarray*}
\sum_{j=1}^{J}\frac{K_{i,j}}{\mu_{j}^{r}} S_{j}^{r}(B_{j}^{r}(s_{2}))-\sum_{j=1}^{J}\frac{K_{i,j}}{\mu_{j}^{r}} S_{j}^{r}(B_{j}^{r}(s_{1}))
 &\geq &\sum_{j=1}^{J}K_{i,j}\left(
B_{j}^{r}(s_{2})-B_{j}^{r}(s_{1})\right) -\sum_{j=1}^{J}K_{i,j}\frac{\min
\{1,\bar{\mu}_{\min}\}v_{i}^{\ast }}{128J\mu _{j}^{r}}rt \\
&\geq &\sum_{j=1}^{J}K_{i,j}\left( B_{j}^{r}(s_{2})-B_{j}^{r}(s_{1})\right) -%
\frac{v_{i}^{\ast }}{64 }rt
\end{eqnarray*}%
and%
\begin{eqnarray*}
\sum_{j=1}^{J} \frac{K_{i,j}}{\mu_{j}^{r}} A_{j}^{r}(s_{2})-
\sum_{j=1}^{J}\frac{K_{i,j}}{\mu_{j}^{r}}A_{j}^{r}(s_{1})
&\leq &\sum_{j=1}^{J}K_{i,j}\rho _{j}^{r}(s_{2}-s_{1})+\sum_{j=1}^{J}K_{i,j}%
\frac{\min \{1,\bar{\mu}_{\min}\}v_{i}^{\ast }}{128J\mu
_{j}^{r}}rt \\
&\leq &\sum_{j=1}^{J}K_{i,j}\rho _{j}^{r}(s_{2}-s_{1})+\frac{v_{i}^{\ast }}{%
64}rt\text{.}
\end{eqnarray*}%
Let $\sigma_0=0$ and for $k\ge 1$
% \begin{equation*}
% \sigma _{1}=\inf \{s\geq 0:W_{i}^{r}(t)\geq c_{3}r^{\alpha }\}\text{,}
% \end{equation*}%
\begin{equation*}
\sigma _{2k-1}=\inf \{s\geq \sigma _{2k-2}:W_{i}^{r}(t)\geq c_{3}r^{\alpha }\},\; \sigma _{2k}=\inf \{s\geq \sigma _{2k-1}:W_{i}^{r}(t)<c_{3}r^{\alpha }\}.%
\end{equation*}%
% and%
% \begin{equation*}
% \sigma _{2k+1}=\inf \{s\geq \sigma _{2k}:W_{i}^{r}(t)\geq c_{3}r^{\alpha }\}
% \end{equation*}%
% for $k\ge1$. 
\ Then, on the  event $
\left( \mathcal{A}_{i,t}^{r}\bigcup \mathcal{B}_{i,t}^{r}\right) ^{c}
$,  for any $\sigma _{2k-1}<\xi r^{2}$, $k\ge 1$, we have 
on noting that $W_{i}(\sigma _{2k-1}) \le c_{3}r^{\alpha }+\frac{2}{\bar{\mu}_{\min}}+w_{i}r$
\begin{align*}
\sup_{\sigma _{2k-1}\leq s\leq \sigma _{2k}\wedge \xi r^{2}}W_{i}^{r}(s)&
\leq \sup_{\sigma _{2k-1}\leq s\leq \sigma _{2k}\wedge \xi r^{2}}\left(
W_{i}^r(\sigma _{2k-1})+\sum_{j=1}^{J}\frac{K_{i,j}}{\mu_{j}^{r}} A_{j}^{r}(s)-\sum_{j=1}^{J}\frac{K_{i,j}}{\mu_{j}^{r}} A_{j}^{r}(\sigma _{2k-1})\right .  \\
& \qquad \qquad \qquad \quad \left. -\left( \sum_{j=1}^{J}\frac{K_{i,j}}{\mu_{j}^{r}}
S_{j}^{r}(B_{j}^{r}(s))-\sum_{j=1}^{J}\frac{K_{i,j}}{\mu_{j}^{r}}
S_{j}^{r}(B_{j}^{r}(\sigma _{2k-1}))\right) \right)  \\
& \leq \sup_{\sigma _{2k-1}\leq s\leq \sigma _{2k}\wedge \xi r^{2}}\left(
\sum_{j=1}^{J}K_{i,j}\rho _{j}^{r}(s-\sigma _{2k-1})-
\sum_{j=1}^{J}K_{i,j}\left( B_{j}^{r}(s)-B_{j}^{r}(\sigma _{2k-1})\right)
 \right.  \\
& \qquad \qquad \quad \qquad \left. +c_{3}r^{\alpha }+\frac{2}{\bar{\mu}_{\min}}+w_{i}r+\frac{%
v_{i}^{\ast }}{32}rt\right)  \\
% \qquad \qquad \qquad \qquad \quad & \leq \sup_{\sigma _{2k-1}\leq s\leq
% \sigma _{2k}\wedge \xi r^{2}}\left( c_{3}r^{\alpha
% }+1+w_{i}r +\frac{3v_{i}^{\ast }}{32}rt
% +\sum_{j=1}^{J}K_{i,j}\rho _{j}^{r}(v_{i}^{\ast }/32C_{i})+\frac{%
% v_{i}^{\ast }}{32}rt\right.  \\
% & \qquad \qquad \quad \qquad \left. +(v_{i}^{\ast }/2r)\left( \left(
% s-\sigma _{2k-1}-(v_{i}^{\ast }/32C_{i})rt\right) \vee 0\right) \right)  \\
\qquad \qquad \qquad \qquad \quad & \leq c_{3}r^{\alpha
}+\frac{2}{\bar{\mu}_{\min}}+w_{i}r +\frac{v_{i}^{\ast }}{16}rt
\end{align*}%
where the third inequality follows from recalling that we are on the event $\left( \mathcal{A}_{i,t}^{r}\right) ^{c}$ so
\begin{align*}
\sum_{j=1}^{J}K_{i,j} \left[\rho _{j}^{r}(s-\sigma _{2k-1}) - ( B_{j}^{r}(s)-B_{j}^{r}(\sigma _{2k-1})\right]	
&\le \left(\sum_{j=1}^{J}K_{i,j}\rho _{j}^{r} - C_i\right) (s-\sigma _{2k-1}) + \frac{v_{i}^{\ast }}{32}rt\\
&\le -\frac{v_{i}^{\ast }}{2r} (s-\sigma _{2k-1}) + \frac{v_{i}^{\ast }}{32}rt \le \frac{v_{i}^{\ast }}{32}rt.
\end{align*}
Thus on the event $\left( \mathcal{A}_{i,t}^{r}\bigcup \mathcal{B}%
_{i,t}^{r}\right) ^{c}$ we have%
\begin{equation*}
\hat{W}_{i}^{r}(\xi )\leq \frac{v_{i}^{\ast }}{16}t+w_{i}+c_{3}r^{\alpha
-1}+\frac{2}{\bar{\mu}_{\min}}r^{-1}\text{.}
\end{equation*}%
Consequently on the event $\left( \mathcal{A}_{i,t}^{r}\bigcup \mathcal{B}%
_{i,t}^{r}\right) ^{c}\cap \{\check{\tau}_{i,\xi }^{r}> t\}$ we have, by a similar calculation,
\begin{eqnarray*}
\hat{W}_{i}^{r}(t) &=&\hat{W}_{i}^{r}(\xi )+\left( \hat{W}_{i}^{r}(t)-\hat{W}%
_{i}^{r}(\xi )\right)  \\
&\leq &\frac{v_{i}^{\ast }}{16}t+w_{i}+c_{3}r^{\alpha -1}+\frac{2}{\bar{\mu}_{\min}}r^{-1}+r(t-\xi
)\sum_{j=1}^{J}K_{i,j}\rho _{j}^{r}-rC_{i}(t-\xi )+\frac{v_{i}^{\ast }}{16}%
t \\
&\leq &\frac{v_{i}^{\ast }}{8}t+w_{i}+c_{3}r^{\alpha -1}+\frac{2}{\bar{\mu}_{\min}}r^{-1}-\frac{t}{2}%
\left( C_{i}-\sum_{j=1}^{J}K_{i,j}\rho _{j}^{r}\right)r  \\
&\leq & \frac{v_{i}^{\ast }}{8}t-t\frac{v_{i}^{\ast }}{4}+w_{i}+c_{3}r^{\alpha -1}+\frac{2}{\bar{\mu}_{\min}}r^{-1} \\
&\leq &2c_{3},
\end{eqnarray*}%
where the third and the fourth inequalities follow from \eqref{eq:larger1155} and recalling that $t \ge \max\{2\xi, 8w_i/v^*_i\}$.
Since on the set $\{\check{\tau}_{i,\xi }^{r}> t\}$ we must have $\hat{W}_{i}^{r}(t) >2c_3$ we have arrived at a contradiction. 
\ Consequently $\left( \mathcal{A}_{i,t}^{r}\bigcup 
\mathcal{B}_{i,t}^{r}\right) ^{c}\cap \{\check{\tau}_{i,\xi }^{r}(x)>
t\}=\emptyset $ and%
\begin{eqnarray*}
P_x\left( \check{\tau}_{i,\xi }^{r}> t\right)  &=&P_x\left( \left( 
\mathcal{A}_{i,t}^{r}\bigcup \mathcal{B}_{i,t}^{r}\right) \cap \{\check{\tau}%
_{i,\xi }^{r}> t\}\right)  
\leq P_x\left( \mathcal{A}_{i,t}^{r}\bigcup \mathcal{B}_{i,t}^{r}\right) \le \hat{H}_{1}e^{-t\hat{H}_{2}}\text{.}
\end{eqnarray*}%
Thus for $\beta <\hat{H}_{2}$ 
\begin{eqnarray*}
E_x\left[ e^{\beta \check{\tau}_{i,\xi }^{r}}\right]  
% &\leq &e^{\alpha
% \max \{2\xi ,4w_{i}/v_{i}^{\ast },1\}}+\int_{e^{\alpha \max \{2\xi
% ,4w_{i}/v_{i}^{\ast },1\}}}^{\infty }P\left( e^{\alpha \check{\tau}_{i,\xi
% }^{r}(x)}\geq u\right) du \\
% &=&e^{\alpha \max \{2\xi ,4w_{i}/v_{i}^{\ast },1\}}+\int_{e^{\alpha \max
% \{2\xi ,4w_{i}/v_{i}^{\ast },1\}}}^{\infty }P\left( \check{\tau}_{i,\xi
% }^{r}(x)\geq \log u/\alpha \right) du \\
% &\leq &e^{\alpha \max \{2\xi ,4w_{i}/v_{i}^{\ast },1\}}+\int_{e^{\alpha \max
% \{2\xi ,4w_{i}/v_{i}^{\ast },1\}}}^{\infty }u^{-\hat{H}_{2}/\alpha }du \\
\leq 1+ \beta e^{\beta M_{\xi}}+\frac{\beta}{\hat{H}_{2}-\beta}e^{(\beta -\hat{H}_{2})M_{\xi}}
\leq H_{1}(\beta)e^{H_{1}(\beta)(\xi +w_{i})}
\end{eqnarray*}%
for suitable $H_{1}(\beta)\in (0,\infty)$, where the last inequality comes from the definition of
$M_{\xi}$ in  \eqref{eq:Mdef}.
\end{proof}

We now establish a lower bound on an exponential moment of $\check{\tau}_{i,0}^{r}$.
\begin{proposition}
 \label{thm:VlowerBnd}For all $i\in \mathbb{N}_{I}$ there exist
 $R,H_{1},H_{2},H_{3} \in (0,\infty)$ such that for all $r\geq R$, $\beta >0$ and $%
x=(q,z)\in \cls^r$
satisfying $w_{i}=(G^{r}q)_{i}\geq H_{1}$ we have 
\begin{equation*}
E_x\left[ e^{\beta \check{\tau}_{i,0}^{r}}\right] >H_{2}e^{H_{3}\beta w_{i}}
\end{equation*}
\end{proposition}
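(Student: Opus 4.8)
The plan is to read off a lower bound on $\hat{W}^r_i$ from the representation \eqref{eq:eq939},
\begin{equation*}
\hat{W}^{r}(t) = \hat w^r + G^{r}(\hat{A}^{r}(t)-\hat{S}^{r}(\bar{B}^{r}(t)))+tr(K\varrho^{r}-C)+r\bar{I}^{r}(t),
\end{equation*}
and then simply \emph{discard} the idleness term $r\bar I^r_i(t)\ge 0$: since the hitting time $\check\tau^r_{i,0}$ of the set $\{|\hat W^r_i|\le 2c_3\}$ is large precisely when $\hat W^r_i$ stays large, only a lower bound on $\hat W^r_i$ is needed, which is why this estimate is much softer than Proposition \ref{thm:stopTimeExpMomBnd} (there the idleness had to be controlled from above). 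Under $P_x$ with $x=(q,z)$ we have $\hat W^r(0)=G^rq=w$, so $\hat w^r_i=w_i$, and dropping $r\bar I^r_i\ge 0$ gives, for all $t\ge 0$,
\begin{equation*}
\hat{W}^{r}_i(t) \ge w_i + \left(G^{r}(\hat{A}^{r}(t)-\hat{S}^{r}(\bar{B}^{r}(t)))\right)_i + t\,r(K\varrho^{r}-C)_i .
\end{equation*}

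Next I would control the last two terms. By Condition \ref{cond:HT1}, $r(K\varrho^r-C)_i=-(K\,r(\varrho-\varrho^r))_i\to -v^*_i<0$, so enlarging $R$ (beyond $\hat R$) we may assume $r(K\varrho^r-C)_i\ge -2v^*_i$ for $r\ge R$. Fix the (deterministic, $\beta$-free) time
\begin{equation*}
t^* = t^*(w_i) \doteq \frac{w_i}{8v^*_i}.
\end{equation*}
By Lemma \ref{lem:admispol}, $0\le x_j(s)\le \bar C \doteq \max_i C_i$, so $\bar B^r_j(t)\le \bar C t$ and hence $|\hat S^r_j(\bar B^r_j(t))|\le r^{-1}\sup_{0\le u\le \bar C r^2 t}|S^r_j(u)-\mu^r_j u|$. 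Define the bad event
\begin{equation*}
\mathcal{B}^r_i \doteq \bigcup_{j\in\mathbb{N}_J}\left\{\sup_{0\le s\le r^2 t^*}|A^r_j(s)-\lambda^r_j s| + \sup_{0\le u\le \bar C r^2 t^*}|S^r_j(u)-\mu^r_j u| \ge \frac{\bar\mu_{\min}\, r\, w_i}{8J}\right\}.
\end{equation*}
Since $G^r_{ij}=K_{ij}/\mu^r_j$ and $\mu^r_j\ge \bar\mu_{\min}/2$ by \eqref{def:Rhat}, on $(\mathcal{B}^r_i)^c$ the sum over the at most $J$ relevant indices is bounded by $J\cdot\frac{2}{\bar\mu_{\min}}\cdot\frac1r\cdot\frac{\bar\mu_{\min}rw_i}{8J}=\frac{w_i}{4}$, so $\sup_{0\le t\le t^*}\big|(G^r(\hat A^r(t)-\hat S^r(\bar B^r(t))))_i\big|\le \frac{w_i}{4}$.

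It then follows that on $(\mathcal{B}^r_i)^c$, for every $t\le t^*$,
\begin{equation*}
\hat W^r_i(t) \ge w_i - \frac{w_i}{4} - 2v^*_i t^* = \frac{3w_i}{4} - \frac{w_i}{4} = \frac{w_i}{2} > 2c_3,
\end{equation*}
provided $w_i>4c_3$; hence $(\mathcal{B}^r_i)^c\subset\{\check\tau^r_{i,0}>t^*\}$. By the Poisson large deviation estimates of Theorem \ref{thm:LDP} (applied with deviation of order $r w_i$ over a window of length $r^2 t^* = r^2 w_i/(8v^*_i)$, whose Gaussian-type tail decays like $\exp(-c\,(rw_i)^2/(r^2 t^*))=\exp(-c' w_i)$), there are $\hat\kappa_1,\hat\kappa_2\in(0,\infty)$ with $P_x(\mathcal{B}^r_i)\le \hat\kappa_1 e^{-\hat\kappa_2 w_i}$ for all $r\ge R$ and $x$ with $w_i\ge H_1$, after enlarging $R,H_1$ so that the appendix estimate applies (e.g.\ so that $t^*\ge 1$) and $\hat\kappa_1 e^{-\hat\kappa_2 H_1}\le 1/2$. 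Then $P_x(\check\tau^r_{i,0}>t^*)\ge 1/2$, and therefore
\begin{equation*}
E_x\!\left[e^{\beta\check\tau^r_{i,0}}\right] \ge e^{\beta t^*}\,P_x(\check\tau^r_{i,0}>t^*) \ge \frac12\, e^{\beta w_i/(8v^*_i)},
\end{equation*}
which is the assertion with $H_2=1/2$ and $H_3=1/(8v^*_i)$, both independent of $\beta$. There is no deep obstacle here; the only point requiring care is the bookkeeping in the last step, namely arranging the deviation size and time window in $\mathcal{B}^r_i$ so that Theorem \ref{thm:LDP} yields a decay rate that is linear in $w_i$ and uniform in $r$.
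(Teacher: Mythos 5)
Your proposal is correct and follows essentially the same route as the paper's proof: dropping the nonnegative idleness term in \eqref{eq:eq939} is the same step as the paper's use of the capacity constraint $K\bar B^r(s)\le Cs$, and both arguments then run the workload forward for a deterministic time of order $w_i/v_i^*$, control the Poisson fluctuations via the second estimate of Theorem \ref{thm:LDP} on an event of probability at least $1/2$ (for $w_i\ge H_1$), and conclude $E_x[e^{\beta\check\tau^r_{i,0}}]\ge \tfrac12 e^{c\beta w_i}$. The only differences are the choice of constants (e.g.\ $w_i/(8v_i^*)$ versus the paper's $w_i/(6v_i^*)$), which are immaterial.
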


\begin{proof}
For $k\in (0,\infty )$ define the event 
\begin{equation*}
\mathcal{B}_{i,k}^{r}=\bigcup_{j\in \mathbb{N}_{J}}\left\{ \sup_{0\leq s\leq
k}\left\vert \frac{A_{j}^{r}(r^{2}s)}{r}-rs\lambda _{j}^{r}\right\vert +\sup_{0\leq
s\leq C_{i}k}\left\vert \frac{S_{j}^{r}(r^{2}s)}{r}-rs\mu _{j}^{r}\right\vert \geq 
\frac{v_{i}^{\ast }\min \{1,\bar{\mu}_{\min}\}k}{4J}\right\} 
\text{.}
\end{equation*}%
From Theorem \ref{thm:LDP} there exists $R \in (\hat{R},\infty)$ (recall \eqref{def:Rhat}) and 
$\hat{H}_{2},\hat{H}_{3} \in (0,\infty)$ such that for all $r\geq R$ and $k \in (0,\infty)$, we have%
\begin{equation*}
P\left(  \mathcal{B}_{i,k}^{r}\right) \leq \hat{H}_{2}e^{-k%
\hat{H}_{3}}\text{.}
\end{equation*}%
We assume that $R$ is big enough so that \eqref{eq:larger1155} is satisfied for all $r \ge R$.
Let $H_{1}=\max \left\{ 5c_{3},\frac{6v_{i}^{\ast }\log (2\hat{H}_{2})}{\hat{%
H}_{3}}\right\} $. \ Then for $w_{i}\geq H_{1}$ we have 
\begin{align}
P\left( \mathcal{B}_{i,w_{i}/(6v_{i}^{\ast })}^{r}\right) 
\leq \hat{H}_{2}e^{-w_{i}\hat{H}_{3}/6v_{i}^{\ast }} \leq \frac{1}{2} \label{eq:eq852}
\end{align}%
and on the event $(\mathcal{B}_{i,w_{i}/6v_{i}^{\ast }}^{r})^{c}$ we have 
from \eqref{eq:queleneqn} and \eqref{eq:eq939}, that under $P_x$,
\begin{eqnarray*}
\inf_{0\leq s\leq w_{i}/6v_{i}^{\ast }}\hat{W}_{i}^{r}(s) &=&\inf_{0\leq
s\leq w_{i}/6v_{i}^{\ast }}\left( w_{i}+\sum_{j=1}^{J}
K_{i,j}\frac{A_{j}^{r}(r^{2}s)}{ r\mu _{j}^{r}}-\sum_{j=1}^{J}K_{i,j}
\frac{S_{j}^{r}(r^{2}\bar{B}_{j}^{r}(s))}{r\mu _{j}^{r}}\right)  \\
&\geq &\inf_{0\leq s\leq w_{i}/6v_{i}^{\ast }}\left( w_{i}-r\left(
C_{i}-\sum_{j=1}^{J}K_{i,j}\varrho _{j}^{r}\right) s-\sum_{j=1}^{J}K_{i,j}\frac{%
\min \{1,\bar{\mu}_{\min}\}w_{i}}{12J\mu _{j}^{r}}\right)  \\
&\geq &\inf_{0\leq s\leq w_{i}/6v_{i}^{\ast }}\left( \frac{5w_{i}}{6}%
-2v_{i}^{\ast }s\right)  \\
&\geq & \frac{w_{i}}{2} > 2c_{3}
\end{eqnarray*}%
where the third line uses \eqref{def:Rhat} and \eqref{eq:larger1155}.
Thus $\{\check{\tau}_{i,0}^{r} \leq w_{i}/6v_{i}^{\ast }\}\cap (\mathcal{B}%
_{i,w_{i}/6v_{i}^{\ast }}^{r})^c = \emptyset$, $P_x$ a.s.. \ This gives 
\begin{eqnarray*}
E_x\left[ e^{\beta \check{\tau}_{i,0}^{r}}\right]  &=&E_x\left[ e^{\beta 
\check{\tau}_{i,0}^{r}}\mathit{I}_{\mathcal{B}_{i,w_{i}/6v_{i}^{\ast
}}^{r}}\right] +E_x\left[ e^{\beta \check{\tau}_{i,0}^{r}}\mathit{I}_{(%
\mathcal{B}_{i,w_{i}/6v_{i}^{\ast }}^{r})^{c}}\right]  \\
&\geq &e^{\beta \left( w_{i}/(6v_{i}^{\ast })\right) }P_x\left( \mathcal{B}%
_{i,w_{i}/6v_{i}^{\ast }}^{r}\right)^c
\geq \frac{1}{2} e^{(\beta /(6v_{i}^{\ast }))w_{i}}\text{,}
\end{eqnarray*}%
where the last inequality is from \eqref{eq:eq852}.
% Consequently for all $r\geq R$ and $x=(q,z)\in \cls^r$ satisfying $w_{i}=(G^{r}q)_{i}\geq H_{1}$ we
% have 
% \begin{equation*}
% E\left[ e^{\beta \check{\tau}_{i,0}^{r}(x)}\right] \geq \frac{1}{2}e^{(\beta
% /(6v_{i}^{\ast }))w_{i}}
% \end{equation*}%
Thus completes the proof.
\end{proof}

Recall   $\delta^*$ from Proposition \ref{thm:stopTimeExpMomBnd}
and fix $\beta \in (0, \delta^*)$. For
 $i\in \mathbb{N}_{I}$ let  
\begin{equation*}
V_{i}(x)\doteq E_x\left[ e^{\beta \check{\tau}_{i,0}^{r}}\right] \text{.}
\end{equation*}
Also recall the Markov process $\hat X^r$ in \eqref{eq:eqhatxrt}. The following result proves a Lyapunov function property for $V_i$.
\begin{proposition}
\label{thm:timeDecayV} There exist   $H,R \in (0,\infty)$ such that for all $x=(q,z)\in \cls^r$, $r\geq R$, $i \in \mathbb{N}_I$, and $t\in \lbrack 0,1]$ we have  
\begin{equation*}
E_x\left[  V_{i}(\hat{X}^{r}(t))\right] \leq
e^{-\beta t} V_{i}(x)+H\text{.}
\end{equation*}
\end{proposition}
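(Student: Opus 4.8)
The plan is to exploit the strong Markov property at the stopping time $\check\tau_{i,0}^r$, together with the two-sided exponential-moment bounds for $\check\tau_{i,\xi}^r$ established in Propositions \ref{thm:stopTimeExpMomBnd} and \ref{thm:VlowerBnd}. The key observation is that, by the strong Markov property of $\hat X^r$, for $x=(q,z)\in\cls^r$ and $t\in[0,1]$ we can write, on the event $\{\check\tau_{i,0}^r(\hat X^r(t))=0\}$ (i.e. when $\hat W_i^r(t)\le 2c_3$ already), $V_i(\hat X^r(t))\le H_1(\beta)e^{H_1(\beta)\cdot 2c_3}$ directly from Proposition \ref{thm:stopTimeExpMomBnd} with $w_i\le 2c_3$ and $\xi=0$, which is a bounded constant absorbed into $H$. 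On the complementary event, $\check\tau_{i,0}^r(\hat X^r(t))>0$; here I would use the fact that, under $P_x$, the process restarted at time $t$ and run until it hits $\{|\hat W_i^r|\le 2c_3\}$ is exactly $\check\tau_{i,t}^r$ shifted, so that
\[
V_i(\hat X^r(t)) = E_x\!\left[e^{\beta(\check\tau_{i,t}^r - t)}\,\big|\,\hat X^r(t)\right]
\quad\text{on }\{\check\tau_{i,0}^r(\hat X^r(t))>0\},
\]
where $\check\tau_{i,t}^r$ is the first hitting time of $\{|\hat W^r_i|\le 2c_3\}$ after time $t$ for the original process started at $x$. Taking $E_x$ of both sides and using $\check\tau_{i,t}^r \le \check\tau_{i,0}^r \vee (\check\tau_{i,t}^r)$ — more precisely, the elementary inequality $\check\tau_{i,t}^r \le t + \check\tau_{i,0}^r\circ\theta_t$ combined with $\check\tau_{i,0}^r \ge \check\tau_{i,t}^r$ when $\hat W_i^r$ has not yet returned by time $t$ — gives $E_x[V_i(\hat X^r(t))\,1_{\{\ldots>0\}}] \le e^{-\beta t}E_x[e^{\beta \check\tau_{i,0}^r}] = e^{-\beta t}V_i(x)$.

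Concretely, the argument I would carry out is: (i) decompose $E_x[V_i(\hat X^r(t))] = E_x[V_i(\hat X^r(t))1_{\{\hat W_i^r(t)\le 2c_3\}}] + E_x[V_i(\hat X^r(t))1_{\{\hat W_i^r(t)> 2c_3\}}]$; (ii) bound the first term by the constant $H_1(\beta)e^{2c_3 H_1(\beta)} =: H$ using Proposition \ref{thm:stopTimeExpMomBnd} (valid for $r\ge R$); (iii) for the second term, observe that on $\{\hat W_i^r(t)>2c_3\}$ the hitting time $\check\tau_{i,0}^r$ (from $x$) is strictly greater than $t$ — indeed $t\in[0,1]$ and before time $\check\tau_{i,0}^r$ the process stays above $2c_3$ — so that $\check\tau_{i,0}^r = t + \check\tau_{i,0}^r\circ\theta_t$ on this event, where $\check\tau_{i,0}^r\circ\theta_t$ is the first return time computed from the post-$t$ trajectory; (iv) by the strong (actually simple) Markov property at the deterministic time $t$, $E_x[e^{\beta\check\tau_{i,0}^r\circ\theta_t}\mid\hat X^r(t)] = V_i(\hat X^r(t))$, hence
\[
E_x\!\left[V_i(\hat X^r(t))\,1_{\{\hat W_i^r(t)>2c_3\}}\right]
= E_x\!\left[e^{\beta(\check\tau_{i,0}^r - t)}\,1_{\{\hat W_i^r(t)>2c_3\}}\right]
\le e^{-\beta t}\,E_x\!\left[e^{\beta\check\tau_{i,0}^r}\right] = e^{-\beta t}V_i(x);
\]
(v) add the two bounds to obtain $E_x[V_i(\hat X^r(t))]\le e^{-\beta t}V_i(x) + H$.

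The main subtlety — and the step I expect to require the most care — is justifying the identity $\check\tau_{i,0}^r = t + \check\tau_{i,0}^r\circ\theta_t$ on $\{\hat W_i^r(t)>2c_3\}$ and the accompanying Markov-property manipulation. One must be careful that $\check\tau_{i,0}^r$ as used in the definition $V_i(x)=E_x[e^{\beta\check\tau^r_{i,0}}]$ is the hitting time of $\{|\hat W^r_i|\le 2c_3\}$ starting from time $0$, and that $\{\hat W_i^r(t)>2c_3\}$ is precisely the event $\{\check\tau_{i,0}^r>t\}$ up to a null set (since $\hat W^r$ has continuous paths and the set $\{|\hat W^r_i|\le 2c_3\}$ is closed, the process cannot have touched it before $t$ if $\hat W^r_i(t)>2c_3$). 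Once this is in place, the Markov property of the (time-homogeneous, strong) Markov process $\hat X^r$ applied at the fixed time $t$ gives the conditional-expectation identity, and the finiteness of $V_i(x)$ for $r\ge R$ (guaranteed by Proposition \ref{thm:stopTimeExpMomBnd}, since $\beta<\delta^*$) ensures all expectations are well-defined. The constants $H$ and $R$ are then determined uniformly in $i$ (there are finitely many resources) and in $x$, as required.
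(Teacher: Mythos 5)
There is a genuine gap, and it lies exactly where you flagged the "main subtlety": the claimed inclusion $\{\hat W^r_i(t)>2c_3\}\subseteq\{\check\tau^r_{i,0}>t\}$ is false. The hitting time $\check\tau^r_{i,0}$ is the \emph{first} time the workload is $\le 2c_3$; nothing prevents $\hat W^r_i$ from dropping to or below $2c_3$ at some $s<t$ and then exceeding $2c_3$ again at time $t$ (note also that $\hat W^r$ is a jump process, not a continuous one, so the "continuous paths, closed set" argument does not even apply). On the portion of $\{\hat W^r_i(t)>2c_3\}$ where $\check\tau^r_{i,0}<t$, the identity $\check\tau^r_{i,0}=t+\check\tau^r_{i,0}\circ\theta_t$ fails: the left side is $<t$ while the right side is the first return \emph{after} $t$, i.e. $\check\tau^r_{i,t}$, which can be arbitrarily larger than $\check\tau^r_{i,0}$. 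Consequently your step (iv) inequality
\begin{equation*}
E_x\bigl[e^{\beta(\check\tau^r_{i,t}-t)}\,\mathbf{1}_{\{\hat W^r_i(t)>2c_3\}}\bigr]\le e^{-\beta t}E_x\bigl[e^{\beta\check\tau^r_{i,0}}\bigr]
\end{equation*}
has no pointwise justification, and the excursion-and-return trajectories are left uncontrolled by your decomposition.

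The paper avoids this by decomposing on $\{\check\tau^r_{i,0}\ge t\}$ versus $\{\check\tau^r_{i,0}<t\}$ rather than on the position of $\hat W^r_i(t)$. On $\{\check\tau^r_{i,0}\ge t\}$ one genuinely has $\check\tau^r_{i,t}=\check\tau^r_{i,0}$, which gives the $e^{-\beta t}V_i(x)$ term; on $\{\check\tau^r_{i,0}<t\}$ (which includes the problematic trajectories above) one applies the strong Markov property at $\check\tau^r_{i,0}$, where the workload is at most $2c_3$, and then bounds $e^{\beta(\check\tau^r_{i,t}-t)}$ by $\sup_{x':w_i\le 2c_3}\sup_{0\le\xi\le1}E_{x'}\bigl[e^{\beta\check\tau^r_{i,\xi}}\bigr]$. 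This is where the full strength of Proposition \ref{thm:stopTimeExpMomBnd} with the delay parameter $\xi$ ranging over $[0,1]$ is needed; your proposal only invokes it with $\xi=0$, which does not cover the time the process must wait from $\check\tau^r_{i,0}$ until $t$ before the return clock restarts. Your proof can be repaired by switching to this decomposition, but as written the second term's bound does not hold.
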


\begin{proof}
	From the Markov property we have
	\begin{eqnarray*}
	E_x\left[  V_{i}(\hat{X}^{r}(t))\right]  =E_x\left[ e^{\beta \left( \check{\tau}_{i,t}^{r}-t\right) }\right]  
	=E_x\left[ e^{\beta \left( \check{\tau}_{i,t}^{r}-t\right) }\mathit{I}%
	_{\left\{ \check{\tau}_{i,0}^{r}\geq t\right\} }\right] +E_x\left[
	e^{\beta \left( \check{\tau}_{i,t}^{r}-t\right) }\mathit{I}_{\left\{ 
	\check{\tau}_{i,0}^{r}<t\right\} }\right] \text{.}
	\end{eqnarray*}%
	Let $R$ be as in Proposition \ref{thm:stopTimeExpMomBnd}. \ Let $t\in \lbrack
0,1]$ and $r\geq R$ be arbitrary. \ Then from  Proposition \ref{thm:stopTimeExpMomBnd}, for some $\hat H_1, \hat H_2 \in (0,\infty)$
\begin{eqnarray*}
E_x\left[ e^{\beta \left( \check{\tau}_{i,t}^{r}-t\right) }\mathit{I}%
_{\left\{ \check{\tau}_{i,0}^{r}<t\right\} }\right]  \leq
\sup_{x':w_{i}\leq 2c_{3}}\sup_{0\le \xi \le 1}E_{x'}\left[ e^{\beta \check{\tau}_{i,\xi}^{r}}\right]
\leq \hat{H}_{1}e^{\hat{H}_{2}(2c_{3}+1)}
\end{eqnarray*}%
Furthermore,
\begin{eqnarray*}
E_x\left[ e^{\beta \left( \check{\tau}_{i,t}^{r}-t\right) }\mathit{I}%
_{\left\{ \check{\tau}_{i,0}^{r}\geq t\right\} }\right]  
= e^{-t\beta }E_x\left[ e^{\beta \check{\tau}_{i,0}^{r}}\mathit{I}%
_{\left\{ \check{\tau}_{i,0}^{r}\geq t\right\} }\right]  
\leq e^{-t\beta }E_x\left[ e^{\beta \check{\tau}_{i,0}^{r}}\right]  
=e^{-t\beta }V_{i}(x)\text{.}
\end{eqnarray*}%
Combining the two estimates we have the result.
\end{proof}
From the  Lyapunov function property proved in the previous result we have the following moment estimate for all time instants.
\begin{proposition}
\label{thm:timeIndVbnd} There exist  $%
H_{1},H_{2},R \in (0,\infty) $  such that for all $t\geq 0$, $i \in \mathbb{N}_I$ and $%
r\geq R$ we have 
\begin{equation*}
E_x\left[  V_{i}(\hat{X}^{r}(t))\right] \leq
H_{1}e^{-\beta t}V_{i}(x)+H_{2}
\end{equation*}
\end{proposition}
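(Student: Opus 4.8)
The plan is to bootstrap the one-step contraction estimate of Proposition \ref{thm:timeDecayV} to all times via the Markov property of $\hat X^r$. Fix $i \in \mathbb{N}_I$, let $R$ be as in Proposition \ref{thm:timeDecayV}, let $H$ be the associated constant, and write $f \doteq V_i$. The essential point is that $H$ (and $R$) depend neither on $r \ge R$ nor on $i$ nor on the initial state $x \in \cls^r$, so the iteration below does not accumulate any $r$-dependence.

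First I would handle integer times. Using Proposition \ref{thm:timeDecayV} with time parameter $1$ together with the Markov property applied at time $n-1$, for every integer $n \ge 1$ and every $x \in \cls^r$,
\[
E_x\big[f(\hat X^r(n))\big] = E_x\Big[E_{\hat X^r(n-1)}\big[f(\hat X^r(1))\big]\Big] \le e^{-\beta} E_x\big[f(\hat X^r(n-1))\big] + H .
\]
Iterating this bound from $n$ down to $0$ and summing the resulting geometric series gives
\[
E_x\big[f(\hat X^r(n))\big] \le e^{-\beta n} f(x) + \frac{H}{1-e^{-\beta}} .
\]

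Next I would extend the estimate to an arbitrary $t \ge 0$. Write $t = n + s$ with $n = \lfloor t \rfloor \in \{0,1,2,\ldots\}$ and $s = t - n \in [0,1]$. Applying the Markov property at time $n$, then Proposition \ref{thm:timeDecayV} with time parameter $s$, and finally the integer-time bound just established,
\[
E_x\big[f(\hat X^r(t))\big] = E_x\Big[E_{\hat X^r(n)}\big[f(\hat X^r(s))\big]\Big] \le e^{-\beta s}\Big( e^{-\beta n} f(x) + \frac{H}{1-e^{-\beta}} \Big) + H .
\]
Since $e^{-\beta s} e^{-\beta n} = e^{-\beta t}$ and $e^{-\beta s} \le 1$, the right-hand side is at most $e^{-\beta t} f(x) + H\big(1 + (1-e^{-\beta})^{-1}\big)$, which is the claim with $H_1 = 1$, $H_2 = H\big(1 + (1-e^{-\beta})^{-1}\big)$, and the same $R$ as in Proposition \ref{thm:timeDecayV}.

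I do not expect any genuine obstacle here; the proof is a routine Lyapunov-type iteration. The only points needing minor attention are the uniformity of the constant $H$ from Proposition \ref{thm:timeDecayV} over $r \ge R$, over $i$, and over the starting state — which is what keeps $H_1, H_2$ independent of $r$ — and the splitting of a non-integer $t$ into an integer part plus a remainder in $[0,1]$ so that Proposition \ref{thm:timeDecayV} governs the final increment.
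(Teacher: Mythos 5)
Your proposal is correct and follows essentially the same route as the paper: iterate the one-step bound of Proposition \ref{thm:timeDecayV} at integer times via the Markov property, sum the geometric series, and absorb the fractional part $t-\lfloor t\rfloor$ with one more application of the same proposition (the paper merely handles the fractional increment first and the integer iteration second). The constants obtained, $H_1=1$ and $H_2=H\bigl(1+(1-e^{-\beta})^{-1}\bigr)$, coincide with those in the paper's argument.
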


\begin{proof}
Let $R, H$ be as in Proposition \ref{thm:timeDecayV}. \ Then for all $i \in \mathbb{N}_I$, $x=(q,z) \in \cls^r$, $t \in [0,1]$ and $r\ge R$, we have
\begin{equation*}
E_x\left[  V_{i}(\hat{X}^{r}(t))\right] \leq
e^{-\beta t}V_{i}(x)+ {H}\text{.}
\end{equation*}
Then from the Markov property, for any $r\geq R$ and $t\ge 0$ we have 
\begin{eqnarray}
E_x\left[  V_{i}(\hat{X}^{r}(t))\right]  =
E_x\left[  E_x\left[ \left. V_{i}(\hat{X}^{r}(t))\right\vert \hat{X}%
^{r}(\left\lfloor t\right\rfloor )\right] \right] 
% = E_x\left[  E_{\hat{X}^{r}(\left\lfloor
% t\right\rfloor )}\left[  V_{i}(\tilde{X}^{r}(t-\left\lfloor
% t\right\rfloor ))\right] \right]  \notag \\ 
\leq  {H}+ e^{-\beta (t-\left\lfloor t\right\rfloor)} E_x\left[  V_{i}(\hat{X}^{r}(\left\lfloor
t\right\rfloor ))\right].   \label{eq:VgenTimeFromIntTime} 
\end{eqnarray}%
Using the Markov property again
\begin{eqnarray*}
E_x\left[  V_{i}(\hat{X}^{r}(\left\lfloor t\right\rfloor ))\right]  =E_x\left[  E\left[  V_{i}(\hat{X}%
^{r}(1))\mid {	\hat{X}^{r}(\left\lfloor t\right\rfloor -1)}\right] \right]  
\leq {H}+e^{-\beta}E_x\left[  V_{i}(\hat{X}^{r}(\left\lfloor
t\right\rfloor -1))\right].
\end{eqnarray*}%
Iterating the above inequality we get 
\begin{eqnarray*}
E_x\left[  V_{i}(\hat{X}^{r}(\left\lfloor t\right\rfloor ))\right]  \leq 
e^{-\beta \left\lfloor
t\right\rfloor} V_{i}(x)+ {H}\sum_{k=0}^{\left\lfloor t\right\rfloor
-1}\left( e^{-\beta}\right) ^{k} \leq 
e^{-\beta \left\lfloor t\right\rfloor }V_{i}(x)+\frac{%
{H}}{1-e^{-\beta}}\text{.}
\end{eqnarray*}%
Combining this with (\ref{eq:VgenTimeFromIntTime}) we have for all $t\ge 0$
\begin{equation*}
E_x\left[  V_{i}(\hat{X}^{r}(t))\right] \leq
e^{-\beta t} V_{i}(x)+ {H}\left(1+\frac{1}{1-e^{-\beta}}\right) .
\end{equation*}%
The result follows.
\end{proof}

{\bf Proof of Theorem \ref{thm:WmomBnd}.}
This proof is immediate from Proposition \ref{thm:VlowerBnd} and Proposition \ref{thm:timeIndVbnd} on taking $\gamma = H_3 \beta $ where $H_3$ is as in the statement of Proposition \ref{thm:VlowerBnd}
and $\beta$ is as fixed above Proposition \ref{thm:timeDecayV}.

\section{\protect Path Occupation Measure Convergence}
\label{sec:pathoccmzr}
Let for $t\ge 0$
\begin{equation}
\hat{Z}^{r}(t)=w^{r}+G^{r}(\hat{A}^{r}(t)-\hat{S}^{r}(\bar{B}^{r}(t))),\label{eq:zhrt}
\end{equation}%
where $w^r = G^rq$.
Consider the  collection of 
 random variables indexed by $T$ and $r$ taking values in  $\mathcal{P}\left(
D([0,1]:\RR_+^{I}\times \mathbb{R}^{I})\right) $, defined by 
\begin{equation*}
\theta _{T}^{r}(dx\times dy)=\frac{1}{T}\int_{0}^{T}\delta _{\hat{W}%
^{r}(t+\cdot )}(dx)\delta _{\hat{Z}^{r}(t+\cdot )-\hat{Z}^{r}(t)}(dy)dt .
\end{equation*}
In this section we will prove the tightness of the collection $\{\theta _{T}^{r}, T>0, r>0\}$ of random path occupation measures and characterize limit points along suitable subsequences.

We begin by noting the following monotonicity property of a one dimensional Skorohod map introduced in Section \ref{sec:hgi}.
\begin{theorem}
\label{thm:skorokIneq} Fix $T \in (0,\infty)$ and $f\in D([0,T]:\mathbb{R}%
)$ satisfying $f(0)=0$. Let $\varphi _{1}=\Gamma_1(f)$.
Suppose $\varphi_{2}, \varphi_{3} \in D([0,T]:\mathbb{R})$ are such that
\begin{itemize}
	\item $\varphi_2(t) = f(t) + h_2(t)$, $t\in [0,T]$, where $h_2 \in D([0,T]:\mathbb{R})$ is a nondecreasing function with $h_2(0)=0$ and $\int_{[0,T]} 1_{(0,\infty)} (\varphi_2(s)) dh_2(s) =0$.
	\item $\varphi_3(t) = f(t) + h_3(t)$, $t\in [0,T]$, where $h_3 \in D([0,T]:\mathbb{R})$ is a nondecreasing function with $h_3(0)=0$ and $\varphi_3(t)\ge 0$ for all $t \in [0,T]$.
\end{itemize}
Then for all $t\in [0,T]$,
$\varphi_2(t) \le \varphi_1(t) \le \varphi_3(t)$.
\end{theorem}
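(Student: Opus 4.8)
The statement is the classical "sandwich" characterization of the one–dimensional Skorohod map: $\varphi_1=\Gamma_1(f)$ is the smallest nonnegative function of the form $f+h$ with $h$ nondecreasing, $h(0)=0$ (this is $\varphi_3$), and it is the largest such function that additionally satisfies the complementarity condition $\int 1_{(0,\infty)}(\varphi_2)\,dh_2=0$ (this is $\varphi_2$). So the plan is to prove the two inequalities separately using the explicit formula for $\Gamma_1$.

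First I would recall the explicit representation $\Gamma_1(f)(t)=f(t)+\sup_{0\le s\le t}(-f(s))^+$, equivalently the pushing term is $\ell_1(t)=\sup_{0\le s\le t}(-f(s))^+$, a nondecreasing function with $\ell_1(0)=0$. For the upper bound $\varphi_1\le\varphi_3$: since $\varphi_3=f+h_3\ge 0$ with $h_3$ nondecreasing, we get for every $s\le t$ that $h_3(t)\ge h_3(s)\ge -f(s)$, hence $h_3(t)\ge\sup_{0\le s\le t}(-f(s))^+=\ell_1(t)$ (using $h_3(0)=0\ge 0$ to handle the positive-part), and therefore $\varphi_3(t)=f(t)+h_3(t)\ge f(t)+\ell_1(t)=\varphi_1(t)$. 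This is just minimality of $\Gamma_1$ and needs only the definition of $\ell_1$.

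For the lower bound $\varphi_2\le\varphi_1$, i.e. $h_2\le\ell_1$: here I would use the complementarity condition. Fix $t\in[0,T]$ and set $s^*=\sup\{s\le t:\varphi_2(s)=0\}$ if the set $\{s\le t:\varphi_2(s)=0\}$ is nonempty (a minor point: one must be a little careful with one-sided limits since we are in $D([0,T])$, so I would instead argue via $s^*=\sup\{s\le t:\varphi_2(s-)\wedge\varphi_2(s)=0\}$, or simply take a sequence $s_n\uparrow s^*$ with $\varphi_2(s_n)\to 0$). On $(s^*,t]$ we have $\varphi_2>0$, so by the complementarity condition $h_2$ is constant on $[s^*,t]$; hence $h_2(t)=h_2(s^*)$ (or its left limit), and then $h_2(t)=h_2(s^*)=-f(s^*)+\varphi_2(s^*)=-f(s^*)\le\ell_1(t)$, using $\varphi_2(s^*)=0$ in the limit. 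If instead $\varphi_2(s)>0$ for all $s\in[0,t]$, then $h_2$ is constant on all of $[0,t]$, so $h_2(t)=h_2(0)=0\le\ell_1(t)$. In either case $h_2(t)\le\ell_1(t)$, so $\varphi_2(t)=f(t)+h_2(t)\le f(t)+\ell_1(t)=\varphi_1(t)$, as desired.

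The only genuinely delicate point — and the one I would treat most carefully — is the right-continuity/left-limit bookkeeping in the definition of $s^*$ and the evaluation $\varphi_2(s^*)=0$, since $\varphi_2\in D([0,T])$ may jump. The clean way to finish is to pick $s_n\uparrow s^*$ (or $s_n=s^*$ eventually if the infimum is attained) with $\varphi_2(s_n)\to 0$, note $h_2(t)=h_2(s_n)$ for $n$ large because $h_2$ is flat on $(s^*,t]$ and constant just below $s^*$ cannot increase across a point where $\varphi_2$ stays positive, pass to the limit in $h_2(s_n)=-f(s_n)+\varphi_2(s_n)$, and use that $-f(s_n)\le\ell_1(t)$ for every $n$. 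Everything else is routine and follows directly from the explicit Skorohod formula recalled above.
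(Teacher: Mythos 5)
Your upper bound $\varphi_1\le\varphi_3$ is fine and is the same standard minimality argument the paper omits. The lower bound, however, has a genuine gap: you implicitly treat $\varphi_2$ as if it were nonnegative. You define $s^*=\sup\{s\le t:\varphi_2(s)=0\}$ and claim that on $(s^*,t]$ one has $\varphi_2>0$, and that if $\{\varphi_2=0\}\cap[0,t]=\emptyset$ then $\varphi_2>0$ on all of $[0,t]$. Neither is justified: the theorem does not assume $\varphi_2\ge 0$, and the complementarity condition $\int 1_{(0,\infty)}(\varphi_2)\,dh_2=0$ permits $h_2$ to increase wherever $\varphi_2\le 0$, in particular where $\varphi_2<0$. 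A c\`adl\`ag $\varphi_2$ can also jump across zero without ever equaling zero, so $\{\varphi_2=0\}$ can be empty while $h_2$ is far from constant. This is not a pathological corner case — it is exactly the situation in the paper's application (Theorem \ref{thm:finTimeConvToRBM}), where $\varphi_2$ corresponds to $\hat W^r_i-c_3r^{\alpha-1}$, which is negative precisely on the stretches where the thresholded idleness term $h_2$ accrues. Concretely, if $f$ drops to $-2$ and $h_2$ then rises continuously from $0$ to $1$ while $\varphi_2=f+h_2$ stays in $[-2,-1]$, your $s^*$ sits at the start of the drop, $\varphi_2<0$ on $(s^*,t]$, and $h_2$ is not constant there, so the chain $h_2(t)=h_2(s^*)=-f(s^*)$ collapses (the conclusion still holds, but not by your argument).

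The repair is to work with the set $\{s\le t:\varphi_2(s)\le 0\}$ instead of $\{\varphi_2=0\}$: with $s^*$ its supremum, complementarity gives $dh_2((s^*,t])=0$, hence $h_2(t)=h_2(s^*)$, and $h_2(s^*)=-f(s^*)+\varphi_2(s^*)\le -f(s^*)\le \ell_1(t)$ when the supremum is attained; when it is not, $\varphi_2(s^*)>0$ forces $dh_2(\{s^*\})=0$, so $h_2(s^*)=h_2(s^*-)$ and you pass to the limit along $s_n\uparrow s^*$ with $\varphi_2(s_n)\le 0$. The empty-set case gives $h_2(t)=h_2(0)=0$. Note that the paper sidesteps all of this by arguing by contradiction directly on the inequality $h_2(t)\le -\inf_{0\le s\le t}f(s)$, choosing the crossing time $t_1^*=\sup\{s\le t_2^*:h_2(s)\le a\}$ and extracting a contradiction with complementarity in two cases (jump at $t_1^*$, or increase on $(t_1^*,t_2^*]$); that route never needs to locate zeros or sign changes of $\varphi_2$ at all.
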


\begin{proof}
The proof of the second inequality is straightforward and is omitted. Consider now the first inequality.
 Note that $\varphi _{1}(t)=f(t)+h_{1}(t)$
where $h_{1}(t)=-\inf_{0\leq s\leq t}\{f(s)\}$ and thus it suffices to show that  for any $t\in \lbrack 0,T]$, $%
h_{2}(t)\leq -\inf_{0\leq s\leq t}\{f(s)\}$. \ Assume that there exists $%
t_{2}^{\ast }\in \lbrack 0,T]$ such that $h_{2}(t_{2}^{\ast })>-\inf_{0\leq
s\leq t_{2}^{\ast }}\{f(s)\} \doteq a$. \ Let 
\begin{equation*}
t_{1}^{\ast }=\sup \{s\in \lbrack 0,t_{2}^{\ast }]:h_{2}(s)\leq a\}
\end{equation*}%
and note that either $h_{2}(t_{1}^{\ast })>a$ or $h_{2}(t_{1}^{\ast })=a$ and $h_{2}(r)>a$ for all 
$r\in (t_{1}^{\ast },t_{2}^{\ast }]$. \ In the first case 
\begin{equation*}
\varphi _{2}(t_{1}^{\ast })=f(t_{1}^{\ast })+h_{2}(t_{1}^{\ast
})>f(t_{1}^{\ast })-\inf_{0\leq s\leq t_{2}^{\ast }}\{f(s)\}\geq 0
\end{equation*}%
so $\varphi _{2}(t_{1}^{\ast })>0$ and 
\begin{equation*}
\int_{\{t_{1}^{\ast }\}}dh_{2}(s)=h_2(t_{1}^{\ast })-\lim_{s\uparrow
t_{1}^{\ast }}h_2(s)>0
\end{equation*}%
which is a contradiction. \ In the second case for all $r\in (t_{1}^{\ast
},t_{2}^{\ast }]$ 
\begin{equation*}
\varphi _{2}(r)=f(r)+h_{2}(r)> f(r)-a \ge
f(r)-\inf_{0\leq s\leq t_2^*}\{f(s)\}\geq 0
\end{equation*}%
so $\varphi _{2}(r)>0$ for all $r\in (t_{1}^{\ast },t_{2}^{\ast }]$ and 
\begin{equation*}
\int_{(t_{1}^{\ast },t_{2}^{\ast }]}dh_{2}(s)=h_2(t_{2}^{\ast })-h_2(t_{1}^{\ast
})=h_{2}(t_{2}^{\ast })-a >0
\end{equation*}%
which is also a contradiction. \ Therefore for any $t\in \lbrack 0,T]$ we
have $h_{2}(t)\leq -\inf_{0\leq s\leq t}\{f(s)\}$ and the desired inequality follows.

\end{proof}

\begin{theorem}
\label{thm:finTimeConvToRBM}For any $\epsilon >0$ and $T \in (0, \infty)$ there
exists $R \in (0,\infty)$ such that for all $r\geq R$ and $x=(q,z) \in \cls^r$,
\begin{equation*}
\sup_{s \in [0,\infty)}P_x\left( \sup_{0\leq t\leq T}\left\vert \Gamma \left( \hat W^r(s)+\hat{Z}^{r}(s+\cdot) - \hat{Z}^{r}(s) +r(K\rho
^{r}-C)\iota\right)(t) -\hat{W}^{r}(t+s)\right\vert >\epsilon \right) <\epsilon \text{.
}
\end{equation*}

\end{theorem}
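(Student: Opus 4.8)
The plan is to reduce the assertion to a pathwise comparison between the workload process and the Skorohod map of its ``free'' version, feeding in only the identity \eqref{eq:eq939} and the idleness estimate \eqref{eq:squareIdleTimeResultImplication}. Since $\Gamma(\psi)_i=\Gamma_1(\psi_i)$ and the Euclidean norm on $\RR^I$ is comparable to the coordinatewise maximum, it suffices to prove the analogous bound, with $\epsilon$ replaced by a smaller constant, for each fixed $i\in\mathbb{N}_I$. Recalling from \eqref{eq:eq935}--\eqref{eq:eq938} that $r\bar I^r=\hat I^r$, equation \eqref{eq:eq939} (noting $\hat w^r=w^r=G^rq$ under $P_x$) reads $\hat W^r(u)=\hat Z^r(u)+ur(K\varrho^r-C)+\hat I^r(u)$, and subtracting the values at $u=s+t$ and $u=s$ gives, for all $s,t\ge 0$,
\begin{equation*}
\hat W^r(s+t)=\psi^r(t)+\Delta^r(t),\quad \psi^r(t)\doteq \hat W^r(s)+\hat Z^r(s+t)-\hat Z^r(s)+t\,r(K\varrho^r-C),\quad \Delta^r(t)\doteq \hat I^r(s+t)-\hat I^r(s).
\end{equation*}
Here $\psi^r(t)$ is exactly the argument of $\Gamma$ in the statement evaluated at $t$; $\Delta^r$ is nondecreasing with $\Delta^r(0)=0$ (the capacity constraint \eqref{eq:resconst} makes $I^r$ nondecreasing); $\psi^r(0)=\hat W^r(s)\ge 0$; and $\hat W^r(s+t)\ge 0$.

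First I would record the deterministic lower bound $\Gamma_1(\psi^r_i)(t)\le \hat W^r_i(s+t)$ for $t\in[0,T]$. This is the elementary inequality of Theorem \ref{thm:skorokIneq}, whose proof goes through verbatim when $f(0)\ge 0$; equivalently, from $\Gamma_1(f)=f+\ell_f$ with $\ell_f(t)=\max\!\big(0,\sup_{0\le u\le t}(-f(u))\big)$, the representation $\hat W^r_i(s+\cdot)=\psi^r_i+\Delta^r_i$ with $\Delta^r_i$ nondecreasing and $\hat W^r_i(s+u)\ge 0$ forces $\Delta^r_i(u)\ge-\psi^r_i(u)$ for all $u\le t$, hence $\Delta^r_i(t)\ge\ell_{\psi^r_i}(t)$. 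It then remains to bound $\hat W^r_i(s+t)-\Gamma_1(\psi^r_i)(t)=\Delta^r_i(t)-\ell_{\psi^r_i}(t)$ from above, uniformly in $t\in[0,T]$.

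For the upper bound I would use that $\ell_{\psi^r_i}(t)\ge \Delta^r_i(u)-\hat W^r_i(s+u)$ for every $u\le t$. Let $\sigma$ be the last time in $[0,t]$ at which $W^r_i(r^2(s+\sigma))<c_3r^\alpha$ (i.e. $\hat W^r_i(s+\sigma)<c_3r^{\alpha-1}$), if such a time exists, and take $u=\sigma$; if no such time exists, use $\ell_{\psi^r_i}(t)\ge 0$ instead. In either case $\hat W^r_i(s+t)-\Gamma_1(\psi^r_i)(t)\le c_3r^{\alpha-1}+[\Delta^r_i(t)-\Delta^r_i(\sigma)]$ (with $\sigma=0$ and the bracket read as $\Delta^r_i(t)$ in the second case), and on the relevant interval $W^r_i(v)\ge c_3r^\alpha$, so $r[\Delta^r_i(t)-\Delta^r_i(\sigma)]=\int_{r^2(s+\sigma)}^{r^2(s+t)}I_{\{W^r_i(v)\ge c_3r^\alpha\}}(v)\,dI^r_i(v)$. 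Hence
\begin{equation*}
\sup_{0\le t\le T}\big[\hat W^r_i(s+t)-\Gamma_1(\psi^r_i)(t)\big]\le c_3r^{\alpha-1}+\frac1r\int_{r^2 s}^{r^2(s+T)}I_{\{W^r_i(v)\ge c_3r^\alpha\}}(v)\,dI^r_i(v).
\end{equation*}
Assuming (by causality of the Skorohod map, replacing $T$ by $T\vee 1$) that $T\ge 1$, the strong Markov property at time $s$ together with \eqref{eq:squareIdleTimeResultImplication} yields, for any $\epsilon'>0$, constants depending on $\epsilon'$ and $i$ but not on $s$ or $x$, with $\sup_{s\ge 0}P_x\big(\frac1r\int_{r^2 s}^{r^2(s+T)}I_{\{W^r_i(v)\ge c_3r^\alpha\}}\,dI^r_i\ge\epsilon' T\big)\le \hat B_1e^{-rT\hat B_2}+(1+\hat B_3r^{-1-\alpha})^{-\hat B_4r^2T}$, which vanishes as $r\to\infty$.

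To conclude, on the complement of that event the $i$-th coordinate discrepancy is at most $c_3r^{\alpha-1}+\epsilon' T$; choosing $\epsilon'$ small and $R$ large so that $\sqrt I\,(c_3r^{\alpha-1}+\epsilon' T)<\epsilon$ (possible since $\alpha<1$) and $I$ times the bad probability is below $\epsilon$, then union-bounding over $i\in\mathbb{N}_I$ and passing to the Euclidean norm, delivers the claim with a bound uniform in $s$ and $x$. I expect the only genuine labor to be bookkeeping of constants: all substantive content is inherited from Theorem \ref{thm:IdleTimeExp}/\eqref{eq:squareIdleTimeResultImplication} and from the observation (Lemma \ref{lem:fullWorkloadCond} and the discussion after it) that a resource holding more than $c_3r^\alpha$ units of work can idle only during the ``emergency'' periods $\{\mathcal{E}^r_j=1\}$. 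The one point requiring care is the uniformity over the time shift $s$ and the initial state $x$, which is precisely why \eqref{eq:squareIdleTimeResultImplication} was established with $x$-independent constants, so that the Markov property can be invoked at time $s$.
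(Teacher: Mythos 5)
Your argument is correct, and its probabilistic core coincides with the paper's: both rest on the decomposition $\hat W^r=\hat Z^r+\iota\, r(K\varrho^r-C)+\hat I^r$, on splitting the idleness at the threshold $c_3r^{\alpha}$, and on the $x$-uniform estimate \eqref{eq:squareIdleTimeResultImplication} (a consequence of Theorem \ref{thm:IdleTimeExp} and Lemma \ref{lem:fullWorkloadCond}) showing that idleness incurred while $W^r_i\ge c_3r^{\alpha}$ is negligible at diffusion scale. Where you genuinely differ is the deterministic step. The paper sandwiches $\hat W^r$ between $\Gamma\bigl(\hat Z^r+r(K\varrho^r-C)\iota\bigr)$ and $\Gamma\bigl(\hat Z^r+r(K\varrho^r-C)\iota+\hat\xi^r-c_3r^{\alpha-1}\bigr)+c_3r^{\alpha-1}$ using both comparisons in Theorem \ref{thm:skorokIneq}, and then converts the sandwich into an error bound $\kappa_1\bigl(2c_3r^{\alpha-1}+|\hat\xi^r(T)|\bigr)$ via the Lipschitz property of the Skorohod map. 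You use only the elementary lower comparison $\Gamma_1(\psi^r_i)\le \hat W^r_i(s+\cdot)$ together with the explicit formula $\Gamma_1(f)=f+\ell_f$, $\ell_f(t)=\max\bigl(0,\sup_{0\le u\le t}(-f(u))\bigr)$ (valid for $f(0)\ge 0$, so the restriction $f(0)=0$ in Theorem \ref{thm:skorokIneq} is harmless), and bound $\Delta^r_i(t)-\ell_{\psi^r_i}(t)$ directly by evaluating $-\psi^r_i$ at (times approaching) the last instant in $[0,t]$ at which $\hat W^r_i(s+\cdot)<c_3r^{\alpha-1}$; since $\hat I^r_i$ is absolutely continuous, this last-exit argument is clean even if the supremum is not attained, and it yields the error $c_3r^{\alpha-1}+\frac1r\int I_{\{W^r_i\ge c_3r^{\alpha}\}}\,dI^r_i$ with constant one, bypassing both the upper-comparison half of Theorem \ref{thm:skorokIneq} and the Lipschitz constant of $\Gamma$. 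You also carry out explicitly the uniformity in the shift $s$ (Markov property at time $r^2s$ combined with the state-independence of the constants in \eqref{eq:squareIdleTimeResultImplication}), which the paper only asserts, and your handling of $T<1$ by replacing $T$ with $T\vee 1$ is fine. In short: same key inputs, but a more self-contained, coordinatewise execution of the pathwise comparison; the paper's route is shorter given that the Lipschitz property of $\Gamma$ is already quoted, while yours avoids it entirely.
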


\begin{proof}
We will only prove the result without the outside supremum and in fact only when $s=0$. The general case follows on using the Markov property and the fact that the estimate in \eqref{eq:squareIdleTimeResultImplication}
is uniform over all $x \in \Gamma^r$.
 Let
\begin{equation*}
\hat{\xi}^{r}_i(t)=\frac{1}{r}\int_{0}^{r^{2}t}\mathit{I}_{\left\{ {W}%
_{i}^{r}(s)\ge c_{3}r^{\alpha }\right\} }(s)dI^{r}_i(s), \; i \in \NN_{I}.
\end{equation*}%
Note that 
\begin{equation*}
\hat{W}^{r}_i(t)-c_{3}r^{\alpha -1}=\hat{Z}^{r}_i(t)+tr(K\rho ^{r}-C)_i+\hat{\xi}%
^{r}_i(t)-c_{3}r^{\alpha -1}+ \int_{0}^{t}\mathit{I}_{\left\{ 
\hat{W}_{i}^{r}(s)-c_{3}r^{\alpha -1}< 0\right\} }(s)d\hat I^{r}_i(s)
\end{equation*}%
and consequently due to Theorem \ref{thm:skorokIneq}\ we have 
\begin{equation*}
\hat{W}^{r}(t)-c_{3}r^{\alpha -1}\leq \Gamma \left( \hat{Z}^{r} + r(K\rho
^{r}-C)\iota +\hat{\xi}^{r}-c_{3}r^{\alpha -1}\right)(t), \; t \ge 0 \text{.}
\end{equation*}%
In addition, 
\begin{equation*}
\hat{W}^{r}(t)=\hat{Z}^{r}(t)+tr(K\rho ^{r}-C)+\hat{I}^{r}(t)
\end{equation*}%
is a nonnegative function and $\hat{I}^{r}(0)$ is nondecreasing and
satisfies $\hat{I}^{r}(0)=0$. Thus once more from  Theorem \ref{thm:skorokIneq} 
\begin{equation*}
\Gamma \left( \hat{Z}^{r}(\cdot)+ r(K\rho ^{r}-C)\iota\right) \leq \hat{W}^{r}(t)%
\text{,}\; t\ge 0.
\end{equation*}%
Combining this gives for all $t\ge 0$
\begin{equation}
\Gamma \left( \hat{Z}^{r} +r(K\rho ^{r}-C)\iota\right)(t) \leq \hat{W}^{r}(t)\leq
\Gamma \left( \hat{Z}^{r} +r(K\rho ^{r}-C)\iota+\hat{\xi}^{r}(\cdot)-c_{3}r^{%
\alpha -1}\right) +c_{3}r^{\alpha -1}.  \label{eq:skorokIneq}
\end{equation}%
Lipschitz property of the Skorokhod map gives that there is a $\kappa_1 \in (0,\infty)$ such that for all $T>0$
\begin{equation*}
\sup_{0\leq t\leq T}\left\vert 
\Gamma \left( \hat{Z}^{r}(\cdot) +r(K\rho
^{r}-C)\iota\right)(t) -\hat{W}^{r}(t)
\right\vert \leq \kappa_1\left( 2c_{3}r^{\alpha -1}+\left\vert \hat{\xi}^{r}(T)\right\vert \right).
\end{equation*}%
From Theorem \ref{thm:IdleTimeExp} (see \eqref{eq:squareIdleTimeResultImplication}), for
any $\epsilon >$ $0$ and $T \in (0,\infty)$, there exists $R \in (0,\infty)$ such that for all $r\geq R$ and $x \in \cls^r$
\begin{equation*}
P_x\left( \left\vert \hat{\xi}^{r}(T)\right\vert >\epsilon
\right) <\epsilon .
\end{equation*}%
The result follows.
\end{proof}

Recall the initial condition $q^r$ introduced in \eqref{eq:queleneqn}.  
\begin{theorem}
\label{thm:occMeasTight} Suppose $\hat q^r \doteq q^r/r$ satisfies $\sup_{r>0} \hat q^r <\infty$.
Let $\{t_r\}$ be an increasing sequence
such that $t_r\uparrow\infty$ as $r\to \infty$. Suppose that $\hat w^r$ converges to some $w \in \RR_+^I$. Then, the random variables $%
\{\theta _{t_r}^{r}, r>0\}$ are tight in the space $\mathcal{P}%
\left( D([0,1]:\RR_+^{I}\times\mathbb{R}^{I})\right) $.
\end{theorem}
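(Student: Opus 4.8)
The plan is to invoke the standard criterion for tightness of a family of random probability measures on a Polish space: it suffices to exhibit, for each $\eps>0$, a compact set $\mathcal{K}_\eps \subset D([0,1]:\RR_+^I\times\RR^I)$ such that $\sup_{r}E[\theta_{t_r}^r(\mathcal{K}_\eps^c)] < \eps$ (see, e.g., the argument that tightness of $\{\theta_{t_r}^r\}$ in $\mathcal{P}(D)$ follows from tightness of the ``mean measures'' $\bar\theta_{t_r}^r(\cdot)\doteq E[\theta_{t_r}^r(\cdot)]$, which are themselves probability measures on $D$). By the definition of $\theta_{t_r}^r$, for any measurable $A$,
\begin{equation*}
E\left[\theta_{t_r}^r(A)\right] = \frac{1}{t_r}\int_0^{t_r} P\left((\hat W^r(t+\cdot),\, \hat Z^r(t+\cdot)-\hat Z^r(t))\in A\right)\, dt.
\end{equation*}
So I would reduce the whole problem to a \emph{uniform-in-$t$} compact containment estimate for the law of the pair $(\hat W^r(t+\cdot), \hat Z^r(t+\cdot)-\hat Z^r(t))$ on $D([0,1]:\RR_+^I\times\RR^I)$, and then average.

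The second coordinate $\hat Z^r(t+\cdot)-\hat Z^r(t)$ is the easy part: from \eqref{eq:zhrt} it is $G^r$ times a difference of the centered scaled Poisson arrival and service processes, which by Theorem \ref{thm:LDP} (or the classical FCLT/maximal inequalities for Poisson processes) satisfies a modulus-of-continuity estimate and an oscillation bound on $[0,1]$ that is uniform in the shift $t$, since increments of Poisson processes are stationary and the rates $\lambda_j^r,\mu_j^r$ are bounded by \eqref{def:Rhat}; this yields a compact set in $D([0,1]:\RR^I)$ carrying all but $\eps$ of the mass. For the first coordinate I would use the sandwich \eqref{eq:skorokIneq} together with Theorem \ref{thm:finTimeConvToRBM}: with probability at least $1-\eps$ (uniformly in the starting time $t$, using the Markov property and the uniformity over $x\in\cls^r$ in \eqref{eq:squareIdleTimeResultImplication}), $\hat W^r(t+\cdot)$ is within $\eps$ in sup-norm on $[0,1]$ of $\Gamma(\hat W^r(t) + (\hat Z^r(t+\cdot)-\hat Z^r(t)) + r(K\rho^r-C)\iota)$. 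Since $\Gamma$ is Lipschitz in the sup-norm (Definition \ref{def:smsp} and the stated Lipschitz property), the regularity/oscillation control on the driving path transfers to $\hat W^r(t+\cdot)$, \emph{provided} we also control the size of the ``initial value'' $\hat W^r(t)$ uniformly in $t$ and $r$. This last point is exactly where Theorem \ref{thm:WmomBnd} enters: it gives $E_x[e^{\gamma\hat W^r_i(t)}]\le H(1+e^{-\beta t}V_i(x))$, and since $V_i(x)$ is bounded on the set of initial conditions (because $\hat w^r$ is bounded, using Proposition \ref{thm:stopTimeExpMomBnd} to bound $V_i$ in terms of $w_i$), we get $\sup_{r\ge R}\sup_{t\ge 0} E[e^{\gamma|\hat W^r(t)|}] < \infty$, hence a uniform bound on $P(|\hat W^r(t)| > N)$ that is small for $N$ large. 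Combining: for $N,\delta$ chosen appropriately, outside an event of probability $\le \eps$ (uniform in $t$), the pair lies in the compact set $\mathcal{K}_{N,\delta}\doteq\{(\varphi,\psi): |\varphi(0)|\le N, |\psi(0)|=0, \,w'(\varphi,\delta)+w'(\psi,\delta)\le \kappa\}$ for suitable oscillation bound $\kappa$; averaging over $t\in[0,t_r]$ gives $E[\theta_{t_r}^r(\mathcal{K}_{N,\delta}^c)]\le\eps$.

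The main obstacle is obtaining the estimates \emph{uniformly in the time shift $t$} (equivalently, over all starting configurations reachable at time $t$). This is handled by the strong Markov property combined with the fact that Theorems \ref{thm:IdleTimeExp}, \ref{thm:finTimeConvToRBM} and the moment bound \ref{thm:WmomBnd} are all stated with constants independent of $x\in\cls^r$ — so the estimate at shift $t$ is controlled by conditioning on $\hat X^r(t)$ and applying the time-$0$ estimate to the (random) state $\hat X^r(t)$, then using $\sup_{t}E[e^{\gamma|\hat W^r(t)|}]<\infty$ to handle the dependence on that state. A minor secondary point is verifying the reduction from tightness of $\{\theta_{t_r}^r\}$ to tightness of the mean measures $\{\bar\theta_{t_r}^r\}$, which is the standard fact that a family of $\mathcal{P}(\mathbb{S})$-valued random variables is tight iff the family of their expectations (as elements of $\mathcal{P}(\mathbb{S})$) is tight, valid since $\mathbb{S}=D([0,1]:\RR_+^I\times\RR^I)$ is Polish.
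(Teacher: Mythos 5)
Your proposal is correct and follows essentially the same route as the paper: reduce tightness of the random measures to tightness, uniformly in the shift $t$ and in $r$, of the laws of the pairs $(\hat W^r(t+\cdot),\, \hat Z^r(t+\cdot)-\hat Z^r(t))$ in $D([0,1]:\RR_+^I\times\RR^I)$, and handle the $\hat W$ component exactly as the paper does, via Theorem \ref{thm:finTimeConvToRBM}, the Lipschitz property of the Skorohod map, and the uniform moment bounds coming from Theorem \ref{thm:WmomBnd} together with Proposition \ref{thm:stopTimeExpMomBnd}. The only immaterial difference is in the $\hat Z$ component, where the paper verifies Aldous's criterion using the martingale property of the centered arrival and (time-changed) service processes, while you appeal to Poisson maximal/large-deviation estimates, stationarity of increments and the capacity bound on $\bar B^r$; both are standard and adequate.
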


\begin{proof}
It suffices to show that the collection
$$\left\{\left (\hat W^r(t+\cdot), \hat Z^r(t+\cdot)- \hat Z^r(t)\right),\; r> 0,\; t>0\right\}$$
is tight in $D([0,1]: \RR_+^I \times \RR^I)$.

Let
\begin{equation*}
\mathcal{F}_{t}^{r}=\sigma \left( \hat{S}_{j}^{r}(\bar{B}^{r}(s)),\hat{A}%
_{j}^{r}(s):j\in \mathbb{N}_{J},0\leq s\leq t\right), \; t\ge 0.
\end{equation*}%
and note that for all $j\in \mathbb{N}$ both $\hat{S}_{j}^{r}(\bar{B}%
^{r}(t)) $ and $\hat{A}_{j}^{r}(t)$ are $\mathcal{F}_{t}^{r}$-martingales. \
Consequently, there are $\kappa_1, \kappa_2 \in (0,\infty)$ such that for any $r>0$, $\delta>0$ and  $\mathcal{F}_{t}^{r}$-stopping times $%
\tau _{1},\tau _{2}$ satisfying $\tau _{1}\leq \tau _{2}\leq \tau
_{1}+\delta \le 1 $, 
\begin{eqnarray*}
&&E\left[ \left( \hat{Z}_{i}^{r}(\tau _{2})-\hat{Z}_{i}^{r}(\tau
_{1})\right) ^{2}\right] \\
&\leq &\kappa_1 \sum_{j=1}^{J}G_{i,j}^{r}\left( E\left[ (\hat{A}_{j}^{r}(\tau _{2})-%
\hat{A}_{j}^{r}(\tau _{1}))^{2}\right] +E\left[ (\hat{S}_{j}^{r}(\bar{B}%
^{r}(\tau _{2}))-\hat{S}_{j}^{r}(\bar{B}^{r}(\tau _{1})))^{2}\right] \right)
\\
&\leq & \kappa_1 \sum_{j=1}^{J} E\left[ \tau _{2}-\tau _{1}%
\right] +\sum_{j=1}^{J} E\left[ \bar{B}^{r}(\tau _{2})-%
\bar{B}^{r}(\tau _{1})\right] \\
&\leq & \kappa_2\delta .
\end{eqnarray*}%
This proves the tightness of the collection $\{\hat Z^r(t+\cdot)- \hat Z^r(t),\; r> 0,\; t>0\}$.

From the convergence $r(K\varrho^r-C) \to v^*$, Theorem \ref{thm:finTimeConvToRBM}, and Lipschitz property of the Skorohod map, to prove the tightness of 
$\{\hat W^r(t+\cdot),\; r> 0,\; t>0\}$ it now suffices to prove the tightness of $\{\hat W^r(t),\; r> 0,\; t>0\}$. However that is an immediate consequence of Propositions \ref{thm:WmomBnd} and \ref{thm:stopTimeExpMomBnd}.
The result follows.

\end{proof}
Recall that the reflected Brownian motion $\{\ch W^{w_0}\}_{w_0 \in \RR_+^I}$ in \eqref{eq:eqrbm}  has a unique invariant probability distribution which we denote as $\pi$.
We will denote by $\Pi$ the unique  measure on 
$C([0,1]:\RR_+^I)$ associated with this Markov process with initial distribution $\pi$.
 The following theorem gives a characterization of the weak limit points of the sequence $\theta _{t_r}^{r}$ in Theorem \ref{thm:occMeasTight}.
We denote the canonical coordinate processes on $D([0,1]:\RR_+^I \times \RR^I)$ as $(\bw(t), \bz(t))_{0\le t\le 1}$.
Let $\clg_t \doteq \sigma\{(\bw(s), \bz(s)): 0 \le s \le t\}$ be the canonical filtration on this space.
\begin{theorem}
\label{thm:limitMeasProp} Suppose $\hat q^r \doteq q^r/r$ satisfies $\sup_{r>0} \hat q^r <\infty$.
Also suppose that $\theta _{t_r}^{r}$ converges in distribution, along some subsequence as $r\to \infty$, to a  $\clp(D([0,1]:\RR_+^I \times \RR^I))$ valued random variable  $\theta$ given on some probability space
$(\bar \Om, \bar \clf, \bar P)$. Then for $\bar P$ a.e. $\omega$, under $\theta(\om)\equiv \theta_{\om}$ the following hold.
\begin{enumerate}
	\item $\theta_{\om}(C([0,1]:\RR_+^I\times \RR^I))=1$.
	\item $\{\bz(t)\}_{0\le t\le 1}$ is a $\clg_t$-Brownian motion with covariance matrix $\Sigma = \Lambda \Lambda'$, where $\Lambda$ is as introduced above \eqref{eq:eqrbm}.
	\item $\{(\bw(t),\bz(t))\}_{0\le t\le 1}$ satisfy $\theta_{\om}$ a.s.
	$$\bw(t) = \Gamma(\bw(0)- v^*\iota +\bz)(t),\; 0\le t \le 1.$$
	\item $\theta_{\om} \circ (\bw(0))^{-1} = \pi$ and thus denoting the first marginal of $\theta_{\om}$ on 
	$C([0,1]:\RR_+^I)$ as $\theta_{\om}^1$, we have $\theta_{\om}^1=\Pi$.
\end{enumerate}
\end{theorem}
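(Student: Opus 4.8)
The plan is to follow the standard martingale-problem argument for convergence to reflected Brownian motion, combined with an ergodic-averaging argument that forces the limiting first-marginal distribution of $\bw(0)$ to be the invariant measure $\pi$. Throughout we work on the probability space $(\bar\Om,\bar\clf,\bar P)$ carrying the limit $\theta$, and we use the fact that $\theta_{t_r}^r$ is, by construction, a path-occupation measure, so for any bounded continuous $F$ on $D([0,1]:\RR_+^I\times\RR^I)$ one has $\int F\, d\theta_{t_r}^r = \frac{1}{t_r}\int_0^{t_r} F(\hat W^r(t+\cdot), \hat Z^r(t+\cdot)-\hat Z^r(t))\,dt$; this ``stationarity under time shift'' is the key structural property that survives the limit.

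First, for part (1): by Theorem \ref{thm:finTimeConvToRBM} and the Lipschitz property of $\Gamma$, together with the fact that $\hat Z^r$ has uniformly (in $r$ and shift) tight and in fact $C$-tight oscillation moduli (established in the proof of Theorem \ref{thm:occMeasTight} via the martingale moment bound $E[(\hat Z^r_i(\tau_2)-\hat Z^r_i(\tau_1))^2]\le \kappa_2\delta$), the jump sizes of $\hat W^r$ and $\hat Z^r$ go to zero uniformly; one passes this to the limit using the portmanteau theorem applied to the (upper semicontinuous) sup-of-jumps functional, so $\theta_\om$ charges $C([0,1]:\RR_+^I\times\RR^I)$ a.s. For part (2): one shows that for each fixed $0\le s<t\le 1$ and each bounded continuous $\clg_s$-measurable $\Phi$, the functionals $F_1 = \Phi\cdot(\bz(t)-\bz(s))$ and $F_2 = \Phi\cdot\big((\bz(t)-\bz(s))(\bz(t)-\bz(s))' - (t-s)\Sigma\big)$ satisfy $E_{\theta_\om}[F_i]=0$ a.s. This is done by noting that $\int F_1\,d\theta_{t_r}^r$ and $\int F_2\,d\theta_{t_r}^r$ are, after using the martingale property of $\hat A^r,\hat S^r(\bar B^r(\cdot))$ and the explicit form $\hat Z^r = w^r + G^r(\hat A^r - \hat S^r(\bar B^r))$, time-averages of conditional-expectation-zero quantities plus asymptotically negligible error (the covariance converges to $\Sigma$ because $\bar B^r_j(t)\to \varrho_j t$, which follows from the fluid limit/rate-allocation structure); one needs a small truncation to handle boundedness of $\bz$ and the error terms, then take $r\to\infty$ using tightness. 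Part (3) follows from part (2) and the relation \eqref{eq:skorokIneq}: $\hat W^r$ is sandwiched between $\Gamma(\hat Z^r + r(K\varrho^r-C)\iota)$ and $\Gamma(\hat Z^r + r(K\varrho^r-C)\iota + \hat\xi^r - c_3 r^{\alpha-1}) + c_3 r^{\alpha-1}$, the idleness correction $\hat\xi^r$ vanishes uniformly by \eqref{eq:squareIdleTimeResultImplication}, $r(K\varrho^r-C)\to -v^*$, and $\Gamma$ is continuous; applying the continuous mapping theorem to the functional $(\bw,\bz)\mapsto \sup_{t}|\bw(t) - \Gamma(\bw(0)-v^*\iota+\bz)(t)|$ (which is continuous on $C([0,1]:\RR_+^I\times\RR^I)$) gives the claimed identity under $\theta_\om$.

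Part (4) is where the ergodic input enters and is, I expect, the main obstacle. Parts (1)--(3) already show that $\theta_\om$ is the law of the Skorohod-reflected process $\Gamma(\bw(0)-v^*\iota+\Sigma^{1/2}\tilde B)$ started from the random initial law $\mu_\om \doteq \theta_\om\circ(\bw(0))^{-1}$; it remains to identify $\mu_\om = \pi$. The idea is that the time-shift structure of the occupation measure forces $\mu_\om$ to be shift-invariant for the RBM semigroup in an appropriate averaged sense. Concretely, for bounded continuous $f$ on $\RR_+^I$ and $h\in(0,1)$, the quantity $\int [f(\bw(h)) - f(\bw(0))]\,d\theta_{t_r}^r = \frac{1}{t_r}\int_0^{t_r}[f(\hat W^r(t+h))-f(\hat W^r(t))]\,dt = \frac{1}{t_r}\big(\int_{t_r}^{t_r+h} f(\hat W^r) - \int_0^h f(\hat W^r)\big)$, which is $O(1/t_r)\to 0$; passing to the limit gives $\int [f(\bw(h))-f(\bw(0))]\,d\theta_\om = 0$ a.s. Combining this with part (3) (which says $\bw(h)$ has, conditionally on $\bw(0)$, the law $\ch W^{\bw(0)}(h)$, i.e. the RBM transition kernel $P_h$), we get $\int P_h f\,d\mu_\om = \int f\,d\mu_\om$ for all bounded continuous $f$ and all $h\in(0,1)$, i.e. $\mu_\om$ is invariant for the RBM semigroup; by the uniqueness of the invariant distribution (from \cite{harwil1}, as recalled above \eqref{eq:eqrbm}), $\mu_\om = \pi$ a.s. Then $\theta_\om^1$ is the law of the stationary RBM, which is exactly $\Pi$. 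The delicate points to watch are: (i) justifying the interchange of the weak limit with the (not globally bounded, but locally bounded and tight) test functionals, which requires a uniform-integrability/truncation argument supported by the exponential moment bound of Theorem \ref{thm:WmomBnd}; and (ii) ensuring the ``a.s.'' statements, initially proved for each fixed $(s,t,h,f)$ in a countable dense family, can be upgraded to hold simultaneously $\bar P$-a.s., which is routine by intersecting countably many null sets and using continuity in the relevant parameters.
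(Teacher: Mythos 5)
Your plan follows essentially the same route as the paper's proof: part (1) via the jump functional and a Fatou/portmanteau argument, part (2) by averaging martingale test functionals against the occupation measure (the paper makes your ``time-averages of mean-zero quantities'' step rigorous through a second-moment computation exploiting that contributions more than one time unit apart are uncorrelated, and identifies the covariance exactly as you indicate, using Theorem \ref{thm:WmomBnd} to show the $\bar{B}^r_j$-increments are close to $\varrho_j(t-s)$ uniformly in the shift), part (3) via the sandwich \eqref{eq:skorokIneq} behind Theorem \ref{thm:finTimeConvToRBM} together with the Lipschitz continuity of $\Gamma$ and continuous mapping, and part (4) via the $O(1/t_r)$ shift-invariance identity combined with parts (2)--(3) and uniqueness of the RBM invariant distribution. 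The only cosmetic difference is in part (1), where the paper simply uses the direct $O(1/r)$ bound on the jump sizes of $(\hat{W}^r,\hat{Z}^r)$ rather than any C-tightness/oscillation argument.
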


\begin{proof}
	For notational simplicity we denote the convergent subsequence of $\theta _{t_r}^{r}$  by the same symbol.
For $(x,y)\in D([0,1]:\mathbb{R}_{+}^{I}\times \mathbb{R}^{I})$
define 
$
j(x,y)=\sup_{0\leq t<1} \left\Vert (x(t),y(t))-(x(t-),y(t-))\right\Vert.
$
Then there is a $\kappa_1\in (0,\infty)$ such that for all $r$,
$E \theta^r_{t_r} ((x,y): j(x,y) > \kappa_1/r) =0.$
Thus in particular, for every $\delta \in (0,\infty)$, as $r\to \infty$,
$E \theta^r_{t_r} ((x,y): j(x,y) > \delta) \to 0.$
By weak convergence of $\theta^r_{t_r}$ to $\theta$ and Fatou's lemma we then have 
$E \theta ((x,y): j(x,y) > \delta) = 0$
which proves part (1) of the theorem.

In what follows, we will denote the expected value under
$\theta^r_{t_r}$ (resp. $\theta$) as $E_{\theta^r_{t_r}}$ (resp. $E_{\theta}$).
Let $f: D([0,1]:\mathbb{R}_{+}^{I}\times \mathbb{R}^{I}) \to \RR$ be a continuous and bounded function.
We now argue that for all $0\le s<t\le 1$, and $i \in \NN_I$
\begin{equation}
	\bar E \left(\left |E_{\theta}\left(f(\bw(\cdot \wedge s), \bz(\cdot \wedge s)) (\bz_i(t)-\bz_i(s))\right)\right|
	\wedge 1\right) =0. \label{eq:eqismart}
\end{equation}
This will prove that $\{\bz(t)\}_{0\le t\le 1}$ is a $\clg_t$-martingale under $\theta_{\om}$ for a.e. $\om$.
To see \eqref{eq:eqismart} note that
\begin{align*}
	 &E E_{\theta^r_{t_r}}\left[f(\bw(\cdot \wedge s), \bz(\cdot \wedge s)) (\bz_i(t)-\bz_i(s))\right]^2\\
	&= E\left[\frac{1}{t_r}\int_0^{t_r} f(\hat W^r(u+(\cdot \wedge s)), \hat Z^r(u+(\cdot \wedge s))- \hat Z^r(u))
	[\hat Z^r_i(u+t) - \hat Z^r_i(u+s)] du\right]^2\\
	&= \frac{2}{t_r^2}\int_0^{t_r}\int_0^u E(H_i(u)H_i(v)) dv du,
\end{align*}
where for $u\ge 0$
\begin{equation*}
H_i(u)=f (\hat{W}^{r}(u+(\cdot \wedge s)),\hat{Z}^{r}(u+(\cdot \wedge s))-\hat{Z}^{r}(u))(\hat{Z}_i^{r}(u+t)-
\hat{Z}_i^{r}(u+s))\text{.}
\end{equation*}%
Since $\hat Z^r_i$ is a martingale, we have for $v<u-1$, $E(H_i(u)H_i(v))=0$.
Also from properties of Poisson processes it follows that for every $p\ge 1$
\begin{equation}
	\sup_{r>0, u\ge 0, s,t\in [0,1]} E\left\|\hat Z^r(u+t) - \hat Z^r(u+s)\right\|^p \doteq m_p<\infty.
\label{eq:eqlpzrin}	
\end{equation}
Thus since $f$ is bounded , we have for some $\kappa_2 \in (0,\infty)$
$$\frac{2}{t_r^2}\int_0^{t_r}\int_0^u E(H_i(u)H_i(v)) dv du \le \frac{\kappa_2}{t_r}\to 0$$
as $r\to \infty$.
Thus as $r\to \infty$
\begin{equation*}
	\bar E \left(\left |E_{\theta^r_{t_r}}\left(f(\bw(\cdot \wedge s), \bz(\cdot \wedge s)) (\bz_i(t)-\bz_i(s))\right)\right|
	\wedge 1\right)\to 0. 
\end{equation*}
The equality in \eqref{eq:eqismart} now follows on noting that from \eqref{eq:eqlpzrin}, for all $t\in [0,1]$,
$\sup_{r>0} E E _{\theta^r_{t_r}}(\bz_i(t))^2 < \infty .$
In order to argue that $\{\bz(t)\}_{0\le t\le 1}$ is a $\clg_t$-Brownian motion with covariance matrix $\Sigma$ it now suffices to show that defining
$\bm(t) \doteq \bz(t)\bz'(t) - t \Sigma$, $\{\bm(t)\}_{0\le t\le 1}$ is a $I^2$ dimensional $\{\clg_t\}$-martingale.
Once more, it suffices to show that with $f$ as before, $0\le s<t\le 1$, and $i,l \in \NN_I$,
\begin{equation}
	\bar E \left(\left |E_{\theta}\left(f(\bw(\cdot \wedge s), \bz(\cdot \wedge s)) (\bm_{i,l}(t)-\bm_{i,l}(s))\right)\right|
	\wedge 1\right) =0. \label{eq:eqismartqv}
\end{equation}
For this note that
\begin{align}
&\bar E  E_{\theta^r_{t_r}}\left[f(\bw(\cdot \wedge s), \bz(\cdot \wedge s)) (\bm_{i,l}(t)-\bm_{i,l}(s))\right]^2	\nonumber\\
&=E\left[\frac{1}{t_r}\int_0^{t_r} f(\hat W^r(u+(\cdot \wedge s)), \hat Z^r(u+(\cdot \wedge s))- \hat Z^r(u))
[\hat M^{r,u}_{i,l}(t) - \hat M^{r,u}_{i,l}(s)]du\right]^2\nonumber\\
&= \frac{2}{t_r^2}\int_0^{t_r}\int_0^u E(H^r_{i,l}(u)H^r_{i,l}(v)) dv du,\label{eq:a11052}
\end{align}
where for $u\ge 0$
$$\hat M^{r,u}_{i,l}(t)=\left( \hat{Z}^{r}_{i}(u+t)-\hat{Z}^{r}_i(u)\right) \left( \hat{Z}%
^{r}_l(u+t)-\hat{Z}^{r}_l(u)\right) -t\Sigma_{il} 
$$
and 
$$
H^r_{i,l}(u) = 
f(\hat W^r(u+(\cdot \wedge s)), \hat Z^r(u+(\cdot \wedge s))- \hat Z^r(u))[\hat M^{r,u}_{i,l}(t) - \hat M^{r,u}_{i,l}(s)].
$$
Write
$$\hat M^{r,u}_{i,l}(t)- \hat M^{r,u}_{i,l}(s) = \hat{\Psi}_{i,l}^{r}(u) + \bar{\xi}_{i,l}^{r}(u),$$
where
\begin{eqnarray*}
\hat{\Psi}_{i,l}^{r}(u) &=&(\hat{Z}_{i}^{r}(u+t)-\hat{Z}_{i}^{r}(u+s))(\hat{Z%
}_{l}^{r}(u+t)-\hat{Z}_{l}^{r}(u+s)) \\
&&-\sum\limits_{j=1}^{J}G_{i,j}^{r}K_{l,j}(\bar{B}_{j}^{r}(u+t)-\bar{B}%
_{j}^{r}(u+s)+(t-s)\varrho _{j}^{r})
\end{eqnarray*}%
and%
\begin{equation}\label{eq:defxiil}
\bar{\xi}_{i,l}^{r}(u)=\sum\limits_{j=1}^{J}G_{i,j}^{r}K_{l,j}(\bar{B}%
_{j}^{r}(u+t)-\bar{B}_{j}^{r}(u+s)+(t-s)\varrho _{j}^{r})-(t-s)\Sigma_{i,l}%
\text{.}
\end{equation}%
Then for $0\le v\le u\le t_r$
\begin{align}
	|E(H^r_{i,l}(u)H^r_{i,l}(v))| &\le |E(\hat H^r_{i,l}(u)\hat H^r_{i,l}(v))|+ \|f\|_{\infty}^2\sup_{u\ge 0}E(\bar{\xi}_{i,l}^{r}(u))^2\nonumber\\
	&\quad+ 2\|f\|_{\infty}^2 \left[\sup_{u\ge 0} E(\hat{\Psi}_{i,l}^{r}(u))^2\right]^{1/2}
	\left[\sup_{u\ge 0} E(\bar{\xi}_{i,l}^{r}(u))^2\right]^{1/2}, \label{eq:eq1036}
\end{align}
where
$$
\hat H^r_{i,l}(u) = 
f(\hat W^r(u+(\cdot \wedge s)), \hat Z^r(u+(\cdot \wedge s))- \hat Z^r(u))\hat{\Psi}_{i,l}^{r}(u).
$$
From \eqref{eq:eqlpzrin}, for some $\kappa_3\in (0,\infty)$	
$$\sup_{r>0, u,v>0} E|\hat H^r_{i,l}(u)\hat H^r_{i,l}(v)| \le \kappa_3.$$
Also, from martingale properties of $\hat A_j$ and $\hat S_j$ we see that for $v<u-1$,
$$E(\hat H^r_{i,l}(u)\hat H^r_{i,l}(v))=0.$$
Combining the above two displays we now have that as $r\to \infty$
\begin{equation}\label{eq:qvdiag}
\frac{2}{t_r^2}\int_0^{t_r}\int_0^u |E(\hat H^r_{i,l}(u)\hat H^r_{i,l}(v))| dv du \le \frac{\kappa_4}{t_r} \to 0.
\end{equation}
From \eqref{eq:eqlpzrin} once more, we have for some $\kappa_5 \in (0,\infty)$
\begin{equation}
	\sup_{u\ge 0, r>0} E(\hat{\Psi}_{i,l}^{r}(u))^2 \le \kappa_5.\label{eq:bdpsiil}
\end{equation}
We now argue that
\begin{equation}
	\sup_{u\ge 0} E(\bar{\xi}_{i,l}^{r}(u))^2 \to 0 \mbox{ as } r\to \infty.\label{eq:bdpsiilxi}
\end{equation}
Note that once \eqref{eq:bdpsiilxi} is proved, it follows on combining \eqref{eq:a11052}, \eqref{eq:eq1036},
\eqref{eq:qvdiag} and \eqref{eq:bdpsiilxi} that 
$$E  E_{\theta^r_{t_r}}\left[f(\bw(\cdot \wedge s), \bz(\cdot \wedge s)) (\bm_{i,l}(t)-\bm_{i,l}(s))\right]^2\to 0$$
as $r\to \infty$. Once more using the moment bound in \eqref{eq:eqlpzrin} we then have
\eqref{eq:eqismartqv} completing the proof of (2). We now return to the proof of \eqref{eq:bdpsiilxi}.
We note that for some $\kappa_6 \in (0,\infty)$
$$\sup_{u,r>0} |\bar{\xi}_{i,l}^{r}(u)| \le \kappa_6 \mbox{ a.s. }.$$
Thus for any $\eps \in (0,\infty)$
\begin{equation}\label{eq:eps2118}
	\sup_{u>0} E|\bar{\xi}_{i,l}^{r}(u)|^2 \le \eps^2 + \kappa_6^2 \sup_{u>0}P(|\bar{\xi}_{i,l}^{r}(u)|>\eps).
\end{equation}
Next from properties of Poisson processes it follows that for any $\tilde \eps \in (0,\infty)$, as $r\to \infty$
\begin{equation*}
\sup_{u\ge 0}P\left( \left\vert \bar{A}_{j}^{r}(u+t)-\bar{A}_{j}^{r}(u+s)-(t-s)\lambda_{j}^r\right\vert >\tilde\epsilon \right) \to 0
\end{equation*}%
and 
\begin{equation*}
\sup_{u\ge 0}P\left( \left\vert \bar{S}_{j}^{r}(\bar{B}_{j}^{r}(u+t))-\bar{S}_{j}^{r}(%
\bar{B}_{j}^{r}(u+t))-(\bar{B}_{j}^{r}(u+t)-\bar{B}_{j}^{r}(u+s))\mu_{j}^r\right\vert >\tilde \epsilon \right) \to 0 \text{.}
\end{equation*}%
Also, using Theorem  \ref{thm:WmomBnd}, as $r\to \infty$
\begin{align*}
&\sup_{u\ge 0}P\left( \left\vert \bar{A}_{j}^{r}(u+t)-\bar{A}_{j}^{r}(u+s)-\left( \bar{S}%
_{j}^{r}(\bar{B}_{j}^{r}(u+t))-\bar{S}_{j}^{r}(\bar{B}_{j}^{r}(u+t))\right)
\right\vert >\tilde \epsilon \right)  \\
&=\sup_{u\ge 0}P\left( \left\vert \bar{Q}_{j}^{r}(u+t)-\bar{Q}_{j}^{r}(u+s)\right\vert
>\tilde \epsilon \right)  \to 0. 
\end{align*}%
Combining the above three convergence properties we have that as $r\to \infty$
\begin{equation}
	\sup_{u\ge 0} P\left( \left\vert (t-s)\lambda_{j}^r-(\bar{B}_{j}^{r}(u+t)-\bar{B}%
	_{j}^{r}(u+s))\mu _{j}^r\right\vert >\tilde \epsilon \right) \to 0 \text{.}
	\label{eq:eeq1113}
 \end{equation}
Recalling the definition of $\bar{\xi}_{i,l}^{r}(u)$ from \eqref{eq:defxiil} and noting that
$2\sum_{j=1}^J G_{ij}K_{l,j}\varrho_j = \Sigma_{il}$, we see from \eqref{eq:eeq1113} that for any $\eps \in (0,\infty)$
$$\sup_{u>0}P(|\bar{\xi}_{i,l}^{r}(u)|>\eps) \to 0$$
as $r\to \infty$.
Using this in \eqref{eq:eps2118} and sending $\eps\to 0$ we have \eqref{eq:bdpsiilxi}. As noted earlier this completes the proof of (1).

We now prove (3). From Theorem \ref{thm:finTimeConvToRBM} and since $r(K\varrho^r-C)\to v^*$ as $r\to \infty$, we have for every $t \in [0,1]$, as $r\to \infty$
\begin{align*}
&	E E_{\theta^r_{t_r}}\left[\left\|\bw(t) - \Gamma(\bw(0) + \bz -v^*\iota)(t)\right\|\wedge 1\right]\\
	& = \frac{1}{t_r}\int_0^{t_r} E\left[\left\|\hat W^r(u+t) - \Gamma(\hat W^r(u) + \hat Z^r(u+\cdot)-\hat Z^r(u)- v^*\iota)(t)\right\|\wedge 1\right] du \to 0.
\end{align*}
Since $\theta^r_{t_r}\to \theta$ in distribution, we have from continuous mapping theorem
$$E E_{\theta}\left[\left\|\bw(t) - \Gamma(\bw(0) + \bz +v^*\iota)(t)\right\|\wedge 1\right]=0.$$
This proves (3).

Finally in order to prove (4) it suffices to show that for every continuous and bounded $g:\RR_+^I \to \RR$
and $t\in [0,1]$
\begin{equation}
	\label{eq:geq1129}
	E\left| E_{\theta}(g(\bw(t))) - E_{\theta}(g(\bw(0)))\right| =0
\end{equation}
Note that as $r\to \infty$
\begin{align*}
	&E\left| E_{\theta^r_{t_r}}(g(\bw(t))) - E_{\theta}(g(\bw(0)))\right|\\
	&= E\left|\frac{1}{t_r}\int_0^{t_r}  g(\hat W^r(u+t)) du - \frac{1}{t_r}\int_0^{t_r}  g(\hat W^r(u)) du\right|\\
	&\le \frac{2\|g\|_{\infty}}{t_r} \to 0 .
\end{align*}
The equality in \eqref{eq:geq1129} now follows on using the convergence of $\theta^r_{t_r}\to \theta$ and applying continuous mapping theorem. This completes the proof of the theorem.
\end{proof}

\section{Proofs of Theorems \ref{thm:thm6.5} and \ref{thm:thm6.5disc}.}
\label{sec:pfsmainthms}

Recall from \eqref{eq:eq942} the cost function in the EWF, namely $\clc$. 

{\bf Proof of Theorem \ref{thm:thm6.5}.}
	From Theorem \ref{thm:discCostInefBnd} and noting that $h\cdot \hat Q^r(t) \ge \clc(\tilde W^r(t))$ a.s., we have
	$$E\frac{1}{t_r}\int_{0}^{t_r} |h\cdot \hat{Q}^{r}(t) - \clc(\tilde W^r(t))| dt \le B r^{\alpha -1/2}(1+ |\hat q^r|^2).$$
	Next, from Theorem \ref{thm:restCostJobOrd} we see that $\clc$ is a Lipschitz function. Let $L_{\clc}$ denote the corresponding Lipschitz constant.
	Since $M^r\to M$, we can find $\eta_r \in (0,\infty)$ such that $\eta_r\to 0$ as $r\to \infty$ and
	\begin{equation}
		\label{eq:eqworkldfi}
		|\tilde W^r(t) - \hat W^r(t)| \le \eta_r |\hat Q^r(t)| \mbox{ for all } t \ge 0,\, r>0.
	\end{equation}
	From Theorem \ref{thm:WmomBnd} it then follows that, as $r\to \infty$
	$$
	E\frac{1}{t_r}\int_{0}^{t_r} |\clc(\tilde W^r(t)) - \clc(\hat W^r(t))| dt \le L_{\clc} \eta_r \frac{1}{t_r}\int_{0}^{t_r} E|\hat Q^r(t)| dt \to 0.$$
	Thus in order to complete the proof it suffices to show that
	\begin{equation}\label{eq:maintpt129}
		\frac{1}{t_r}\int_{0}^{t_r} \clc(\hat W^r(t)) \to \int \clc(w)\pi(dw), \mbox{ in } L^1, \mbox{ as } r \to \infty .
	\end{equation}
From Theorems \ref{thm:occMeasTight} and \ref{thm:limitMeasProp}, for every $L\in (0,\infty)$,
$$	\frac{1}{t_r}\int_{0}^{t_r} \clc_L(\hat W^r(t)) \to \int \clc_L(w)\pi(dw), \mbox{ in } L^1, \mbox{ as } r \to \infty $$
where $\clc_L(w) \doteq \clc(w)\wedge L$ for $w \in \RR_+^I$.
Also, from linear growth of $\clc$ and Theorem \ref{thm:WmomBnd}, as $L\to \infty$,
$$\sup_{r>0} \frac{1}{t_r}\int_{0}^{t_r} E|\clc(\hat W^r(t)) - \clc_L(\hat W^r(t))| dt \le \frac{1}{L} \sup_{r>0} \frac{1}{t_r}\int_{0}^{t_r} E\clc^2(\hat W^r(t))  dt \to 0.$$
Theorem \ref{thm:limitMeasProp} and Fatou's lemma also show that $\int \clc(w) \pi(dw)<\infty$. Combining this with the above two displays we now have 
\eqref{eq:maintpt129} and the result follows. \qed \\ \ \\

We now prove the convergence of the discounted cost.  Proof is a simpler version of the argument in the proof of Theorem \ref{thm:thm6.5} and therefore we omit some details.\\ \ \\

{\bf Proof of Theorem \ref{thm:thm6.5disc}.}
Minor modifications of the proof of Theorem \ref{thm:occMeasTight} together with Theorem \ref{thm:finTimeConvToRBM} show that for any $T<\infty$
$\hat W^r$ converges in $D([0,T]: \RR_+^I)$ to $\check{W}^{w_{0}}$.  Thus using continuity of $\clc$, for every $L\in (0,\infty)$ and $\clc_L$ as in the proof of Theorem \ref{thm:thm6.5}, for every
$T<\infty$,
\begin{equation*}
\lim_{r\rightarrow \infty }E\left[ \int_{0}^{T}e^{-\theta t}\clc_{L}\left( \hat{%
W}^{r}(t)\right) dt\right] =E\left[ \int_{0}^{T}e^{-\theta t}\clc_{L}\left( 
\check{W}^{w_{0}}(t)\right) dt\right] \text{.}
\end{equation*}%
From Theorem \ref{thm:WmomBnd} we have, as $L\to \infty$,
$$
\sup_{r>0} E \int_{0}^{\infty}e^{-\theta t}|\clc( \hat{W}^{r}(t))-\clc_{L}( \hat{%
W}^{r}(t))| dt \le \frac{1}{L } \sup_{r>0} \int_{0}^{\infty}e^{-\theta t} E\clc^2( \hat{W}^{r}(t)) dt \to 0$$
From Theorem \ref{thm:WmomBnd} we also see that as $T\to \infty$
$$\sup_{r>0} \int_{T}^{\infty}e^{-\theta t} E\clc( \hat{W}^{r}(t)) dt \to 0, \; \int_{T}^{\infty}e^{-\theta t} E\clc(\check{W}^{w_{0}}(t))dt \to 0.$$
Using the fact that $E \int_{0}^{\infty}e^{-\theta t}\clc( \check{W}^{w_{0}}(t)) dt <\infty$ it then follows that for every $T\in (0,\infty)$
$$E \int_{0}^{\infty}e^{-\theta t} h \cdot\hat{Q}^{r}(t) dt \to E \int_{0}^{\infty}e^{-\theta t}\clc(\check{W}^{w_{0}}(t))dt .$$

The result follows. \hfill \qed

\setcounter{equation}{0}
\appendix
\numberwithin{equation}{section}
\section{Large Deviation Estimates for Poisson Processes}

The following result gives classical exponential tail bounds for Poisson processes. For the proof of the first estimate we refer the reader
to \cite{kurtz1978strongapp} while the second result is a consequence of \cite[Section 4.1l1
3, Theorem 5]{LipShibook}.

\begin{theorem}
\label{thm:LDP}Let $N^{r}(t)$ be a Poisson process with rates $\lambda ^{r}$
such that $\lim_{r\rightarrow \infty }\lambda ^{r}=\lambda \in (0,\infty) $. \ Then for any 
$\epsilon \in (0,\infty)$  there exist  $B_{1},B_{2}, R \in (0,\infty)$
  such that for all $0<\sigma <\infty $ and $%
r\geq R$ we have 
\begin{equation*}
P\left( \sup_{0\leq t\leq 1}\left\vert \frac{N^{r}(\sigma t)}{\sigma }%
-\lambda ^{r}t\right\vert >\epsilon \right) \leq B_{1}e^{-\sigma B_{2}}
\end{equation*}
and for all $T \in (0,\infty)$
\begin{equation*}
P\left( \sup_{0\leq t\leq T}\left\vert N^{r}(r^{2}t)-r^{2}t\lambda^{r}\right\vert \geq \epsilon rT\right) \leq B_{1}e^{-B_{2}T}
\end{equation*}%
\end{theorem}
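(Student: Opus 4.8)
The plan is to derive both estimates from Doob's maximal inequality applied to the exponential martingales of the Poisson process. Fix $R$ large enough (and $\ge 1$) that $\Lambda^\ast \doteq \sup_{r\ge R}\lambda^r<\infty$; this is possible since $\lambda^r\to\lambda\in(0,\infty)$. For each $\theta\in\RR$ the process
\[
M^r_\theta(t)\doteq\exp\{\theta N^r(t)-\lambda^r t(e^\theta-1)\}
\]
is a nonnegative right-continuous martingale with $M^r_\theta(0)=1$; writing $X^r(t)\doteq N^r(t)-\lambda^r t$ and $\psi(\theta)\doteq e^\theta-1-\theta\ge0$ we have $M^r_\theta(t)=\exp\{\theta X^r(t)-\lambda^r t\psi(\theta)\}$. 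Fix $\sigma,b>0$ and $\theta>0$ and consider the time-changed nonnegative martingale $t\mapsto M^r_\theta(\sigma t)$ on $[0,1]$. On the event $\{\sup_{0\le t\le1}X^r(\sigma t)\ge b\}$ there is a $t^\ast\in[0,1]$ with $X^r(\sigma t^\ast)\ge b$, and there $M^r_\theta(\sigma t^\ast)\ge\exp\{\theta b-\lambda^r\sigma\psi(\theta)\}$ (using $\theta>0$, $t^\ast\le1$, $\psi(\theta)\ge0$); hence $\sup_{0\le t\le1}M^r_\theta(\sigma t)\ge\exp\{\theta b-\lambda^r\sigma\psi(\theta)\}$, and Doob's maximal inequality gives
\[
P\Big(\sup_{0\le t\le1}X^r(\sigma t)\ge b\Big)\le\exp\{-\theta b+\lambda^r\sigma\psi(\theta)\}.
\]
Applying the same argument to $M^r_{-\theta}$ bounds $P(\inf_{0\le t\le1}X^r(\sigma t)\le-b)$ by $\exp\{-\theta b+\lambda^r\sigma\psi(-\theta)\}$, and since $\psi(\theta)\vee\psi(-\theta)\le\theta^2$ for $0\le\theta\le1$ (immediate from the power series), a union bound yields, for all $\sigma,b>0$, $0<\theta\le1$ and $r\ge R$,
\[
P\Big(\sup_{0\le t\le1}|X^r(\sigma t)|\ge b\Big)\le2\exp\{-\theta b+\Lambda^\ast\sigma\theta^2\}.
\]

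Both claims then follow by choosing $b$ and $\theta$. For the first, take $b=\epsilon\sigma$ and $\theta=\min\{\epsilon/(2\Lambda^\ast),1\}$, so that $-\theta\epsilon\sigma+\Lambda^\ast\sigma\theta^2\le-\frac12\theta\epsilon\sigma$; this is exactly $P(\sup_{0\le t\le1}|N^r(\sigma t)/\sigma-\lambda^r t|>\epsilon)\le2e^{-\sigma B_2}$ with $B_2=\frac12\theta\epsilon$, uniformly in $\sigma>0$ and $r\ge R$. For the second, apply the last displayed bound with $\sigma=r^2T$, $b=\epsilon rT$ and $\theta=\theta_0/r$, where $\theta_0\doteq\min\{\epsilon/(2\Lambda^\ast),1\}$ (note $\theta_0/r\le1$ since $r\ge1$): then $-\theta b+\Lambda^\ast\sigma\theta^2=T(-\theta_0\epsilon+\Lambda^\ast\theta_0^2)\le-\frac12\theta_0\epsilon T$, which is $P(\sup_{0\le t\le T}|N^r(r^2t)-r^2t\lambda^r|\ge\epsilon rT)\le2e^{-B_2T}$ with $B_2=\frac12\theta_0\epsilon$, again uniformly in $T>0$ and $r\ge R$. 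So both estimates come from the same one-parameter family of bounds; the only difference is that the optimal tilt scales like $1/r$ in the second case, which is precisely what absorbs the $r^2$ in the time change.

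I do not expect any real difficulty here: once the exponential martingale is written down, everything reduces to Doob's maximal inequality, the elementary bound $\psi(\pm\theta)\le\theta^2$ on $[0,1]$, and the optimization over $\theta$. The points to be careful about are (i) expressing the deviation event through the running supremum of $|X^r|$ and splitting it into the up and down halves, so that a single-signed tilt handles each half, and (ii) tracking the dependence of the constants on $\epsilon$ while keeping them free of $\sigma$, $T$ and $r$, which is what makes the two bounds uniform; the uniformity over $r\ge R$ uses nothing beyond $\Lambda^\ast<\infty$. Alternatively, as the authors note, one may simply quote the strong-approximation estimate of Kurtz for the first bound and the exponential inequality of Liptser--Shiryaev for the second.
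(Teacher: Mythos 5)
Your argument is correct. Note, though, that the paper does not actually prove Theorem \ref{thm:LDP}: it simply quotes the strong-approximation estimate of Kurtz for the first bound and the exponential inequality in Liptser--Shiryaev for the second, exactly as you mention in your closing sentence. So your exponential-martingale route is a genuinely different (and more self-contained) treatment. What it buys is a unified derivation: both estimates fall out of the single family of bounds
$P\bigl(\sup_{0\le t\le 1}|N^r(\sigma t)-\lambda^r\sigma t|\ge b\bigr)\le 2\exp\{-\theta b+\Lambda^*\sigma\theta^2\}$, $0<\theta\le 1$,
with the only difference being that the optimal tilt is of order $1$ in the first case and of order $1/r$ in the second, which is precisely what absorbs the $r^2$ time change; the uniformity in $\sigma$, $T$ and $r\ge R$ is transparent because the constants depend only on $\epsilon$ and $\Lambda^*=\sup_{r\ge R}\lambda^r$. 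Your choices $b=\epsilon\sigma$, $\theta=\min\{\epsilon/(2\Lambda^*),1\}$ and $\sigma=r^2T$, $b=\epsilon rT$, $\theta=\theta_0/r$ do give exponents $\le-\tfrac12\theta\epsilon\sigma$ and $\le-\tfrac12\theta_0\epsilon T$ respectively, and $\psi(\pm\theta)\le\theta^2$ on $[0,1]$ is correct. One small point worth a sentence in a polished write-up: for the lower tail the infimum of the c\`adl\`ag path $N^r(s)-\lambda^r s$ over a compact interval need not be attained (it can be a left limit just before a jump), so the phrase ``there is a $t^*$ with $X^r(\sigma t^*)\le -b$'' should be replaced by taking $t^*$ with $-X^r(\sigma t^*)>b-\epsilon'$ and letting $\epsilon'\downarrow 0$, or by applying Doob's inequality directly to the nonnegative martingale $M^r_{-\theta}$; this is cosmetic and does not affect the constants.
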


\section*{Acknowledgement}
This research has been partially supported by the National Science Foundation (DMS-1305120), the Army Research Office (W911NF-14-1-0331) and DARPA (W911NF-15-2-0122).

% \bib, bibdiv, biblist are defined by the amsrefs package.
\begin{bibdiv}
\begin{biblist}

\bib{atakum}{article}{
      author={Ata, Baris},
      author={Kumar, Sunil},
       title={Heavy traffic analysis of open processing networks with complete
  resource pooling: asymptotic optimality of discrete review policies},
        date={2005},
     journal={The Annals of Applied Probability},
      volume={15},
      number={1A},
       pages={331\ndash 391},
}

\bib{belwil1}{article}{
      author={Bell, Steven~L},
      author={Williams, Ruth~J},
       title={Dynamic scheduling of a system with two parallel servers in heavy
  traffic with resource pooling: asymptotic optimality of a threshold policy},
        date={2001},
     journal={The Annals of Applied Probability},
      volume={11},
      number={3},
       pages={608\ndash 649},
}

\bib{belwil2}{article}{
      author={Bell, Steven~L},
      author={Williams, Ruth~J},
       title={Dynamic scheduling of a parallel server system in heavy traffic
  with complete resource pooling: Asymptotic optimality of a threshold policy},
        date={2005},
     journal={Electronic Journal of Probability},
      volume={10},
       pages={1044\ndash 1115},
}

\bib{boh1}{article}{
      author={B{\"o}hm, Volker},
       title={On the continuity of the optimal policy set for linear programs},
        date={1975},
     journal={SIAM Journal on Applied Mathematics},
      volume={28},
      number={2},
       pages={303\ndash 306},
}

\bib{budgho1}{article}{
      author={Budhiraja, Amarjit},
      author={Ghosh, Arka~Prasanna},
       title={A large deviations approach to asymptotically optimal control of
  crisscross network in heavy traffic},
        date={2005},
     journal={The Annals of Applied Probability},
      volume={15},
      number={3},
       pages={1887\ndash 1935},
}

\bib{budgho2}{article}{
      author={Budhiraja, Amarjit},
      author={Ghosh, Arka~Prasanna},
       title={Diffusion approximations for controlled stochastic networks: An
  asymptotic bound for the value function},
        date={2006},
     journal={The Annals of Applied Probability},
      volume={16},
      number={4},
       pages={1962\ndash 2006},
}

\bib{harwil1}{article}{
      author={Harrison, J.~M.},
      author={Williams, R.~J.},
       title={Brownian models of open queueing networks with homogeneous
  customer populations},
        date={1987},
        ISSN={0090-9491},
     journal={Stochastics},
      volume={22},
      number={2},
       pages={77\ndash 115},
  url={http://dx.doi.org.libproxy.lib.unc.edu/10.1080/17442508708833469},
      review={\MR{912049}},
}

\bib{har1}{article}{
      author={Harrison, J~Michael},
       title={Brownian models of open processing networks: Canonical
  representation of workload},
        date={2000},
     journal={Annals of Applied Probability},
       pages={75\ndash 103},
}

\bib{harmandhayan}{article}{
      author={Harrison, J~Michael},
      author={Mandayam, Chinmoy},
      author={Shah, Devavrat},
      author={Yang, Yang},
       title={Resource sharing networks: Overview and an open problem},
        date={2014},
     journal={Stochastic Systems},
      volume={4},
      number={2},
       pages={524\ndash 555},
}

\bib{harvan}{article}{
      author={Harrison, J~Michael},
      author={Van~Mieghem, Jan~A},
       title={Dynamic control of brownian networks: state space collapse and
  equivalent workload formulations},
        date={1997},
     journal={The Annals of Applied Probability},
       pages={747\ndash 771},
}

\bib{kankelleewil}{article}{
      author={Kang, WN},
      author={Kelly, FP},
      author={Lee, NH},
      author={Williams, RJ},
       title={State space collapse and diffusion approximation for a network
  operating under a fair bandwidth sharing policy},
        date={2009},
     journal={The Annals of Applied Probability},
       pages={1719\ndash 1780},
}

\bib{kurtz1978strongapp}{article}{
      author={Kurtz, T.G.},
       title={Strong approximation theorems for density dependent {M}arkov
  chains},
        date={1978},
     journal={Stochastic Process. Appl.},
      volume={6},
       pages={223\ndash 240},
}

\bib{LipShibook}{book}{
      author={Liptser, R.~Sh.},
      author={Shiryayev, A.~N.},
       title={Theory of martingales},
      series={Mathematics and its Applications (Soviet Series)},
   publisher={Kluwer Academic Publishers Group, Dordrecht},
        date={1989},
      volume={49},
}

\bib{masrob}{article}{
      author={Massoulie, Laurent},
      author={Roberts, James~W},
       title={Bandwidth sharing and admission control for elastic traffic},
        date={2000},
     journal={Telecommunication systems},
      volume={15},
      number={1-2},
       pages={185\ndash 201},
}

\end{biblist}
\end{bibdiv}

% \bibliography{networks}
% \bibliographystyle{amsplain}
\end{document}